\newtheorem{thm}{Theorem}[section]
\newtheorem{lem}[thm]{Lemma}
\theoremstyle{definition}
\newtheorem{defn}{Definition}[section]
\numberwithin{equation}{section}
\def\d{\,\mathrm{d}}
\def\dt{\frac{\mathrm{d}}{\mathrm{d}t}}
\def\r{\!\!\!}
\begin{document}

%\linenumbers  %添加行号
%\let\oldalign\align
%\let\oldendalign\endalign
%\renewenvironment{align}{\linenomathNonumbers\oldalign}{\oldendalign\endlinenomath}
%\pagewiselinenumbers  %每页重新开始添加行号

%\title[chemotaxis system with singular density-suppressed motility and superlinear consumption]
%{Global solutions to a chemotaxis system with singular density-suppressed motility and superlinear consumption}
\title[ZHANG AND LI] 
{Global boundedness for a two-dimensional doubly degenerate nutrient taxis system}
\author[Zhang]{Zhiguang Zhang }%
\address{School of Mathematics, Southeast University, Nanjing 211189, P. R. China; and School of Mathematics and Statistics, Chongqing Three Gorges University, Wanzhou 404020 , P. R. China}
\email{guangz$\_$z@163.com}

\author[Li]{Yuxiang Li$^{\star}$}
\address{School of Mathematics, Southeast University, Nanjing 211189, P. R. China}
\email{lieyx@seu.edu.cn}

\thanks{$^{\star}$Corresponding author.}
\thanks{Supported in part by National Natural Science Foundation of China (No. 12271092, No. 11671079) and the  Science and Technology Research Program of Chongqing Municipal Education Commission (No. KJQN202201226).}

\subjclass[2020]{35B36, 35K65, 35K59, 35A01, 35Q92, 92C17.}%

\keywords{Chemotaxis system, doubly degenerate, global weak existence.}

% \date{}%
% \dedicatory{}%
% \commby{}%
% ----------------------------------------------------------------
\begin{abstract}
This paper is concerned with the doubly degenerate nutrient taxis system $u_t=\nabla \cdot(u^{l-1} v \nabla u)- \nabla \cdot\left(u^{l} v \nabla v\right)+ uv$ and $v_t=\Delta v-u v$ for some $l \geqslant 1$, subjected to homogeneous Neumann boundary conditions in a smooth bounded convex domain $\Omega \subset \mathbb{R}^n$ $(n \leqslant 2)$. Through distinct approaches, we establish that for sufficiently regular initial data, in two-dimensional contexts, if $l \in[1,3]$, then the system possesses global weak solutions, and in one-dimensional settings, the same conclusion holds for $l \in[1,\infty)$. Notably, the solution remains uniformly bounded when $l \in[1,\infty)$ in one dimension or $l \in(1,3]$ in two dimensions. 
\end{abstract}
\maketitle

\section{Introduction and main results}\label{section1}

In the present work, we consider a doubly degenerate nutrient taxis system
\begin{equation}\label{SYS:MAIN}
\begin{cases}u_t=\nabla \cdot(u^{l-1} v \nabla u)- \nabla \cdot\left(u^{l}v \nabla v\right)+  u v, & x \in \Omega, t>0, \\ v_t=\Delta v-u v, & x \in \Omega, t>0, \\ \left(u v \nabla u- u^{2} v \nabla v\right) \cdot \nu=\nabla v \cdot \nu=0, & x \in \partial \Omega, t>0, \\ u(x, 0)=u_0(x), \quad v(x, 0)=v_0(x), & x \in \Omega, \end{cases}
\end{equation}
where $\Omega \subset \mathbb{R}^n$ $(n \leqslant 2)$ is a bounded domain with smooth boundary, $\nu$ is the outward unit normal vector at a point of $\partial \Omega$ and $l \geqslant 1$. The scalar functions $u$ and $v$ denote the concentration of the nutrient, respectively.

Fujikawa \cite{1992-PASMaiA-Fujikawa}, Fujikawa and Matsushita \cite{1989-JotPSoJ-FujikawaMatsushita}, and Matsushita and Fujikawa \cite{1990-PASMaiA-MatsushitaFujikawa} showed that \textit{Bacillus subtilis} forms complex patterns, including snowflake-like distributions, under nutrient scarcity. To model these dynamics, Kawasaki et al. \cite{1997-JoTB-Kawa} proposed the diffusion model
\begin{align}\label{sys:Kaw}
\begin{cases}
u_t=\nabla \cdot (D_u(u,v) \nabla u) + uv,  \\ 
v_t=D_v \Delta v - uv, 
\end{cases}
\end{align}
where $D_u(u,v)$ is the bacterial diffusion coefficient and $D_v$ is the constant nutrient diffusion coefficient. Evidences presumably suggest that bacteria are immotile when either $u$ or $v$ is low and become active as $u$ or $v$
increases, so they proposed the simplest diffusion coefficient as 
\begin{align*}
D_u(u,v)=uv.
\end{align*} 
This system associated with no-flux boundary condition in a smooth bounded convex domain $\Omega \subset \mathbb{R}^n$ was studied by Winkler \cite{2022-CVPDE-Winkler}. It is proved that for nonnegative initial data satisfying certain conditions, there exists a global weak solution $(u, v)$ which satisfies $\sup _{t>0}\|u(t)\|_{L^p(\Omega)}<\infty$ for all $p \in [1, p_0)$ with $p_0=\frac{n}{(n-2)_{+}}$, and one can find $u_{\infty} \in \bigcap_{p \in\left[1, p_0\right)} L^p(\Omega)$ such that, within an appropriate topological setting, $(u(t), v(t))$ approaches the equilibrium $\left(u_{\infty}, 0\right)$ in the large time limit.

\vskip 1mm

To describe the formation of the said aggregation pattern with more accuracy, Leyva et al. \cite{2013-PA-LeyvaMalagaPlaza} extended the degenerate diffusion model \eqref{sys:Kaw} to the following doubly degenerate nutrient taxis system 
\begin{align}\label{SYS:LEYVA}
\begin{cases}
u_t=\nabla \cdot(u v \nabla u)-\nabla \cdot\left(u^{2}v \nabla v\right)+ u v,  \\ 
v_t=\Delta v- u v. 
\end{cases}
\end{align}
The term ``doubly degenerate" means the diffusion coefficient of the bacteria becomes degenerate when $ u \text { or } v \rightarrow 0$. In the two-dimensional setting, numerical simulations in \cite{1997-JoTB-Kawa, 2000-AiP-Ben-JacobCohenLevine, 2013-PA-LeyvaMalagaPlaza} indicated that, depending on the initial data and parameter conditions, the model \eqref{SYS:LEYVA} could generate rich branching pattern, which is very close to that observed in biological experiments.
In \cite{2021-TAMS-Winkler}, Winkler studied \eqref{SYS:LEYVA} in the one-dimensional setting, i.e. the following cross-diffusion system 
\begin{align}\label{ONESYS:WINKLER}
\begin{cases}
u_t=\left(u v u_x\right)_x-\left(u^2 v v_x\right)_x+u v, & x \in \Omega, t>0, \\ 
v_t=v_{x x}-u v, & x \in \Omega, t>0, \\ 
u v u_x-u^2 v v_x=0, \quad v_x=0, & x \in \partial \Omega, t>0, \\ 
u(x, 0)=u_0(x), \quad v(x, 0)=v_0(x), & x \in \Omega,
\end{cases}
\end{align}
where the initial data in \eqref{ONESYS:WINKLER} are assumed to be such that 
\begin{align}\label{ONESYS:WINKLERa}
\begin{cases}
u_0 \in C^{\vartheta}(\overline{\Omega}) \text { for some } \vartheta \in(0,1), \text { with } u_0 \geqslant 0 \text { and } \int_{\Omega} \ln u_0>-\infty, \quad \text { and that } \\ 
v_0 \in W^{1, \infty}(\Omega) \text { satisfies } v_0>0 \text { in } \overline{\Omega}.
\end{cases}
\end{align}
Based on the method of energy estimates, the author demonstrated that the system \eqref{ONESYS:WINKLER} admits a global weak solution, which is uniform-in-time bounded and converges to some equilibrium within a defined topological space. Later, Li and Winkler \cite{2022-CPAA-LiWinklera} removed $\int_{\Omega} \ln u_0>-\infty$ in \eqref{ONESYS:WINKLERa} and obtained similar results.

The associated no-flux type initial-boundary value problem for \eqref{SYS:LEYVA} is considered by Winkler \cite{2022-NARWA-Winkler} in smooth bounded convex domain of the plane. It is shown that for any $p>2$ and each fixed nonnegative $u_0 \in W^{1, \infty}(\Omega)$, a smallness condition exclusively involving $v_0$ can be identified as sufficient to ensure that this problem with $\left.(u, v)\right|_{t=0}=\left(u_0, v_0\right)$ admits a global weak solution satisfying $\mathop{\mathrm{ess\,sup}}\limits_{t>0}\|u(t)\|_{L^p(\Omega)}<\infty$. 
Recently, in order to obtain the large-data solutions, the model \eqref{SYS:LEYVA} is modified via replacing $\nabla \cdot\left(u^2 v \nabla v\right)$ by $\nabla \cdot\left(u^l v \nabla v\right)$ and is transformed to the following system
\begin{align}\label{SYS:LIWINKLER}
\begin{cases}
u_t=\nabla \cdot(u v \nabla u)- \nabla \cdot\left(u^l v \nabla v\right)+ u v, \\
v_t=\Delta v-u v.
\end{cases}
\end{align}
It is shown in \cite{2022-JDE-Li} that the associated no-flux type initial-boundary value problem for \eqref{SYS:LIWINKLER} in a smooth bounded convex domain $\Omega \subset \mathbb{R}^n$, $n=2,3$, admits a global weak solution if either $l \in \left(1, \frac{3}{2}\right)$ when $n=2$ or $l \in \left(\frac{7}{6}, \frac{13}{9}\right)$ when $n=3$. 

Very recently, Winkler \cite{2024-JDE-Winkler} considered a more general variant of the system \eqref{SYS:LEYVA}, including the system \eqref{SYS:LIWINKLER} as a special case. Global bounded weak solutions were obtained in a bounded convex planar domain when either $\alpha<2$ and the initial data are reasonably regular as well as arbitrary large, or $\alpha=2$ but the initial data are such that $v_0$ satisfies an appropriate smallness condition.

\vskip 2mm

Let us point out that, in the past decade, the following relatively simpler nutrient taxis system      
\begin{equation}\label{sys-1.5}
\begin{cases}
u_t=\Delta u-\nabla \cdot(u \nabla v), \\ v_t=\Delta v-u v,
\end{cases}
\end{equation}
has been studied extensively by many authors. It is proved that the associated no-flux  initial-boundary value problem for the system \eqref{sys-1.5} admits global classical solutions in bounded planar domains \cite{2012-CPDE-Winkler}, and admits global weak solutions in bounded three-dimensional domains that eventually become smooth in the large time \cite{2012-jde-taoyou}. In the higher dimensional setting, Tao \cite{2011-JMAA-Tao} obtained global bounded classical solutions under a smallness assumption on the initial signal concentration $v_0$. Wang and Li \cite{2019-EJDE-WangLi} showed that this model possesses at least one global renormalized solution. For more related results, we refer readers to the surveys \cite{2015-MMMAS-BellomoBellouquidTaoWinkler,2023-SAM-LankeitWinkler} and the references therein. Recently, blow-up problems for nutrient taxis system have been investigated by some authors. We refer the reader to \cite{2024-JNS-Jin} for singular attraction-consumption systems, and to \cite{2023-PRSESA-WangWinkler, 2023-CVPDE-AhnWinkler} for repulsion-consumption systems.  

Motivated by the aforementioned works, this paper aims to investigate the existence of global bounded weak solutions for a doubly degenerate nutrient taxi system \eqref{SYS:MAIN}.

\vskip 3mm

\textbf{Main results.} 
Throughout our analysis, we will assume that the initial data satisfy 
\begin{align}\label{assIniVal}
\begin{cases}
u_0 \in \left\{\begin{array}{lll}
W^{1, \infty}(\Omega) \text {  with  } u_0 \geqslant 0   & \text {  when  } & 1 \leqslant l<2 , \\
W^{1, \infty}(\Omega) \text { with } u_0 \geqslant 0 \text {  and  } \int_{\Omega} \ln u_0>-\infty    & \text {   when   } & l=2, \\
W^{1, \infty}(\Omega) \text {   with  } u_0 \geqslant 0 \text {  and   } \int_{\Omega} u_{0}^{2-l}<\infty    & \text {   when   } & l>2  \quad \text {  as well as   }  \\  
\end{array}\right.\\ v_0 \in W^{1, \infty}(\Omega) \text {    satisfies   }  v_0 > 0  \text { in } \overline{\Omega}.
\end{cases}
\end{align}

First, we introduce the definition of a weak solution to the system \eqref{SYS:MAIN}.

%%%%%%%%%%%%%%%%%%%%%%%%%%%%%%%%%%%%%%%%%%%%%%%%%%%%%%%%%%%%%%%%%%%%%%%%%%%%%%%%%%%%%%%%
\begin{defn} \label{def-weak-sol}
Let $l\geqslant1$ and $\Omega \subset \mathbb{R}^n$ $(n \leqslant 2)$ be a bounded domain with smooth boundary. Suppose that $u_0 \in L^1(\Omega)$ and $v_0 \in L^1(\Omega)$ are nonnegative. By a global weak solution of the system (\ref{SYS:MAIN}) we mean a pair $(u, v)$ of functions satisfying
\begin{equation}\label{-2.1}
\begin{cases}
u \in L_{\mathrm{loc}}^1(\overline{\Omega} \times[0, \infty)) \quad \text { and } \\
v \in L_{\mathrm{loc}}^{\infty}(\overline{\Omega} \times[0, \infty)) \cap L_{\mathrm{loc}}^1\left([0, \infty) ; W^{1,1}(\Omega)\right)
\end{cases}
\end{equation}
and
\begin{equation}\label{-2.2}
u^l \in L_{\mathrm{loc}}^1\left([0, \infty) ; W^{1,1}(\Omega)\right) \quad \text { and } \quad u^{l} \nabla v \in L_{\mathrm{loc}}^1\left(\overline{\Omega} \times[0, \infty) ; \mathbb{R}^2\right),
\end{equation}
which are such that
\begin{align}\label{-2.3}
-\int_0^{\infty} \int_{\Omega} u \varphi_t-\int_{\Omega} u_0 \varphi(\cdot, 0)=&-\frac{1}{l} \int_0^{\infty} \int_{\Omega} v  \nabla u^{l} \cdot \nabla \varphi
+ \int_0^{\infty} \int_{\Omega} u^{l} v \nabla v \cdot \nabla \varphi \nonumber\\
& + \int_0^{\infty} \int_{\Omega} u v \varphi, 
\end{align}
and
\begin{align}\label{-2.4}
\int_0^{\infty} \int_{\Omega} v \varphi_t+\int_{\Omega} v_0 \varphi(\cdot, 0)=\int_0^{\infty} \int_{\Omega} \nabla v \cdot \nabla \varphi+\int_0^{\infty} \int_{\Omega} u v \varphi
\end{align}
for all $\varphi \in C_0^{\infty}(\overline{\Omega} \times[0, \infty))$.
\end{defn}
%%%%%%%%%%%%%%%%%%%%%%%%%%%%%%%%%%%%%%%%%%%%%%%%%%%%%%%%%%%%%%%%%%%%%%%%%%%%%%%%%%%%%%%%

Next, we state the global existence of weak solutions for the system \eqref{SYS:MAIN}.

\begin{thm} \label{thm-1.1}
Let $1<l\leqslant3$ and let $\Omega \subset \mathbb{R}^2$ be a bounded convex domain with smooth boundary. Assume that the initial value $\left(u_0, v_0\right)$ satisfies \eqref{assIniVal}. Then there exist functions
\begin{align}\label{solu:property2}
\begin{cases}
u \in C^{0}(\overline{\Omega} \times[0, \infty)) \quad \text { and } \\
v \in C^0(\overline{\Omega} \times[0, \infty)) \cap C^{2,1}(\overline{\Omega} \times(0, \infty))
\end{cases}
\end{align}
such that $u > 0$ a.e in $\Omega \times(0, \infty)$ and $v>0$ in $\overline{\Omega} \times[0, \infty)$, and that $(u, v)$ solves \eqref{SYS:MAIN} in the sense of Definition \ref{def-weak-sol}, and that $(u, v)$  has the property that for each $p \geq 1$ there exists $C(p)>0$ fulfilling
\begin{align*}
\|u(t)\|_{L^p(\Omega)}+\|v(t)\|_{W^{1, \infty}(\Omega)} \leqslant C(p) \text { for all } t>0
\end{align*} 
with some constant $C>0$.
\end{thm}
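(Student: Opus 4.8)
The plan is to realize the solution as a limit of classical solutions to a uniformly parabolic approximation. First I would regularize the degeneracy, studying e.g.
$u_{\varepsilon t}=\nabla\cdot((u_\varepsilon+\varepsilon)^{l-1}v_\varepsilon\nabla u_\varepsilon)-\nabla\cdot((u_\varepsilon+\varepsilon)^{l}v_\varepsilon\nabla v_\varepsilon)+u_\varepsilon v_\varepsilon$ coupled with $v_{\varepsilon t}=\Delta v_\varepsilon-u_\varepsilon v_\varepsilon$ under the same no-flux conditions and mollified initial data, obtaining global classical solutions with $u_\varepsilon,v_\varepsilon>0$. Two $\varepsilon$-independent bounds are immediate: the maximum principle for the second equation gives $0\le v_\varepsilon\le\|v_0\|_{L^\infty(\Omega)}$, and adding the spatial integrals of the two equations shows $\int_\Omega(u_\varepsilon+v_\varepsilon)$ is conserved, so $\|u_\varepsilon(t)\|_{L^1(\Omega)}\le\int_\Omega(u_0+v_0)$ for all $t>0$.

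The heart of the matter is a single dissipative functional tuned to the three cases of \eqref{assIniVal}. Let $F$ be the convex primitive with $F''(s)=s^{-l}$, i.e.\ $F(s)=\frac{s^{2-l}}{(2-l)(1-l)}$ for $l\ne2$ and $F(s)=-\ln s$ for $l=2$; the point is that $\int_\Omega F(u_0)<\infty$ is precisely what \eqref{assIniVal} guarantees in each regime. Testing the first equation against $F'(u_\varepsilon)$ and the second against $-\Delta v_\varepsilon$ (equivalently differentiating $\tfrac12\int_\Omega|\nabla v_\varepsilon|^2$) and adding, the two cross-diffusion contributions $\pm\int_\Omega v_\varepsilon\nabla u_\varepsilon\cdot\nabla v_\varepsilon$ cancel exactly, leaving (up to $\varepsilon$-shifts that I would track and discard in the limit)
\begin{align*}
\frac{d}{dt}\Big(\int_\Omega F(u_\varepsilon)+\tfrac12\int_\Omega|\nabla v_\varepsilon|^2\Big)
&=-\int_\Omega\frac{v_\varepsilon}{u_\varepsilon}|\nabla u_\varepsilon|^2-\int_\Omega|\Delta v_\varepsilon|^2\\
&\quad-\int_\Omega u_\varepsilon|\nabla v_\varepsilon|^2-\frac{1}{l-1}\int_\Omega u_\varepsilon^{2-l}v_\varepsilon\le0.
\end{align*}
Thus this functional is non-increasing for $l>1$, and its four dissipation terms are integrable in time uniformly in $\varepsilon$.

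Granting these integral bounds, the remaining and most delicate task is to upgrade them to the claimed pointwise estimates $\|u_\varepsilon(t)\|_{L^p(\Omega)}\le C(p)$ for every $p$ and $\|v_\varepsilon(t)\|_{W^{1,\infty}(\Omega)}\le C$, uniformly in $t$ and $\varepsilon$. Here I would iterate between the two equations: once $\|u_\varepsilon(t)\|_{L^q(\Omega)}$ is known for some $q>1$, smoothing estimates for the Neumann heat semigroup (or, to keep everything time-uniform, a pointwise analysis of $w_\varepsilon=|\nabla v_\varepsilon|^2$, whose boundary flux satisfies $\partial_\nu w_\varepsilon\le0$ by convexity of $\Omega$) yield a bound on $\|\nabla v_\varepsilon(t)\|_{L^\infty(\Omega)}$; with $v_\varepsilon$ and $\nabla v_\varepsilon$ bounded, testing the first equation against $u_\varepsilon^{p-1}$ produces a dissipation $\sim\int_\Omega v_\varepsilon|\nabla u_\varepsilon^{(l+p-1)/2}|^2$ competing with a superlinear cross term $\lesssim\int_\Omega u_\varepsilon^{l+p-1}$, to be absorbed by a two-dimensional Gagliardo--Nirenberg interpolation and Grönwall. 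The main obstacle is closing this loop despite the double degeneracy: the diffusion is weighted by $v_\varepsilon$, which may vanish, so the interpolation must proceed without any positive lower bound on $v_\varepsilon$. I expect the admissible range $l\le3$ and the restriction $n=2$ to be exactly the thresholds at which this absorption still succeeds.

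Finally, with the uniform $L^p$- and $W^{1,\infty}$-bounds in hand I would derive $\varepsilon$-independent Hölder estimates---standard parabolic theory for $v_\varepsilon$ and degenerate, porous-medium-type regularity for $u_\varepsilon$---together with temporal equicontinuity, so that along a subsequence $u_\varepsilon\to u$ and $v_\varepsilon\to v$ in $C^0_{\mathrm{loc}}(\overline{\Omega}\times[0,\infty))$ with $v\in C^{2,1}(\overline{\Omega}\times(0,\infty))$ and $u>0$ a.e., $v>0$. Passing to the limit in the weak formulation, using the dissipation to control $\nabla u_\varepsilon^{l}$ and $u_\varepsilon^{l}\nabla v_\varepsilon$ in the spaces demanded by \eqref{-2.2}, then produces a weak solution in the sense of Definition \ref{def-weak-sol} carrying all the stated regularity, positivity, and uniform-in-time bounds.
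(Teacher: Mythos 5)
Your opening moves are sound and coincide with the paper's: the regularization (the paper shifts the initial data to $u_{0\varepsilon}=u_0+\varepsilon$ rather than the nonlinearity, which is immaterial), the mass and $L^\infty$ bounds, and the entropy identity obtained by testing the first equation with $F'(u_\varepsilon)$, $F''(s)=s^{-l}$, and the second with $-\Delta v_\varepsilon$ --- this is exactly Lemma \ref{lem-1st-est}, yielding the uniform space-time bound on $\int\frac{v_\varepsilon}{u_\varepsilon}|\nabla u_\varepsilon|^2$. The genuine gap is everything after ``granting these integral bounds'': that portion is not a proof but an acknowledgment of the main difficulty, and the bootstrap you sketch cannot be closed as described. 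It is circular at the entry point: the semigroup estimate for $\|\nabla v_\varepsilon(t)\|_{L^\infty(\Omega)}$ requires $\sup_t\|u_\varepsilon(t)\|_{L^q(\Omega)}$ for some $q>2$ (in two dimensions one needs the exponent $\tfrac12+\tfrac1q<1$), while your route to uniform $L^p$ bounds on $u_\varepsilon$ presupposes boundedness of $\nabla v_\varepsilon$; the entropy estimates alone provide no uniform-in-time $L^q$ bound with $q>2$ to start the loop. Moreover, as you yourself note, the absorption step fails without a positive lower bound on $v_\varepsilon$: testing with $u_\varepsilon^{p-1}$ produces the dissipation $\int u_\varepsilon^{p+l-3}v_\varepsilon|\nabla u_\varepsilon|^2$ and the bad term $\int u_\varepsilon^{p+l-1}v_\varepsilon|\nabla v_\varepsilon|^2$, both carrying the weight $v_\varepsilon$, and plain two-dimensional Gagliardo--Nirenberg cannot mediate between them on the set where $v_\varepsilon$ is small; predicting that $l\leqslant3$ is ``exactly the threshold'' is not a mechanism for overcoming this.

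What the paper actually does, and what is absent from your proposal, is the following. It proves weighted two-dimensional functional inequalities (Lemmas \ref{lemma-3.4} and \ref{lemma-3.5}) in which every term carries compatible powers of the weight, so that no lower bound on $v_\varepsilon$ is ever needed; it derives further space-time bounds $\int\int\frac{u_\varepsilon}{v_\varepsilon}|\nabla v_\varepsilon|^2$ and $\int\int\frac{|\nabla v_\varepsilon|^4}{v_\varepsilon^3}$ (Lemma \ref{lem-2nd-est}, where convexity enters) and $\int\int u_\varepsilon^2 v_\varepsilon$ (Lemma \ref{lemma-3.7}); and, crucially, it introduces the quasi-energy $G_\varepsilon$ coupling $\pm\int u_\varepsilon^{3-l}$ (or its logarithmic analogues at $l=2,3$) with $\int\frac{|\nabla v_\varepsilon|^4}{v_\varepsilon^3}$, whose Gr\"onwall analysis (Lemmas \ref{lemma-3.6} and \ref{lemma-3.8}) yields the \emph{time-uniform} bound $\sup_t\int\frac{|\nabla v_\varepsilon|^4}{v_\varepsilon^3}\leqslant C$. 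It is this quantity, not $\|\nabla v_\varepsilon\|_{L^\infty(\Omega)}$, that controls the chemotactic term in the differential inequality for $\int u_\varepsilon^p$ (Lemma \ref{lemma-3.9xx}), after which Gr\"onwall with space-time integrable coefficients gives $\sup_t\int u_\varepsilon^p\leqslant C(p)$ (Lemma \ref{lemma-3.9x}); the hypothesis $l\leqslant3$ enters concretely there, since for $2<l\leqslant3$ one estimates $u_\varepsilon^{p+l-1}\leqslant u_\varepsilon^{p+2}+u_\varepsilon$ and invokes an additional inequality of Winkler. Only after these $L^p$ bounds does the paper obtain the $W^{1,\infty}$ bound for $v_\varepsilon$ (Lemma \ref{lemma-4.1}), the $L^\infty$ bound for $u_\varepsilon$ by Moser iteration (Lemma \ref{lemma-4.4vbv}), global existence of the approximations, the local-in-time lower bound for $v_\varepsilon$, the H\"older estimates, and the limit passage --- in that order, the reverse of yours. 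Your final paragraph on compactness is fine, but it rests entirely on the uniform bounds whose derivation is precisely the missing core.
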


\begin{thm} \label{thm-1.1xzc}
Let $l=1$ and let $\Omega \subset \mathbb{R}^2$ be a bounded convex domain with smooth boundary. Assume that the initial value $\left(u_0, v_0\right)$ satisfies \eqref{assIniVal}. Then there exist functions
\begin{align}\label{solu:property1}
\begin{cases}
u \in C^{0}(\overline{\Omega} \times[0, \infty)) \quad \text { and } \\
v \in C^0(\overline{\Omega} \times[0, \infty)) \cap C^{2,1}(\overline{\Omega} \times(0, \infty))
\end{cases}
\end{align}
such that $u\geqslant 0$ and $v>0$ in $\overline{\Omega} \times[0, \infty)$, and that $(u, v)$ solves \eqref{SYS:MAIN} in the sense of Definition \ref{def-weak-sol}, and that $(u, v)$  has the property that for each $T>0$ there exists $C(T)>0$ fulfilling
\begin{align*}
\|u(t)\|_{L^{\infty}(\Omega)}+\|v(t)\|_{L^{\infty}(\Omega)} \leqslant C(T)\quad \text { for all } t \in(0, T).
\end{align*}
\end{thm}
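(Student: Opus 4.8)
The plan is to realize the solution as a limit of solutions to a family of uniformly parabolic, non-degenerate approximating problems, and to pass to the limit by compactness. The feature that distinguishes $l=1$ from the range $l\in(1,3]$ is that the bacterial diffusion $\nabla\cdot(u^{l-1}v\nabla u)=\nabla\cdot(v\nabla u)$ is degenerate only through the coefficient $v$, not through a power of $u$; consequently, once $v$ is shown to be bounded both above and below on each time interval, the $u$-equation is uniformly parabolic and the whole scheme reduces to quantitative $\varepsilon$-independent estimates together with a fairly standard limit passage. Accordingly, I would first regularize, for instance by adding a small diffusion $\varepsilon\Delta u_\varepsilon$ to the first equation and lifting the initial data so that $u_{0\varepsilon}>0$; standard quasilinear parabolic theory then yields global smooth solutions $(u_\varepsilon,v_\varepsilon)$, for which the no-flux condition forces $\partial_\nu u_\varepsilon=0$ and thus preserves the conservation structure below.

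The backbone of the a priori estimates is a mass identity: integrating the two equations over $\Omega$ and adding, the divergence terms drop by the no-flux condition and the reaction terms cancel, so $\frac{d}{dt}\int_\Omega(u_\varepsilon+v_\varepsilon)=0$. This gives a time-uniform $L^1$ bound on $u_\varepsilon$, while the comparison principle applied to $v_{\varepsilon t}=\Delta v_\varepsilon-u_\varepsilon v_\varepsilon$ (with $u_\varepsilon\geqslant0$) gives $0<v_\varepsilon\leqslant\|v_0\|_{L^\infty(\Omega)}$, and testing the second equation by $v_\varepsilon$ additionally yields $\nabla v_\varepsilon\in L^2_{\mathrm{loc}}([0,\infty);L^2(\Omega))$.

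The technical core is to bootstrap these to $L^p$ bounds on $u_\varepsilon$ for every $p$. Testing the first equation by $u_\varepsilon^{p-1}$ produces a good dissipation $-(p-1)\int_\Omega v_\varepsilon u_\varepsilon^{p-2}|\nabla u_\varepsilon|^2$ and a cross term $(p-1)\int_\Omega u_\varepsilon^{p-1}v_\varepsilon\nabla u_\varepsilon\cdot\nabla v_\varepsilon$; after Young's inequality, half of the latter is absorbed into the dissipation and the remainder is controlled by $\tfrac{(p-1)\|v_0\|_\infty}{2}\int_\Omega u_\varepsilon^p|\nabla v_\varepsilon|^2$. This last quantity is where the coupling bites: I would estimate it by Hölder against $\nabla v_\varepsilon$ in a high $L^q$, and close the loop by feeding the current $L^p$ information on $u_\varepsilon$ into parabolic $L^q$ regularity for $v_{\varepsilon t}-\Delta v_\varepsilon=-u_\varepsilon v_\varepsilon$, the convexity of $\Omega$ ensuring that the boundary terms arising in these $\nabla v$ estimates carry a favorable sign. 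In two dimensions the Gagliardo--Nirenberg/Sobolev embeddings make this bootstrap self-consistent, at the cost of constants that grow in time, which is precisely why only local-in-time bounds are claimed. I expect this coupled estimate to be the main obstacle.

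Once $u_\varepsilon\in L^\infty_{\mathrm{loc}}([0,\infty);L^p(\Omega))$ for all $p$ is in hand, a Moser/Alikakos iteration promotes it to a local-in-time $L^\infty$ bound, uniform in $\varepsilon$. With $u_\varepsilon$ bounded on $[0,T]$, comparison in the $v$-equation together with the hypothesis $v_0>0$ on $\overline\Omega$ furnishes a lower bound $v_\varepsilon\geqslant\delta(T)>0$, so the first equation is uniformly parabolic on $\Omega\times[0,T]$; standard Schauder/Hölder estimates then yield $\varepsilon$-uniform regularity of $u_\varepsilon$, and parabolic regularity with bounded source gives $\nabla v_\varepsilon\in L^\infty$ and the regularity of $v_\varepsilon$. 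Finally, Aubin--Lions compactness extracts a subsequence along which $u_\varepsilon\to u$, $v_\varepsilon\to v$ and $\nabla v_\varepsilon\to\nabla v$ strongly enough, with $\nabla u_\varepsilon\rightharpoonup\nabla u$ weakly, to pass to the limit in the weak formulation \eqref{-2.3}--\eqref{-2.4}; the regularity asserted in \eqref{solu:property1} and the stated local-in-time bounds then follow from the uniform estimates and from parabolic regularity applied to the limiting equations.
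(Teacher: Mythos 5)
Your overall architecture (regularize, derive $\varepsilon$-uniform $L^p$ bounds, promote to $L^\infty$, bound $v_\varepsilon$ above and below, pass to the limit by compactness) is the same as the paper's, but the step you yourself flag as the main obstacle --- the passage from the mass bound to $L^p$ bounds on $u_\varepsilon$ --- is proposed in a way that cannot close, and this is a genuine gap. At the start of your bootstrap the only information is $u_\varepsilon\in L^\infty_t L^1_x$, $0<v_\varepsilon\leqslant\|v_0\|_{L^\infty(\Omega)}$ and $\nabla v_\varepsilon\in L^2_{\mathrm{loc}}$. Feeding $u_\varepsilon v_\varepsilon\in L^\infty_t L^1_x$ into parabolic regularity for $v_{\varepsilon t}=\Delta v_\varepsilon-u_\varepsilon v_\varepsilon$ in two dimensions yields $\nabla v_\varepsilon(t)\in L^q(\Omega)$ only for $q<2$; then $|\nabla v_\varepsilon|^2\in L^{q/2}$ with $q/2<1$, which cannot be paired with any power of $u_\varepsilon$ by H\"older, so the first iteration of the loop already fails. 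Moreover, the dissipation produced by testing with $u_\varepsilon^{p-1}$ is $\int_\Omega v_\varepsilon u_\varepsilon^{p-2}|\nabla u_\varepsilon|^2$, weighted by $v_\varepsilon$, which at this stage has no positive lower bound; hence it cannot be converted into $\int_\Omega|\nabla u_\varepsilon^{p/2}|^2$ for a Gagliardo--Nirenberg absorption, and the bad term $\int_\Omega u_\varepsilon^{p}v_\varepsilon|\nabla v_\varepsilon|^2$ is left uncontrolled. The paper's route through this obstacle is entirely different and is the substance of its proof: the entropy-type functional $\int_\Omega u_\varepsilon\ln u_\varepsilon+\frac12\int_\Omega|\nabla v_\varepsilon|^2$ (Lemma \ref{lem-1st-est-11}) gives the space-time bound $\int_0^T\int_\Omega \frac{v_\varepsilon}{u_\varepsilon}|\nabla u_\varepsilon|^2\leqslant C(T)$; this is propagated, using the $D^2\ln v_\varepsilon$ identities and convexity of $\Omega$, to bounds on $\int_0^T\int_\Omega\frac{u_\varepsilon}{v_\varepsilon}|\nabla v_\varepsilon|^2$ and $\int_0^T\int_\Omega\frac{|\nabla v_\varepsilon|^4}{v_\varepsilon^3}$, then to a time-uniform bound on $\int_\Omega\frac{|\nabla v_\varepsilon|^4}{v_\varepsilon^3}$ via an energy-like functional; only then do the weighted two-dimensional functional inequalities (Lemmas \ref{lemma-3.4} and \ref{lemma-3.5}) allow a Gronwall argument for $\int_\Omega u_\varepsilon^p$. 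Your appeal to ``Gagliardo--Nirenberg/Sobolev embeddings make this bootstrap self-consistent'' replaces precisely this machinery with a hope; for consumption systems even with non-degenerate diffusion, the known 2D boundedness proofs go through such coupled energy functionals, not through a na\"{\i}ve $L^1\to L^p$ bootstrap.

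There is also a circularity in your ordering of the final steps. You run the Moser/Alikakos iteration to get $\|u_\varepsilon\|_{L^\infty}$ \emph{before} establishing a lower bound on $v_\varepsilon$, yet the iteration for $u_{\varepsilon t}=\nabla\cdot(v_\varepsilon\nabla u_\varepsilon)-\nabla\cdot(u_\varepsilon v_\varepsilon\nabla v_\varepsilon)+u_\varepsilon v_\varepsilon$ needs uniform parabolicity (i.e.\ $v_\varepsilon\geqslant\delta>0$) or else the whole weighted iteration apparatus; meanwhile your lower bound on $v_\varepsilon$ is obtained by comparison using $\sup\|u_\varepsilon\|_{L^\infty}$, which presupposes the Moser step. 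The paper breaks this circle in the opposite order: it first derives $v_\varepsilon\geqslant C(T)>0$ from the $L^p$ bounds ($p>2$) of $u_\varepsilon$ alone, via the substitution $w_\varepsilon=-\ln\bigl(v_\varepsilon/\|v_0\|_{L^\infty(\Omega)}\bigr)$, which satisfies $w_{\varepsilon t}\leqslant\Delta w_\varepsilon+u_\varepsilon$, together with Neumann heat semigroup estimates (Lemma \ref{lemma-4.6}), and only then applies the Moser-type result of Tao--Winkler to the now uniformly parabolic equation (Lemma \ref{lemma-4.4a}). Your final compactness and limit passage is standard and matches the paper's Lemmas \ref{lemma-4.8aa} and \ref{lemma-4.9aa}, but it rests on the uniform estimates whose derivation is the missing core.
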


\begin{thm} \label{thm-1.1a}
Let $l\geqslant1$ and let $\Omega$ be an open bounded interval in $\mathbb{R}^1$. Assume that the initial value $\left(u_0, v_0\right)$ satisfies \eqref{assIniVal}. Then there exist functions
\begin{align}\label{solu:property3}
\begin{cases}
u \in C^{0}(\overline{\Omega} \times[0, \infty)) \quad \text { and } \\
v \in C^0(\overline{\Omega} \times[0, \infty)) \cap C^{2,1}(\overline{\Omega} \times(0, \infty))
\end{cases}
\end{align}
such that $u \geqslant 0$ and $v>0$ in $\overline{\Omega} \times[0, \infty)$, and that $(u, v)$ solves \eqref{SYS:MAIN} in the sense of Definition \ref{def-weak-sol}, and that $(u, v)$ is bounded in the sense that
\begin{align*}
\|u(t)\|_{L^\infty(\Omega)}+\|v(t)\|_{W^{1, \infty}(\Omega)} \leqslant C \quad \text { for all } t>0
\end{align*} 
with some constant $C>0$.
\end{thm}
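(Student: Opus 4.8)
The plan is to treat the one-dimensional problem (Theorem~\ref{thm-1.1a}) as the easiest of the three cases, since in $n=1$ the Sobolev embeddings are the most forgiving, the gradient of $v$ is controlled directly, and degenerate-diffusion estimates close without the dimensional restriction $l\leqslant 3$ that is forced in the planar case. I would begin with the now-standard regularization: replace the degenerate diffusion by approximating the problem with $u^{l-1}v \to (u+\varepsilon)^{l-1}(v+\varepsilon)$ type coefficients, or more simply perturb the initial data and add $\varepsilon$-uniform parabolicity, to obtain a family of smooth positive solutions $(u_\varepsilon,v_\varepsilon)$ on a maximal interval; local existence and positivity follow from standard parabolic theory. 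The goal is then to derive $\varepsilon$-independent bounds strong enough to pass to the limit.

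The key estimates I would establish, in order, are the following. First, integrating the $u$-equation gives conservation/monotonicity of mass: since $\int_\Omega u_t = \int_\Omega uv \geqslant 0$ but the $v$-equation yields $\frac{\d}{\d t}\int_\Omega v = -\int_\Omega u_\varepsilon v_\varepsilon \le 0$, one controls $\int_0^\infty\!\int_\Omega uv<\infty$ and hence a uniform bound on $\int_\Omega u_\varepsilon(t)$. Second, the comparison principle applied to the second equation yields $0<v_\varepsilon\le\|v_0\|_{L^\infty}$ uniformly. Third, the heart of the matter is an entropy-type functional: testing the first equation with a suitable power of $u_\varepsilon$ (guided by the conditions $\int_\Omega\ln u_0>-\infty$ when $l=2$ and $\int_\Omega u_0^{2-l}<\infty$ when $l>2$ in \eqref{assIniVal}, which are exactly the integrability assumptions making $\int_\Omega u_\varepsilon^{2-l}$ or $\int_\Omega\ln u_\varepsilon$ a usable Lyapunov quantity) produces a dissipation term of the form $\int_0^T\!\int_\Omega v_\varepsilon u_\varepsilon^{l-3}|\partial_x u_\varepsilon|^2$, which in one dimension upgrades via the Gagliardo--Nirenberg inequality to an $L^\infty$ bound on $u_\varepsilon$. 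In $\R^1$ the embedding $W^{1,1}\hookrightarrow L^\infty$ is the crucial simplification that lets the spatial gradient estimate directly dominate $\sup_x u_\varepsilon$.

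With $u_\varepsilon,v_\varepsilon$ bounded in $L^\infty$ uniformly in $\varepsilon$ and $t$, I would then bootstrap the regularity of $v_\varepsilon$: parabolic Schauder/$L^p$ theory for $\partial_t v_\varepsilon=\partial_{xx}v_\varepsilon - u_\varepsilon v_\varepsilon$ with an $L^\infty$ source gives uniform $W^{1,\infty}$ (indeed $C^{2,1}$ away from $t=0$) control on $v_\varepsilon$, delivering the claimed $\|v(t)\|_{W^{1,\infty}}\le C$. For the limit passage I would extract, via Aubin--Lions-type compactness applied to $u_\varepsilon$ and $u_\varepsilon^l$ (whose spatial gradients are controlled by the dissipation integral) together with the equicontinuity coming from the uniform higher-order $v$-bounds, a subsequence converging strongly enough to identify the nonlinear terms $v\,\partial_x u^l$ and $u^l v\,\partial_x v$ in the weak formulations \eqref{-2.3}--\eqref{-2.4}, and to obtain $u\in C^0$ by a uniform H\"older estimate (here the non-degeneracy once $u$ is bounded below on compact time intervals, or a De Giorgi--Nash argument on the porous-medium structure, yields continuity).

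The main obstacle I anticipate is the same one that plagues all doubly degenerate problems: obtaining a \emph{time-independent} (not merely locally-in-time) bound and enough compactness near the degeneracy set $\{u=0\}\cup\{v=0\}$ to justify passing the limit inside $v\nabla u^l$. The entropy functional has to be chosen so that its dissipation survives uniformly in $\varepsilon$ despite the vanishing of the coefficient $u^{l-1}v$; the three-way split of the initial-data hypothesis in \eqref{assIniVal} according to $l<2$, $l=2$, $l>2$ is precisely what makes the entropy integrable and must be handled case by case. In contrast to the planar Theorems~\ref{thm-1.1}--\ref{thm-1.1xzc}, the one-dimensional setting should allow this to close for \emph{all} $l\geqslant1$, because the favorable embedding removes the loss that otherwise caps $l$ at $3$.
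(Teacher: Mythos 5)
Your overall scaffolding (regularization, mass and sup bounds, the space--time bound $\int_0^\infty\!\int_\Omega u_\varepsilon v_\varepsilon\leqslant\int_\Omega v_0$, limit extraction) matches the paper, but the step you call ``the heart of the matter'' contains a genuine gap, and it is precisely the step where the paper does something different. You claim that the entropy dissipation $\int_0^T\!\int_\Omega v_\varepsilon u_\varepsilon^{l-3}|\partial_x u_\varepsilon|^2$ ``upgrades via Gagliardo--Nirenberg to an $L^\infty$ bound on $u_\varepsilon$'' thanks to $W^{1,1}(\Omega)\hookrightarrow L^\infty(\Omega)$ in one dimension. This fails twice over. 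First, that dissipation is a space--time integral: it controls $\int_\Omega v_\varepsilon u_\varepsilon^{l-3}|\partial_x u_\varepsilon|^2$ only for a.e.\ $t$ (or along a sequence of times), not uniformly in $t$, so no embedding can convert it into $\sup_{t>0}\|u_\varepsilon(t)\|_{L^\infty(\Omega)}\leqslant C$. Second, and more fundamentally, the weight $v_\varepsilon$ degenerates in \emph{time}, not only on a spatial set: by \eqref{1010-2030} one has $\int_0^\infty\|v_\varepsilon(t)\|_{L^\infty(\Omega)}\,\mathrm{d}t<\infty$, so $\|v_\varepsilon(t)\|_{L^\infty(\Omega)}$ becomes arbitrarily small, and a bound on the $v_\varepsilon$-weighted gradient integral says nothing about unweighted quantities like $\|\partial_x u_\varepsilon^{(l-1)/2}(t)\|_{L^2(\Omega)}$ for large $t$. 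The dimensional advantage you invoke does not touch this temporal degeneracy, which is exactly why uniform-in-time boundedness is delicate here even for $n=1$.

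The paper closes this gap with two devices absent from your proposal. For time-independent $L^p$ bounds (Lemma \ref{1012-2334}) it does not use the entropies $\int_\Omega u_\varepsilon^{2-l}$ or $\int_\Omega\ln u_\varepsilon$ at all; instead it couples $\frac{\mathrm{d}}{\mathrm{d}t}\int_\Omega u_\varepsilon^p$ with the singular-weight functional $\int_\Omega v_\varepsilon^{-(q-1)}|v_{\varepsilon x}|^q$ with $q=2(p+l-1)$, whose dissipation $\int_\Omega v_\varepsilon^{-(q+1)}|v_{\varepsilon x}|^{q+2}$ (Lemma \ref{lem-1009}, taken from Winkler's 2021 work) absorbs both the taxis term and the source, so that the combined quantity is a Lyapunov functional up to the time-integrable forcing $\int_\Omega u_\varepsilon v_\varepsilon$. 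For the uniform $L^\infty$ bound (Lemma \ref{lem-1013}) the paper uses a time-rescaling argument: since $v_\varepsilon\geqslant C^{-1}\|v_\varepsilon(t)\|_{L^\infty(\Omega)}$ by the Harnack-type estimate \eqref{1010-1937}, and $L_\varepsilon=\int_0^\infty\|v_\varepsilon(t)\|_{L^\infty(\Omega)}\,\mathrm{d}t<\infty$, the new time variable $\tau=L_\varepsilon^{-1}\int_0^t\|v_\varepsilon(s)\|_{L^\infty(\Omega)}\,\mathrm{d}s$ maps $(0,\infty)$ onto $(0,1)$ and transforms the first equation into \eqref{1010-2338}, whose coefficients $a_\varepsilon$ are bounded below and $b_\varepsilon$ bounded above uniformly; Moser iteration on this non-degenerate problem over the \emph{finite} interval $(0,1)$ then yields the $\varepsilon$-independent, time-global bound. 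Without these (or equivalent) ingredients your argument cannot deliver the claimed estimate for all $t>0$; as written it would at best give bounds on finite time intervals.
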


\vskip 1mm

The remainder of this paper is organized as follows. 
In Section \ref{section2}, we define the global weak solutions of the system \eqref{SYS:MAIN} and provide some initial findings regarding the local-in-time existence of the problem \eqref{SYS:MAIN}. In Section \ref{sect-3x}, we give some a priori estimates and state straightforward consequences.  In Section \ref{sect-3} and
\ref{sect-5}, we prove Theorems \ref{thm-1.1}, \ref{thm-1.1xzc} and \ref{thm-1.1a} respectively.

%%%%%%%%%%%%%%%%%%%%%%%%%%%%%%%%%%%%%%%%%%%%%%%%%%%%%%%%%%%%%%%%%%%%%%%%%%%%%%%%%%%%%%%%
%By gaining information on the corresponding dissipation rates, we can confirm that with additional constraints on $m$, $n$ and $l$ and singular behavior of $\phi$, global solvability can be ensured in frameworks of standard weak solvability with respect to the first equation in (\ref{SYS:MAIN}).
%%%%%%%%%%%%%%%%%%%%%%%%%%%%%%%%%%%%%%%%%%%%%%%%%%%%%%%%%%%%%%%%%%%%%%%%%%%%%%%%%%%%%%%%

%%%%%%%%%%%%%%%%%%%%%%%%%%%%%%%%%%%%%%%%%%%%%%%%%%%%%%%%%%%%%%%%%%%%%%%%%%%%%%%%%%%%%%%%

\section{Preliminaries}\label{section2}

In order to construct a weak solution of the system \eqref{SYS:MAIN}, we consider the following regularized problem for $\varepsilon \in(0,1)$ and $l\geqslant1$,
\begin{align}\label{sys-regul}
\begin{cases}
u_{\varepsilon t}=\nabla \cdot\left(u_{\varepsilon}^{l-1} v_{\varepsilon} \nabla u_{\varepsilon}\right)
  - \nabla \cdot\left(u_{\varepsilon}^{l} v_{\varepsilon} \nabla v_{\varepsilon}\right)+ u_{\varepsilon}v_{\varepsilon}, 
    & x \in \Omega, t>0, \\ 
v_{\varepsilon t}=\Delta v_{\varepsilon}-u_{\varepsilon} v_{\varepsilon}, 
   & x \in \Omega, t>0, \\ 
\frac{\partial u_{\varepsilon}}{\partial \nu}=\frac{\partial v_{\varepsilon}}{\partial \nu}=0, 
   & x \in \partial \Omega, t>0, \\ 
u_{\varepsilon}(x, 0)=u_{0 \varepsilon}(x):=u_0(x)+\varepsilon, \quad 
   v_{\varepsilon}(x, 0)=v_{0 \varepsilon}(x):=v_0(x), 
   & x \in \Omega.
\end{cases}
\end{align}

For any such problem, standard theory yields the following result on local existence and extensibility of smooth solutions.

\begin{lem}\label{lemma-2.1}
Let $l\geqslant1$ and $\Omega \subset \mathbb{R}^n$ $(n \leqslant 2)$ be a bounded domain with smooth boundary, and suppose that (\ref{assIniVal}) holds. Then for each $\varepsilon \in(0,1)$, there exist $T_{\max, \varepsilon} \in(0, \infty]$ and at least one pair $\left(u_{\varepsilon}, v_{\varepsilon}\right)$ of functions
\begin{align}\label{-2.6}
\begin{cases}
u_{\varepsilon} \in C^0\left(\overline{\Omega} \times\left[0, T_{\max, \varepsilon}\right)\right) \cap C^{2,1}\left(\overline{\Omega} \times\left(0, T_{\max, \varepsilon}\right)\right) \quad \text{ and } \\
v_{\varepsilon} \in \cap_{q > 2} C^0\left(\left[0, T_{\max, \varepsilon}\right); W^{1, q}(\Omega)\right) \cap C^{2,1}\left(\overline{\Omega} \times\left(0, T_{\max, \varepsilon}\right)\right)
\end{cases}
\end{align}
which are such that $u_{\varepsilon}>0$ and $v_{\varepsilon}>0$ in $\overline{\Omega} \times\left[0, T_{\max, \varepsilon}\right)$, that $\left(u_{\varepsilon}, v_{\varepsilon}\right)$ solves \eqref{sys-regul} in the classical sense, and that
\begin{align}\label{-2.7}
if \text{  } T_{\max, \varepsilon}<\infty, \quad \text { then } \quad \limsup _{t \nearrow T_{\max, \varepsilon}}\left\|u_{\varepsilon}(t)\right\|_{L^{\infty}(\Omega)}=\infty.
\end{align}
In addition, this solution satisfies
\begin{align}\label{-2.8}
\int_{\Omega} u_{0 \varepsilon} \leqslant \int_{\Omega} u_{\varepsilon}(t) \leqslant \int_{\Omega} (u_{0}+ \varepsilon)+ \int_{\Omega} v_{0}, \quad \text { for all } t \in\left(0, T_{\max, \varepsilon}\right)
\end{align}
and
\begin{align}\label{-2.9}
\left\|v_{\varepsilon}(t)\right\|_{L^{\infty}(\Omega)} \leqslant \|v_0\|_{L^{\infty}(\Omega)}, \quad \text { for all } t \in\left(0, T_{\max, \varepsilon}\right)
\end{align}
as well as
\begin{align}\label{-2.10}
\int_{0}^{T_{\max, \varepsilon}}\!\!\! \int_{\Omega} u_{\varepsilon} v_{\varepsilon} \leqslant \int_{\Omega} v_0.
\end{align}
\end{lem}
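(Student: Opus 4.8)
The plan is to produce the solution of the regularized problem \eqref{sys-regul} by standard quasilinear parabolic theory while the state stays in the region of positivity, and then to read off the auxiliary bounds \eqref{-2.8}--\eqref{-2.10} by testing the two equations against suitable functions. First I would observe that, for the unknown $w=(u_\varepsilon,v_\varepsilon)$, the system \eqref{sys-regul} has the divergence form $\partial_t w=\nabla\cdot\bigl(A(w)\nabla w\bigr)+f(w)$ with diffusion matrix $A(w)=\left(\begin{smallmatrix} u_\varepsilon^{l-1}v_\varepsilon & -u_\varepsilon^{l} v_\varepsilon \\ 0 & 1 \end{smallmatrix}\right)$, whose eigenvalues $u_\varepsilon^{l-1}v_\varepsilon$ and $1$ are strictly positive whenever $u_\varepsilon,v_\varepsilon>0$. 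Since \eqref{assIniVal} gives $u_{0\varepsilon}=u_0+\varepsilon\geqslant\varepsilon>0$ and $v_0>0$ on $\overline{\Omega}$, continuity forces $u_\varepsilon\geqslant\varepsilon/2$ on a short initial interval, so the problem is genuinely normally parabolic there; Amann's theory for quasilinear parabolic systems with Neumann data then yields a unique maximal classical solution with the regularity \eqref{-2.6} on some $[0,T_{\max,\varepsilon})$, together with an abstract continuation alternative.

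Next I would establish positivity and convert that abstract alternative into \eqref{-2.7}. For $v_\varepsilon$, the second equation is $v_{\varepsilon t}-\Delta v_\varepsilon+u_\varepsilon v_\varepsilon=0$ with a bounded nonnegative zeroth-order coefficient, so the maximum principle gives $v_\varepsilon>0$ as well as the bound \eqref{-2.9}; comparison with the spatially constant subsolution $(\min_{\overline{\Omega}}v_0)\,e^{-\|u_\varepsilon\|_{L^\infty}t}$ keeps $v_\varepsilon$ bounded below by a positive constant on every compact time interval on which $u_\varepsilon$ is bounded. The main point is then a bootstrap: if $\|u_\varepsilon(t)\|_{L^\infty}$ stayed bounded as $t\nearrow T_{\max,\varepsilon}<\infty$, then $v_\varepsilon$ would lie in a fixed positive range, the principal part would remain uniformly elliptic with bounded coefficients, parabolic Schauder and $L^p$ estimates would control every norm entering the continuation alternative, and the solution could be continued past $T_{\max,\varepsilon}$, a contradiction establishing \eqref{-2.7}.

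Finally I would derive the integral relations. Integrating the first equation of \eqref{sys-regul} over $\Omega$ and invoking the no-flux boundary condition makes the two divergence terms vanish, leaving $\frac{d}{dt}\int_\Omega u_\varepsilon=\int_\Omega u_\varepsilon v_\varepsilon\geqslant0$; the same computation for the second equation gives $\frac{d}{dt}\int_\Omega v_\varepsilon=-\int_\Omega u_\varepsilon v_\varepsilon$. Monotonicity of $t\mapsto\int_\Omega u_\varepsilon$ yields the lower bound in \eqref{-2.8}, while adding the two identities shows that $\int_\Omega(u_\varepsilon+v_\varepsilon)$ is conserved and hence, since $v_\varepsilon\geqslant0$, gives the upper bound in \eqref{-2.8}. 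Integrating the $v_\varepsilon$-identity in time and discarding the nonnegative term $\int_\Omega v_\varepsilon(T)$ produces $\int_0^T\!\int_\Omega u_\varepsilon v_\varepsilon=\int_\Omega v_0-\int_\Omega v_\varepsilon(T)\leqslant\int_\Omega v_0$, and letting $T\nearrow T_{\max,\varepsilon}$ gives \eqref{-2.10}.

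I expect the genuinely delicate step to be the reduction of the abstract continuation criterion to the clean $L^\infty$-criterion \eqref{-2.7}, since only the initial data, and not the diffusion coefficient $u_\varepsilon^{l-1}v_\varepsilon$, are regularized in \eqref{sys-regul}. The argument hinges on showing that control of $\|u_\varepsilon\|_{L^\infty}$ together with the universal bound \eqref{-2.9} suffices to preclude \emph{every} blow-up mechanism, including degeneration of the principal part as $u_\varepsilon\to 0$; maintaining a strictly positive lower bound for $u_\varepsilon$ on compact time intervals is exactly where the doubly degenerate structure re-enters despite the $\varepsilon$-shift, and this is the technical heart of the lemma. By comparison, the integral bounds \eqref{-2.8}--\eqref{-2.10} are routine.
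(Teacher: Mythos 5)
Your proposal is correct and takes essentially the same approach as the paper: the paper's own proof is a single sentence delegating existence, extensibility and the basic properties to well-established methods via the citation \cite[Lemma 2.1]{2022-NARWA-Winkler}, and what you spell out --- quasilinear (Amann-type) local existence while $u_\varepsilon, v_\varepsilon>0$, maximum-principle arguments for the positivity of $v_\varepsilon$ and for \eqref{-2.9}, and the integrated identities $\frac{d}{dt}\int_\Omega u_\varepsilon=\int_\Omega u_\varepsilon v_\varepsilon$ and $\frac{d}{dt}\int_\Omega v_\varepsilon=-\int_\Omega u_\varepsilon v_\varepsilon$ yielding \eqref{-2.8} and \eqref{-2.10} --- is exactly that standard route. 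The one step you rightly identify as the technical heart but leave unproved (a positive lower bound for $u_\varepsilon$ on compact time intervals, so that the abstract continuation alternative reduces to the $L^\infty$ criterion \eqref{-2.7}) is likewise not carried out in the paper; it is part of what is delegated to the cited reference.
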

\begin{proof}
Since the initial data of \eqref{sys-regul} is strictly positive in $\overline{\Omega}$, the statement on existence, extensibility
and basic properties can be proved according to well-established approaches, see \cite[Lemma 2.1]{2022-NARWA-Winkler} for details. 
\end{proof}

Now, we derive below two functional inequalities, which play key roles in the two dimensional setting.  

%%%%%%%%%%%%%%%%%%%%%%%%%%%%%%%%%%%%%%%%%%%%%%%%%%%%%%%%%%%%%%
\begin{lem}\label{lemma-3.4}
Let $\Omega \subset \mathbb{R}^2$ be a bounded domain with smooth boundary and $p \geqslant 1$. For any $\varphi, \psi \in C^1(\overline{\Omega})$ satisfying $\varphi,\psi>0$ in $\overline{\Omega}$, there holds
\begin{align}\label{eq-6.1}
\int_{\Omega} \varphi^{p +1} \psi 
  \leqslant  c \left\{\int_{\Omega} \frac{\varphi}{\psi}|\nabla \psi|^2\
    +\int_{\Omega} \frac{\psi}{\varphi}|\nabla \varphi|^2
    +\int_{\Omega} \varphi \psi  \right\}\cdot \int_{\Omega} \varphi^p
\end{align}
for some constant $c=c(p)>0$.
\end{lem}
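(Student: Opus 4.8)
The plan is to establish \eqref{eq-6.1} by an $L^2$-type interpolation argument tailored to the planar setting, exploiting the Gagliardo--Nirenberg inequality together with a clever choice of test function. The key observation is that the quantity $\int_\Omega \varphi^{p+1}\psi$ can be rewritten in terms of $w := \varphi^{(p+1)/2}\psi^{1/2}$, so that $\int_\Omega \varphi^{p+1}\psi = \int_\Omega w^2 = \|w\|_{L^2(\Omega)}^2$. The goal is then to bound $\|w\|_{L^2(\Omega)}^2$ by a product of an $H^1$-type Dirichlet energy of a related function and a low-order norm, which is exactly what the two-dimensional Gagliardo--Nirenberg inequality $\|f\|_{L^2}^2 \leqslant c\,\|\nabla f\|_{L^1}\|f\|_{L^1} + c\,\|f\|_{L^1}^2$ (valid in $\mathbb{R}^2$) provides.

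Concretely, I would apply this $L^2$--$L^1$ Gagliardo--Nirenberg--Sobolev inequality to the function $f := \varphi^{p+1}\psi$ (rather than to $w$), giving
\begin{align*}
\int_\Omega \varphi^{p+1}\psi = \|f\|_{L^1(\Omega)}
  \leqslant c\,\Bigl(\|\nabla f\|_{L^1(\Omega)} + \|f\|_{L^1(\Omega)}\Bigr)^{1/2}\,\|f\|_{L^1(\Omega)}^{1/2}
\end{align*}
after an appropriate normalization; more usefully, the version that directly yields a product structure is to estimate $\int_\Omega \varphi^{p+1}\psi$ against $\|\nabla(\varphi^{(p+1)/2}\psi^{1/2})\|_{L^1}$ times $\|\varphi^{(p+1)/2}\psi^{1/2}\|_{L^1}$. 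The gradient of $\varphi^{(p+1)/2}\psi^{1/2}$ expands by the product and chain rules into terms proportional to $\varphi^{(p-1)/2}\psi^{1/2}\nabla\varphi$ and $\varphi^{(p+1)/2}\psi^{-1/2}\nabla\psi$. Taking $L^1$ norms and invoking the Cauchy--Schwarz inequality on each term, one separates each into a factor matching the Dirichlet-type integrals $\int_\Omega \frac{\psi}{\varphi}|\nabla\varphi|^2$ and $\int_\Omega \frac{\varphi}{\psi}|\nabla\psi|^2$ appearing in \eqref{eq-6.1}, paired against $\int_\Omega \varphi^{p}$. This is precisely how the weighted gradient integrals on the right-hand side arise, and the $\int_\Omega\varphi\psi$ term absorbs the lower-order contribution from $\|f\|_{L^1}$.

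The main obstacle I anticipate is bookkeeping the powers of $\varphi$ and $\psi$ so that, after Cauchy--Schwarz, every factor lands exactly on one of the three prescribed right-hand side integrals times $\int_\Omega\varphi^p$, with no residual weights. For instance, writing $\varphi^{p+1}\psi = \varphi^{(p-1)/2}\psi^{1/2}\cdot\varphi\cdot\varphi^{(p+1)/2}\psi^{1/2}$ and grouping carefully, the Cauchy--Schwarz split must produce $\bigl(\int_\Omega \frac{\psi}{\varphi}|\nabla\varphi|^2\bigr)^{1/2}$ paired with a factor whose square integrates to $\int_\Omega\varphi^p$, and similarly for the $\psi$-gradient term. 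Ensuring these exponents balance under the constraint $n=2$ (which fixes the interpolation exponents in Gagliardo--Nirenberg) is the delicate computational heart of the argument; the positivity hypotheses $\varphi,\psi>0$ in $\overline\Omega$ are essential here to make the negative powers $\psi^{-1/2}$ and the division by $\varphi$ meaningful.

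A cleaner alternative, which I would pursue in parallel, is to set $a := \varphi\psi$ and note $\int_\Omega \varphi^{p+1}\psi = \int_\Omega \varphi^p\cdot(\varphi\psi)$, then apply the Gagliardo--Nirenberg inequality in the form $\|g\|_{L^2}^2\leqslant c\|g\|_{W^{1,1}}\|g\|_{L^1}$ to a suitable square-root substitution so that the output is literally a product of a sum of the three integrals with $\int_\Omega\varphi^p$. Either route reduces the proof to the standard two-dimensional embedding $W^{1,1}(\Omega)\hookrightarrow L^2(\Omega)$ combined with elementary algebra of exponents, so I expect the proof to be short once the substitution is fixed, with the only real care needed in the exponent arithmetic described above.
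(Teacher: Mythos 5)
Your proposal is correct and takes essentially the same approach as the paper's proof: substitute $\rho=\varphi^{\frac{p+1}{2}}\psi^{\frac{1}{2}}$, apply the two-dimensional $W^{1,1}(\Omega)\hookrightarrow L^{2}(\Omega)$ embedding (Gagliardo--Nirenberg) to $\rho$, expand $\nabla\rho$ into the terms $\varphi^{\frac{p-1}{2}}\psi^{\frac{1}{2}}\nabla\varphi$ and $\varphi^{\frac{p+1}{2}}\psi^{-\frac{1}{2}}\nabla\psi$, split each by Cauchy--Schwarz against $\int_{\Omega}\varphi^{p}$, and control the lower-order term by H\"older to produce $\int_{\Omega}\varphi\psi\cdot\int_{\Omega}\varphi^{p}$. (Your brief detour of applying the inequality directly to $f=\varphi^{p+1}\psi$ would be circular, but you correctly discard it in favor of the square-root substitution, which is exactly the paper's argument.)
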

\begin{proof}
Recall the Sobolev imbedding inequality in $\Omega \subset \mathbb{R}^2$,
\begin{align}\label{eq-6.2}
\int_{\Omega} \rho^2 \leqslant C\|\nabla \rho\|_{L^1(\Omega)}^2 + C\|\rho\|_{L^{1}(\Omega)}^2, \quad 
  \rho \in W^{1,1}(\Omega).
\end{align}
For any $\varphi, \psi \in C^1(\overline{\Omega})$ satisfying $\varphi,\psi>0$ in $\overline{\Omega}$, we apply \eqref{eq-6.2} with $\rho=\varphi^{\frac{p +1}{2}} \psi^{\frac{1}{2}}$ to infer that
\begin{align}\label{eq-6.3}
\int_{\Omega} \varphi^{p +1} \psi \leqslant 
&\ C\left\{\int_{\Omega}\left|\frac{p+1}{2}\varphi^{\frac{p -1}{2}}\psi^{\frac{1}{2}}\nabla\varphi
    +\frac{1}{2}\varphi^{\frac{p +1}{2}} \psi^{-\frac{1}{2}} \nabla \psi\right|\right\}^2
 +C\cdot\left\{\int_{\Omega}\varphi^{\frac{p+1}{2}} \psi^{\frac{1}{2}}\right\}^{2}\nonumber\\
\leqslant &\ \frac{(p+1)^2 C}{2} \left\{\int_{\Omega} \varphi^{\frac{p -1}{2}} \psi^{\frac{1}{2}}|\nabla \varphi|\right\}^2
  +\frac{C}{2}\left\{\int_{\Omega} \varphi^{\frac{p +1}{2}} \psi^{-\frac{1}{2}}|\nabla \psi|\right\}^{2} \nonumber\\
&\ +C\left\{\int_{\Omega}\varphi^{\frac{p +1}{2}} \psi^{\frac{1}{2}}\right\}^{2}.
\end{align}
By Cauchy-Schwarz inequality, we have
\begin{align*}
\left\{\int_{\Omega} \varphi^{\frac{p -1}{2}} \psi^{\frac{1}{2}}|\nabla \varphi|\right\}^2 =\left\{\int_{\Omega} \varphi^{\frac{p}{2}}\cdot \frac{\psi^{\frac{1}{2}}}{\varphi^{\frac{1}{2}}} |\nabla \varphi|\right\}^2 \leqslant \int_{\Omega} \varphi^p  \cdot \int_{\Omega} \frac{\psi}{\varphi}|\nabla \varphi|^2
\end{align*}
and
\begin{align*}
\left\{\int_{\Omega} \varphi^{\frac{p +1}{2}} \psi^{-\frac{1}{2}}|\nabla \psi|\right\}^2 =
\left\{\int_{\Omega} \varphi^{\frac{p}{2}}\cdot \frac{\varphi^{\frac{1}{2}}}{\psi^{\frac{1}{2}}} |\nabla \psi|\right\}^2
\leqslant \int_{\Omega} \varphi^p  \cdot \int_{\Omega} \frac{\varphi}{\psi}|\nabla \psi|^2.
\end{align*}
Based on H\"{o}lder's inequality, we see that
\begin{align*}
\left\{\int_{\Omega}\varphi^{\frac{p +1}{2}} \psi^{\frac{1}{2}}\right\}^{2} & 
=\left\{\int_{\Omega} \varphi^{\frac{p }{2}}  \cdot (\varphi \psi)^{\frac{1}{2}}\right\}^{2} \leqslant \int_{\Omega} \varphi^p  \cdot \int_{\Omega} \varphi \psi.
\end{align*}
Therefore, \eqref{eq-6.1} results from \eqref{eq-6.3} if we let $c=c(p)=\max \left\{\frac{(p +1)^2 C}{2}, C\right\}$.
\end{proof}

%%%%%%%%%%%%%%%%%%%%%%%%%%%%%%%%%%%%%%%%%%%%%%%%%%%%%%%%%%%%%%
\begin{lem}\label{lemma-3.5}
Let $\Omega \subset \mathbb{R}^2$ be a bounded domain with smooth boundary and $p \geqslant 1$. For each $\eta>0$ and any $\varphi, \psi \in C^1(\overline{\Omega})$ satisfying $\varphi,\psi>0$ in $\overline{\Omega}$, there holds
\begin{align}\label{eq-6.4}
\int_\Omega \varphi^{p+1} \psi |\nabla \psi|^2
\leqslant & \eta \int_\Omega \varphi^{p-1} \psi|\nabla \varphi|^2
  + c \left\{\left\|\psi \right\|_{L^{\infty}(\Omega)}+\frac{\left\|\psi \right\|^3_{L^{\infty}(\Omega)}}{\eta}\right\} 
    \cdot \int_{\Omega} \varphi^{p+1} \psi  \cdot \int_\Omega \frac{|\nabla \psi|^4}{\psi^3} \nonumber\\
& + c \left\|\psi \right\|^2_{L^{\infty}(\Omega)} \cdot\left\{\int_\Omega \varphi\right\}^{2 p+1} 
    \cdot \int_\Omega \frac{|\nabla \psi|^4}{\psi^3} 
    + c \left\|\psi \right\|^2_{L^{\infty}(\Omega)} \cdot \int_{\Omega}\varphi \psi
\end{align}
for some constant $c=c(p)>0$.
\end{lem}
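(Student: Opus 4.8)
The plan is to mirror the strategy behind Lemma \ref{lemma-3.4}, reducing the estimate to the two–dimensional embedding \eqref{eq-6.2}, while treating the extra weight $|\nabla\psi|^2$ and the quartic quantity $\int_\Omega |\nabla\psi|^4/\psi^3$ by elementary Young and Cauchy--Schwarz manipulations. A guiding constraint is that, since the hypotheses involve only $\varphi,\psi$ and their \emph{first} derivatives and the right--hand side contains no second derivatives of $\psi$, the factor $\int_\Omega |\nabla\psi|^4/\psi^3$ must be generated purely algebraically, by peeling $|\nabla\psi|^2$ off the integrand, and never by differentiating $|\nabla\psi|$. This rules out the naive choice $\rho=\varphi^{\frac{p+1}{2}}\psi^{\frac12}|\nabla\psi|$ in \eqref{eq-6.2}, whose gradient would illegitimately introduce $|D^2\psi|$.

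First I would split the integrand pointwise through Young's inequality in the form $\varphi^{p+1}\psi|\nabla\psi|^2=\bigl(\varphi^{p+1}\psi^{\frac52}\bigr)\cdot\bigl(\psi^{-\frac32}|\nabla\psi|^2\bigr)$, which after integration gives, for any $\delta>0$,
\begin{align*}
\int_\Omega \varphi^{p+1}\psi|\nabla\psi|^2
\leqslant \frac{\delta}{2}\int_\Omega \varphi^{2p+2}\psi^{5}
 +\frac{1}{2\delta}\int_\Omega \frac{|\nabla\psi|^4}{\psi^3}.
\end{align*}
This already isolates the quartic energy as an independent first--power factor. The companion term is returned to first order in $\psi$ via $\psi^{5}\leqslant\|\psi\|_{L^\infty(\Omega)}^{4}\psi$ and then lowered in its $\varphi$--power by invoking Lemma \ref{lemma-3.4} (with exponent $2p+1$ in place of $p$), so that $\int_\Omega\varphi^{2p+2}\psi$ is dominated by an energy bracket $\int_\Omega\frac{\varphi}{\psi}|\nabla\psi|^2+\int_\Omega\frac{\psi}{\varphi}|\nabla\varphi|^2+\int_\Omega\varphi\psi$ multiplied by a mass factor. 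The residual \emph{quadratic} $\psi$--gradient term $\int_\Omega\frac{\varphi}{\psi}|\nabla\psi|^2$ produced here is itself fed back into a second Young step, $\frac{\varphi}{\psi}|\nabla\psi|^2=(\varphi\psi^{\frac12})(\psi^{-\frac32}|\nabla\psi|^2)$, converting it into more of the quartic $\int_\Omega|\nabla\psi|^4/\psi^3$ plus a bottom--order remainder that is absorbed into the term $\|\psi\|_{L^\infty(\Omega)}^2\int_\Omega\varphi\psi$. The $\varphi$--gradient contribution $\int_\Omega\frac{\psi}{\varphi}|\nabla\varphi|^2$ is reshaped into $\int_\Omega\varphi^{p-1}\psi|\nabla\varphi|^2$ and split off with the tunable small weight $\eta$ via a final Young inequality, which is precisely where the coefficient $\|\psi\|_{L^\infty(\Omega)}^3/\eta$ in the statement arises.

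The main obstacle is the multiplicative bookkeeping: one must tune the successive Young parameters and the exponent in Lemma \ref{lemma-3.4} so that, after all reductions, the quartic energy survives at exactly first power while being paired with the two distinct mass weights $\int_\Omega\varphi^{p+1}\psi$ and $\bigl\{\int_\Omega\varphi\bigr\}^{2p+1}$, and so that the powers of $\|\psi\|_{L^\infty(\Omega)}$ and the $\eta$--dependence come out as asserted. In particular, realizing the pure $L^1$--mass power $\bigl\{\int_\Omega\varphi\bigr\}^{2p+1}$ (rather than a higher $L^q$ norm of $\varphi$) is the delicate point, and I expect it to require feeding the high $\varphi$--power through \eqref{eq-6.2} in the interpolated form that trades a gradient norm against the conserved $L^1$ mass, exactly as in the mass bracket of Lemma \ref{lemma-3.4}. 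Once these parameters are fixed, collecting the lower--order leftovers into $\|\psi\|_{L^\infty(\Omega)}^2\int_\Omega\varphi\psi$ and renaming the constants yields \eqref{eq-6.4}.
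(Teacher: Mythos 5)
Your guiding instinct (generate $\int_\Omega|\nabla\psi|^4/\psi^3$ purely algebraically and never differentiate $|\nabla\psi|$) matches the paper, but the machinery you propose does not close, for three concrete reasons. First, replacing Cauchy--Schwarz by Young with a fixed parameter $\delta$ leaves the standalone term $\frac{1}{2\delta}\int_\Omega|\nabla\psi|^4/\psi^3$ on the right-hand side; the target inequality \eqref{eq-6.4} contains no free-floating quartic term (it always appears multiplied by $\int_\Omega\varphi^{p+1}\psi$ or $\left\{\int_\Omega\varphi\right\}^{2p+1}$, both of which can be made arbitrarily small while $\int_\Omega|\nabla\psi|^4/\psi^3$ stays of order one), so this term can never be absorbed. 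Second, invoking Lemma \ref{lemma-3.4} with $2p+1$ in place of $p$ produces the mass factor $\int_\Omega\varphi^{2p+1}$, \emph{not} $\left\{\int_\Omega\varphi\right\}^{2p+1}$; these are essentially different (H\"older gives $\left\{\int_\Omega\varphi\right\}^{2p+1}\leqslant|\Omega|^{2p}\int_\Omega\varphi^{2p+1}$, i.e.\ the comparison you need goes the wrong way), and in the intended application only the $L^1$ norm of $u_\varepsilon$ is conserved, so this substitution destroys exactly the feature the lemma exists to provide --- you flag this as ``the delicate point,'' but the tool you name is the step that fails. Third, the bracket produced by Lemma \ref{lemma-3.4} contains $\int_\Omega\frac{\psi}{\varphi}|\nabla\varphi|^2$, and no Young-type manipulation can dominate this by $\eta\int_\Omega\varphi^{p-1}\psi|\nabla\varphi|^2$: pointwise this would require $\varphi^{-1}\lesssim\varphi^{p-1}$, which fails as $\varphi\to0$ (only strict positivity, not a lower bound, is assumed, and $c$ may depend only on $p$); splitting $|\nabla\varphi|^2$ by Young only makes the negative power of $\varphi$ in the complementary factor worse. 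Similarly, the remainder $\varphi^2\psi$ from your second Young step is not dominated by $\varphi\psi$.

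The paper's proof avoids all three traps by a different order of operations: H\"older first, $\int_\Omega\varphi^{p+1}\psi|\nabla\psi|^2\leqslant\left\{\int_\Omega|\nabla\psi|^4/\psi^3\right\}^{1/2}\cdot\left\{\int_\Omega\varphi^{2(p+1)}\psi^5\right\}^{1/2}$, so the quartic energy enters only at half power and stays paired multiplicatively; then the modified Sobolev inequality \eqref{eq-6.5}, $\|\rho\|_{L^2(\Omega)}\leqslant C\|\nabla\rho\|_{L^1(\Omega)}+C\|\rho\|_{L^{1/(p+1)}(\Omega)}$, applied to $\rho=\varphi^{p+1}\psi^{5/2}$. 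Here $\nabla\rho$ contains $\varphi^{p}|\nabla\varphi|$ --- not $|\nabla\varphi|/\varphi$ --- so H\"older and Young yield exactly $\frac{\eta}{2}\int_\Omega\varphi^{p-1}\psi|\nabla\varphi|^2$; the $\psi$-gradient part of $\nabla\rho$ reproduces $\frac12\int_\Omega\varphi^{p+1}\psi|\nabla\psi|^2$, which is absorbed into the left-hand side; and the sub-unitary norm $\|\rho\|_{L^{1/(p+1)}(\Omega)}=\left\{\int_\Omega\varphi\,\psi^{5/(2(p+1))}\right\}^{p+1}$ is precisely what delivers the pure $L^1$ mass power $\left\{\int_\Omega\varphi\right\}^{(2p+1)/2}\left\{\int_\Omega\varphi\psi\right\}^{1/2}$. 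To repair your outline you would have to replace the appeal to Lemma \ref{lemma-3.4} by this low-norm Sobolev step; as written, the proposal has a genuine gap.
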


\begin{proof}
Applying \eqref{eq-6.2}, we easily obtain
\begin{align}\label{eq-6.5}
\|\rho\|_{L^2(\Omega)} 
\leqslant C\|\nabla \rho\|_{L^1(\Omega)}+C\|\rho\|_{L^{\frac{1}{p+1}}(\Omega)}, \quad \rho \in W^{1,1}(\Omega)
\end{align}
for some $C=C(p)>0$. 

We use H\"{o}lder's inequality to yield
\begin{align*}%\label{-3.18}
\int_\Omega \varphi^{p+1} \psi |\nabla \psi|^2 \leqslant\left\{\int_\Omega \frac{|\nabla \psi|^4}{\psi^3}\right\}^{\frac{1}{2}} \cdot\left\{\int_\Omega \varphi^{2 (p+1)} \psi^{5}\right\}^{\frac{1}{2}}.
\end{align*}
For any $\varphi, \psi \in C^1(\overline{\Omega})$ satisfying $\varphi,\psi>0$ in $\overline{\Omega}$, we apply \eqref{eq-6.5} with $\rho=\varphi^{p+1} \psi^{\frac{5}{2}}$ to infer that
\begin{align*}%\label{-3.19}
\left\{\int_\Omega \varphi^{2 (p+1)} \psi^{5}\right\}^{\frac{1}{2}} 
= &\ \left\|\varphi^{p+1} \psi^{\frac{5}{2}}\right\|_{L^2(\Omega)} \nonumber\\
\leqslant &\  C \int_\Omega\left|(p+1) \varphi^{p} \psi^{\frac{5}{2}} \nabla \varphi
    +\frac{5}{2} \varphi^{p+1}\psi^\frac{3}{2}\nabla \psi\right|
    +C\left\{\int_\Omega \varphi \psi^{\frac{5}{2 (p+1)}}\right\}^{p+1} \nonumber\\
= &\ (p+1) C \int_\Omega \varphi^{p} \psi^\frac{5}{2}|\nabla \varphi|
     +\frac{5 C}{2} \int_\Omega \varphi^{p+1}\psi^\frac{3}{2}|\nabla \psi| \nonumber\\
  &\ + C \left\{\int_\Omega \varphi \psi^{\frac{5}{2 (p+1)}}\right\}^{p+1}.
\end{align*}
Applying H\"{o}lder's inequality, we show that
\begin{align*}
\int_G \varphi^{p} \psi^\frac{5}{2}|\nabla \varphi| 
\leqslant \left\|\psi\right\|_{L^{\infty}(\Omega)}^\frac{3}{2} \cdot\left\{\int_\Omega \varphi^{p+1}\psi\right\}^{\frac{1}{2}} \cdot\left\{\int_\Omega \varphi^{p-1} \psi|\nabla \varphi|^2\right\}^{\frac{1}{2}}
\end{align*}
and
\begin{align*}
\int_\Omega \varphi^{p+1}\psi^\frac{3}{2}|\nabla \psi| 
\leqslant \left\|\psi\right\|_{L^{\infty}(\Omega)}^\frac{1}{2} \cdot\left\{\int_\Omega \varphi^{p+1}\psi\right\}^{\frac{1}{2}} \cdot\left\{\int_\Omega \varphi^{p+1} \psi|\nabla \psi|^2\right\}^{\frac{1}{2}}
\end{align*}
as well as
\begin{align*}
\left\{\int_\Omega \varphi \psi^{\frac{5}{2 (p+1)}}\right\}^{p+1} 
& \leqslant \left\|\psi \right\|_{L^{\infty}(\Omega)}^2 \cdot\left\{\int_\Omega\varphi^{\frac{2 p+1}{2 (p+1)}}\cdot(\varphi \psi)^{\frac{1}{2 (p+1)}}\right\}^{p+1} \\
& \leqslant  \left\|\psi \right\|_{L^{\infty}(\Omega)}^2 \cdot\left\{\int_\Omega \varphi\right\}^{\frac{2 p+1}{2}} \cdot\left\{\int_\Omega \varphi \psi\right\}^{\frac{1}{2}}.
\end{align*}
Therefore, for all $\eta>0$, it follows that
\begin{align*}
\int_\Omega \varphi^{p+1} \psi |\nabla \psi|^2 
\leqslant & (p+1) C\left\|\psi\right\|_{L^{\infty}(\Omega)}^\frac{3}{2} \cdot\left\{\int_\Omega \frac{|\nabla 
    \psi|^4}{\psi^3}\right\}^{\frac{1}{2}} 
    \cdot\left\{\int_\Omega \varphi^{p+1}\psi\right\}^{\frac{1}{2}} 
    \cdot\left\{\int_\Omega \varphi^{p-1} \psi|\nabla \varphi|^2\right\}^{\frac{1}{2}}\\
& +\frac{5 C\left\|\psi\right\|_{L^{\infty}(\Omega)}^\frac{1}{2}}{2} 
    \cdot \left\{\int_\Omega \frac{|\nabla \psi|^4}{\psi^3}\right\}^{\frac{1}{2}}
    \cdot\left\{\int_\Omega \varphi^{p+1}\psi\right\}^{\frac{1}{2}} 
    \cdot\left\{\int_\Omega \varphi^{p+1} \psi|\nabla \psi|^2\right\}^{\frac{1}{2}} \\
& + C\left\|\psi \right\|_{L^{\infty}(\Omega)}^2 \cdot\left\{\int_\Omega \frac{|\nabla \psi|^4}{\psi^3}\right\}^{\frac{1}{2}} 
    \cdot\left\{\int_\Omega \varphi\right\}^{\frac{2 p+1}{2}} \cdot\left\{\int_\Omega \varphi \psi\right\}^{\frac{1}{2}} \\
\leqslant &   \frac{\eta}{2} \int_\Omega \varphi^{p-1} \psi|\nabla \varphi|^2
  +\frac{(p+1)^2 C^2\left\|\psi\right\|_{L^{\infty}(\Omega)}^3}{2 \eta} \cdot \int_\Omega \varphi^{p+1} \psi 
    \cdot \int_\Omega \frac{|\nabla \psi|^4}{\psi^3} \\
& + \frac{1}{2} \int_\Omega \varphi^{p+1} \psi |\nabla \psi|^2
  + \frac{25 C^2\left\|\psi\right\|_{L^{\infty}(\Omega)}}{8} 
  \cdot \int_\Omega \varphi^{p+1} \psi \cdot \int_\Omega \frac{|\nabla \psi|^4}{\psi^3}\\
& +C\left\|\psi \right\|_{L^{\infty}(\Omega)}^2 \cdot\left\{\int_\Omega \varphi\right\}^{2 p+1} 
  \cdot \int_\Omega \frac{|\nabla \psi|^4}{\psi^3}
  +C\left\|\psi \right\|_{L^{\infty}(\Omega)}^2 \int_\Omega \varphi \psi
\end{align*}
which implies \eqref{eq-6.4} if we set $c=c(p)=\max \left\{2C, (p+1)^2 C^2, \frac{25 C^2}{4}\right\}$.
\end{proof}

\section{Fundamental a Priori Estimates for \eqref{sys-regul}}\label{sect-3x}
In \cite{2021-TAMS-Winkler}, Winkler found a gradient-like structure of the system \eqref{sys-regul}. We use such a structure to derive a uniform estimate of the temporal-spatial integral of $\frac{v_{\varepsilon}}{u_{\varepsilon}}|\nabla  u_{\varepsilon }|^2$.

%%%%%%%%%%%%%%%%%%%%%%%%%%%%%%%%%%%%%%%%%%%%%%%%%%%%%%%%%%%%%%%%%%%%%%%%%%%%%
\begin{lem}\label{lem-1st-est}
Let $l >1$ and assume that \eqref{assIniVal} holds. 
Then we have 
\begin{align}\label{-3.5aa} 
\int_0^{T_{\max, \varepsilon}}\r \int_{\Omega} \frac{v_{\varepsilon}}{u_{\varepsilon}}|\nabla u_{\varepsilon }|^2  \leqslant C  \quad \text { for all } \varepsilon \in(0,1),
\end{align}
where $C$ is a positive constant depending on $\int_{\Omega} u_0$, $\int_{\Omega} v_0$ and $\int_{\Omega} |\nabla v_{0 }|^2$, but independent of $\varepsilon$.
\end{lem}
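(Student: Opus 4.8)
The plan is to make the gradient-like structure explicit by building a Lyapunov functional whose dissipation contains precisely $\int_\Omega \frac{v_\varepsilon}{u_\varepsilon}|\nabla u_\varepsilon|^2$. The guiding observation is that the flux in the first equation of \eqref{sys-regul} can be rewritten as $u_\varepsilon^{l-1}v_\varepsilon\nabla u_\varepsilon - u_\varepsilon^{l}v_\varepsilon\nabla v_\varepsilon = u_\varepsilon^{l}v_\varepsilon\nabla(\ln u_\varepsilon - v_\varepsilon)$, which suggests testing the $u_\varepsilon$-equation against a power of $u_\varepsilon$ tuned so that the self-diffusion term collapses to $\frac{v_\varepsilon}{u_\varepsilon}|\nabla u_\varepsilon|^2$. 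Concretely, I would let $\Phi$ be determined by $\Phi''(s)=s^{-l}$, that is $\Phi(s)=\frac{s^{2-l}}{(1-l)(2-l)}$ when $l\neq 2$ and $\Phi(s)=-\ln s$ when $l=2$, and set $\mathcal{F}(t):=\int_\Omega \Phi(u_\varepsilon(\cdot,t))+\frac12\int_\Omega|\nabla v_\varepsilon(\cdot,t)|^2$.

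First I would differentiate $\int_\Omega\Phi(u_\varepsilon)$ along \eqref{sys-regul}. Integrating the two divergence terms by parts (all boundary contributions vanish since $\partial_\nu u_\varepsilon=\partial_\nu v_\varepsilon=0$) and using $\Phi''(s)=s^{-l}$ yields
\[
\frac{d}{dt}\int_\Omega\Phi(u_\varepsilon)=-\int_\Omega\frac{v_\varepsilon}{u_\varepsilon}|\nabla u_\varepsilon|^2+\int_\Omega v_\varepsilon\nabla u_\varepsilon\cdot\nabla v_\varepsilon+\frac{1}{1-l}\int_\Omega u_\varepsilon^{2-l}v_\varepsilon.
\]
Then, differentiating $\frac12\int_\Omega|\nabla v_\varepsilon|^2$, integrating by parts once and inserting $v_{\varepsilon t}=\Delta v_\varepsilon-u_\varepsilon v_\varepsilon$, I obtain
\[
\frac{d}{dt}\,\frac12\int_\Omega|\nabla v_\varepsilon|^2=-\int_\Omega(\Delta v_\varepsilon)^2-\int_\Omega v_\varepsilon\nabla u_\varepsilon\cdot\nabla v_\varepsilon-\int_\Omega u_\varepsilon|\nabla v_\varepsilon|^2.
\]
The decisive point is that the two cross terms $\pm\int_\Omega v_\varepsilon\nabla u_\varepsilon\cdot\nabla v_\varepsilon$ cancel upon addition. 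Since $l>1$ forces $\frac{1}{1-l}<0$, every surviving term on the right is nonpositive, so I reach the clean inequality
\[
\frac{d}{dt}\mathcal{F}(t)+\int_\Omega\frac{v_\varepsilon}{u_\varepsilon}|\nabla u_\varepsilon|^2\le 0,\qquad t\in(0,T_{\max,\varepsilon}).
\]
Note that, unlike the higher-order estimates used later, no convexity of $\Omega$ is required here, because the Neumann condition alone gives $\int_\Omega\nabla v_\varepsilon\cdot\nabla\Delta v_\varepsilon=-\int_\Omega(\Delta v_\varepsilon)^2$.

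Integrating this inequality over $(0,T)$ reduces the lemma to two $\varepsilon$- and $T$-uniform bounds: an upper bound on $\mathcal{F}(0)$ and a lower bound on $\mathcal{F}(T)$. For $\mathcal{F}(0)=\int_\Omega\Phi(u_0+\varepsilon)+\frac12\int_\Omega|\nabla v_0|^2$ I would treat the three regimes separately: when $1<l<2$ the term $\int_\Omega\Phi(u_0+\varepsilon)$ is negative and controlled by $\|u_0\|_{L^\infty(\Omega)}$; when $l=2$ it equals $-\int_\Omega\ln(u_0+\varepsilon)\le-\int_\Omega\ln u_0$, finite precisely by the hypothesis $\int_\Omega\ln u_0>-\infty$; and when $l>2$ the monotonicity $(u_0+\varepsilon)^{2-l}\le u_0^{2-l}$ combined with $\int_\Omega u_0^{2-l}<\infty$ does the job. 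For the lower bound, since $\mathcal{F}(T)\ge\int_\Omega\Phi(u_\varepsilon(T))$, the case $l>2$ is immediate as $\Phi\ge 0$ there, while for $1<l\le 2$ I would invoke Jensen's (or Hölder's) inequality together with the uniform mass control \eqref{-2.8} to bound $\int_\Omega u_\varepsilon(T)^{2-l}$ (respectively $\int_\Omega\ln u_\varepsilon(T)$) from above, producing a lower bound on $\mathcal{F}(T)$ independent of $T$ and $\varepsilon$.

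The main obstacle is conceptual rather than computational: recognizing the exact pairing $\int_\Omega\Phi(u_\varepsilon)+\frac12\int_\Omega|\nabla v_\varepsilon|^2$ with $\Phi''(s)=s^{-l}$ that forces the cross terms to cancel and makes the reaction contribution sign-definite. Once the functional is identified, the only genuine care needed is the regime-dependent verification of the uniform bounds on $\mathcal{F}(0)$ and $\mathcal{F}(T)$, which is exactly where the three distinct initial-data conditions in \eqref{assIniVal} and the mass bound \eqref{-2.8} enter. A minor technical point is that $\Phi$ is singular at $s=0$ when $l\ge 2$; this is harmless because for each fixed $\varepsilon$ the regularized solution is strictly positive and smooth on compact time intervals, so $\mathcal{F}$ is finite and differentiable along the trajectory, and letting $T\nearrow T_{\max,\varepsilon}$ then yields \eqref{-3.5aa}.
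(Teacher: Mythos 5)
Your proposal is correct and is essentially the paper's own argument: the paper also couples $\frac{1}{2}\int_\Omega|\nabla v_\varepsilon|^2$ with the functional $\int_\Omega u_\varepsilon^{2-l}$ (sign-adjusted) for $l\neq 2$ and $-\int_\Omega \ln u_\varepsilon$ for $l=2$, which are exactly constant multiples of your $\int_\Omega\Phi(u_\varepsilon)$ with $\Phi''(s)=s^{-l}$, exploits the same cancellation of the cross terms $\pm\int_\Omega v_\varepsilon\nabla u_\varepsilon\cdot\nabla v_\varepsilon$, discards the same sign-definite reaction term, and closes with the same regime-dependent bounds on the functional at $t=0$ (via \eqref{assIniVal}) and at later times (via \eqref{-2.8}, $\ln\xi\leqslant\xi$, and positivity). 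The only difference is presentational: you package the three cases into one test function $\Phi'(u_\varepsilon)$, whereas the paper carries out the three cases separately from the start.
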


\begin{proof}
Multiplying the second equation of \eqref{sys-regul} by $-\Delta v$ and integrating over $\Omega$ yield that
\begin{align}\label{1006-1659}
\frac{1}{2} \frac{d}{d t} \int_{\Omega}|\nabla v_{\varepsilon}|^2+\int_{\Omega}|\Delta v_{\varepsilon}|^2 + \int_{\Omega}u_{\varepsilon} | \nabla v_{\varepsilon }|^2  =- \int_{\Omega}v_{\varepsilon} \nabla u_{\varepsilon }\cdot \nabla v_{\varepsilon }
\end{align}
for all $t \in(0, T_{\max, \varepsilon})$ and $\varepsilon \in(0,1)$.

Case I: $1<l<2$. We multiply the first equation in \eqref{sys-regul} by $u^{1-l}_{\varepsilon}$ and  integrate by parts to obtain
\begin{align}\label{1006-2240}
- \dt \int_{\Omega} u^{2-l}_{\varepsilon} 
= & -(2-l)\int_{\Omega} u^{1-l}_{\varepsilon} \cdot \left\{\nabla \cdot \left(u^{l-1}_{\varepsilon} v_{\varepsilon} \nabla u_{\varepsilon }-u^{l}_{\varepsilon} v_{\varepsilon} \nabla v_{\varepsilon }\right)+ u_{\varepsilon }  v_{\varepsilon }\right\} \nonumber\\
= &  -(2-l)(l-1)\int_{\Omega} \frac{v_{\varepsilon}}{u_{\varepsilon}}|\nabla u_{\varepsilon }|^2+(2-l)(l-1) \int_{\Omega} v_{\varepsilon} \nabla u_{\varepsilon }\cdot \nabla v_{\varepsilon }-(2-l)\int_{\Omega}u^{2-l}_{\varepsilon} v_{\varepsilon}\nonumber\\
\leqslant & -(2-l)(l-1)\int_{\Omega} \frac{v_{\varepsilon}}{u_{\varepsilon}}|\nabla u_{\varepsilon }|^2+(2-l)(l-1) \int_{\Omega} v_{\varepsilon} \nabla u_{\varepsilon }\cdot \nabla v_{\varepsilon }
\end{align}
for all $t \in(0, T_{\max, \varepsilon})$ and $\varepsilon \in(0,1)$. Combining \eqref{1006-1659} with \eqref{1006-2240}, we conclude that
\begin{align*}
\dt \left\{-\int_{\Omega} u^{2-l}_{\varepsilon} +  \frac{(2-l)(l-1)}{2} \int_{\Omega} |\nabla v_{\varepsilon }|^2\right\} + (2-l)(l-1)\int_{\Omega} \frac{v_{\varepsilon}}{u_{\varepsilon}}|\nabla u_{\varepsilon }|^2
\leqslant  0 
\end{align*}
for all $t \in(0, T_{\max, \varepsilon})$ and $\varepsilon \in(0,1)$. Integrating the above differential equality on $(0,t)$, we have
\begin{align}\label{1006-2249}
-\int_{\Omega} u^{2-l}_{\varepsilon}(t)+ \frac{(2-l)(l-1)}{2}  \int_{\Omega} |\nabla v_{\varepsilon }(t)|^2 & +(2-l)(l-1) \int_0^{t} \int_{\Omega} \frac{v_{\varepsilon}}{u_{\varepsilon}}|\nabla u_{\varepsilon }|^2 \nonumber \\  
& \leqslant - \int_{\Omega} (u_0+\varepsilon)^{2-l}+\frac{(2-l)(l-1)}{2} \int_{\Omega} | \nabla v_{0 }|^2 \nonumber \\
& \leqslant \frac{(2-l)(l-1)}{2} \int_{\Omega} | \nabla v_{0 }|^2
\end{align}
for all $t \in(0, T_{\max, \varepsilon})$ and $\varepsilon \in(0,1)$. In view of \eqref{-2.8} and Young's inequality, we see that
\begin{align}\label{0914-1042}
\int_{\Omega} u^{2-l}_{\varepsilon}(t) \leqslant \int_{\Omega} u_{\varepsilon}(t) + |\Omega| \leqslant \int_{\Omega} (u_0+2)+ \int_{\Omega} v_0
\end{align}
for all $t \in(0, T_{\max, \varepsilon})$ and $\varepsilon \in(0,1)$. Owing to \eqref{1006-2249}, we infer that
\begin{align*}
\frac{(2-l)(l-1)}{2}  \int_{\Omega} |\nabla v_{\varepsilon }(t)|^2 & +(2-l)(l-1) \int_0^{t} \int_{\Omega} \frac{v_{\varepsilon}}{u_{\varepsilon}}|\nabla u_{\varepsilon }|^2 \nonumber \\ \leqslant & \frac{(2-l)(l-1)}{2} \int_{\Omega} | \nabla v_{0 }|^2 + \int_{\Omega} (u_0+2)+ \int_{\Omega} v_0
\end{align*}
for all $t \in(0, T_{\max, \varepsilon})$ and $\varepsilon \in(0,1)$, which implies \eqref{-3.5aa}.

Case II: $l=2$. We multiply the first equation in \eqref{sys-regul} by $-\frac{1}{u_{\varepsilon}}$ and  integrate by parts to obtain
\begin{align}\label{1006-1700}
- \dt \int_{\Omega} \ln u_{\varepsilon}
= & -\int_{\Omega} \frac{1}{u_{\varepsilon}} \cdot 
  \left\{\nabla \cdot \left(u_{\varepsilon} v_{\varepsilon} \nabla u_{\varepsilon }- u_{\varepsilon}^2 v_{\varepsilon} \nabla v_{\varepsilon }\right)+ u_{\varepsilon} v_{\varepsilon}\right\} \nonumber\\
= & - \int_{\Omega}\frac{v_{\varepsilon}}{u_{\varepsilon}} |\nabla u_{\varepsilon }|^2 +\int_{\Omega} 
  v_{\varepsilon} \nabla u_{\varepsilon }\cdot \nabla v_{\varepsilon }-  \int_{\Omega} v_{\varepsilon}
\end{align}
for all $t \in(0, T_{\max, \varepsilon})$ and $\varepsilon \in(0,1)$. Combining \eqref{1006-1659} with \eqref{1006-1700}, we conclude that
\begin{align*}
\dt\left\{-\int_{\Omega} \ln u_{\varepsilon} +  
  \frac{1}{2} \int_{\Omega} |\nabla v_{\varepsilon }|^2\right\} + \int_{\Omega} \frac{v_{\varepsilon}}{u_{\varepsilon}} | \nabla u_{\varepsilon }|^2 
  +  \int_{\Omega} v_{\varepsilon}
  + \int_{\Omega}  |\Delta v_{\varepsilon }|^2
  + \int_{\Omega} u_{\varepsilon} | \nabla v_{\varepsilon }|^2=0 
\end{align*}
for all $t \in(0, T_{\max, \varepsilon})$ and $\varepsilon \in(0,1)$. Integrating the above differential equality on $(0,t)$ implies that
\begin{align}\label{1006-1728}
& -\int_{\Omega} \ln u_{\varepsilon}(t) 
   +\frac{1}{2} \int_{\Omega} |\nabla v_{\varepsilon}(t)|^2 
   + \int_0^{t} \int_{\Omega} \left(\frac{v_{\varepsilon}}{u_{\varepsilon}} | \nabla u_{\varepsilon }|^2 + v_{\varepsilon} +  |\Delta v_{\varepsilon }|^2 + u_{\varepsilon} | \nabla v_{\varepsilon }|^2\right) \nonumber \\ 
   = & - \int_{\Omega} \ln (u_0+\varepsilon)+\frac{1}{2} \int_{\Omega} | \nabla v_{0 }|^2
\end{align}
for all $t \in(0, T_{\max, \varepsilon})$ and $\varepsilon \in(0,1)$. In view of \eqref{-2.8} and $\ln \xi \leqslant \xi$ for all $\xi>0$, we see that
\begin{align}\label{0914-1042}
\int_{\Omega} \ln u_{\varepsilon}(t) \leqslant \int_{\Omega} u_{\varepsilon}(t) \leqslant \int_{\Omega} (u_0+1)+ \int_{\Omega} v_0
\end{align}
for all $t \in(0, T_{\max, \varepsilon})$ and $\varepsilon \in(0,1)$. Owing to \eqref{1006-1728}, we infer that
\begin{align*}
&\frac{1}{2} \int_{\Omega} |\nabla v_{\varepsilon}(t)|^2 + \int_0^{t} \int_{\Omega} \left(\frac{v_{\varepsilon}}{u_{\varepsilon}} | \nabla u_{\varepsilon }|^2 + v_{\varepsilon} +  |\Delta v_{\varepsilon }|^2 +  u_{\varepsilon} | \nabla v_{\varepsilon }|^2\right) \\
\leqslant & -\int_{\Omega} \ln u_0 + \int_{\Omega} \ln u_{\varepsilon}(t) +\frac{1}{2} \int_{\Omega} | \nabla v_{0 }|^2 \\
\leqslant & -\int_{\Omega} \ln u_0 + \int_{\Omega} (u_0+1)+ \int_{\Omega} v_0 +\frac{1}{2} \int_{\Omega} | \nabla v_{0 }|^2 
\end{align*}
for all $t \in(0, T_{\max, \varepsilon})$ and $\varepsilon \in(0,1)$, which implies \eqref{-3.5aa}.

Case III: $l>2$. We multiply the first equation in \eqref{sys-regul} by $u^{1-l}_{\varepsilon}$ and  integrate by parts to obtain
\begin{align}\label{1006-2201}
\dt \int_{\Omega} u^{2-l}_{\varepsilon} 
= & (2-l)\int_{\Omega} u^{1-l}_{\varepsilon} \cdot \left\{\nabla \cdot \left(u^{l-1}_{\varepsilon} v_{\varepsilon} \nabla u_{\varepsilon }-u^{l}_{\varepsilon} v_{\varepsilon} \nabla v_{\varepsilon }\right)+ u_{\varepsilon }  v_{\varepsilon }\right\} \nonumber\\
= &  -(l-2)(l-1)\int_{\Omega} \frac{v_{\varepsilon}}{u_{\varepsilon}}|\nabla u_{\varepsilon }|^2+(l-2)(l-1) \int_{\Omega} v_{\varepsilon} \nabla u_{\varepsilon }\cdot \nabla v_{\varepsilon }-(l-2)\int_{\Omega}u^{2-l}_{\varepsilon} v_{\varepsilon}\nonumber\\
\leqslant & -(l-2)(l-1)\int_{\Omega} \frac{v_{\varepsilon}}{u_{\varepsilon}}|\nabla u_{\varepsilon }|^2+(l-2)(l-1) \int_{\Omega} v_{\varepsilon} \nabla u_{\varepsilon }\cdot \nabla v_{\varepsilon }
\end{align}
for all $t \in(0, T_{\max, \varepsilon})$ and $\varepsilon \in(0,1)$. Combining \eqref{1006-1659} and \eqref{1006-2201}, we conclude that
\begin{align*}
\dt \left\{\int_{\Omega} u^{2-l}_{\varepsilon} +  \frac{(l-2)(l-1)}{2} \int_{\Omega} |\nabla v_{\varepsilon }|^2\right\} + (l-2)(l-1)\int_{\Omega} \frac{v_{\varepsilon}}{u_{\varepsilon}}|\nabla u_{\varepsilon }|^2
\leqslant  0 
\end{align*}
for all $t \in(0, T_{\max, \varepsilon})$ and $\varepsilon \in(0,1)$. Integrating the above differential equality on $(0,t)$, we have
\begin{align*}
\int_{\Omega} u^{2-l}_{\varepsilon}(t)+ \frac{(l-2)(l-1)}{2}  \int_{\Omega} |\nabla v_{\varepsilon }(t)|^2 & +(l-2)(l-1) \int_0^{t} \int_{\Omega} \frac{v_{\varepsilon}}{u_{\varepsilon}}|\nabla u_{\varepsilon }|^2 \nonumber \\  
& \leqslant \int_{\Omega} u_0^{2-l}+\frac{(l-2)(l-1)}{2} \int_{\Omega} | \nabla v_{0 }|^2 
\end{align*}
for all $t \in(0, T_{\max, \varepsilon})$ and $\varepsilon \in(0,1)$, which implies \eqref{-3.5aa}.
\end{proof}

In \cite{2022-CVPDE-Winkler}, Winkler derived a uniform estimate of the temporal-spatial integral of $\frac{u_{\varepsilon}}{v_{\varepsilon}}|\nabla  v_{\varepsilon }|^2$ for a doubly degenerate reaction-diffusion system. Based on Lemma \ref{lem-1st-est}, using the similar method, we also obtain such estimate for the system \eqref{sys-regul}.

%%%%%%%%%%%%%%%%%%%%%%%%%%%%%%%%%%%%%%%%%%%%%%%%%%%%%%%%%%%%%%%%%%%%%%%%%%%%%

\begin{lem}\label{lem-2nd-est}
Let $l>1$ and assume that \eqref{assIniVal} holds.
Then we have 
\begin{align}\label{-3.9}
\int_0^{T_{\max, \varepsilon}}\r\int_{\Omega} \frac{u_{\varepsilon}}{v_{\varepsilon}}|\nabla  v_{\varepsilon }|^2 \leqslant C \quad \text { for all } \varepsilon \in(0,1)
\end{align}
and
\begin{align}\label{-3.10}
\int_0^{T_{\max, \varepsilon}}\r\int_{\Omega} \frac{|\nabla v_{\varepsilon }|^4}{v_{\varepsilon}^3} \leqslant C \quad \text { for all } \varepsilon \in(0,1),
\end{align}
where $C$ is a positive constant depending on $\int_{\Omega} u_0$, $\int_{\Omega} v_0$, $\int_{\Omega} |\nabla v_{0 }|^2$ and $\int_{\Omega} \frac{|\nabla v_{0 }|^2}{v_0}$, but independent of $\varepsilon$. 
\end{lem}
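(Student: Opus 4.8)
The plan is to control both quantities in \eqref{-3.9} and \eqref{-3.10} simultaneously through the dissipation of the functional $\int_\Omega |\nabla v_\varepsilon|^2/v_\varepsilon$. Abbreviating $u=u_\varepsilon$, $v=v_\varepsilon$ and using the second equation of \eqref{sys-regul} in the form $v_t=\Delta v-uv$ together with $\partial_\nu v=0$, I would first integrate by parts (invoking $\partial_\nu \ln v =0$) to arrive at
\begin{align*}
\frac{d}{dt}\int_\Omega\frac{|\nabla v|^2}{v}+\int_\Omega\frac{u}{v}|\nabla v|^2
=-2\int_\Omega\frac{|\Delta v|^2}{v}+\int_\Omega\frac{|\nabla v|^2\Delta v}{v^2}+2\int_\Omega u\Delta v.
\end{align*}
The second term on the left already carries the sign required for \eqref{-3.9}, so the task reduces to extracting a favourable multiple of $\int_\Omega|\nabla v|^4/v^3$ from the right-hand side while keeping all remaining contributions integrable in time.

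For the two genuinely second-order terms I would rely on the pointwise identity $v|D^2\ln v|^2=v^{-1}|D^2 v|^2-2v^{-2}D^2 v(\nabla v,\nabla v)+v^{-3}|\nabla v|^4$ and on the Bochner formula $\nabla v\cdot\nabla\Delta v=\tfrac12\Delta|\nabla v|^2-|D^2 v|^2$. Two integrations by parts then express $\int_\Omega|\Delta v|^2/v$ in terms of $\int_\Omega|D^2 v|^2/v$, $\int_\Omega|\nabla v|^2\Delta v/v^2$ and $\int_\Omega|\nabla v|^4/v^3$, the convexity of $\Omega$ entering decisively through the sign condition $\partial_\nu|\nabla v|^2\leqslant0$ on $\partial\Omega$. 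Combining these relations with the two-dimensional inequality $\int_\Omega|\nabla v|^4/v^3\leqslant C\int_\Omega v|D^2\ln v|^2$, valid on convex planar domains under the Neumann condition, would yield
\begin{align*}
-2\int_\Omega\frac{|\Delta v|^2}{v}+\int_\Omega\frac{|\nabla v|^2\Delta v}{v^2}\leqslant-c_1\int_\Omega\frac{|\nabla v|^4}{v^3}
\end{align*}
for some $c_1>0$, which is exactly the dissipation matching \eqref{-3.10}.

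The coupling term $2\int_\Omega u\Delta v=-2\int_\Omega\nabla u\cdot\nabla v$ I would split by Young's inequality as $2|\nabla u||\nabla v|\leqslant\tfrac12\,\tfrac{u}{v}|\nabla v|^2+2\,\tfrac{v}{u}|\nabla u|^2$, absorbing the first summand into $\int_\Omega\frac{u}{v}|\nabla v|^2$ on the left and recognizing the second as precisely the density estimated in Lemma \ref{lem-1st-est}. Collecting the bounds produces the differential inequality
\begin{align*}
\frac{d}{dt}\int_\Omega\frac{|\nabla v|^2}{v}+\frac12\int_\Omega\frac{u}{v}|\nabla v|^2+c_1\int_\Omega\frac{|\nabla v|^4}{v^3}\leqslant2\int_\Omega\frac{v}{u}|\nabla u|^2.
\end{align*}

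Integrating over $(0,t)$, discarding the nonnegative term $\int_\Omega|\nabla v(t)|^2/v(t)$, and letting $t\nearrow T_{\max,\varepsilon}$, I would bound the right-hand side by $2\int_0^{T_{\max,\varepsilon}}\int_\Omega\frac{v}{u}|\nabla u|^2\leqslant C$ through Lemma \ref{lem-1st-est} and the initial contribution by $\int_\Omega|\nabla v_0|^2/v_0$, which is finite because $v_0\in W^{1,\infty}(\Omega)$ is bounded away from zero on $\overline\Omega$; both \eqref{-3.9} and \eqref{-3.10} then follow with $\varepsilon$-independent constants. To justify the time integration down to $t=0$ I would integrate from $t=\tau>0$ and let $\tau\searrow0$, using $v_\varepsilon\in C^0([0,T_{\max,\varepsilon});W^{1,q}(\Omega))$ and the strict positivity of $v_\varepsilon$. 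The main obstacle is the second paragraph: securing the clean negative sign $-c_1\int_\Omega|\nabla v|^4/v^3$ for the second-order terms, which hinges on the planar setting and on the convexity of $\Omega$ through the boundary sign of $\partial_\nu|\nabla v|^2$.
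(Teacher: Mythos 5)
Your proposal is correct and follows essentially the same route as the paper: both track the functional $\int_\Omega |\nabla v_\varepsilon|^2/v_\varepsilon$, convert the second-order terms into $-\int_\Omega v_\varepsilon|D^2\ln v_\varepsilon|^2$ plus a boundary term whose sign is fixed by convexity, invoke the planar inequality $\int_\Omega v_\varepsilon|D^2\ln v_\varepsilon|^2\geqslant c\int_\Omega |\nabla v_\varepsilon|^4/v_\varepsilon^3$, absorb the cross term $-\int_\Omega\nabla u_\varepsilon\cdot\nabla v_\varepsilon$ by Young's inequality into $\int_\Omega \frac{u_\varepsilon}{v_\varepsilon}|\nabla v_\varepsilon|^2$ plus the dissipation controlled by Lemma \ref{lem-1st-est}, and integrate in time. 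The only difference is cosmetic: you first integrate by parts to pass through $\int_\Omega|\Delta v_\varepsilon|^2/v_\varepsilon$ and then recover the $|D^2\ln v_\varepsilon|^2$ structure, whereas the paper applies the corresponding identity of Winkler directly to $\int_\Omega v_\varepsilon^{-1}\nabla v_\varepsilon\cdot\nabla\Delta v_\varepsilon-\frac12\int_\Omega v_\varepsilon^{-2}|\nabla v_\varepsilon|^2\Delta v_\varepsilon$.
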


%%%%%%%%%%%%%%%%%%%%%%%%%%%%%%%%%%%%%%%%%%%%%%%%%%%%%%%%%%%%%%%%%%%%%%%%%%%%%

\begin{proof}
Since $v_{\varepsilon}$ is positive in $\overline{\Omega} \times(0, T_{\max, \varepsilon})$ and actually belongs to $C^3(\overline{\Omega} \times(0, T_{\max, \varepsilon}))$ thanks to standard parabolic regularity theory, using the second equation in the system \eqref{sys-regul} we compute
\begin{align}\label{-3.11}
\frac{1}{2} \dt \int_{\Omega} \frac{\left|\nabla v_{\varepsilon}\right|^2}{v_{\varepsilon}}
= & \int_{\Omega} \frac{1}{v_{\varepsilon}} \nabla v_{\varepsilon} \cdot \nabla\left\{\Delta v_{\varepsilon}
    -u_{\varepsilon} v_{\varepsilon}\right\}
    -\frac{1}{2} \int_{\Omega} \frac{1}{v_{\varepsilon}^2}\left|\nabla v_{\varepsilon}\right|^2 \cdot\left\{\Delta v_{\varepsilon}-u_{\varepsilon} v_{\varepsilon}\right\} \nonumber\\
= & \int_{\Omega} \frac{1}{v_{\varepsilon}} \nabla v_{\varepsilon} \cdot \nabla\Delta v_{\varepsilon}
    -\int_{\Omega} \frac{u_{\varepsilon}}{v_{\varepsilon}}\left|\nabla v_{\varepsilon}\right|^2
    -\int_{\Omega} \nabla u_{\varepsilon} \cdot \nabla v_{\varepsilon}\nonumber\\
  & -\frac{1}{2} \int_{\Omega} \frac{1}{v_{\varepsilon}^2}\left|\nabla v_{\varepsilon}\right|^2 \cdot\Delta v_{\varepsilon}
    +\frac{1}{2} \int_{\Omega} \frac{u_{\varepsilon}}{v_{\varepsilon}}\left|\nabla v_{\varepsilon}\right|^2\nonumber\\
= & \int_{\Omega} \frac{1}{v_{\varepsilon}} \nabla v_{\varepsilon} \cdot \nabla\Delta v_{\varepsilon}
    -\frac{1}{2} \int_{\Omega} \frac{1}{v_{\varepsilon}^2}\left|\nabla v_{\varepsilon}\right|^2 \cdot\Delta v_{\varepsilon}\nonumber\\
  & -\frac{1}{2} \int_{\Omega} \frac{u_{\varepsilon}}{v_{\varepsilon}}\left|\nabla v_{\varepsilon}\right|^2
    -\int_{\Omega} \nabla u_{\varepsilon} \cdot \nabla v_{\varepsilon}
\end{align}
for all $t \in\left(0, T_{\max, \varepsilon}\right)$ and $\varepsilon \in(0,1)$. 
Based on \cite[Lemma~3.2]{2012-CPDE-Winkler}, we have the integral identity
\begin{align*}
\int_{\Omega} \frac{1}{v_{\varepsilon}} \nabla v_{\varepsilon} \cdot \nabla \Delta v_{\varepsilon}
-\frac{1}{2} \int_{\Omega} \frac{1}{v_{\varepsilon}^2}|\nabla v_{\varepsilon}|^2 \Delta v_{\varepsilon}
= -\int_{\Omega} v_{\varepsilon}\left|D^2 \ln v_{\varepsilon}\right|^2
  +\frac{1}{2} \int_{\partial \Omega} \frac{1}{v_{\varepsilon}} \frac{\partial|\nabla v_{\varepsilon}|^2}{\partial \nu}.
\end{align*}
Therefore, \eqref{-3.11} becomes
\begin{align}\label{-3.11-2a}
\frac{1}{2} \dt \int_{\Omega} \frac{\left|\nabla v_{\varepsilon}\right|^2}{v_{\varepsilon}}
  +\int_{\Omega} v_{\varepsilon}\left|D^2 \ln v_{\varepsilon}\right|^2
   & +\frac{1}{2} \int_{\Omega} \frac{u_{\varepsilon}}{v_{\varepsilon}}\left|\nabla v_{\varepsilon}\right|^2\nonumber\\
   &=  \frac{1}{2} \int_{\partial \Omega} \frac{1}{v_{\varepsilon}} \frac{\partial|\nabla v_{\varepsilon}|^2}{\partial \nu}
    -\int_{\Omega} \nabla u_{\varepsilon} \cdot \nabla v_{\varepsilon}
\end{align}
%and \cite[Lemma 3.4]{2022-NARWA-Winkler} c_1 \int_{\Omega} \frac{\left|D^2 v_{\varepsilon}\right|^2}{v_{\varepsilon}}  +
for all $t \in\left(0, T_{\max, \varepsilon}\right)$ and $\varepsilon \in(0,1)$. By \cite[Lemma 3.3]{2012-CPDE-Winkler}, there holds
\begin{equation}\label{-3.12}
\int_{\Omega} v_{\varepsilon}\left|D^2 \ln v_{\varepsilon}\right|^2 
\geqslant \frac{c_1}{2}  \int_{\Omega} \frac{\left|\nabla v_{\varepsilon}\right|^4}{v_{\varepsilon}^3}
\end{equation}
for some $c_1>0$.
%Combining a pointwise upper estimate for normal derivatives in \cite[Lemma 4.2]{2014-aihcn-mizoguchi} with a boundary trace embedding inequality (cf. \cite[Theorem 1, p.272]{2010--Evans}) and Young's inequality, we have 
%\begin{align}\label{n-3.13}
%\frac{1}{2} \int_{\partial \Omega} \frac{1}{v_{\varepsilon}} \cdot \frac{\partial\left|\nabla v_{\varepsilon}\right|^2}{\partial \nu} 
%& \leqslant c_2 \int_{\partial \Omega} \frac{\left|\nabla v_{\varepsilon}\right|^2}{v_{\varepsilon}} \nonumber\\
%& \leqslant c_3 \int_{\Omega}\left|\nabla\Big(\frac{\left|\nabla v_{\varepsilon}\right|^2}{v_{\varepsilon}}\Big)\right|
%    +c_3 \int_{\Omega} \frac{\left|\nabla v_{\varepsilon}\right|^2}{v_{\varepsilon}} \nonumber\\
%& \leqslant 2 c_3 \int_{\Omega} \frac{\left|D^2 v_{\varepsilon} \cdot \nabla v_{\varepsilon}\right|}{v_{\varepsilon}}
 %   +c_3 \int_{\Omega} \frac{\left|\nabla v_{\varepsilon}\right|^3}{v_{\varepsilon}^2}
 %   +c_3 \int_{\Omega} \frac{\left|\nabla v_{\varepsilon}\right|^2}{v_{\varepsilon}} \nonumber\\
%&\leqslant c_1\int_{\Omega} \frac{\left|D^2 v_{\varepsilon}\right|^2}{v_{\varepsilon}}
%  +\frac{c_1}{4} \int_{\Omega} \frac{\left|\nabla   v_{\varepsilon}\right|^4}{v_{\varepsilon}^3}
%  +(2 c_1^{-1} c_3^2+c_3) \int_{\Omega} \frac{\left|\nabla v_{\varepsilon}\right|^2}{v_{\varepsilon}}\nonumber\\
%&\leqslant c_1\int_{\Omega} \frac{\left|D^2 v_{\varepsilon}\right|^2}{v_{\varepsilon}}
%  +\frac{c_1}{2} \int_{\Omega} \frac{\left|\nabla v_{\varepsilon}\right|^4}{v_{\varepsilon}^3}
%  +c_5 \int_{\Omega} v_{\varepsilon}
%\end{align}
%with some constant $c_5>0$.  
By Young’s inequality, we infer that 
\begin{align}\label{-3.13a}
-\int_{\Omega} \nabla u_{\varepsilon} \cdot \nabla v_{\varepsilon}& \leqslant   \frac{1}{4}\int_{\Omega} \frac{u_{\varepsilon}}{v_{\varepsilon}}|\nabla v_{\varepsilon }|^2
+\int_{\Omega} \frac{v_{\varepsilon}}{u_{\varepsilon}} |\nabla u_{\varepsilon }|^2. 
\end{align}
From \eqref{-3.11-2a}-\eqref{-3.13a} and using that $\frac{\partial|\nabla \varphi|^2}{\partial v} \leqslant 0$ on $\partial \Omega$ by convexity of $\Omega$ (cf. \cite{1980-ARMA-Lions}), we obtain 
\begin{align}\label{-3.14}
\dt \int_{\Omega} \frac{|\nabla v_{\varepsilon}|^2}{v_{\varepsilon}}
+c_1 \int_{\Omega} \frac{|\nabla v_{\varepsilon}|^4}{v_{\varepsilon}^3}
+ \frac{1}{2}\int_{\Omega} \frac{u_{\varepsilon}}{v_{\varepsilon}} |\nabla v_{\varepsilon }|^2 
\leqslant   2 \int_{\Omega} \frac{v_{\varepsilon}}{u_{\varepsilon}} |\nabla u_{\varepsilon }|^2 
\end{align}
for all $t \in(0, T_{\max, \varepsilon})$ and $\varepsilon \in(0,1)$.
Integrating \eqref{-3.14} on $(0,t)$, we show that
\begin{align*}
\int_{\Omega} \frac{|\nabla v_{\varepsilon}(t)|^2}{v_{\varepsilon}(t)}
  &+c_1 \int_0^{t} \int_{\Omega} \frac{|\nabla v_{\varepsilon}|^4}{v_{\varepsilon}^3}
  +\frac{1}{2} \int_0^{t} \int_{\Omega} \frac{u_{\varepsilon}}{v_{\varepsilon}} |\nabla v_{\varepsilon }|^2 \\
&\leqslant \int_{\Omega} \frac{|\nabla v_{0 }|^2}{v_0} 
  + 2 \int_0^{t} \int_{\Omega} \frac{v_{\varepsilon}}{u_{\varepsilon}}|\nabla  u_{\varepsilon }|^2 
\end{align*}
for all $t \in(0, T_{\max, \varepsilon})$ and $\varepsilon \in(0,1)$, which yields \eqref{-3.9} and \eqref{-3.10} thanks to \eqref{-3.5aa}.
\end{proof}

Using Lemma \ref{lemma-3.4} with $p=1$ and the estimates obtained in Lemmas \ref{lem-1st-est} and \ref{lem-2nd-est}, we can establish the following a temporal-spatial $L^{2}$ bound for $u_{\varepsilon}$, weighted by the factor $v_{\varepsilon}$.

\begin{lem}\label{lemma-3.7}
Let $l>1$ and assume that \eqref{assIniVal} holds.
Then we have 
\begin{align}\label{-3.25xxx}
\int_0^{T_{\max, \varepsilon}}\r \int_{\Omega} u^2_{\varepsilon} v_{\varepsilon} \leqslant C\quad \text { for all } \varepsilon \in(0,1),
\end{align}
where $C$ is a positive constant depending on $-\int_{\Omega} \ln u_0$, $\int_{\Omega} u_0$, $\int_{\Omega} v_0$, $\int_{\Omega} |\nabla v_{0 }|^2$ and $\int_{\Omega} \frac{|\nabla v_{0 }|^2}{v_0}$, but independent of $\varepsilon$. 
\end{lem}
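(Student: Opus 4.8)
The plan is to apply Lemma~\ref{lemma-3.4} with the choices $p=1$, $\varphi = u_\varepsilon$, and $\psi = v_\varepsilon$, which immediately yields the pointwise-in-time inequality
\begin{align*}
\int_\Omega u_\varepsilon^2 v_\varepsilon
\leqslant c\left\{\int_\Omega \frac{u_\varepsilon}{v_\varepsilon}|\nabla v_\varepsilon|^2
+\int_\Omega \frac{v_\varepsilon}{u_\varepsilon}|\nabla u_\varepsilon|^2
+\int_\Omega u_\varepsilon v_\varepsilon\right\}\cdot\int_\Omega u_\varepsilon
\end{align*}
for all $t\in(0,T_{\max,\varepsilon})$ and $\varepsilon\in(0,1)$. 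The strategy is then to integrate this over $(0,T_{\max,\varepsilon})$ and control each of the three resulting temporal-spatial integrals using the estimates already in hand.

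\textbf{The key step} is that the mass bound \eqref{-2.8} gives $\int_\Omega u_\varepsilon(t)\leqslant \int_\Omega(u_0+1)+\int_\Omega v_0=:M$ uniformly in $t$ and $\varepsilon$, so the factor $\int_\Omega u_\varepsilon$ can be pulled out as a constant. After multiplying through by $M$, I would integrate in time and estimate term by term: the first temporal integral $\int_0^{T_{\max,\varepsilon}}\!\int_\Omega \frac{u_\varepsilon}{v_\varepsilon}|\nabla v_\varepsilon|^2$ is bounded by Lemma~\ref{lem-2nd-est} (estimate \eqref{-3.9}); the second integral $\int_0^{T_{\max,\varepsilon}}\!\int_\Omega \frac{v_\varepsilon}{u_\varepsilon}|\nabla u_\varepsilon|^2$ is bounded by Lemma~\ref{lem-1st-est} (estimate \eqref{-3.5aa}); and the third integral $\int_0^{T_{\max,\varepsilon}}\!\int_\Omega u_\varepsilon v_\varepsilon$ is bounded by the reaction estimate \eqref{-2.10}. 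Chaining these gives \eqref{-3.25xxx} with a constant $C$ depending only on the stated initial-data quantities.

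\textbf{I expect no serious obstacle here}, since the inequality of Lemma~\ref{lemma-3.4} was evidently tailored for exactly this application; the only point requiring a little care is bookkeeping the dependence of the final constant, which aggregates the constant $c=c(1)$ from Lemma~\ref{lemma-3.4}, the mass bound $M$, and the three time-integral bounds. In particular, the dependence on $-\int_\Omega \ln u_0$ enters through Lemma~\ref{lem-1st-est} in the borderline case $l=2$ (via \eqref{1006-1728}), which explains its appearance in the statement even though it does not arise in Cases~I and~III. The whole argument is thus a direct composition of the preceding lemmas with the a priori mass and reaction controls, requiring no new differential inequality.
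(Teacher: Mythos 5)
Your proposal is correct and follows essentially the same route as the paper's proof: apply Lemma~\ref{lemma-3.4} with $p=1$ to $(u_\varepsilon,v_\varepsilon)$, pull out the mass factor via \eqref{-2.8}, and integrate in time using \eqref{-3.5aa}, \eqref{-3.9} and \eqref{-2.10}. Your remark tracing the $-\int_\Omega \ln u_0$ dependence to the case $l=2$ of Lemma~\ref{lem-1st-est} is also accurate.
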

\begin{proof}
Letting $p=1$ in \eqref{eq-6.1}, we have
\begin{align*}%\label{-3.26}
\int_{\Omega} u_{\varepsilon}^{2} v_{\varepsilon} 
 \leqslant c \left\{\int_{\Omega} \frac{u_{\varepsilon}}{v_{\varepsilon}}|\nabla v_{\varepsilon}|^2
  +\int_{\Omega} \frac{v_{\varepsilon}}{u_{\varepsilon}}|\nabla u_{\varepsilon}|^2
  +\int_{\Omega} u_{\varepsilon} v_{\varepsilon}  \right\} \cdot \int_{\Omega} u_{\varepsilon} 
\end{align*}
for some $c>0$. Summing up \eqref{-2.10}, \eqref{-3.5aa} and \eqref{-3.9}, we infer that
\begin{align*}
\int_0^{T_{\max, \varepsilon}}\r  \left\{\int_{\Omega} \frac{u_{\varepsilon}}{v_{\varepsilon}}|\nabla v_{\varepsilon}|^2
+\int_{\Omega} \frac{v_{\varepsilon}}{u_{\varepsilon}}|\nabla u_{\varepsilon}|^2
+\int_{\Omega} u_{\varepsilon} v_{\varepsilon}\right\} \leqslant C  \quad \text { for all } \varepsilon \in(0,1)
\end{align*}
with some $C>0$ independent of $\varepsilon$. From \eqref{-2.8}, there holds
\begin{align*}
\int_{\Omega} u_{\varepsilon}(t) \leqslant \int_{\Omega}\left(u_0+1\right)+ \int_{\Omega} v_0, \quad \text { for all } t \in(0, T_{\max, \varepsilon}) \text { and } \varepsilon \in(0,1).
\end{align*}
Thus, we complete the proof of \eqref{-3.25xxx}.
\end{proof}

%%%%%%%%%%%%%%%%%%%%%%%%%%%%%%%%%%%%%%%%%%%%%%%%%%%%%%%%%%%%%%%%%%%%%%%%%%%%%

\section{The Two-Dimensional Case}\label{sect-3}
\subsection{Uniform estimates in Lebesgue spaces}\label{sect-3xx}
We first derive a uniform estimate for $\|u_\varepsilon(t)\|_{L^p(\Omega)}$ independent of $\varepsilon$ for any $p > 1$ in $(0,T_{\max, \varepsilon})$. 
%In the process, the key step is to obtain a uniform estimate for $\frac{|\nabla v_{\varepsilon }|^4}{v_{\varepsilon}^3}$ with respect to $\varepsilon$ in $L^\infty((0,T_{\max, \varepsilon}),L^1(\Omega))$. 
We first deduce a differential inequality for $\int_\Omega u^p_{\varepsilon}(t)$ using Lemma \ref{lemma-3.5}.

%%%%%%%%%%%%%%%%%%%%%%%%%%%%%%%%%%%%%%%%%%%%%%%%%%%%%%%%%%%%%%%%%%
\begin{lem}\label{lemma-3.9xx}
Let $l > 1$ and assume that \eqref{assIniVal} holds.
Then for all $p \geqslant 2$ we have
\begin{small} 
\begin{align}\label{0704-0024}
\dt \int_{\Omega} u_{\varepsilon}^p 
+ \frac{p(p-1)}{4} \int_{\Omega} u_{\varepsilon}^{p+l-3} v_{\varepsilon} \left|\nabla 
u_{\varepsilon}\right|^2  
\leqslant &   A \int_{\Omega} u_{\varepsilon}^{p+l-1} v_{\varepsilon}  \cdot \int_\Omega \frac{|\nabla v_{\varepsilon}|^4}{v_{\varepsilon}^3} \nonumber\\
& + A\left\{\int_{\Omega} \frac{u_{\varepsilon}}{v_{\varepsilon}}|\nabla v_{\varepsilon}|^2
  +\int_{\Omega} \frac{v_{\varepsilon}}{u_{\varepsilon}}|\nabla u_{\varepsilon}|^2
  +\int_{\Omega} u_{\varepsilon} v_{\varepsilon}  \right\} \cdot \int_{\Omega} u_{\varepsilon}^p \nonumber\\
& + A \left\{\int_\Omega u_{\varepsilon} \right\}^{2 p+2l-3} \cdot \int_\Omega \frac{|\nabla v_{\varepsilon}|^4}{v_{\varepsilon}^3}+ A \int_{\Omega} u_{\varepsilon} v_{\varepsilon} 
\end{align}
\end{small}
where $A=\max \left\{c_1\left\|v_{0} \right\|_{L^{\infty}(\Omega)}+2 c_1 \left\|v_{0} \right\|^3_{L^{\infty}(\Omega)}, c_1\left\|v_{0} \right\|^2_{L^{\infty}(\Omega)}+p, c_2 p \right\}$ with some constants $c_1=c_1(p)>0$ and $c_2=c_2(p)>0$.
\end{lem}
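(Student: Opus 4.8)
The plan is to derive \eqref{0704-0024} by a standard $L^p$ testing procedure, and to tame the cross-diffusion term by invoking the two functional inequalities of Lemmas \ref{lemma-3.4} and \ref{lemma-3.5} with carefully matched exponents.

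First I would multiply the first equation of \eqref{sys-regul} by $p u_\varepsilon^{p-1}$ and integrate over $\Omega$. Using the homogeneous Neumann conditions, the two divergence terms integrate by parts without boundary contributions, and the reaction term $u_\varepsilon v_\varepsilon$ yields $p\int_\Omega u_\varepsilon^p v_\varepsilon$, so that
\[
\dt\int_\Omega u_\varepsilon^p + p(p-1)\int_\Omega u_\varepsilon^{p+l-3}v_\varepsilon|\nabla u_\varepsilon|^2 = p(p-1)\int_\Omega u_\varepsilon^{p+l-2}v_\varepsilon\nabla u_\varepsilon\cdot\nabla v_\varepsilon + p\int_\Omega u_\varepsilon^p v_\varepsilon .
\]
Here the first gradient integral is the favourable dissipation. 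I would then split the mixed gradient term by Young's inequality, writing its integrand as the product of $u_\varepsilon^{(p+l-3)/2}v_\varepsilon^{1/2}|\nabla u_\varepsilon|$ and $u_\varepsilon^{(p+l-1)/2}v_\varepsilon^{1/2}|\nabla v_\varepsilon|$, to obtain
\[
p(p-1)\int_\Omega u_\varepsilon^{p+l-2}v_\varepsilon\nabla u_\varepsilon\cdot\nabla v_\varepsilon \leqslant \tfrac{p(p-1)}{4}\int_\Omega u_\varepsilon^{p+l-3}v_\varepsilon|\nabla u_\varepsilon|^2 + p(p-1)\int_\Omega u_\varepsilon^{p+l-1}v_\varepsilon|\nabla v_\varepsilon|^2 .
\]
Absorbing the first piece leaves the coefficient $\tfrac{3p(p-1)}{4}$ on the dissipation and isolates the genuinely problematic integral $\int_\Omega u_\varepsilon^{p+l-1}v_\varepsilon|\nabla v_\varepsilon|^2$.

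The heart of the argument is to apply Lemma \ref{lemma-3.5} with $(\varphi,\psi)=(u_\varepsilon,v_\varepsilon)$ and with its exponent parameter chosen as $p+l-2$, which is admissible since $p\geqslant 2$ and $l>1$ force $p+l-2>1$. The crucial point is that this choice matches all exponents: the left side $\varphi^{(p+l-2)+1}\psi|\nabla\psi|^2$ is exactly $u_\varepsilon^{p+l-1}v_\varepsilon|\nabla v_\varepsilon|^2$; the $\eta$-term $\varphi^{(p+l-2)-1}\psi|\nabla\varphi|^2$ is precisely the dissipation $u_\varepsilon^{p+l-3}v_\varepsilon|\nabla u_\varepsilon|^2$ already on the left; and the remaining factors $\{\int_\Omega\varphi\}^{2(p+l-2)+1}=\{\int_\Omega u_\varepsilon\}^{2p+2l-3}$ together with $\int_\Omega\varphi^{(p+l-2)+1}\psi$ reproduce exactly the structures on the right of the claim. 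Choosing $\eta=\tfrac12$ and multiplying by $p(p-1)$, the induced contribution $\tfrac{p(p-1)}{2}\int_\Omega u_\varepsilon^{p+l-3}v_\varepsilon|\nabla u_\varepsilon|^2$ is again absorbed into the left-hand side, lowering the dissipation coefficient from $\tfrac{3p(p-1)}{4}$ to the asserted $\tfrac{p(p-1)}{4}$ and producing the first, third, and part of the fourth term of \eqref{0704-0024}.

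It remains to dispose of the reaction term $p\int_\Omega u_\varepsilon^p v_\varepsilon$. Here I would use the elementary bound $u_\varepsilon^{p-1}\leqslant u_\varepsilon^p+1$ to write $u_\varepsilon^p v_\varepsilon\leqslant u_\varepsilon^{p+1}v_\varepsilon+u_\varepsilon v_\varepsilon$, and then apply Lemma \ref{lemma-3.4} with exponent $p$ to the first summand, giving $\int_\Omega u_\varepsilon^{p+1}v_\varepsilon\leqslant c\{\int_\Omega\frac{u_\varepsilon}{v_\varepsilon}|\nabla v_\varepsilon|^2+\int_\Omega\frac{v_\varepsilon}{u_\varepsilon}|\nabla u_\varepsilon|^2+\int_\Omega u_\varepsilon v_\varepsilon\}\int_\Omega u_\varepsilon^p$, which furnishes exactly the bracketed product term (second term) and one more $\int_\Omega u_\varepsilon v_\varepsilon$ summand. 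Finally I would collect all coefficients, using \eqref{-2.9} to replace each $\|v_\varepsilon\|_{L^\infty(\Omega)}$ by $\|v_0\|_{L^\infty(\Omega)}$, which bounds the three families of constants by the single quantity $A$ of the stated form. The main obstacle throughout is the sign-indefinite taxis integral $\int_\Omega u_\varepsilon^{p+l-1}v_\varepsilon|\nabla v_\varepsilon|^2$; its control rests entirely on the tailored inequality of Lemma \ref{lemma-3.5}, whose exponent bookkeeping is engineered precisely so that its leading term can be reabsorbed by the diffusion-generated dissipation, the remainder of the computation being routine applications of Young's and Hölder's inequalities.
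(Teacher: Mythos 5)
Your proposal is correct and follows essentially the same route as the paper: testing with $pu_\varepsilon^{p-1}$, splitting the cross-diffusion term by Young's inequality, invoking Lemma \ref{lemma-3.5} with exponent parameter $p+l-2$ and $\eta$ chosen so that its leading term is absorbed into the dissipation (ending with the coefficient $\tfrac{p(p-1)}{4}$), bounding $u_\varepsilon^p v_\varepsilon\leqslant u_\varepsilon^{p+1}v_\varepsilon+u_\varepsilon v_\varepsilon$, and then applying Lemma \ref{lemma-3.4} with exponent $p$ together with \eqref{-2.9}. The only deviation is the bookkeeping of the Young splits (you absorb $\tfrac14$ first and carry coefficient $p(p-1)$ on the taxis integral, while the paper uses a $\tfrac12$--$\tfrac12$ split), which changes only the unspecified constants $c_1(p)$, $c_2(p)$ inside $A$.
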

\begin{proof}
We multiply the first equation of \eqref{sys-regul} by $u_{\varepsilon}^{p-1}$, integrate by parts and use Young's inequality to obtain 
\begin{align}\label{0703-1903xx}
 \dt \int_{\Omega} u_{\varepsilon}^p
= &\ p \int_{\Omega} u_{\varepsilon}^{p-1}\left\{\nabla \cdot(u^{l-1}_{\varepsilon} v_{\varepsilon} \nabla u_{\varepsilon})- \nabla \cdot\left(u_{\varepsilon}^{l} v_{\varepsilon} \nabla v_{\varepsilon}\right)+ u_{\varepsilon} v_{\varepsilon}\right\} \nonumber\\
= &-(p-1)p \int_{\Omega} u_{\varepsilon}^{p+l-3} v_{\varepsilon}\left|\nabla u_{\varepsilon}\right|^2+ (p-1)p \int_{\Omega} u_{\varepsilon}^{p+l-2} v_{\varepsilon} \nabla u_{\varepsilon} \cdot \nabla v_{\varepsilon} + p \int_{\Omega} u_{\varepsilon}^{p} v_{\varepsilon} \nonumber\\
\leqslant & -\frac{(p-1)p}{2} \int_{\Omega} u_{\varepsilon}^{p+l-3} v_{\varepsilon} \left|\nabla u_{\varepsilon}\right|^2 +\frac{(p-1)p}{2} \int_{\Omega} u_{\varepsilon}^{p+l-1} v_{\varepsilon}\left|\nabla v_{\varepsilon}\right|^2 \nonumber\\
& + p \int_{\Omega} u_{\varepsilon}^{p+1}v_{\varepsilon} + p \int_{\Omega} u_{\varepsilon} v_{\varepsilon} \quad \text { for all } t \in(0, T_{\max, \varepsilon}) \text { and } \varepsilon \in(0,1).
\end{align}
Letting $\eta =\frac{1}{2}$ in \eqref{eq-6.4}  and using \eqref{-2.9}, we have
\begin{align*}
& \frac{p(p-1)}{2} \int_{\Omega} u_{\varepsilon}^{p+l-1} v_{\varepsilon}\left|\nabla v_{\varepsilon}\right|^2 \nonumber\\
\leqslant &    \frac{p(p-1) }{4} \int_\Omega u_{\varepsilon}^{p+l-3} v_{\varepsilon}|\nabla u_{\varepsilon}|^2 +  \left\{ c_1   \left\|v_{0} \right\|_{L^{\infty}(\Omega)}+2c_1  \left\|v_{0} \right\|^3_{L^{\infty}(\Omega)}\right\} \cdot \int_{\Omega} u_{\varepsilon}^{p+l-1} v_{\varepsilon}  \cdot \int_\Omega \frac{|\nabla v_{\varepsilon}|^4}{v_{\varepsilon}^3} \nonumber\\
&+c_1  \left\|v_{0} \right\|^2_{L^{\infty}(\Omega)} \cdot\left\{\int_\Omega u_{\varepsilon} \right\}^{2 p+2l-3} \cdot \int_\Omega \frac{|\nabla v_{\varepsilon}|^4}{v_{\varepsilon}^3} +  c_1   \left\|v_{0} \right\|^2_{L^{\infty}(\Omega)} \cdot \int_{\Omega} u_{\varepsilon} v_{\varepsilon}
\end{align*}
for some $c_1=c_1(p)>0$. This together with \eqref{0703-1903xx} yields 
\begin{align}\label{jia-2}
\dt \int_{\Omega} u_{\varepsilon}^p 
 + & \frac{p(p-1)}{4} \int_{\Omega} u_{\varepsilon}^{p+l-3} v_{\varepsilon} \left|\nabla 
u_{\varepsilon}\right|^2  \nonumber\\
\leqslant &   \left\{ c_1   \left\|v_{0} \right\|_{L^{\infty}(\Omega)}+2c_1 \left\|v_{0} \right\|^3_{L^{\infty}(\Omega)}\right\} \cdot \int_{\Omega} u_{\varepsilon}^{p+l-1} v_{\varepsilon}  \cdot \int_\Omega \frac{|\nabla v_{\varepsilon}|^4}{v_{\varepsilon}^3} \nonumber\\
& +  c_1 \left\|v_{0} \right\|^2_{L^{\infty}(\Omega)} \cdot \left\{\int_\Omega u_{\varepsilon} \right\}^{2 p+2l-3} \cdot \int_\Omega \frac{|\nabla v_{\varepsilon}|^4}{v_{\varepsilon}^3}\nonumber\\
& + p \int_{\Omega} u_{\varepsilon}^{p+1}  v _{\varepsilon} +  \left\{c_1 \left\|v_{0} \right\|^2_{L^{\infty}(\Omega)}+p \right\} \cdot \int_{\Omega} u_{\varepsilon} v_{\varepsilon} 
\end{align}
for all $t \in(0, T_{\max, \varepsilon})$ and $\varepsilon \in(0,1)$. It follows from \eqref{eq-6.1} that
\begin{align}\label{jia-1}
\int_{\Omega} u_{\varepsilon}^{p+1} v_{\varepsilon} 
 \leqslant c_2 \left\{\int_{\Omega} \frac{u_{\varepsilon}}{v_{\varepsilon}}|\nabla v_{\varepsilon}|^2
  +\int_{\Omega} \frac{v_{\varepsilon}}{u_{\varepsilon}}|\nabla u_{\varepsilon}|^2
  +\int_{\Omega} u_{\varepsilon} v_{\varepsilon}  \right\} \cdot \int_{\Omega} u_{\varepsilon}^p
\end{align}
for some $c_2=c_2(p)>0$. 
Together with \eqref{jia-1}, \eqref{jia-2} yields the desired \eqref{0704-0024}.
\end{proof}

From \eqref{-2.10}, \eqref{-3.5aa} and \eqref{-3.9}, we next know that $\int_{\Omega} \frac{u_{\varepsilon}}{v_{\varepsilon}}|\nabla v_{\varepsilon}|^2+\int_{\Omega} \frac{v_{\varepsilon}}{u_{\varepsilon}}|\nabla u_{\varepsilon}|^2+ \int_{\Omega} u_{\varepsilon} v_{\varepsilon}\in L^1(0,T_{\max, \varepsilon})$, and from \eqref{-3.10}, we also know that $\int_\Omega \frac{|\nabla v_{\varepsilon}|^4}{v_{\varepsilon}^3}\in L^1(0,T_{\max, \varepsilon})$. However, this is not sufficient to solve the differential inequality \eqref{0704-0024} for $\int_{\Omega} u_{\varepsilon}^p$. To overcome this difficulty, we shall prove $\int_\Omega \frac{|\nabla v_{\varepsilon}|^4}{v_{\varepsilon}^3}\in L^\infty(0,T_{\max, \varepsilon})$. To this aim, we derive a differential inequality for the following energy-like functional $G_{\varepsilon}(t)$ and the estimate of $\int_{\Omega} u_{\varepsilon}^{p+l-1} v_{\varepsilon}$ (cf. \cite[Lemma 6.1]{2024-JDE-Winkler}).

\begin{lem}\label{lemma-3.6}
Let $l > 1$ and assume that \eqref{assIniVal} holds, and let
\begin{align*}
G_{\varepsilon}(t) =\left\{\begin{array}{lll}
\frac{4b}{(l-3)(l-2)} \int_{\Omega} u_{\varepsilon}^{3-l}+\int_{\Omega} \frac{\left|\nabla v_{\varepsilon}\right|^4}{v_{\varepsilon}^3} & \text { when } & 1 < l<2  \text {   or   }  l>3, \\
-\frac{4b}{(3-l)(l-2)} \int_{\Omega} u_{\varepsilon}^{3-l}+\int_{\Omega} \frac{\left|\nabla v_{\varepsilon}\right|^4}{v_{\varepsilon}^3} & \text { when } & 2<l<3,\\
4b \int_{\Omega} u_{\varepsilon} \ln u_{\varepsilon}+\int_{\Omega} \frac{\left|\nabla v_{\varepsilon}\right|^4}{v_{\varepsilon}^3} & \text { when } & l=2, \\
-4b \int_{\Omega} \ln u_{\varepsilon}+\int_{\Omega} \frac{\left|\nabla v_{\varepsilon}\right|^4}{v_{\varepsilon}^3} & \text { when } & l=3. 
\end{array}\right.
\end{align*}
Then we have
\begin{align}\label{0922-1558}
G_{\varepsilon}^{\prime}(t) +2 \int_{\Omega} v_{\varepsilon}^{-1}\left|\nabla v_{\varepsilon}\right|^2\left|D^2 \ln v_{\varepsilon}\right|^2+ b \int_{\Omega} v_{\varepsilon}\left|\nabla u_{\varepsilon}\right|^2 
\leqslant &  4b  \int_{\Omega} u_{\varepsilon}^{2} v_{\varepsilon}\left|\nabla v_{\varepsilon}\right|^2  -\frac{4 b}{l-2}  \int_{\Omega} u^{3-l}_{\varepsilon} v_{\varepsilon}   \nonumber\\  
 & \quad \quad \text { when }  1 < l<2  \text {   or   }    l>3
\end{align}
and
\begin{align}\label{0922-1561}
G_{\varepsilon}^{\prime}(t)+2 \int_{\Omega} v_{\varepsilon}^{-1}\left|\nabla v_{\varepsilon}\right|^2\left|D^2 \ln v_{\varepsilon}\right|^2 + b \int_{\Omega} v_{\varepsilon}\left|\nabla u_{\varepsilon}\right|^2 
& \leqslant   4b  \int_{\Omega} u_{\varepsilon}^{2} v_{\varepsilon}\left|\nabla v_{\varepsilon}\right|^2  \nonumber\\  
& \text { when } 2<l<3
\end{align}
and
\begin{align}\label{0922-1559}
G_{\varepsilon}^{\prime}(t) +2 \int_{\Omega} v_{\varepsilon}^{-1}\left|\nabla v_{\varepsilon}\right|^2\left|D^2 \ln v_{\varepsilon}\right|^2 + b \int_{\Omega} v_{\varepsilon}\left|\nabla u_{\varepsilon}\right|^2 
\leqslant &  4b \int_{\Omega} u_{\varepsilon}^{2} v_{\varepsilon}\left|\nabla v_{\varepsilon}\right|^2 + 4b \int_{\Omega} u_{\varepsilon} v_{\varepsilon}  \nonumber\\ 
& +   4b \int_{\Omega} u_{\varepsilon}^{2} v_{\varepsilon} \quad \text { when } l=2
\end{align}
as well as
\begin{align}\label{0922-1560}
G_{\varepsilon}^{\prime}(t) +2 \int_{\Omega} v_{\varepsilon}^{-1}\left|\nabla v_{\varepsilon}\right|^2\left|D^2 \ln v_{\varepsilon}\right|^2 + b \int_{\Omega} v_{\varepsilon}\left|\nabla u_{\varepsilon}\right|^2 
\leqslant &  4b \int_{\Omega} u_{\varepsilon}^{2} v_{\varepsilon}\left|\nabla v_{\varepsilon}\right|^2   \text { when } l=3
\end{align}
for all $t \in(0, T_{\max, \varepsilon})$ and $\varepsilon \in(0,1)$, where $b$ and $c$ are some positive constants independent of $\varepsilon$.
\end{lem}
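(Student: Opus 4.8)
The plan is to write $G_\varepsilon=c_l\,\Phi_\varepsilon+\int_\Omega\frac{|\nabla v_\varepsilon|^4}{v_\varepsilon^3}$, where $\Phi_\varepsilon$ denotes the $u$-part of the functional ($\int_\Omega u_\varepsilon^{3-l}$ when $l\notin\{2,3\}$, $\int_\Omega u_\varepsilon\ln u_\varepsilon$ when $l=2$, and $-\int_\Omega\ln u_\varepsilon$ when $l=3$) and $c_l$ is the prescribed prefactor, and to differentiate the two parts separately along \eqref{sys-regul}. First I would test the first equation against $u_\varepsilon^{2-l}$ (respectively $\ln u_\varepsilon+1$ and $-u_\varepsilon^{-1}$ in the logarithmic cases) and integrate by parts, obtaining for $l\notin\{2,3\}$
\begin{align*}
\frac{d}{dt}\int_\Omega u_\varepsilon^{3-l}
={}& -(3-l)(2-l)\int_\Omega v_\varepsilon|\nabla u_\varepsilon|^2
+(3-l)(2-l)\int_\Omega u_\varepsilon v_\varepsilon\nabla u_\varepsilon\cdot\nabla v_\varepsilon \\
&+(3-l)\int_\Omega u_\varepsilon^{3-l}v_\varepsilon ,
\end{align*}
with the analogous identities in the two remaining cases. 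The sign and denominator of $c_l$ are arranged precisely so that $(3-l)(2-l)c_l=4b$ in \emph{every} regime; multiplying then turns the first term into the dissipation $-4b\int_\Omega v_\varepsilon|\nabla u_\varepsilon|^2$, the second into the cross term $4b\int_\Omega u_\varepsilon v_\varepsilon\nabla u_\varepsilon\cdot\nabla v_\varepsilon$, and the third into $-\frac{4b}{l-2}\int_\Omega u_\varepsilon^{3-l}v_\varepsilon$, which is exactly the production term in \eqref{0922-1558} and is nonpositive (hence discardable) in the setting of \eqref{0922-1561}. For $l=2,3$ the reaction instead contributes $4b\int u_\varepsilon v_\varepsilon\ln u_\varepsilon+4b\int u_\varepsilon v_\varepsilon$ and $-4b\int v_\varepsilon$; the logarithmic term is bounded by $4b\int u_\varepsilon^2 v_\varepsilon$ via $\ln\xi\le\xi$ (producing the extra terms of \eqref{0922-1559}), while $-4b\int v_\varepsilon\le0$ is dropped.

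Next I would differentiate $\int_\Omega\frac{|\nabla v_\varepsilon|^4}{v_\varepsilon^3}$ using $v_{\varepsilon t}=\Delta v_\varepsilon-u_\varepsilon v_\varepsilon$ and separate the contributions of the two terms. The consumption part is elementary and yields $-\int_\Omega u_\varepsilon\frac{|\nabla v_\varepsilon|^4}{v_\varepsilon^3}-4\int_\Omega\frac{|\nabla v_\varepsilon|^2}{v_\varepsilon^2}\nabla v_\varepsilon\cdot\nabla u_\varepsilon$, the first of which is nonpositive. The diffusion part is the delicate one. Writing $w=\ln v_\varepsilon$ and $g=|\nabla w|^2$, the key is the Bochner identity $\nabla w\cdot\nabla\Delta w=\tfrac12\Delta|\nabla w|^2-|D^2 w|^2$; substituting it and then integrating the remaining $\int v_\varepsilon g\,\Delta w$ and $\int v_\varepsilon g\,\Delta g$ by parts, while using the convexity of $\Omega$ (so that $\partial_\nu|\nabla v_\varepsilon|^2\le0$, whence $\partial_\nu g\le0$ and all boundary integrals are nonpositive, cf. \cite{1980-ARMA-Lions} as in Lemma~\ref{lem-2nd-est}), all lower-order terms should cancel and leave the \emph{surplus} dissipation
\begin{align*}
\text{(diffusion part)}\ \le\ -2\int_\Omega v_\varepsilon\big|\nabla|\nabla\ln v_\varepsilon|^2\big|^2
-4\int_\Omega v_\varepsilon^{-1}|\nabla v_\varepsilon|^2\,|D^2\ln v_\varepsilon|^2 .
\end{align*}

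Adding $c_l\frac{d}{dt}\Phi_\varepsilon$ to the evolution of $\int\frac{|\nabla v_\varepsilon|^4}{v_\varepsilon^3}$ and keeping only $2\int_\Omega v_\varepsilon^{-1}|\nabla v_\varepsilon|^2|D^2\ln v_\varepsilon|^2$ together with $b\int_\Omega v_\varepsilon|\nabla u_\varepsilon|^2$ on the left reproduces the left-hand sides of \eqref{0922-1558}--\eqref{0922-1560}. On the first cross term Young's inequality gives $4b\int u_\varepsilon v_\varepsilon\nabla u_\varepsilon\cdot\nabla v_\varepsilon\le b\int v_\varepsilon|\nabla u_\varepsilon|^2+4b\int u_\varepsilon^2 v_\varepsilon|\nabla v_\varepsilon|^2$, which consumes part of the $u$-dissipation and produces \emph{exactly} the right-hand term of each inequality. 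The consumption cross term $-4\int v_\varepsilon|\nabla\ln v_\varepsilon|^2\nabla\ln v_\varepsilon\cdot\nabla u_\varepsilon$ carries no factor $u_\varepsilon$, so it cannot feed the $u_\varepsilon^2$-term; instead I would bound it by $4\int v_\varepsilon g^{3/2}|\nabla u_\varepsilon|$ and, by Young against the still-available $u$-dissipation, reduce it to $\tfrac{C}{b}\int_\Omega v_\varepsilon g^3=\tfrac{C}{b}\int_\Omega\frac{|\nabla v_\varepsilon|^6}{v_\varepsilon^5}$, which is precisely what the surplus quantity $\int_\Omega v_\varepsilon|\nabla g|^2$ (and the extra half of the Hessian dissipation) is designed to swallow for $b$ chosen large enough.

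\smallskip

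\noindent The main obstacle is exactly this last mechanism. Carrying out the fourth-order integration by parts for $\int\frac{|\nabla v_\varepsilon|^4}{v_\varepsilon^3}$ so that everything collapses to the displayed surplus dissipation with the correct constant $4$ (rather than the $2$ retained on the left), and ensuring the boundary contributions are nonpositive via convexity, is technically the heaviest step; and then dominating the leftover $\int\frac{|\nabla v_\varepsilon|^6}{v_\varepsilon^5}$ by $\int v_\varepsilon\big|\nabla|\nabla\ln v_\varepsilon|^2\big|^2$ through a two-dimensional Gagliardo--Nirenberg interpolation (using $\big|\nabla|\nabla\ln v_\varepsilon|^2\big|^2\le 4\,|\nabla\ln v_\varepsilon|^2|D^2\ln v_\varepsilon|^2$) is what forces both the planar restriction and the choice of $b$, and is where the genuine work of the proof is concentrated.
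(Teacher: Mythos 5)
Your proposal follows essentially the same route as the paper's proof: the same splitting of $G_\varepsilon$ into a $u$-part and $\int_\Omega |\nabla v_\varepsilon|^4/v_\varepsilon^3$, the same test functions $u_\varepsilon^{2-l}$, $1+\ln u_\varepsilon$, $-u_\varepsilon^{-1}$, the same Young step $4b\int_\Omega u_\varepsilon v_\varepsilon \nabla u_\varepsilon\cdot\nabla v_\varepsilon\le b\int_\Omega v_\varepsilon|\nabla u_\varepsilon|^2+4b\int_\Omega u_\varepsilon^2 v_\varepsilon|\nabla v_\varepsilon|^2$, and the same case-by-case treatment of the reaction terms (your sign bookkeeping for $c_l$, the bound $\int_\Omega u_\varepsilon v_\varepsilon\ln u_\varepsilon\le\int_\Omega u_\varepsilon^2v_\varepsilon$ for $l=2$, and the discarding of the nonpositive terms for $2<l<3$ and $l=3$ all match). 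The difference is one of sourcing: the paper does not redo the fourth-order computation, but quotes \cite[Lemma 2.3]{2022-JDE-Li} together with convexity to get \eqref{-3.22a}, quotes \eqref{-3.21a} from \cite[Lemma 3.4]{2022-DCDSSB-Winkler}, and \emph{defines} $b$ as the constant in \eqref{-3.21a}, so that the consumption cross term, split by Young as $\frac{2}{b}\int_\Omega\frac{|\nabla v_\varepsilon|^6}{v_\varepsilon^5}+2b\int_\Omega v_\varepsilon|\nabla u_\varepsilon|^2$, is absorbed by half of the Hessian dissipation. You instead re-derive the evolution inequality by hand (your Bochner computation, including the surplus term $-2\int_\Omega v_\varepsilon\bigl|\nabla|\nabla\ln v_\varepsilon|^2\bigr|^2$ and the convexity argument for the boundary integral, is correct and slightly stronger than what the paper imports) and take $b$ large at the end; both normalizations prove the lemma as stated.

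The one step you must repair is the final absorption. The inequality you need, with $g:=|\nabla\ln v_\varepsilon|^2$, namely $\int_\Omega v_\varepsilon g^3\le C\bigl\{\int_\Omega v_\varepsilon g|D^2\ln v_\varepsilon|^2+\int_\Omega v_\varepsilon|\nabla g|^2\bigr\}$, is \emph{not} a two-dimensional Gagliardo--Nirenberg interpolation: a weighted GN argument would require uniform control of a lower-order quantity such as $\int_\Omega g$, which is not available at this stage, and your parenthetical pointwise bound $|\nabla g|^2\le 4g|D^2\ln v_\varepsilon|^2$ runs in the wrong direction (it estimates the surplus term by the Hessian term, not the sextic integral by either). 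The correct elementary route is a single integration by parts: since $v_\varepsilon g^3=g^2\nabla\ln v_\varepsilon\cdot\nabla v_\varepsilon$ and $\partial_\nu v_\varepsilon=0$ on $\partial\Omega$,
\begin{align*}
\int_\Omega v_\varepsilon g^3 &= -\int_\Omega v_\varepsilon g^2\,\Delta\ln v_\varepsilon-2\int_\Omega v_\varepsilon g\,\nabla g\cdot\nabla\ln v_\varepsilon\\
&\le \sqrt{2}\left\{\int_\Omega v_\varepsilon g^3\right\}^{\frac12}\left\{\int_\Omega v_\varepsilon g\,|D^2\ln v_\varepsilon|^2\right\}^{\frac12}+2\left\{\int_\Omega v_\varepsilon g^3\right\}^{\frac12}\left\{\int_\Omega v_\varepsilon|\nabla g|^2\right\}^{\frac12},
\end{align*}
which yields $\int_\Omega v_\varepsilon g^3\le 4\int_\Omega v_\varepsilon g|D^2\ln v_\varepsilon|^2+8\int_\Omega v_\varepsilon|\nabla g|^2$; estimating the second integral above via $|\nabla g|\le 2\sqrt{g}\,|D^2\ln v_\varepsilon|$ instead gives \eqref{-3.21a} itself, with no surplus term needed, which is exactly the content of the paper's cited \cite[Lemma 3.4]{2022-DCDSSB-Winkler}. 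Note also that this inequality holds in any dimension (with $\sqrt{n}$ in place of $\sqrt{2}$), so, contrary to your closing remark, it is not what forces the planar restriction; two-dimensionality enters through Lemmas \ref{lemma-3.4} and \ref{lemma-3.5}, which are used elsewhere in the paper.
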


\begin{proof}
Based on \cite[Lemma 3.4]{2022-DCDSSB-Winkler}, we see that there exists a constant $b>0$ such that
\begin{align}\label{-3.21a}
\int_{\Omega} \frac{|\nabla \phi|^6}{\phi^5} \leqslant b \int_{\Omega} \phi^{-1}|\nabla \phi|^2\left|D^2 \ln \phi\right|^2.
\end{align}
In view of \cite[Lemma 2.3]{2022-JDE-Li} and $\frac{\partial|\nabla \varphi|^2}{\partial v} \leqslant 0$ on $\partial \Omega$ by convexity of $\Omega$ (cf. \cite{1980-ARMA-Lions}), we obtain 
\begin{align}\label{-3.22a}
\frac{d}{d t} \int_{\Omega} \frac{\left|\nabla v_{\varepsilon}\right|^4}{v_{\varepsilon}^3} +  4 \int_{\Omega} v_{\varepsilon}^{-1}\left|\nabla v_{\varepsilon}\right|^2\left|D^2 \ln v_{\varepsilon}\right|^2  
\leqslant  -4 \int_{\Omega} v_{\varepsilon}^{-2}\left|\nabla v_{\varepsilon}\right|^2\left(\nabla u_{\varepsilon} \cdot \nabla v_{\varepsilon}\right)
\end{align}
for all $t \in(0, T_{\max, \varepsilon})$ and $\varepsilon \in(0,1)$. Invoking Young's inequality and \eqref{-3.21a}, we deduce that
\begin{align}\label{-3.23a}
-4 \int_{\Omega} v_{\varepsilon}^{-2}\left|\nabla v_{\varepsilon}\right|^2\left(\nabla u_{\varepsilon} \cdot \nabla v_{\varepsilon}\right) & \leqslant \frac{2}{b} \int_{\Omega} \frac{\left|\nabla v_{\varepsilon}\right|^6}{v_{\varepsilon}^5}+2 b \int_{\Omega} v_{\varepsilon}\left|\nabla u_{\varepsilon}\right|^2 \nonumber\\
& \leqslant 2 \int_{\Omega} v_{\varepsilon}^{-1}\left|\nabla v_{\varepsilon}\right|^2\left|D^2 \ln v_{\varepsilon}\right|^2 +2 b \int_{\Omega} v_{\varepsilon}\left|\nabla u_{\varepsilon}\right|^2.
\end{align}
Combining \eqref{-3.22a} and \eqref{-3.23a}, we see that
\begin{align}\label{-3.22aa}
\frac{d}{d t} \int_{\Omega} \frac{\left|\nabla v_{\varepsilon}\right|^4}{v_{\varepsilon}^3} +2 \int_{\Omega} v_{\varepsilon}^{-1}\left|\nabla v_{\varepsilon}\right|^2\left|D^2 \ln v_{\varepsilon}\right|^2
\leqslant  2 b \int_{\Omega} v_{\varepsilon}\left|\nabla u_{\varepsilon}\right|^2
\end{align}
for all $t \in(0, T_{\max, \varepsilon})$ and $\varepsilon \in(0,1)$. 

Case I: $l\neq2$ and $l\neq3$. Multiplying the first equation in \eqref{sys-regul} by $u^{2-l}_{\varepsilon}$, we infer that
\begin{align}\label{0921-1843a}
\dt \int_{\Omega} u^{3-l}_{\varepsilon} 
= & (3-l)\int_{\Omega} u^{2-l}_{\varepsilon} \cdot \left\{\nabla \cdot \left(u^{l-1}_{\varepsilon} v_{\varepsilon} \nabla u_{\varepsilon }-u^{l}_{\varepsilon} v_{\varepsilon} \nabla v_{\varepsilon }\right)+ u_{\varepsilon} v_{\varepsilon} \right\} \nonumber\\
= &  (3-l)(l-2)\int_{\Omega} v_{\varepsilon}|\nabla u_{\varepsilon }|^2-(3-l)(l-2) \int_{\Omega} u_{\varepsilon} v_{\varepsilon} \nabla u_{\varepsilon }\cdot \nabla v_{\varepsilon } \nonumber\\
& +(3-l)\int_{\Omega}u^{3-l}_{\varepsilon}v_{\varepsilon } \quad \text { for all } t \in(0, T_{\max, \varepsilon}) \text { and } \varepsilon \in(0,1).
\end{align} 

In this case, we need to consider two subcases. 

Subcase 1: $1 \leqslant l<2$ or $l>3$. From \eqref{0921-1843a} and Young’s inequality, we obtain
\begin{small}
\begin{align*}
\dt \int_{\Omega} u^{3-l}_{\varepsilon} + (l-3)(l-2)\int_{\Omega} v_{\varepsilon}|\nabla u_{\varepsilon }|^2 
= &  (l-3)(l-2) \int_{\Omega} u_{\varepsilon} v_{\varepsilon} \nabla u_{\varepsilon }\cdot \nabla v_{\varepsilon }+(3-l)\int_{\Omega}u^{3-l}_{\varepsilon}v_{\varepsilon } \nonumber\\
\leqslant  & \frac{(l-3)(l-2)}{4}\int_{\Omega} v_{\varepsilon}|\nabla u_{\varepsilon }|^2  + (l-3)(l-2) \int_{\Omega} u^{2}_{\varepsilon} v_{\varepsilon}\left|\nabla v_{\varepsilon}\right|^2 \nonumber\\
& + (3-l) \int_{\Omega}u^{3-l}_{\varepsilon} v_{\varepsilon},
\end{align*} 
\end{small}
so that
\begin{align}\label{0921-1104}
\frac{4b}{(l-3)(l-2)}  \dt \int_{\Omega} u^{3-l}_{\varepsilon} +3 b \int_{\Omega} v_{\varepsilon}\left|\nabla u_{\varepsilon}\right|^2  \leqslant & 4b  \int_{\Omega} u_{\varepsilon}^{2} v_{\varepsilon}\left|\nabla v_{\varepsilon}\right|^2  -\frac{4 b}{l-2}  \int_{\Omega} u^{3-l}_{\varepsilon} v_{\varepsilon}
\end{align} 
for all $t \in(0, T_{\max, \varepsilon})$ and $\varepsilon \in(0,1)$. We set
\begin{align*}
G_{\varepsilon}(t)= \frac{4b}{(l-3)(l-2)}\int_{\Omega} u^{3-l}_{\varepsilon} +\int_{\Omega} \frac{\left|\nabla v_{\varepsilon}\right|^4}{v_{\varepsilon}^3}\quad \text { for all } t \in(0, T) \text { and } \varepsilon \in(0,1).
\end{align*}
Combining \eqref{-3.22aa} and \eqref{0921-1104}, we conclude that
\begin{align*}
G_{\varepsilon}^{\prime}(t) +2 \int_{\Omega} v_{\varepsilon}^{-1}\left|\nabla v_{\varepsilon}\right|^2\left|D^2 \ln v_{\varepsilon}\right|^2 + b \int_{\Omega} v_{\varepsilon}\left|\nabla u_{\varepsilon}\right|^2
\leqslant & 4b  \int_{\Omega} u_{\varepsilon}^{2} v_{\varepsilon}\left|\nabla v_{\varepsilon}\right|^2  -\frac{4 b}{l-2}  \int_{\Omega} u^{3-l}_{\varepsilon} 
\end{align*}
for all $t \in(0, T_{\max, \varepsilon})$ and $\varepsilon \in(0,1)$, which implies \eqref{0922-1558}.

Subcase 2: $2 < l<3$. From \eqref{0921-1843a} and Young’s inequality, we see that
\begin{small}
\begin{align*}
- \dt \int_{\Omega} u^{3-l}_{\varepsilon} + (3-l)(l-2)\int_{\Omega} v_{\varepsilon}|\nabla u_{\varepsilon }|^2 
= &  (3-l)(l-2) \int_{\Omega} u_{\varepsilon} v_{\varepsilon} \nabla u_{\varepsilon }\cdot \nabla v_{\varepsilon } -(3-l)\int_{\Omega}u^{3-l}_{\varepsilon}v_{\varepsilon} \nonumber\\
\leqslant  & \frac{(3-l)(l-2)}{4}\int_{\Omega} v_{\varepsilon}|\nabla u_{\varepsilon }|^2  + (3-l)(l-2) \int_{\Omega} u^{2}_{\varepsilon} v_{\varepsilon}\left|\nabla v_{\varepsilon}\right|^2 \nonumber\\
& - (3-l) \int_{\Omega}u^{3-l}_{\varepsilon}v_{\varepsilon},
\end{align*} 
\end{small}
so that
\begin{align}\label{0921-2120}
-\frac{4b}{(3-l)(l-2)} \dt \int_{\Omega} u^{3-l}_{\varepsilon} + 3b \int_{\Omega} v_{\varepsilon}|\nabla u_{\varepsilon }|^2 
\leqslant &  4 b \int_{\Omega} u^{2}_{\varepsilon} v_{\varepsilon}\left|\nabla v_{\varepsilon}\right|^2 - \frac{4 b}{l-2}   \int_{\Omega}u^{3-l}_{\varepsilon}v_{\varepsilon}\nonumber\\
\leqslant &  4 b \int_{\Omega} u^{2}_{\varepsilon} v_{\varepsilon}\left|\nabla v_{\varepsilon}\right|^2
\end{align}
for all $t \in(0, T_{\max, \varepsilon})$ and $\varepsilon \in(0,1)$. We set
\begin{align*}
G_{\varepsilon}(t)=- \frac{4b}{(3-l)(l-2)}\int_{\Omega} u^{3-l}_{\varepsilon} +\int_{\Omega} \frac{\left|\nabla v_{\varepsilon}\right|^4}{v_{\varepsilon}^3}\quad \text { for all } t \in(0, T_{\max, \varepsilon}) \text { and } \varepsilon \in(0,1).
\end{align*}
Combining \eqref{-3.22aa} and \eqref{0921-2120}, we infer that
\begin{align*}
G_{\varepsilon}^{\prime}(t)+2 \int_{\Omega} v_{\varepsilon}^{-1}\left|\nabla v_{\varepsilon}\right|^2\left|D^2 \ln v_{\varepsilon}\right|^2  + b \int_{\Omega} v_{\varepsilon}\left|\nabla u_{\varepsilon}\right|^2 
\leqslant &  4b  \int_{\Omega} u_{\varepsilon}^{2} v_{\varepsilon}\left|\nabla v_{\varepsilon}\right|^2 
\end{align*}
for all $t \in(0, T_{\max, \varepsilon})$ and $\varepsilon \in(0,1)$, which implies \eqref{0922-1561}.

Case II: $l=2$. Multiplying the first equation in \eqref{sys-regul} by $1+\ln u_{\varepsilon}$, we use Cauchy-Schwarz inequality and $\ln \xi \leqslant \xi$ for all $\xi>0$ to obtain
\begin{align*}
\dt \int_{\Omega} u_{\varepsilon} \ln u_{\varepsilon}+\int_{\Omega} v_{\varepsilon}\left|\nabla u_{\varepsilon}\right|^2 
= &  \int_{\Omega} u_{\varepsilon} v_{\varepsilon} \nabla u_{\varepsilon} \cdot \nabla v_{\varepsilon}+ \int_{\Omega} u_{\varepsilon}v_{\varepsilon}     + \int_{\Omega} u_{\varepsilon} v_{\varepsilon} \ln u_{\varepsilon}\nonumber\\
\leqslant &   \frac{1}{4} \int_{\Omega} v_{\varepsilon}\left|\nabla u_{\varepsilon}\right|^2+  \int_{\Omega} u_{\varepsilon}^{2} v_{\varepsilon}\left|\nabla v_{\varepsilon}\right|^2 + \int_{\Omega} u_{\varepsilon}v_{\varepsilon} + \int_{\Omega} u^2_{\varepsilon} v_{\varepsilon}
\end{align*}
so that
\begin{align}\label{0921-1256}
4b \dt \int_{\Omega} u_{\varepsilon} \ln u_{\varepsilon}+3b \int_{\Omega} v_{\varepsilon}\left|\nabla u_{\varepsilon}\right|^2    
\leqslant  4b \int_{\Omega} u_{\varepsilon}^{2} v_{\varepsilon}\left|\nabla v_{\varepsilon}\right|^2 + 4b \int_{\Omega} u_{\varepsilon} v_{\varepsilon} + 4b \int_{\Omega} u_{\varepsilon}^{2} v_{\varepsilon}
\end{align}
for all $t \in(0, T_{\max, \varepsilon})$ and $\varepsilon \in(0,1)$. We set
\begin{align*}
G_{\varepsilon}(t)= 4b \int_{\Omega} u_{\varepsilon} \ln u_{\varepsilon} +\int_{\Omega} \frac{\left|\nabla v_{\varepsilon}\right|^4}{v_{\varepsilon}^3}\quad \text { for all } t \in(0, T) \text { and } \varepsilon \in(0,1).
\end{align*}
Combining \eqref{-3.22aa} and \eqref{0921-1256}, we conclude that
\begin{align*}
G_{\varepsilon}^{\prime}(t) +2 \int_{\Omega} v_{\varepsilon}^{-1}\left|\nabla v_{\varepsilon}\right|^2\left|D^2 \ln v_{\varepsilon}\right|^2 + b \int_{\Omega} v_{\varepsilon}\left|\nabla u_{\varepsilon}\right|^2   
\leqslant & 4b \int_{\Omega} u_{\varepsilon}^{2} v_{\varepsilon}\left|\nabla v_{\varepsilon}\right|^2 + 4b \int_{\Omega} u_{\varepsilon} v_{\varepsilon} \nonumber\\
& + 4b \int_{\Omega} u_{\varepsilon}^{2} v_{\varepsilon},
\end{align*}
for all $t \in(0, T_{\max, \varepsilon})$ and $\varepsilon \in(0,1)$, which implies \eqref{0922-1559}.

Case III: $l=3$. Multiplying the first equation in \eqref{sys-regul} by $-u^{-1}_{\varepsilon}$, we use Cauchy-Schwarz inequality to show that
\begin{align*}
- \dt \int_{\Omega}\ln u_{\varepsilon} 
= & - \int_{\Omega} u^{-1}_{\varepsilon} \cdot \left\{\nabla \cdot \left(u^{2}_{\varepsilon} v_{\varepsilon} \nabla u_{\varepsilon }-u^{3}_{\varepsilon} v_{\varepsilon} \nabla v_{\varepsilon }\right)+ u_{\varepsilon}  v_{\varepsilon} \right\} \nonumber\\
= &  -\int_{\Omega} v_{\varepsilon}|\nabla u_{\varepsilon }|^2 + \int_{\Omega} u_{\varepsilon} v_{\varepsilon} \nabla u_{\varepsilon }\cdot \nabla v_{\varepsilon } 
-\int_{\Omega}v_{\varepsilon}\nonumber\\
\leqslant &  - \frac{3}{4} \int_{\Omega} v_{\varepsilon}\left|\nabla u_{\varepsilon}\right|^2+  \int_{\Omega} u_{\varepsilon}^{2} v_{\varepsilon}\left|\nabla v_{\varepsilon}\right|^2, 
\end{align*} 
so that
\begin{align}\label{0921-2309}
- 4b \dt \int_{\Omega}\ln u_{\varepsilon} + 3b \int_{\Omega} v_{\varepsilon}\left|\nabla u_{\varepsilon}\right|^2
\leqslant  4b \int_{\Omega} u_{\varepsilon}^{2} v_{\varepsilon}\left|\nabla v_{\varepsilon}\right|^2 
\end{align}
for all $t \in(0, T_{\max, \varepsilon})$ and $\varepsilon \in(0,1)$. We set
\begin{align*}
G_{\varepsilon}(t)= -4b \int_{\Omega} \ln u_{\varepsilon} +\int_{\Omega} \frac{\left|\nabla v_{\varepsilon}\right|^4}{v_{\varepsilon}^3}\quad \text { for all } t \in(0, T_{\max, \varepsilon}) \text { and } \varepsilon \in(0,1).
\end{align*}
Combining \eqref{-3.22aa} and \eqref{0921-2309}, we conclude that
\begin{align*}
G_{\varepsilon}^{\prime}(t) +2 \int_{\Omega} v_{\varepsilon}^{-1}\left|\nabla v_{\varepsilon}\right|^2\left|D^2 \ln v_{\varepsilon}\right|^2 + b \int_{\Omega} v_{\varepsilon}\left|\nabla u_{\varepsilon}\right|^2 + \int_{\Omega} u_{\varepsilon} v_{\varepsilon}^{-3}\left|\nabla v_{\varepsilon}\right|^4  
\leqslant  4b \int_{\Omega} u_{\varepsilon}^{2} v_{\varepsilon}\left|\nabla v_{\varepsilon}\right|^2 
\end{align*}
for all $t \in(0, T_{\max, \varepsilon})$ and $\varepsilon \in(0,1)$, which implies \eqref{0922-1560}.
\end{proof}

We are now in a position to derive a uniform estimate for $\frac{|\nabla v_{\varepsilon }|^4}{v_{\varepsilon}^3}$ in $L^\infty((0, T_{\max, \varepsilon}), L^1(\Omega))$ independent of $\varepsilon$ applying the above energy-like functional differential inequality.
\begin{lem}\label{lemma-3.8}
Let $l > 1$ and assume that \eqref{assIniVal} holds.
Then we have
\begin{align}\label{-3.29}
\int_{\Omega} \frac{|\nabla v_{\varepsilon}|^4}{v_{\varepsilon}^3} \leqslant C \quad \text { for all } t \in(0, T_{\max, \varepsilon}) \text { and } \varepsilon \in(0,1)
\end{align}
and
\begin{align}\label{-3.29xaxa}
\int_0^{T_{\max, \varepsilon}} \int_{\Omega}  v_{\varepsilon} |\nabla v_{\varepsilon}|^2 \leqslant C \quad \text { for all } \varepsilon \in(0,1)
\end{align}
as well as
\begin{align}\label{-3.32} 
\int_0^{T_{\max, \varepsilon}}  \int_{\Omega} \frac{\left|\nabla v_{\varepsilon}\right|^6}{v_{\varepsilon}^5} \leqslant C \quad \text { for all } \varepsilon \in(0,1),
\end{align}
where $C$ is a positive constant depending on $-\int_{\Omega} \ln u_0$, $\int_{\Omega} u_0$, $\int_{\Omega} v_0$, $\int_{\Omega} |\nabla v_{0 }|^2$, $\int_{\Omega} \frac{|\nabla v_{0 }|^2}{v_0}$ and $\int_{\Omega} \frac{\left|\nabla v_{0}\right|^4}{v_{0}^3}$, but independent of $\varepsilon$. 
\end{lem}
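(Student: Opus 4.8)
The idea is to combine the four differential inequalities of Lemma~\ref{lemma-3.6} into a single Grönwall argument for $G_\varepsilon$, the only real obstruction being the term $4b\int_\Omega u_\varepsilon^2 v_\varepsilon|\nabla v_\varepsilon|^2$ common to the right-hand sides of \eqref{0922-1558}--\eqref{0922-1560}. To dispose of it I would invoke Lemma~\ref{lemma-3.5} with $p=1$, $\varphi=u_\varepsilon$, $\psi=v_\varepsilon$ and a small $\eta$ (say $\eta=\tfrac18$), using \eqref{-2.9} to replace $\|v_\varepsilon\|_{L^\infty(\Omega)}$ by $\|v_0\|_{L^\infty(\Omega)}$ and \eqref{-2.8} to bound $\int_\Omega u_\varepsilon$. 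This produces
\begin{align*}
4b\int_\Omega u_\varepsilon^2 v_\varepsilon|\nabla v_\varepsilon|^2
\leqslant{}& \tfrac{b}{2}\int_\Omega v_\varepsilon|\nabla u_\varepsilon|^2
+ C_1\Big(\int_\Omega u_\varepsilon^2 v_\varepsilon\Big)\int_\Omega\frac{|\nabla v_\varepsilon|^4}{v_\varepsilon^3}\\
&+ C_2\int_\Omega\frac{|\nabla v_\varepsilon|^4}{v_\varepsilon^3}+C_3\int_\Omega u_\varepsilon v_\varepsilon,
\end{align*}
whose first summand is absorbed by the dissipation term $b\int_\Omega v_\varepsilon|\nabla u_\varepsilon|^2$ sitting on the left of each inequality of Lemma~\ref{lemma-3.6}.

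After this absorption every case reduces to an inequality of the shape
\begin{align*}
G_\varepsilon'(t)+\tfrac{b}{2}\int_\Omega v_\varepsilon|\nabla u_\varepsilon|^2
&+2\int_\Omega v_\varepsilon^{-1}|\nabla v_\varepsilon|^2|D^2\ln v_\varepsilon|^2\\
&\leqslant a(t)\int_\Omega\frac{|\nabla v_\varepsilon|^4}{v_\varepsilon^3}+g(t),
\end{align*}
where $a(t)=C_1\int_\Omega u_\varepsilon^2 v_\varepsilon+C_2$ and $g(t)$ gathers the leftover reaction terms $\int_\Omega u_\varepsilon v_\varepsilon$, $\int_\Omega u_\varepsilon^2 v_\varepsilon$ and $\int_\Omega u_\varepsilon^{3-l}v_\varepsilon$. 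The decisive observation is that $a$ and $g$ belong to $L^1(0,T_{\max,\varepsilon})$ uniformly in $\varepsilon$: $\int_0^{T_{\max,\varepsilon}}\!\int_\Omega u_\varepsilon^2 v_\varepsilon\leqslant C$ by Lemma~\ref{lemma-3.7}, $\int_0^{T_{\max,\varepsilon}}\!\int_\Omega u_\varepsilon v_\varepsilon\leqslant\int_\Omega v_0$ by \eqref{-2.10}, and $\int_\Omega u_\varepsilon^{3-l}v_\varepsilon$ is dominated by $\int_\Omega u_\varepsilon^2 v_\varepsilon$ and $\int_\Omega u_\varepsilon v_\varepsilon$ via Young's inequality (the term being simply discarded whenever its sign is favourable). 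Moreover, inspecting each of the four regimes shows that $\int_\Omega\frac{|\nabla v_\varepsilon|^4}{v_\varepsilon^3}\leqslant G_\varepsilon(t)+C_4$, the constant $C_4$ bounding the sign-adjusted $u_\varepsilon$-part of $G_\varepsilon$ through \eqref{-2.8} together with the elementary estimates $s\ln s\geqslant -e^{-1}$ and $\ln s\leqslant s$.

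Substituting this last bound yields $G_\varepsilon'(t)\leqslant a(t)G_\varepsilon(t)+\big(a(t)C_4+g(t)\big)$ with both $a$ and $aC_4+g$ integrable over $(0,T_{\max,\varepsilon})$ uniformly in $\varepsilon$; since $G_\varepsilon(0)$ is bounded independently of $\varepsilon$ by the hypotheses \eqref{assIniVal} on $u_0$ (via $\ln u_0$ resp. $u_0^{2-l}$) and the finiteness of $\int_\Omega\frac{|\nabla v_0|^4}{v_0^3}$, Grönwall's lemma gives $G_\varepsilon(t)\leqslant C$, hence \eqref{-3.29}. Integrating the same inequality in time and using $G_\varepsilon\geqslant -C$ makes both dissipation integrals finite; coupling $\int_0^{T_{\max,\varepsilon}}\!\int_\Omega v_\varepsilon^{-1}|\nabla v_\varepsilon|^2|D^2\ln v_\varepsilon|^2\leqslant C$ with \eqref{-3.21a} then gives \eqref{-3.32}. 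The remaining bound \eqref{-3.29xaxa} is independent of this scheme: testing the second equation of \eqref{sys-regul} with $v_\varepsilon^2$ gives $\tfrac13\tfrac{d}{dt}\int_\Omega v_\varepsilon^3+2\int_\Omega v_\varepsilon|\nabla v_\varepsilon|^2+\int_\Omega u_\varepsilon v_\varepsilon^3=0$, so that $\int_0^{T_{\max,\varepsilon}}\!\int_\Omega v_\varepsilon|\nabla v_\varepsilon|^2\leqslant\tfrac16\int_\Omega v_0^3$.

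The main obstacle, and the heart of the argument, is the term $\int_\Omega u_\varepsilon^2 v_\varepsilon|\nabla v_\varepsilon|^2$: it must be split so that the factor multiplying the still-unknown quantity $\int_\Omega\frac{|\nabla v_\varepsilon|^4}{v_\varepsilon^3}$ is integrable in time. Lemma~\ref{lemma-3.5} is designed precisely for this, delivering the coefficient $\int_\Omega u_\varepsilon^2 v_\varepsilon$, which Lemma~\ref{lemma-3.7} guarantees to lie in $L^1_t$ uniformly in $\varepsilon$; this $L^1_t$ structure is exactly what Grönwall's lemma needs to close the estimate.
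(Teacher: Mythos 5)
There is a genuine gap in your Gr\"{o}nwall step, and it concerns exactly the point of the lemma: uniformity in time on the whole interval $(0,T_{\max,\varepsilon})$. When you apply Lemma~\ref{lemma-3.5} with $p=1$, its third term produces
$c\,\|v_{\varepsilon}\|_{L^{\infty}(\Omega)}^2\cdot\{\int_\Omega u_{\varepsilon}\}^{3}\cdot\int_\Omega \frac{|\nabla v_{\varepsilon}|^4}{v_{\varepsilon}^3}$,
whose coefficient is bounded via \eqref{-2.8} and \eqref{-2.9} by a \emph{constant} that does not decay in time; this is the $C_2$ in your $a(t)=C_1\int_\Omega u_{\varepsilon}^2v_{\varepsilon}+C_2$. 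Your ``decisive observation'' that $a\in L^1(0,T_{\max,\varepsilon})$ uniformly in $\varepsilon$ is therefore false: $T_{\max,\varepsilon}$ may be infinite (the paper later proves $T_{\max,\varepsilon}=\infty$), and a positive constant is not integrable on $(0,\infty)$. With your allocation of this term to the multiplier, Gr\"{o}nwall yields only $G_{\varepsilon}(t)\leqslant C e^{C_2 t}$, which is not \eqref{-3.29}; for the same reason $\int_0^{T_{\max,\varepsilon}}\bigl(a C_4+g\bigr)$ is infinite, so the finiteness of the two dissipation integrals, and hence \eqref{-3.32}, does not follow either. The missing ingredient is the space-time estimate \eqref{-3.10} of Lemma~\ref{lem-2nd-est}, namely $\int_0^{T_{\max,\varepsilon}}\int_\Omega \frac{|\nabla v_{\varepsilon}|^4}{v_{\varepsilon}^3}\leqslant C$, which you never invoke: it permits (and requires) keeping the term $C_2\int_\Omega \frac{|\nabla v_{\varepsilon}|^4}{v_{\varepsilon}^3}$ in the \emph{forcing} $g(t)$, where it is uniformly $L^1$ in time, so that the Gr\"{o}nwall kernel is $a(t)=C_1\int_\Omega u_{\varepsilon}^2 v_{\varepsilon}$ alone, uniformly $L^1$ by Lemma~\ref{lemma-3.7}. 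This is precisely how the paper closes the argument; apart from this misallocation, your scheme (energy functional of Lemma~\ref{lemma-3.6}, absorption of $\tfrac{b}{2}\int_\Omega v_{\varepsilon}|\nabla u_{\varepsilon}|^2$ via Lemma~\ref{lemma-3.5} with small $\eta$, sign adjustment of the $u_{\varepsilon}$-part of $G_{\varepsilon}$ through $s\ln s\geqslant -e^{-1}$, $\ln s\leqslant s$ and \eqref{-2.8}, then Gr\"{o}nwall) coincides with the paper's.

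One further remark on \eqref{-3.29xaxa}: your trick of testing the second equation of \eqref{sys-regul} with $v_{\varepsilon}^2$, giving $\tfrac13\tfrac{d}{dt}\int_\Omega v_{\varepsilon}^3+2\int_\Omega v_{\varepsilon}|\nabla v_{\varepsilon}|^2+\int_\Omega u_{\varepsilon}v_{\varepsilon}^3=0$, is correct and more elementary than anything in the paper, and it does prove the inequality as literally printed. Be aware, however, that the paper's own proof establishes instead $\int_0^{T_{\max,\varepsilon}}\int_\Omega v_{\varepsilon}|\nabla u_{\varepsilon}|^2\leqslant C$, and it is this quantity (with $\nabla u_{\varepsilon}$, not $\nabla v_{\varepsilon}$) that is cited later in Lemma~\ref{lemma-3.9x}; the printed statement apparently carries a typo. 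The $\nabla u_{\varepsilon}$ version is exactly one of your ``dissipation integrals,'' so obtaining it again hinges on repairing the Gr\"{o}nwall step described above.
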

\begin{proof}
We use Lemma \ref{lemma-3.6} to derive the estimates in this lemma. We put $m_*=\int_{\Omega}\left(u_0+1\right)+ \int_{\Omega} v_0$ and observe that 
\begin{align}\label{-2.8x}
\int_{\Omega} u_{\varepsilon}(t) \leqslant \int_{\Omega} (u_{0}+\varepsilon)+ \int_{\Omega} v_{0} \leqslant m_*, \quad \text { for all } t \in\left(0, T_{\max, \varepsilon}\right) 
\end{align}
according to \eqref{-2.8}. An application of \eqref{eq-6.4} with $p=1$ and $\eta =\frac{1}{8 }$ provides $c_1>0$ such that
\begin{align}\label{1008-0954}
4b  \int_\Omega u_{\varepsilon}^{2} v_{\varepsilon} |\nabla v_{\varepsilon}|^2
\leqslant & \frac{b}{2} \int_\Omega  v_{\varepsilon}|\nabla u_{\varepsilon}|^2 + \left\{c_1b \left\|v_{\varepsilon} \right\|_{L^{\infty}(\Omega)}+8 c_1 b  \left\|v_{\varepsilon} \right\|^3_{L^{\infty}(\Omega)}\right\} \cdot \int_{\Omega} u_{\varepsilon}^{2} v_{\varepsilon}  \cdot \int_\Omega \frac{|\nabla v_{\varepsilon}|^4}{v_{\varepsilon}^3} \nonumber\\
&+c_1 b  \left\|v_{\varepsilon} \right\|^2_{L^{\infty}(\Omega)} \cdot\left\{\int_\Omega u_{\varepsilon}\right\}^{3} \cdot \int_\Omega \frac{|\nabla v_{\varepsilon}|^4}{v_{\varepsilon}^3}+c_1 b  \left\|v_{\varepsilon} \right\|^2_{L^{\infty}(\Omega)}\cdot \int_{\Omega} u_{\varepsilon} v_{\varepsilon}.
\end{align}

Case I: $ 1 < l < 2$. Substituting \eqref{1008-0954} into \eqref{0922-1558}, thanks to \eqref{-2.9}, \eqref{-2.8x} and Young’s inequality, for all $t \in(0, T_{\max, \varepsilon})$ and $\varepsilon \in(0,1)$, we obtain 
\begin{align}\label{-3.33}
G_{\varepsilon}^{\prime}(t) +2 \int_{\Omega} v_{\varepsilon}^{-1}\left|\nabla v_{\varepsilon}\right|^2\left|D^2 \ln v_{\varepsilon}\right|^2+ \frac{b}{2} \int_{\Omega} v_{\varepsilon}\left|\nabla u_{\varepsilon}\right|^2 
\leqslant & B \int_{\Omega} u_{\varepsilon}^{2}  v_{\varepsilon} \cdot \int_\Omega \frac{|\nabla v_{\varepsilon}|^4}{v_{\varepsilon}^3} +B \int_\Omega \frac{|\nabla v_{\varepsilon}|^4}{v_{\varepsilon}^3}\nonumber\\
& +B \int_{\Omega} u_{\varepsilon} v_{\varepsilon} + \frac{4 b}{2-l}  \int_{\Omega} u^{3-l}_{\varepsilon} v_{\varepsilon} \nonumber\\
\leqslant & B \int_{\Omega} u_{\varepsilon}^{2} v_{\varepsilon}  \cdot \left(D \int_{\Omega} u_{\varepsilon}^{3-l}+\int_\Omega \frac{|\nabla v_{\varepsilon}|^4}{v_{\varepsilon}^3} \right)\nonumber\\
& +B \int_\Omega \frac{|\nabla v_{\varepsilon}|^4}{v_{\varepsilon}^3}+B \int_{\Omega} u_{\varepsilon} v_{\varepsilon} \nonumber\\
& + \frac{4 b}{2-l}  \int_{\Omega} u^{2}_{\varepsilon} v_{\varepsilon}  + \frac{4 b}{2-l}  \int_{\Omega} u_{\varepsilon} v_{\varepsilon}\nonumber\\
= &  B  \int_{\Omega} u_{\varepsilon}^{2}   \cdot G_{\varepsilon} (t)+B \int_\Omega \frac{|\nabla v_{\varepsilon}|^4}{v_{\varepsilon}^3}\nonumber\\
& +\left(B + \frac{4 b}{2-l}\right)\int_{\Omega} u_{\varepsilon} v_{\varepsilon} \nonumber\\
& + \frac{4 b}{2-l}  \int_{\Omega} u^{2}_{\varepsilon} v_{\varepsilon},
\end{align}
where $B=\max \left\{c_1 b\left\|v_{0} \right\|_{L^{\infty}(\Omega)}+8 c_1 b \left\|v_{0} \right\|^4_{L^{\infty}(\Omega)}, c_1b m_*^3 \left\|v_{0} \right\|^2_{L^{\infty}(\Omega)}, c_1b \left\|v_{0} \right\|^2_{L^{\infty}(\Omega)} \right\}$ and $D=\frac{4b}{(l-3)(l-2)}$. We set
\begin{align*}
Z_{1 \varepsilon}(t)= B\int_{\Omega} u_{\varepsilon}^{2} v_{\varepsilon} \quad \text { for all } t \in(0, T) \text { and } \varepsilon \in(0,1) 
\end{align*}
and
\begin{align*}
M_{1\varepsilon}(t) = & B \int_\Omega \frac{|\nabla v_{\varepsilon}|^4}{v_{\varepsilon}^3} + \left(B + \frac{4 b}{2-l}\right) \int_{\Omega} u_{\varepsilon} v_{\varepsilon} + \frac{4 b}{2-l}  \int_{\Omega} u^{2}_{\varepsilon}v_{\varepsilon}
\end{align*}
for all $t \in(0, T_{\max, \varepsilon})$ and $\varepsilon \in(0,1)$. Then we rewrite \eqref{-3.33} as follows
\begin{align*}
G_{\varepsilon}^{\prime}(t) \leqslant  Z_{1\varepsilon}(t) G_{\varepsilon}(t)+M_{1\varepsilon}(t) \quad \text { for all } t \in(0, T_{\max, \varepsilon}) \text { and } \varepsilon \in(0,1). 
\end{align*}
Integrating this differential inequality gives
\begin{align}\label{-3.34xx}
G_{\varepsilon}(t)  & \leqslant G_{\varepsilon}(0) e^{\int_0^t Z_{1 \varepsilon}(s) \d s}+\int_0^t e^{\int_s^t Z_{1 \varepsilon}(\sigma) \d \sigma} M_{1 \varepsilon}(s) \d s 
\end{align}
for all $t \in(0, T_{\max, \varepsilon})$ and $\varepsilon \in(0,1)$. Since 
\begin{align*}
\int_s^t Z_{1 \varepsilon}(\sigma) \d \sigma \leqslant  c_2 \quad \text { for all } t \in\left(0, T_{\max, \varepsilon}\right),~ s \in[0, t) \text {  and   } \varepsilon \in(0,1)
\end{align*}
by Lemma \ref{lemma-3.7}, and 
\begin{align*}
\int_0^t M_{1 \varepsilon}(s) \d s \leqslant  c_3 \quad \text { for all } t \in\left(0, T_{\max, \varepsilon} \right) \text { and } \varepsilon \in(0,1)
\end{align*}
due to \eqref{-2.10}, \eqref{-3.10} and \eqref{-3.25xxx}, from \eqref{-3.34xx} we conclude that 
\begin{align}\label{0908-1147}
G_{\varepsilon}(t) & \leqslant  \left\{\frac{4b}{(l-3)(l-2)} \int_{\Omega} (u_{0}+1)^{3-l}+\int_{\Omega} \frac{\left|\nabla v_{0}\right|^4}{v_{0}^3}\right\} \cdot e^{c_2}
+c_3 e^{c_2}
\end{align}
for all $t \in(0, T_{\max, \varepsilon})$ and $\varepsilon \in(0,1)$, which establishes \eqref{-3.29}. Similarly, we can derive \eqref{-3.29} for the cases where $2 < l < 3$ or $l>3$.

Case II: $l = 2$. Plugging \eqref{1008-0954} into \eqref{0922-1559} and using $\xi \ln \xi +\frac{1}{e}\geqslant 0$ and $\ln \xi - \xi \leqslant 0$ for all $\xi>0$, thanks to \eqref{-2.9}, \eqref{-2.8x} and Young’s inequality, for all $t \in(0, T_{\max, \varepsilon})$ and $\varepsilon \in(0,1)$, we obtain 
\begin{align}\label{-3.33aa}
G_{\varepsilon}^{\prime}(t) +2 \int_{\Omega} v_{\varepsilon}^{-1}\left|\nabla v_{\varepsilon}\right|^2\left|D^2 \ln v_{\varepsilon}\right|^2 + \frac{b}{2} \int_{\Omega} v_{\varepsilon}\left|\nabla u_{\varepsilon}\right|^2  \leqslant & B \int_{\Omega} u_{\varepsilon}^{2} v_{\varepsilon}  \cdot \int_\Omega \frac{|\nabla v_{\varepsilon}|^4}{v_{\varepsilon}^3} +B \int_\Omega \frac{|\nabla v_{\varepsilon}|^4}{v_{\varepsilon}^3} \nonumber\\
& +B \int_{\Omega} u_{\varepsilon} v_{\varepsilon}  + 4b \int_{\Omega} u_{\varepsilon} v_{\varepsilon}   +   4b \int_{\Omega} u_{\varepsilon}^{2} v_{\varepsilon}\nonumber\\
\leqslant & B \int_{\Omega} u_{\varepsilon}^{2} v_{\varepsilon}  \cdot \left(G_{\varepsilon} (t)+\frac{4b|\Omega|}{e} \right) \nonumber\\
&  +B \int_\Omega \frac{|\nabla v_{\varepsilon}|^4}{v_{\varepsilon}^3}+(B+4b) \int_{\Omega} u_{\varepsilon} v_{\varepsilon}     \nonumber\\
& +   4b \int_{\Omega} u_{\varepsilon}^{2} v_{\varepsilon} \nonumber\\
= &  B  \int_{\Omega} u_{\varepsilon}^{2} v_{\varepsilon}  \cdot G_{\varepsilon} (t)+B \int_\Omega \frac{|\nabla v_{\varepsilon}|^4}{v_{\varepsilon}^3} \nonumber\\
& +(B+4b) \int_{\Omega} u_{\varepsilon} v_{\varepsilon} \nonumber\\
& +4b \left(\frac{|\Omega|B}{e}+1\right) \int_{\Omega} u^2_{\varepsilon} v_{\varepsilon}
\end{align}
We set
\begin{align*}
Z_{2 \varepsilon}(t)= B\int_{\Omega} u_{\varepsilon}^{2}v_{\varepsilon} \quad \text { for all } t \in\left(0, T_{\max, \varepsilon}\right) \text { and } \varepsilon \in(0,1)
\end{align*}
and
\begin{align*}
M_{2 \varepsilon}(t) = B \int_\Omega \frac{|\nabla v_{\varepsilon}|^4}{v_{\varepsilon}^3} + (B+4b)\int_{\Omega} u_{\varepsilon} v_{\varepsilon}  +4b\left(1+ \frac{|\Omega|B}{e}\right) \int_{\Omega} u^2_{\varepsilon} v_{\varepsilon}
\end{align*}
for all $t \in(0, T_{\max, \varepsilon})$ and $\varepsilon \in(0,1)$. Then we rewrite \eqref{-3.33aa} as follows
\begin{align*}
G_{\varepsilon}^{\prime}(t) \leqslant  Z_{2 \varepsilon}(t) G_{\varepsilon}(t)+M_{2 \varepsilon}(t) \quad \text { for all } t \in(0, T_{\max, \varepsilon}) \text { and } \varepsilon \in(0,1). 
\end{align*}
Integrating this differential inequality gives
\begin{align}\label{-3.34}
G_{\varepsilon}(t)  & \leqslant G_{\varepsilon}(0) e^{\int_0^t Z_{2 \varepsilon}(s) \d s}+\int_0^t e^{\int_s^t Z_{2 \varepsilon}(\sigma) \d \sigma} M_{2 \varepsilon}(s) \d s
\end{align}
for all $t \in(0, T_{\max, \varepsilon})$ and $\varepsilon \in(0,1)$. Since 
\begin{align*}
\int_s^t Z_{2 \varepsilon}(\sigma) \d \sigma \leqslant  c_4 \quad \text { for all } t \in\left(0, T_{\max, \varepsilon}\right),~ s \in[0, t) \text {  and   } \varepsilon \in(0,1)
\end{align*}
by Lemma \ref{lemma-3.7}, and 
\begin{align*}
\int_0^t M_{2 \varepsilon}(s) \d s \leqslant  c_5 \quad \text { for all } t \in\left(0, T_{\max, \varepsilon}\right) \text { and } \varepsilon \in(0,1)
\end{align*}
due to \eqref{-2.10}, \eqref{-3.10} and \eqref{-3.25xxx}, from \eqref{-3.34} we conclude that 
\begin{align}\label{0908-1147a}
G_{\varepsilon}(t) & \leqslant  \left\{4b \int_{\Omega} (u_{0}+1)\ln (u_{0}+1)+\int_{\Omega} \frac{\left|\nabla v_{0}\right|^4}{v_{0}^3}\right\} \cdot e^{c_4}
+c_5 e^{c_4}
\end{align}
for all $t \in(0, T_{\max, \varepsilon})$ and $\varepsilon \in(0,1)$, which establishes \eqref{-3.29}. In a similar manner, we can derive \eqref{-3.29} for the case where $l = 3$.

By a direct integration in \eqref{-3.33} or \eqref{-3.33aa}, thanks to \eqref{-2.10}, \eqref{-3.10}, \eqref{-3.25xxx}  and \eqref{0908-1147} or \eqref{0908-1147a}, we see that
\begin{align*}
\int_0^{T_{\max, \varepsilon}} \int_{\Omega} v_{\varepsilon}^{-1}\left|\nabla v_{\varepsilon}\right|^2\left|D^2 \ln v_{\varepsilon}\right|^2 \leqslant C\quad \text { for all } \varepsilon \in(0,1)
\end{align*}
and
\begin{align*}
\int_0^{T_{\max, \varepsilon}} \int_{\Omega} v_{\varepsilon}\left|\nabla u_{\varepsilon}\right|^2  \leqslant C\quad \text { for all } \varepsilon \in(0,1).
\end{align*}
Combining these two inequalities with \eqref{-3.23a}, we show that
\begin{align*}
\int_0^{T_{\max, \varepsilon}} \int_{\Omega} \frac{\left|\nabla v_{\varepsilon}\right|^6}{v_{\varepsilon}^5} \leqslant C \quad \text { for all } \varepsilon \in(0,1).
\end{align*}
We complete the proof.
\end{proof}

In view of \eqref{0704-0024} and \eqref{-3.29}, we can derive the following lemma.
%%%%%%%%%%%%%%%%%%%%%%%%%%%%%%%%%%%%%%%%%%%%%%%%%%%%%%%%%%%%%%%%%%
\begin{lem}\label{lemma-3.9x}
Let $1<l \leqslant 3$ and assume that \eqref{assIniVal} holds.
Then for all $p \geqslant 2$ we have 
\begin{align}\label{-3.36}
\int_{\Omega} u_{\varepsilon}^p(t) \leqslant C(p) \quad \text { for all } t \in(0, T_{\max, \varepsilon}) \text { and } \varepsilon \in(0,1)
\end{align}
\begin{align}\label{4.7-6111}
\int_0^{T_{\max, \varepsilon}} \int_{\Omega} u_{\varepsilon}^{p} v_{\varepsilon} \leqslant  C(p) \quad \text { for all } \varepsilon \in(0,1)
\end{align}
and
\begin{align}\label{-3.37}
\int_0^{T_{\max, \varepsilon}} \int_{\Omega} u_{\varepsilon}^{p+l-3} v_{\varepsilon} \left|\nabla u_{\varepsilon}\right|^2 \leqslant  C(p) \quad \text { for all }  \varepsilon \in(0,1),
\end{align}
where $C(p)$ is a positive constant depending on $-\int_{\Omega} \ln u_0$, $\int_{\Omega} u_0$, $\int_{\Omega} v_0$, $\int_{\Omega} |\nabla v_{0 }|^2$, $\int_{\Omega} \frac{|\nabla v_{0 }|^2}{v_0}$ and $\int_{\Omega} \frac{\left|\nabla v_{0}\right|^4}{v_{0}^3}$, but independent of $\varepsilon$. 
\end{lem}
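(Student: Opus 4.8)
The plan is to test the first equation of \eqref{sys-regul} against $u_\varepsilon^{p-1}$ --- which is precisely what produces the differential inequality \eqref{0704-0024} of Lemma \ref{lemma-3.9xx} --- and then to convert \eqref{0704-0024} into a Gr\"onwall inequality for $y(t):=\int_\Omega u_\varepsilon^p$ by exploiting the uniform-in-time bound \eqref{-3.29}. First I would use \eqref{-3.29} to replace the factor $\int_\Omega\frac{|\nabla v_\varepsilon|^4}{v_\varepsilon^3}$ occurring in the first term on the right of \eqref{0704-0024} by a constant, and bound $\{\int_\Omega u_\varepsilon\}^{2p+2l-3}\le m_*^{2p+2l-3}$ by \eqref{-2.8}, where $m_*=\int_\Omega(u_0+1)+\int_\Omega v_0$. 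Writing $G_\varepsilon(t):=\int_\Omega\frac{u_\varepsilon}{v_\varepsilon}|\nabla v_\varepsilon|^2+\int_\Omega\frac{v_\varepsilon}{u_\varepsilon}|\nabla u_\varepsilon|^2+\int_\Omega u_\varepsilon v_\varepsilon$, the estimates \eqref{-3.5aa}, \eqref{-3.9} and \eqref{-2.10} show that $G_\varepsilon\in L^1(0,T_{\max,\varepsilon})$ uniformly in $\varepsilon$, while \eqref{-3.10} and \eqref{-2.10} show that the genuinely lower-order contributions, collected into $H_\varepsilon(t)$, are uniformly $L^1$ in time as well.

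The only dangerous term is $\int_\Omega u_\varepsilon^{p+l-1}v_\varepsilon$. Here I would invoke Lemma \ref{lemma-3.4} with $\varphi=u_\varepsilon$, $\psi=v_\varepsilon$ and exponent $p+l-2$, which gives $\int_\Omega u_\varepsilon^{p+l-1}v_\varepsilon\le c\,G_\varepsilon(t)\int_\Omega u_\varepsilon^{p+l-2}$. Collecting everything, \eqref{0704-0024} acquires the schematic form $\dt\int_\Omega u_\varepsilon^p+\frac{p(p-1)}{4}\int_\Omega u_\varepsilon^{p+l-3}v_\varepsilon|\nabla u_\varepsilon|^2\le c\,G_\varepsilon(t)\big(\int_\Omega u_\varepsilon^{p+l-2}+\int_\Omega u_\varepsilon^p\big)+H_\varepsilon(t)$ with $G_\varepsilon,H_\varepsilon\in L^1(0,T_{\max,\varepsilon})$ uniformly in $\varepsilon$. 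When $1<l\le2$ one has $p+l-2\le p$, so Young's inequality yields $\int_\Omega u_\varepsilon^{p+l-2}\le\int_\Omega u_\varepsilon^p+|\Omega|$ and the inequality closes: it reads $y'(t)\le\widetilde G_\varepsilon(t)(y(t)+1)+H_\varepsilon(t)$ with $\widetilde G_\varepsilon\in L^1$ uniformly, whence Gr\"onwall's lemma gives \eqref{-3.36} with a constant independent of $\varepsilon$ and $t$ (the datum $y(0)=\int_\Omega(u_0+\varepsilon)^p$ being bounded). A subsequent integration in time then yields the dissipation bound \eqref{-3.37}, and \eqref{4.7-6111} follows from one more application of Lemma \ref{lemma-3.4} with exponent $p-1$ once \eqref{-3.36} is available.

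The hard part is the range $2<l\le3$, where $p<p+l-2\le p+1$, so $\int_\Omega u_\varepsilon^{p+l-2}$ is \emph{supercritical} relative to the Gr\"onwall variable $y=\int_\Omega u_\varepsilon^p$. At this stage the only uniform-in-time control of $u_\varepsilon$ is the mass bound \eqref{-2.8}, and the degeneracy of the weight $v_\varepsilon$ prevents converting the weighted dissipation $\int_\Omega u_\varepsilon^{p+l-3}v_\varepsilon|\nabla u_\varepsilon|^2$ into control of an unweighted power exceeding $p$; direct absorption into the dissipation also fails, since the relevant weighted-Sobolev constant carries a factor growing too fast in $p$ to be beaten by the coefficient $\frac{p(p-1)}{4}$. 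This borderline is exactly what forces the threshold $l\le3$ and constitutes the technical heart of the argument. I would resolve it by a bootstrap: interpolate $\int_\Omega u_\varepsilon^{p+l-2}\le\delta\int_\Omega u_\varepsilon^{p+1}+C_\delta$ and control the surplus power $\int_\Omega u_\varepsilon^{p+1}$ along a finite increasing chain of exponents, at each link combining the two-dimensional weighted Gagliardo--Nirenberg inequality of Lemma \ref{lemma-3.4} with the space--time estimates already secured at the preceding link (of the types \eqref{-3.25xxx} and \eqref{-3.37}), and tuning $\delta$ of order $p^{-2}$ so that the extra dissipation generated is reabsorbed on the left. The delicate point throughout is to keep every Gr\"onwall coefficient uniformly integrable in time and independent of $\varepsilon$ as the chain of exponents is traversed.
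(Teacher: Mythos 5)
Your treatment of the range $1<l\leqslant 2$ is correct and essentially identical to the paper's Case I (the paper applies Lemma \ref{lemma-3.4} at exponent $p$ after the pointwise bound $u_\varepsilon^{p+l-1}\leqslant u_\varepsilon^{p+1}+1$, you apply it at exponent $p+l-2$ and then use $p+l-2\leqslant p$; the two are interchangeable), and your derivations of \eqref{4.7-6111} and \eqref{-3.37} once \eqref{-3.36} is available are also fine. The genuine gap is the case $2<l\leqslant 3$, which you rightly identify as the heart of the matter but do not resolve: the bootstrap you sketch cannot close. First, the chain of exponents runs the wrong way: after Lemma \ref{lemma-3.4}, the Gr\"onwall inequality at level $p$ carries the term $G_\varepsilon(t)\int_\Omega u_\varepsilon^{p+l-2}$ with $p+l-2>p$, so each link of your chain needs uniform-in-time control at a \emph{strictly higher} exponent than the one it produces; the only unconditional starting point is the mass bound \eqref{-2.8}, and from a secured level $q$ this mechanism (or the space--time bounds of type \eqref{-3.25xxx}, \eqref{4.7-6111}, \eqref{-3.37} at level $q$) only yields levels $p\leqslant q-(l-2)<q$, never higher ones. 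Second, tuning $\delta$ of order $p^{-2}$ cannot rescue the absorption, because the obstruction is not the size of constants in $p$: the coefficient $G_\varepsilon(t)$ multiplying $\delta\int_\Omega u_\varepsilon^{p+1}$ is only $L^1$ in time, not bounded, so no fixed $\delta$ gives a pointwise-in-time absorption into the dissipation --- and, as you yourself observe, the degenerate weight blocks converting $\int_\Omega u_\varepsilon^{p+l-3}v_\varepsilon|\nabla u_\varepsilon|^2$ into an unweighted power above $p$ in any case. Third, at the endpoint $l=3$ your interpolation $\int_\Omega u_\varepsilon^{p+l-2}\leqslant\delta\int_\Omega u_\varepsilon^{p+1}+C_\delta$ is vacuous, since $p+l-2=p+1$.

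The missing ingredients, which is how the paper actually closes Case II, are the two further a priori estimates prepared in Lemma \ref{lemma-3.8}, namely $\int_0^{T_{\max,\varepsilon}}\int_\Omega \frac{|\nabla v_\varepsilon|^6}{v_\varepsilon^5}\leqslant C$ (estimate \eqref{-3.32}) and the time--space bound on $\int_\Omega v_\varepsilon|\nabla u_\varepsilon|^2$ obtained by integrating \eqref{-3.33}/\eqref{-3.33aa} (this is the role of \eqref{-3.29xaxa}), combined with the stronger functional inequality \cite[Lemma 6.1]{2024-JDE-Winkler} (with $d=0$), which gains \emph{two} powers rather than one: with $m_*=\int_\Omega(u_0+1)+\int_\Omega v_0$,
\begin{align*}
\int_\Omega u_\varepsilon^{p+2}v_\varepsilon \leqslant \Gamma(p)\,m_*\int_\Omega u_\varepsilon^{p-1}v_\varepsilon|\nabla u_\varepsilon|^2
+ \Gamma(p)\,m_*^2\int_\Omega u_\varepsilon^{p}\cdot\int_\Omega\frac{|\nabla v_\varepsilon|^6}{v_\varepsilon^5}
+ \Gamma(p)\,m_*^{p+1}\int_\Omega u_\varepsilon v_\varepsilon .
\end{align*}
Since $p-1\leqslant p+l-3$ for $l\geqslant 2$, Young's inequality splits the first term into a piece absorbed by the dissipation $\frac{p(p-1)}{4}\int_\Omega u_\varepsilon^{p+l-3}v_\varepsilon|\nabla u_\varepsilon|^2$ plus a multiple of $\int_\Omega v_\varepsilon|\nabla u_\varepsilon|^2$, which is integrable in time; the second term is a genuine Gr\"onwall term, $\int_\Omega u_\varepsilon^p$ times an $L^1$-in-time weight; the third is integrable in time by \eqref{-2.10}. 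It is exactly this two-power gain (requiring $p+l-1\leqslant p+2$, i.e. $l\leqslant 3$) that produces the threshold your bootstrap was attempting to reach, and without an inequality of this type --- which uses the sixth-order quantity $\int_\Omega|\nabla v_\varepsilon|^6/v_\varepsilon^5$ rather than the fourth-order one controlled in \eqref{-3.29} --- your argument for $2<l\leqslant 3$ does not go through.
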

%%%%%%%%%%%%%%%%%%%%%%%%%%%%%%%%%%%%%%%%%%%%%%%%%%%%%%%%%%%%%%%%%%

\begin{proof}

We employ \eqref{-3.29} to fix $c_1 > 0$ such that
\begin{align}\label{0904-923a}
\int_{\Omega} \frac{|\nabla v_{\varepsilon}|^4}{v_{\varepsilon}^3} \leqslant c_1.
\end{align}

Case I: $1<l\leqslant 2$. Plugging \eqref{-2.8x} and \eqref{0904-923a} into \eqref{0704-0024}, and using \eqref{-2.9}, \eqref{jia-1} and Young's inequality, we obtain 
\begin{align}\label{-3.43}
& \dt \int_{\Omega} u_{\varepsilon}^p 
 + \frac{p(p-1)}{4} \int_{\Omega} u_{\varepsilon}^{p+l-3} v_{\varepsilon} \left|\nabla 
u_{\varepsilon}\right|^2  \nonumber\\
\leqslant &   A \int_{\Omega} u_{\varepsilon}^{p+1} v_{\varepsilon}  \cdot \int_\Omega \frac{|\nabla v_{\varepsilon}|^4}{v_{\varepsilon}^3} +  A \int_{\Omega}v_{\varepsilon} \cdot \int_\Omega \frac{|\nabla v_{\varepsilon}|^4}{v_{\varepsilon}^3} \nonumber\\
& + A\left\{\int_{\Omega} \frac{u_{\varepsilon}}{v_{\varepsilon}}|\nabla v_{\varepsilon}|^2
  +\int_{\Omega} \frac{v_{\varepsilon}}{u_{\varepsilon}}|\nabla u_{\varepsilon}|^2
  +\int_{\Omega} u_{\varepsilon} v_{\varepsilon}  \right\} \cdot \int_{\Omega} u_{\varepsilon}^p \nonumber\\
& + A m_*^{2 p+2l-3} \cdot \int_\Omega \frac{|\nabla v_{\varepsilon}|^4}{v_{\varepsilon}^3}+ A \int_{\Omega} u_{\varepsilon} v_{\varepsilon}  \nonumber\\
\leqslant &   \left(c_1 + 1\right) A \cdot \left\{\int_{\Omega} \frac{u_{\varepsilon}}{v_{\varepsilon}}|\nabla v_{\varepsilon}|^2
  +\int_{\Omega} \frac{v_{\varepsilon}}{u_{\varepsilon}}|\nabla u_{\varepsilon}|^2
  +\int_{\Omega} u_{\varepsilon} v_{\varepsilon}  \right\} \cdot \int_{\Omega} u_{\varepsilon}^p  \nonumber\\
& + A \left( m_*^{2 p+2l-3}+ \left\|v_{0} \right\|^2_{L^{\infty}(\Omega)}|\Omega|\right) \cdot \int_\Omega \frac{|\nabla v_{\varepsilon}|^4}{v_{\varepsilon}^3}+ A \int_{\Omega} u_{\varepsilon} v_{\varepsilon}  
\end{align}
for all $t \in(0, T_{\max, \varepsilon})$ and $\varepsilon \in(0,1)$. For each $\varepsilon \in(0,1)$, we set
\begin{align*}
Y_{1 \varepsilon}(t) = \int_{\Omega} u_{\varepsilon}^p
\end{align*}
and
\begin{align*}
H_{1 \varepsilon}(t)= \left(c_1 + 1\right) A \cdot \left\{\int_{\Omega} \frac{u_{\varepsilon}}{v_{\varepsilon}}|\nabla v_{\varepsilon}|^2+\int_{\Omega} \frac{v_{\varepsilon}}{u_{\varepsilon}}|\nabla u_{\varepsilon}|^2+ \int_{\Omega} u_{\varepsilon} v_{\varepsilon} \right\}
\end{align*}
as well as
\begin{align*}
M_{1 \varepsilon}(t)= A \left( m_*^{2 p+2l-3}+ \left\|v_{0} \right\|^2_{L^{\infty}(\Omega)}|\Omega|\right) \cdot \int_\Omega \frac{|\nabla v_{\varepsilon}|^4}{v_{\varepsilon}^3}+ A \int_{\Omega} u_{\varepsilon} v_{\varepsilon}.
\end{align*}
Thus \eqref{-3.43} becomes 
\begin{align*}
Y_{1 \varepsilon}^{\prime}(t) \leqslant  H_{1 \varepsilon}(t) Y_{1 \varepsilon}(t)+ M_{1 \varepsilon}(t), \quad \text { for all } t \in(0, T_{\max, \varepsilon}) \text { and } \varepsilon \in(0,1). 
\end{align*}
Integrating this differential inequality gives
\begin{align}\label{-3.44}
Y_{1 \varepsilon}(t) \leqslant Y_{1 \varepsilon}(0) e^{\int_0^t H_{1 \varepsilon}(s) \d s}+\int_0^t e^{\int_s^t H_{1 \varepsilon}(\sigma) \d \sigma} M_{1 \varepsilon}(s) \d s 
\end{align}
for all $t \in(0, T_{\max, \varepsilon})$ and $\varepsilon \in(0,1)$. Since 
\begin{align*}
\int_s^t H_{1 \varepsilon}(\sigma) \d \sigma \leqslant c_3(p)  \quad \text { for all } t \in\left(0, T_{\max, \varepsilon}\right), s \in[0, t) \text {, and } \varepsilon \in(0,1)
\end{align*}
by \eqref{-2.10}, \eqref{-3.5aa} and \eqref{-3.9}, and
\begin{align*}
\int_0^t M_{1 \varepsilon}(s) \d s \leqslant c_4(p) \quad \text { for all } t \in\left(0, T_{\max, \varepsilon}\right)\text { and } \varepsilon \in(0,1)
\end{align*}
due to \eqref{-2.10}  and \eqref{-3.10}, from \eqref{-3.44}, we have 
\begin{align}\label{jia-3}
\int_{\Omega} u_{\varepsilon}^p(t) \leqslant & e^{c_3(p) } \int_{\Omega} u_{0}^p + c_4(p) e^{c_3(p)} \quad \text { for all } t \in\left(0, T_{\max, \varepsilon}\right)\text { and } \varepsilon \in(0,1).
\end{align}
Collecting \eqref{-2.10}, \eqref{-3.5aa}, \eqref{-3.9}, \eqref{jia-1} and \eqref{jia-3}, we  see that
\begin{align*}
\int_0^{T_{\max, \varepsilon}} \int_{\Omega} u_{\varepsilon}^{p+1} v_{\varepsilon} \leqslant  C(p)\quad \text { for all }  \varepsilon \in(0,1). 
\end{align*}
This together with Young's inequality and \eqref{-2.10} shows that
\begin{align*}
\int_0^{T_{\max, \varepsilon}} \int_{\Omega} u_{\varepsilon}^{p} v_{\varepsilon} \leqslant \int_0^{T_{\max, \varepsilon}} \int_{\Omega} u_{\varepsilon}^{p+1} v_{\varepsilon}  + \int_0^{T_{\max, \varepsilon}} \int_{\Omega} u_{\varepsilon} v_{\varepsilon} \leqslant  C(p) \quad \text { for all }  \varepsilon \in(0,1).
\end{align*}
By a direct integration in \eqref{-3.43}, combining \eqref{-2.10}, \eqref{-3.5aa}, \eqref{-3.9}, \eqref{-3.10} with \eqref{jia-3}, we conclude that
\begin{align*}
\int_0^{T_{\max, \varepsilon}} \int_{\Omega} u_{\varepsilon}^{p+l-3} v_{\varepsilon} \left|\nabla 
u_{\varepsilon}\right|^2 \leqslant  C(p)\quad \text { for all }  \varepsilon \in(0,1).
\end{align*}

Case II: $2<l\leqslant 3$. Plugging \eqref{-2.8x} and \eqref{0904-923a} into \eqref{0704-0024}, and using \eqref{-2.9} and Young's inequality, we obtain 
\begin{align}\label{-3.43xax}
&\dt \int_{\Omega} u_{\varepsilon}^p 
+ \frac{p(p-1)}{4} \int_{\Omega} u_{\varepsilon}^{p+l-3} v_{\varepsilon} \left|\nabla 
u_{\varepsilon}\right|^2  \nonumber\\
\leqslant &   A \int_{\Omega} u_{\varepsilon}^{p+2} v_{\varepsilon}  \cdot \int_\Omega \frac{|\nabla v_{\varepsilon}|^4}{v_{\varepsilon}^3} +   A \int_{\Omega} v_{\varepsilon}  \cdot \int_\Omega \frac{|\nabla v_{\varepsilon}|^4}{v_{\varepsilon}^3} \nonumber\\
& + A\left\{\int_{\Omega} \frac{u_{\varepsilon}}{v_{\varepsilon}}|\nabla v_{\varepsilon}|^2
  +\int_{\Omega} \frac{v_{\varepsilon}}{u_{\varepsilon}}|\nabla u_{\varepsilon}|^2
  +\int_{\Omega} u_{\varepsilon} v_{\varepsilon}  \right\} \cdot \int_{\Omega} u_{\varepsilon}^p \nonumber\\
& + A m_*^{2 p+2l-3} \cdot \int_\Omega \frac{|\nabla v_{\varepsilon}|^4}{v_{\varepsilon}^3}+ A \int_{\Omega} u_{\varepsilon} v_{\varepsilon} \nonumber\\
\leqslant &   A c_1 \cdot \int_{\Omega} u_{\varepsilon}^{p+2} v_{\varepsilon}   + A  \left\{\int_{\Omega} \frac{u_{\varepsilon}}{v_{\varepsilon}}|\nabla v_{\varepsilon}|^2
  +\int_{\Omega} \frac{v_{\varepsilon}}{u_{\varepsilon}}|\nabla u_{\varepsilon}|^2
  +\int_{\Omega} u_{\varepsilon} v_{\varepsilon}  \right\} \cdot \int_{\Omega} u_{\varepsilon}^p \nonumber\\
& +  A \left( m_*^{2 p+2l-3}+ \left\|v_{0} \right\|^2_{L^{\infty}(\Omega)}|\Omega|\right)  \cdot \int_\Omega \frac{|\nabla v_{\varepsilon}|^4}{v_{\varepsilon}^3}+ A \int_{\Omega} u_{\varepsilon} v_{\varepsilon} 
\end{align}
for all $t \in(0, T_{\max, \varepsilon})$ and $\varepsilon \in(0,1)$. Using \cite[Lemma 6.1]{2024-JDE-Winkler} with $d=0$, \eqref{-2.8x} and Young's inequality, we infer that
\begin{align*}
\int_{\Omega} u_{\varepsilon}^{p+2} v_{\varepsilon} \leqslant & \Gamma(p)m_*  \cdot \int_{\Omega} u_{\varepsilon}^{p-1} v_{\varepsilon}|\nabla u_{\varepsilon}|^2  +\Gamma(p) m_*^2 \cdot \int_{\Omega} u_{\varepsilon}^p \cdot \int_{\Omega} \frac{|\nabla v_{\varepsilon}|^6}{v_{\varepsilon}^5} \nonumber\\
& +\Gamma(p) m_*^{p+1} \cdot \int_{\Omega} u_{\varepsilon} v_{\varepsilon}\nonumber\\
\leqslant & \frac{p(p-1)}{8}  \cdot \int_{\Omega} u_{\varepsilon}^{p+l-3} v_{\varepsilon}|\nabla u_{\varepsilon}|^2 +\frac{2 \Gamma^2(p)m_*^2 }{p(p-1)} \int_{\Omega} v_{\varepsilon}|\nabla u_{\varepsilon}|^2 \nonumber\\
&  +\Gamma(p) m_*^2 \cdot  \int_{\Omega} \frac{|\nabla v_{\varepsilon}|^6}{v_{\varepsilon}^5} \cdot  \int_{\Omega} u_{\varepsilon}^p+\Gamma(p) m_*^{p+1} \cdot \int_{\Omega} u_{\varepsilon} v_{\varepsilon}
\end{align*}
for all $t \in(0, T_{\max, \varepsilon})$ and $\varepsilon \in(0,1)$, where $\Gamma(p)$ is a some constant. Together with above inequality, \eqref{-3.43xax} yields 
\begin{align}\label{-3.43xaxxx}
&\dt \int_{\Omega} u_{\varepsilon}^p 
+ \frac{p(p-1)}{8} \int_{\Omega} u_{\varepsilon}^{p+l-3} v_{\varepsilon} \left|\nabla 
u_{\varepsilon}\right|^2  \nonumber\\
\leqslant &   A c_1  \Gamma(p) m_*^2 \cdot  \int_{\Omega} \frac{|\nabla v_{\varepsilon}|^6}{v_{\varepsilon}^5} \cdot  \int_{\Omega} u_{\varepsilon}^p  + A  \left\{\int_{\Omega} \frac{u_{\varepsilon}}{v_{\varepsilon}}|\nabla v_{\varepsilon}|^2
  +\int_{\Omega} \frac{v_{\varepsilon}}{u_{\varepsilon}}|\nabla u_{\varepsilon}|^2
  +\int_{\Omega} u_{\varepsilon} v_{\varepsilon}  \right\} \cdot \int_{\Omega} u_{\varepsilon}^p \nonumber\\
& +  A \left( m_*^{2 p+2l-3}+ \left\|v_{0} \right\|^2_{L^{\infty}(\Omega)}|\Omega|\right)  \cdot \int_\Omega \frac{|\nabla v_{\varepsilon}|^4}{v_{\varepsilon}^3}+ (A+\Gamma(p) m_*^{p+1}) \int_{\Omega} u_{\varepsilon} v_{\varepsilon} \nonumber\\
& +\frac{2 \Gamma^2(p)m_*^2 }{p(p-1)} \int_{\Omega} v_{\varepsilon}|\nabla u_{\varepsilon}|^2 \nonumber\\
\leqslant &   A (c_1  \Gamma(p) m_*^2+1)  \left\{\int_{\Omega} \frac{u_{\varepsilon}}{v_{\varepsilon}}|\nabla v_{\varepsilon}|^2
  +\int_{\Omega} \frac{v_{\varepsilon}}{u_{\varepsilon}}|\nabla u_{\varepsilon}|^2
  +\int_{\Omega} u_{\varepsilon} v_{\varepsilon}+\int_{\Omega} \frac{|\nabla v_{\varepsilon}|^6}{v_{\varepsilon}^5}  \right\} \cdot \int_{\Omega} u_{\varepsilon}^p \nonumber\\
& +  A \left( m_*^{2 p+2l-3}+ \left\|v_{0} \right\|^2_{L^{\infty}(\Omega)}|\Omega|\right)  \cdot \int_\Omega \frac{|\nabla v_{\varepsilon}|^4}{v_{\varepsilon}^3}+ (A+\Gamma(p) m_*^{p+1}) \int_{\Omega} u_{\varepsilon} v_{\varepsilon} \nonumber\\
& +\frac{2 \Gamma^2(p)m_*^2 }{p(p-1)} \int_{\Omega} v_{\varepsilon}|\nabla u_{\varepsilon}|^2  \quad \text { for all } t \in(0, T_{\max, \varepsilon}) \text { and } \varepsilon \in(0,1).
\end{align}
For each $\varepsilon \in(0,1)$, we set
\begin{align*}
Y_{2 \varepsilon}(t) = \int_{\Omega} u_{\varepsilon}^p
\end{align*}
and
\begin{align*}
H_{2 \varepsilon}(t)= A (c_1  \Gamma(p) m_*^2+1)  \left\{\int_{\Omega} \frac{u_{\varepsilon}}{v_{\varepsilon}}|\nabla v_{\varepsilon}|^2
  +\int_{\Omega} \frac{v_{\varepsilon}}{u_{\varepsilon}}|\nabla u_{\varepsilon}|^2
  +\int_{\Omega} u_{\varepsilon} v_{\varepsilon}+\int_{\Omega} \frac{|\nabla v_{\varepsilon}|^6}{v_{\varepsilon}^5}  \right\}
\end{align*}
as well as
\begin{align*}
M_{2 \varepsilon}(t)= &  A \left( m_*^{2 p+2l-3}+ \left\|v_{0} \right\|^2_{L^{\infty}(\Omega)}|\Omega|\right)  \cdot \int_\Omega \frac{|\nabla v_{\varepsilon}|^4}{v_{\varepsilon}^3}+ (A+\Gamma(p) m_*^{p+1}) \int_{\Omega} u_{\varepsilon} v_{\varepsilon} \nonumber\\
& +\frac{2 \Gamma^2(p)m_*^2 }{p(p-1)} \int_{\Omega} v_{\varepsilon}|\nabla u_{\varepsilon}|^2.
\end{align*}
Thus \eqref{-3.43xaxxx} becomes 
\begin{align*}
Y_{2 \varepsilon}^{\prime}(t) \leqslant  H_{2 \varepsilon}(t) Y_{2 \varepsilon}(t)+ M_{2 \varepsilon}(t), \quad \text { for all } t \in(0, T_{\max, \varepsilon}) \text { and } \varepsilon \in(0,1). 
\end{align*}
Integrating this differential inequality gives
\begin{align}\label{-3.44xzxz}
Y_{2 \varepsilon}(t) \leqslant Y_{2 \varepsilon}(0) e^{\int_0^t H_{2 \varepsilon}(s) \d s}+\int_0^t e^{\int_s^t H_{2 \varepsilon}(\sigma) \d \sigma} M_{2 \varepsilon}(s) \d s 
\end{align}
for all $t \in(0, T_{\max, \varepsilon})$ and $\varepsilon \in(0,1)$. Since 
\begin{align*}
\int_s^t H_{2 \varepsilon}(\sigma) \d \sigma \leqslant c_5(p)  \quad \text { for all } t \in\left(0, T_{\max, \varepsilon}\right), s \in[0, t) \text {, and } \varepsilon \in(0,1)
\end{align*}
by \eqref{-2.10}, \eqref{-3.5aa}, \eqref{-3.9} and \eqref{-3.32}, and
\begin{align*}
\int_0^t M_{2 \varepsilon}(s) \d s \leqslant c_6(p) \quad \text { for all } t \in\left(0, T_{\max, \varepsilon}\right)\text { and } \varepsilon \in(0,1)
\end{align*}
due to \eqref{-2.10}, \eqref{-3.10} and \eqref{-3.29xaxa}, from \eqref{-3.44xzxz}, we have 
\begin{align}\label{jia-3a}
\int_{\Omega} u_{\varepsilon}^p(t) \leqslant & e^{c_5(p) } \int_{\Omega} u_{0}^p + c_6(p) e^{c_5(p)} \quad \text { for all } t \in\left(0, T_{\max, \varepsilon}\right)\text { and } \varepsilon \in(0,1).
\end{align}
Collecting \eqref{-2.10}, \eqref{-3.5aa}, \eqref{-3.9}, \eqref{jia-1} and \eqref{jia-3a}, we  see that
\begin{align*}
\int_0^{T_{\max, \varepsilon}} \int_{\Omega} u_{\varepsilon}^{p+1} v_{\varepsilon} \leqslant  C(p)\quad \text { for all }  \varepsilon \in(0,1). 
\end{align*}
This together with Young's inequality and \eqref{-2.10} shows that
\begin{align*}
\int_0^{T_{\max, \varepsilon}} \int_{\Omega} u_{\varepsilon}^{p} v_{\varepsilon} \leqslant \int_0^{T_{\max, \varepsilon}} \int_{\Omega} u_{\varepsilon}^{p+1} v_{\varepsilon}  + \int_0^{T_{\max, \varepsilon}} \int_{\Omega} u_{\varepsilon} v_{\varepsilon} \leqslant  C(p) \quad \text { for all }  \varepsilon \in(0,1).
\end{align*}
By a direct integration in \eqref{-3.43xaxxx}, combining \eqref{-2.10}, \eqref{-3.5aa}, \eqref{-3.9}, \eqref{-3.10}, \eqref{-3.29xaxa}, \eqref{-3.32} with \eqref{jia-3a}, we conclude that
\begin{align*}
\int_0^{T_{\max, \varepsilon}} \int_{\Omega} u_{\varepsilon}^{p+l-3} v_{\varepsilon} \left|\nabla 
u_{\varepsilon}\right|^2 \leqslant  C(p)\quad \text { for all }  \varepsilon \in(0,1).
\end{align*}
We complete the proof.
\end{proof}

\subsection{Higher-Order Estimates. Proof of Theorem \ref{thm-1.1}}
On the basis of standard heat semigroup estimates \cite{2010-JDE-Winkler} we can obtain $L^{\infty}$ bounds for $\nabla v_{\varepsilon}$.

\begin{lem}\label{lemma-4.1}
Let $1<l \leqslant 3$ and assume that \eqref{assIniVal} holds.
Then we have
\begin{align}\label{-4.1}
\left\|v_{\varepsilon}(t)\right\|_{W^{1, \infty}(\Omega)} \leqslant C\quad \text { for all } t \in(0, T_{\max, \varepsilon}) \text { and } \varepsilon \in(0,1),
\end{align}
where $C$ is a positive constant independent of $\varepsilon$.
\end{lem}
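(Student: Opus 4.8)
The plan is to treat the second equation of \eqref{sys-regul} as a linear inhomogeneous heat equation for $v_\varepsilon$ under homogeneous Neumann boundary conditions, and to exploit the smoothing properties of the associated semigroup. Writing $(e^{\tau\Delta})_{\tau\geqslant0}$ for the Neumann heat semigroup on $\Omega$, the variation-of-constants formula reads
\begin{align*}
v_\varepsilon(t)=e^{t\Delta}v_0-\int_0^t e^{(t-s)\Delta}\bigl(u_\varepsilon(s)v_\varepsilon(s)\bigr)\d s,
\end{align*}
so that, upon differentiating,
\begin{align*}
\nabla v_\varepsilon(t)=\nabla e^{t\Delta}v_0-\int_0^t \nabla e^{(t-s)\Delta}\bigl(u_\varepsilon(s)v_\varepsilon(s)\bigr)\d s
\end{align*}
for all $t\in(0,T_{\max,\varepsilon})$. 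The first term is handled directly by the gradient estimate $\|\nabla e^{t\Delta}v_0\|_{L^\infty(\Omega)}\leqslant C\|\nabla v_0\|_{L^\infty(\Omega)}$ for the Neumann heat semigroup (cf. \cite{2010-JDE-Winkler}), which is finite thanks to the assumption $v_0\in W^{1,\infty}(\Omega)$ in \eqref{assIniVal}.

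For the Duhamel term I would invoke the standard $L^q$--$L^\infty$ smoothing estimate for the gradient of the Neumann heat semigroup recorded in \cite{2010-JDE-Winkler}: there exist $C>0$ and $\lambda>0$ such that
\begin{align*}
\bigl\|\nabla e^{\tau\Delta}w\bigr\|_{L^\infty(\Omega)}\leqslant C\bigl(1+\tau^{-\frac12-\frac{1}{q}}\bigr)e^{-\lambda\tau}\|w\|_{L^q(\Omega)}
\end{align*}
for all $\tau>0$ and $w\in L^q(\Omega)$, where $n=2$. Fixing any exponent $q>2$, say $q=3$, the power $-\frac12-\frac1q$ exceeds $-1$, so the kernel $\tau^{-\frac12-\frac1q}$ is integrable at $\tau=0$, while the exponential factor secures uniform-in-time integrability over all of $(0,\infty)$.

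It then remains to control the source in $L^q$, and here the decisive input is the uniform estimate \eqref{-3.36} from Lemma \ref{lemma-3.9x}, which holds precisely for every $p\geqslant2$ and hence for the chosen $q>2$. Combining it with the pointwise bound \eqref{-2.9} yields, via Hölder's inequality,
\begin{align*}
\bigl\|u_\varepsilon(s)v_\varepsilon(s)\bigr\|_{L^q(\Omega)}\leqslant\|v_\varepsilon(s)\|_{L^\infty(\Omega)}\,\|u_\varepsilon(s)\|_{L^q(\Omega)}\leqslant\|v_0\|_{L^\infty(\Omega)}\,C(q)
\end{align*}
uniformly in $s\in(0,T_{\max,\varepsilon})$ and $\varepsilon\in(0,1)$. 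Inserting this into the semigroup estimate and integrating, the Duhamel contribution is bounded by a constant multiple of $\int_0^\infty(1+\sigma^{-\frac12-\frac1q})e^{-\lambda\sigma}\d\sigma<\infty$, independent of $t$ and $\varepsilon$; together with \eqref{-2.9} this establishes \eqref{-4.1}. The only delicate point is the requirement $q>2$, needed simultaneously for integrability of the kernel in the planar setting and for the applicability of the $u_\varepsilon$ bound; since Lemma \ref{lemma-3.9x} furnishes \eqref{-3.36} for every $p\geqslant2$, this threshold is met with room to spare, and no further estimate on $u_\varepsilon$ is required.
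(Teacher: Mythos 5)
Your proof is correct and follows essentially the same route as the paper: a Duhamel representation of $v_\varepsilon$, the gradient smoothing estimates for the Neumann heat semigroup from \cite{2010-JDE-Winkler}, and the uniform bounds \eqref{-2.9} and \eqref{-3.36} to control the source term in $L^q$ for a fixed $q>2$. The only cosmetic difference is that the paper works with the semigroup of $\Delta-1$ (so the source becomes $u_\varepsilon v_\varepsilon - v_\varepsilon$ and the factor $e^{-(t-s)}$ appears explicitly), whereas you use the unshifted semigroup and invoke the intrinsic decay $e^{-\lambda\tau}$ of $\nabla e^{\tau\Delta}$; your explicit insistence on $q>2$ for integrability of the kernel is in fact slightly more careful than the paper's ``fixing any $p>1$''.
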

\begin{proof}
According to the Neumann heat semigroup, fixing any $p > 1$ we can find $c_1>0$ such that 
\begin{align}\label{-4.2}
&\left\|\nabla v_{\varepsilon}(t)\right\|_{L^{\infty}(\Omega)} \nonumber\\
= &\left\|\nabla e^{t(\Delta-1)} v_0-\int_0^t \nabla e^{(t-s)(\Delta-1)}\left\{u_{\varepsilon}(s) v_{\varepsilon}(s)-v_{\varepsilon}(s)\right\} \d s\right\|_{L^{\infty}(\Omega)} \nonumber\\
\leqslant &  c_1\left\|v_0\right\|_{W^{1, \infty}(\Omega)}\nonumber\\
& +  c_1 \int_0^t\left(1+(t-s)^{-\frac{1}{2}-\frac{1}{p}}\right) e^{-(t-s)}\left\|u_{\varepsilon}(s) v_{\varepsilon}(s)-v_{\varepsilon}(s)\right\|_{L^p(\Omega)} \d s
\end{align}
for all $t \in\left(0, T_{\max, \varepsilon}\right)$ and $\varepsilon \in(0,1)$. From \eqref{-2.9} and \eqref{-3.36}, we can find $c_2>0$ such that
\begin{align*}
\left\|u_{\varepsilon}(s) v_{\varepsilon}(s)-v_{\varepsilon}(s)\right\|_{L^p(\Omega)} 
\leqslant & \left\|u_{\varepsilon}(s)\right\|_{L^p(\Omega)}\left\|v_{\varepsilon}(s)\right\|_{L^{\infty}(\Omega)}+|\Omega|^{\frac{1}{p}}\left\|v_{\varepsilon}(s)\right\|_{L^{\infty}(\Omega)} \\
\leqslant & c_2\left\|v_0\right\|_{L^{\infty}(\Omega)}+|\Omega|^{\frac{1}{p}}\left\|v_0\right\|_{L^{\infty}(\Omega)}.
\end{align*}
This together with \eqref{-4.2} yields \eqref{-4.1}.
\end{proof}

To establish the $L^\infty$ estimate of $u_{\varepsilon}$, we cite the following two lemmas (cf. \cite{2024-JDE-Winkler}).
\begin{lem}[{\cite[Lemma 6.2]{2024-JDE-Winkler}}]
\label{lemma-4.2}
Let $p_{\star}>2$. Then there exist $\kappa=\kappa\left(p_{\star}\right)>0$ and $K=K\left(p_{\star}\right)>0$ such that for any choice of $p \geqslant p_{\star}$ and $\eta \in(0,1]$, all $\varphi \in C^1(\overline{\Omega})$ and $\psi \in C^1(\overline{\Omega})$ fulfilling $\varphi>0$ and $\psi>0$ in $\overline{\Omega}$ satisfy
\begin{align*}
\int_{\Omega} \varphi^{p+1} \psi \leqslant & \eta \int_{\Omega} \varphi^{p-1} \psi|\nabla \varphi|^2+\eta \cdot\left\{\int_{\Omega} \varphi^{\frac{p}{2}}\right\}^{\frac{2(p+1)}{p}} \cdot \int_{\Omega} \frac{|\nabla \psi|^6}{\psi^5} \nonumber\\
& +K \eta^{-\kappa} p^{2 \kappa} \cdot\left\{\int_{\Omega} \varphi^{\frac{p}{2}}\right\}^2 \cdot \int_{\Omega} \varphi \psi.
\end{align*}
\end{lem}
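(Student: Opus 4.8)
The plan is to treat this as a two-dimensional weighted interpolation inequality and to prove it along the same lines as Lemmas \ref{lemma-3.4} and \ref{lemma-3.5}: apply a Sobolev/Gagliardo--Nirenberg estimate to a suitable power-product of $\varphi$ and $\psi$, then redistribute the resulting factors by H\"older's and Young's inequalities. The natural first move is the substitution $g:=\varphi^{\frac{p+1}{2}}$, which turns the left-hand side into the weighted integral $\int_\Omega g^2\psi$ and the genuine dissipation term into a multiple of $\int_\Omega\psi|\nabla g|^2$, since $\int_\Omega\varphi^{p-1}\psi|\nabla\varphi|^2=\frac{4}{(p+1)^2}\int_\Omega\psi|\nabla g|^2$. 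Thus the task reduces to bounding $\int_\Omega g^2\psi$ by the Dirichlet-type energy $\int_\Omega\psi|\nabla g|^2$ together with lower-order terms, the weight $\psi$ being carried throughout because the statement contains no factor $\|\psi\|_{L^\infty(\Omega)}$.

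Next I would apply the Sobolev inequality \eqref{eq-6.2} (or, to reach the $L^1$-type norms on the right-hand side, the companion Gagliardo--Nirenberg estimate with an $L^1$ endpoint) to $\rho:=g\psi^{\frac12}=\varphi^{\frac{p+1}{2}}\psi^{\frac12}$. Expanding $\nabla\rho=\psi^{\frac12}\nabla g+\tfrac12 g\psi^{-\frac12}\nabla\psi$ splits $\|\nabla\rho\|_{L^1(\Omega)}$ into a $\nabla g$-contribution and a $\nabla\psi$-contribution. The former, after Cauchy--Schwarz against the weight $\psi$, produces the good term $\int_\Omega\psi|\nabla g|^2$; the latter is handled by H\"older's inequality with exponents tuned so that the $\psi$-derivative is collected in the scaling-natural quantity $\int_\Omega\frac{|\nabla\psi|^6}{\psi^5}$ --- the sixth power being forced by the borderline two-dimensional embedding $W^{1,q}\hookrightarrow L^\infty$ for $q>2$, and being precisely the quantity later controlled through \eqref{-3.21a}. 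A final round of Young's inequalities absorbs the genuine gradient term into $\eta\int_\Omega\varphi^{p-1}\psi|\nabla\varphi|^2$, assigns the coefficient $\eta$ to the $|\nabla\psi|^6/\psi^5$ term, and throws the residual mass onto $\bigl(\int_\Omega\varphi^{p/2}\bigr)^2\int_\Omega\varphi\psi$ with a coefficient of the form $K\eta^{-\kappa}$. The refinement that yields $\bigl(\int_\Omega\varphi^{p/2}\bigr)^2$ and $\bigl(\int_\Omega\varphi^{p/2}\bigr)^{\frac{2(p+1)}{p}}$ --- rather than the cruder $\int_\Omega\varphi^{p}$ produced by a plain $L^2$-Sobolev estimate --- is exactly what requires interpolating against the $L^1$-norm of $\varphi^{p/2}$.

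The hard part will be bookkeeping the dependence on $p$, not the qualitative scheme. One must verify that after all the H\"older and Young steps the constant is bounded by $K\eta^{-\kappa}p^{2\kappa}$ with a single pair $(\kappa,K)$ depending only on $p_\star$ and uniform over all $p\geq p_\star$; the a priori $p$-dependence of the Gagliardo--Nirenberg constants must therefore be extracted as at most the stated polynomial factor $p^{2\kappa}$. This is where the hypothesis $p_\star>2$ enters: it keeps the interpolation exponents bounded away from their endpoints uniformly in $p$, so that the embedding constants do not blow up and the interpolation weights $\theta$ stay in a compact subinterval of $(0,1)$. Pinning the exact exponent $\frac{2(p+1)}{p}$ on $\int_\Omega\varphi^{p/2}$ and confirming the normalization $|\nabla\psi|^6/\psi^5$ are the two places where the computation must be carried out precisely rather than schematically.
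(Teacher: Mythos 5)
A preliminary structural remark: the paper itself contains no proof of this lemma --- it is imported verbatim from \cite[Lemma 6.2]{2024-JDE-Winkler} and used as a black box --- so your proposal has to stand on its own. It does not: there is a genuine gap, located exactly at the step you describe as ``a final round of Young's inequalities absorbs the genuine gradient term into $\eta\int_\Omega\varphi^{p-1}\psi|\nabla\varphi|^2$''. The whole content of the lemma is that the coefficient of the dissipation term is an \emph{arbitrary} $\eta\in(0,1]$, at a cost that is only polynomial in $\eta^{-1}$ and in $p$; this tunability, uniform in $p\geqslant p_\star$, is precisely what lets Lemma \ref{lemma-4.4vbv} absorb $c_3p_k^2\int_\Omega u_\varepsilon^{p_k+l-1}v_\varepsilon$ into the available dissipation $\frac{p_k^2}{4}\int_\Omega u_\varepsilon^{p_k+l-3}v_\varepsilon|\nabla u_\varepsilon|^2$ by the fixed choice $\eta=\frac{1}{4c_3}$ at every step of the Moser iteration, with a residual factor $p_k^{2\kappa}$ tame enough for Lemma \ref{lemma-4.3}. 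Your scheme cannot produce this $\eta$. The estimate \eqref{eq-6.2} (equally \eqref{eq-6.5}) is the \emph{critical} planar embedding $W^{1,1}(\Omega)\hookrightarrow L^2(\Omega)$: $\|\nabla\rho\|_{L^1}$ and $\|\rho\|_{L^2}$ scale identically, so in any interpolation $\|\rho\|_{L^2}\leqslant C\|\nabla\rho\|_{L^1}^{\theta}\|\rho\|_{L^s}^{1-\theta}+C\|\rho\|_{L^s}$ with $s<2$ one is forced to take $\theta=1$; there is no free interpolation parameter. Consequently, after applying \eqref{eq-6.2} to $\rho=\varphi^{\frac{p+1}{2}}\psi^{\frac12}$ and Cauchy--Schwarz to $\int_\Omega\varphi^{\frac{p-1}{2}}\psi^{\frac12}|\nabla\varphi|$, the dissipation $D:=\int_\Omega\varphi^{p-1}\psi|\nabla\varphi|^2$ enters the right-hand side \emph{linearly}, with a fixed coefficient of order $(p+1)^2|\Omega|$. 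Young's inequality cannot convert a term $cD$ into $\eta D+C_\eta\cdot(\text{lower order})$: absorption requires $D$ to appear as a product $D^{\theta}X^{1-\theta}$ with $\theta<1$, and your decomposition never produces such a fractional power of $D$. (The alternative Cauchy--Schwarz splitting which does pull out $\{\int_\Omega\varphi^{p/2}\}^{1/2}$ yields the dissipation $\int_\Omega\varphi^{\frac{p}{2}-1}\psi|\nabla\varphi|^2$ with the wrong exponent, and trading it back for $D$ forces quantities such as $\int_\Omega\frac{\psi}{\varphi}|\nabla\varphi|^2$ that the statement does not allow.)

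What a correct proof must engineer is a genuinely \emph{subcritical} interpolation in which the gradient carries an exponent strictly below $2$, uniformly in $p\geqslant p_\star$. This is the true role of the hypothesis $p_\star>2$: writing $w=\varphi^{p/2}$, so that $\int_\Omega\varphi^{p+1}\psi=\int_\Omega w^2\,\varphi\psi$ and $D=\frac{4}{p^2}\int_\Omega|\nabla w|^2\,\varphi\psi$, the relevant (model, unweighted) Gagliardo--Nirenberg inequality bounds $\|w\|_{L^{2(p+1)/p}(\Omega)}^{2(p+1)/p}$ by $\|\nabla w\|_{L^2(\Omega)}$ raised to the power $\frac{2(p+1)}{p}-1=1+\frac{2}{p}\leqslant 1+\frac{2}{p_\star}<2$, times powers of $\|w\|_{L^1(\Omega)}$; only because this exponent stays below $2$ does Young's inequality yield $\eta\|\nabla w\|_{L^2(\Omega)}^2$ plus a remainder with the polynomial cost $\eta^{-\kappa}$, $\kappa$ of order $\frac{p_\star+2}{p_\star-2}$. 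So $p_\star>2$ is not bookkeeping that ``keeps the embedding constants bounded'', as your last paragraph suggests; it is what makes the exponents subcritical at all, and the critical-Sobolev route you set up never sees it. A second, unaddressed difficulty: with $\rho=\varphi^{\frac{p+1}{2}}\psi^{\frac12}$ every lower-order norm $\|\rho\|_{L^s}$ carries the weight $\psi^{s/2}$, so ``interpolating against the $L^1$-norm of $\varphi^{p/2}$'' is not actually available; converting the resulting mixed integrals into the admissible combination $\{\int_\Omega\varphi^{p/2}\}^2\int_\Omega\varphi\psi$ without smuggling in $\|\psi\|_{L^\infty(\Omega)}$ --- which the statement pointedly excludes --- requires further H\"older steps against $\int_\Omega\frac{|\nabla\psi|^6}{\psi^5}$ that your sketch does not supply.
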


\begin{lem}[{\cite[Lemma 6.3]{2024-JDE-Winkler}}]
\label{lemma-4.3}
Let $a \geqslant 1, b \geqslant 1, q \geqslant 0$ and $\left\{N_k\right\}_{k \in\{0,1,2,3, \ldots\}} \subset[1, \infty)$ be such that
\begin{align*}
N_k \leqslant a^k N_{k-1}^{2+q \cdot 2^{-k}}+b^{2^k} \quad \text { for all } k \geqslant 1.
\end{align*}
Then
\begin{align*}
\liminf _{k \rightarrow \infty} N_k^{\frac{1}{2^k}} \leqslant\left(2 \sqrt{2} a^3 b^{1+\frac{q}{2}} N_0\right)^{e^{\frac{q}{2}}}.
\end{align*}
\end{lem}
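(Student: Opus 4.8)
The plan is to treat this as a super-exponential (Moser-type) iteration and to track how the two competing terms on the right propagate. First I would record the elementary splitting
$$
N_k \le 2\max\left\{a^k N_{k-1}^{2+q2^{-k}},\, b^{2^k}\right\}\qquad(k\ge1),
$$
so that at each index either the \emph{product term} $a^kN_{k-1}^{2+q2^{-k}}$ or the \emph{forcing term} $b^{2^k}$ dominates. The decisive structural observation is that the constant factor $2$ must be unfolded in the \emph{original} variables rather than after normalising by the power $1/2^k$: when the recursion is iterated, a factor introduced at level $i$ is subsequently squared roughly $k-i$ times, so after taking the final $1/2^k$-th root its exponent is of size $2^{-i}$, and $\sum_i 2^{-i}<\infty$. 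This dilution is exactly what keeps the factor $2$ (and the factor $b$) from accumulating over the $k$ steps, and it is the point I expect to be the main obstacle to handle correctly, since a naive normalisation first would make each constant factor multiply the bound at every step.

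Concretely, for a fixed $k$ I would let $j=j(k)\in\{0,1,\dots,k\}$ be the last index $\le k$ at which the forcing term dominates, so that $N_j\le 2b^{2^j}$, setting $j=0$ if the product term dominates at every $i\in\{1,\dots,k\}$. On the block $j<i\le k$ one has $N_i\le 2a^iN_{i-1}^{2+q2^{-i}}$, and iterating this gives
$$
N_k \le (\mathrm{const})\cdot N_j^{\,E_{j,k}},\qquad E_{j,k}=\prod_{i=j+1}^{k}\bigl(2+q2^{-i}\bigr).
$$
Raising to the power $1/2^k$ and writing $\tilde N_m:=N_m^{1/2^m}$, the exponent on $\tilde N_j$ becomes $\prod_{i=j+1}^{k}\bigl(1+\tfrac{q}{2}2^{-i}\bigr)$, which by $1+x\le e^{x}$ and $\sum_{i\ge1}2^{-i}=1$ is bounded by $e^{q/2}$; this is the source of the outer exponent. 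Since $\tilde N_j\le\sqrt2\,b$ when $j\ge1$ and $\tilde N_0=N_0$, the base of the iteration is controlled by $\max\{\sqrt2\,b,\,N_0\}$.

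It then remains to estimate the accumulated constant. Tracking the powers of $a$ and of $2$ through the same unfolding, their normalised exponents are governed by the convergent sums $\sum_{i\ge1} i\,2^{-i}=2$ and $\sum_{i\ge1}2^{-i}=1$ respectively, each further weighted by a factor $\le e^{q/2}$, so the $a$-contribution is at most $a^{2e^{q/2}}$ and the $2$-contribution at most $2^{e^{q/2}}$. Collecting these bounds yields, in the cases $j\ge1$ and $j=0$ respectively,
$$
\tilde N_k \le \bigl(2\sqrt2\,a^2\,b\bigr)^{e^{q/2}}\qquad\text{or}\qquad \bigl(2\,a^2\,N_0\bigr)^{e^{q/2}},
$$
and since $a,b,N_0\ge1$ and $e^{q/2}\ge1$, both are dominated by $\bigl(2\sqrt2\,a^3b^{1+q/2}N_0\bigr)^{e^{q/2}}$. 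As this bound is uniform in $k$, it holds in particular for $\liminf_{k\to\infty}N_k^{1/2^k}$, which is the assertion; in fact the argument delivers the slightly stronger conclusion that $\sup_k N_k^{1/2^k}$ obeys the same bound, the $\liminf$ formulation being all that the intended application requires.
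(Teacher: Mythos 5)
The paper does not actually prove this lemma: it is quoted verbatim, with attribution, from \cite[Lemma 6.3]{2024-JDE-Winkler}, so there is no in-paper argument to compare against; your proposal has to stand on its own, and it does. I checked the details: the splitting $N_k\le 2\max\{a^kN_{k-1}^{2+q2^{-k}},b^{2^k}\}$, the choice of the last forcing-dominated index $j=j(k)$, the unfolded estimate $N_k\le \prod_{i=j+1}^{k}\bigl(2a^i\bigr)^{\prod_{m=i+1}^{k}(2+q2^{-m})}\cdot N_j^{\prod_{i=j+1}^{k}(2+q2^{-i})}$, and the exponent bookkeeping after taking the $2^{-k}$-th root --- namely $2^{-k}\prod_{m=i+1}^{k}(2+q2^{-m})\le 2^{-i}e^{q/2}$ together with $\sum_{i\ge1}2^{-i}=1$ and $\sum_{i\ge1}i\,2^{-i}=2$ --- are all correct, and both terminal cases ($\tilde N_j\le 2^{1/2^j}b\le\sqrt2\,b$ when $j\ge1$, $\tilde N_0=N_0$ when $j=0$) are absorbed into the stated constant because $a,b,N_0\ge1$ and $e^{q/2}\ge1$. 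Your key structural point, that multiplicative constants must be diluted through the iteration before normalizing (so that their total normalized exponent is a convergent geometric-type sum rather than growing linearly in $k$), is exactly the mechanism that makes such Moser-type recursion lemmas work, and it is how the cited source argues as well. As you note, your per-$k$ choice of $j(k)$ in fact gives the stronger conclusion $\sup_k N_k^{1/2^k}\le\bigl(2\sqrt2\,a^3b^{1+q/2}N_0\bigr)^{e^{q/2}}$, of which the $\liminf$ statement is a weakening.
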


In \cite{2024-JDE-Winkler}, Winkler achieved the estimate of $L^{\infty}$-bounds for $u_{\varepsilon}$ in a doubly degenerate reaction-diffusion system \eqref{sys-regul}. With Lemmas~\ref{lemma-3.9x} and \ref{lemma-4.1} at hand, we also can derive such an estimate for the system \eqref{sys-regul} by employing a similar method.

\begin{lem}\label{lemma-4.4vbv}
Let $1<l \leqslant 3$ and assume that \eqref{assIniVal} holds.
Then we have 
\begin{align}\label{619-1648}
\left\|u_{\varepsilon}(t)\right\|_{L^{\infty}(\Omega)} \leqslant C\quad \text { for all } t \in(0, T_{\max, \varepsilon}) \text { and } \varepsilon \in(0,1), 
\end{align}
where $C$ is a positive constant depending on $-\int_{\Omega} \ln u_0$, $\int_{\Omega} u_{0} \ln u_{0}$, $\int_{\Omega} u_0$, $\int_{\Omega} v_0$, $\int_{\Omega} |\nabla v_{0 }|^2$, $\int_{\Omega} \frac{|\nabla v_{0 }|^2}{v_0}$ and $\int_{\Omega} \frac{\left|\nabla v_{0}\right|^4}{v_{0}^3}$, but independent of $\varepsilon$.
\end{lem}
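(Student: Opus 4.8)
The plan is to run a Moser-type iteration and identify $\|u_\varepsilon\|_{L^\infty}$ with the limit of its $L^{p_k}$-norms along a geometrically growing sequence of exponents. I would fix $p_0>2$ large and define the sequence by $p_k=2p_{k-1}-(l-2)$, equivalently $p_k=(p_0-l+2)2^k+(l-2)$, which increases to $\infty$ since $1<l\le3$ forces $p_0>l-2$; the point of this choice is that $\tfrac{p_k+l-2}{2}=p_{k-1}$ exactly. Writing $N_k:=1+\sup_{t\in(0,T_{\max,\varepsilon})}\int_\Omega u_\varepsilon^{p_k}(t)$, the goal is to produce a recursion of the precise shape demanded by Lemma \ref{lemma-4.3}, namely $N_k\le a^kN_{k-1}^{2+q\cdot2^{-k}}+b^{2^k}$ with $a\ge1$, $b\ge1$, $q\ge0$ all independent of $k$ and $\varepsilon$; the base value $N_0$ is finite and $\varepsilon$-independent by \eqref{-3.36}.

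The differential inequality is obtained exactly as in \eqref{0703-1903xx}, by testing the first equation of \eqref{sys-regul} with $u_\varepsilon^{p-1}$ for $p=p_k$. The decisive point, which departs from Lemma \ref{lemma-3.9x}, is to route every superquadratic contribution to the single matched power $\int_\Omega u_\varepsilon^{p+l-1}v_\varepsilon$. The reaction term is handled by Young's inequality in the form $u_\varepsilon^{p}\le u_\varepsilon^{p+l-1}+u_\varepsilon$ --- valid precisely because $l>1$ --- so that it contributes $p\int_\Omega u_\varepsilon^{p+l-1}v_\varepsilon$ plus the space-time integrable remainder $p\int_\Omega u_\varepsilon v_\varepsilon$, and never a bare, time-nonintegrable constant. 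The cross term $\tfrac{p(p-1)}{2}\int_\Omega u_\varepsilon^{p+l-1}v_\varepsilon|\nabla v_\varepsilon|^2$ is treated by Lemma \ref{lemma-3.5} (i.e.\ \eqref{eq-6.4}) with the shifted exponent $\bar p:=p+l-2$, whose dissipative output $\int_\Omega u_\varepsilon^{p+l-3}v_\varepsilon|\nabla u_\varepsilon|^2$ matches the good term produced by the test and can be absorbed. After these reductions the whole right-hand side consists of $\tilde C(p)\int_\Omega u_\varepsilon^{p+l-1}v_\varepsilon$, with $\tilde C(p)$ polynomial in $p$, together with terms that are, up to $m_*$-powers, products of a uniformly time-integrable density and a controlled factor.

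Next I would apply the Moser-ready inequality of Lemma \ref{lemma-4.2} with the same $\bar p=p+l-2$ to $\tilde C(p)\int_\Omega u_\varepsilon^{p+l-1}v_\varepsilon$, choosing $\eta=\eta(p)\in(0,1]$ so that its dissipative contribution $\eta\int_\Omega u_\varepsilon^{p+l-3}v_\varepsilon|\nabla u_\varepsilon|^2$ is absorbed; since $\tilde C(p)$ is polynomial, the surviving prefactor $\eta^{-\kappa}\bar p^{2\kappa}$ is again polynomial in $p$. Discarding the nonnegative dissipation, what remains has the schematic form
\begin{align*}
\dt\int_\Omega u_\varepsilon^{p}\ \le\ & C(p)\Big\{\int_\Omega u_\varepsilon^{p_{k-1}}\Big\}^{2+\frac{2}{\bar p}}\int_\Omega\frac{|\nabla v_\varepsilon|^6}{v_\varepsilon^5}+C(p)\Big\{\int_\Omega u_\varepsilon^{p_{k-1}}\Big\}^{2}\int_\Omega u_\varepsilon v_\varepsilon\\
& +C(p)\,m_*^{2\bar p+1}\int_\Omega\frac{|\nabla v_\varepsilon|^4}{v_\varepsilon^3}+C(p)\int_\Omega u_\varepsilon v_\varepsilon,
\end{align*}
where I have used $\tfrac{\bar p}{2}=p_{k-1}$ and, crucially, kept $\int_\Omega\frac{|\nabla v_\varepsilon|^4}{v_\varepsilon^3}$ under the $m_*$-power term so that its space-time $L^1$ bound \eqref{-3.10} can be used rather than its pointwise bound. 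Integrating in $t$ over $(0,T_{\max,\varepsilon})$ and invoking \eqref{-3.32}, \eqref{-2.10}, \eqref{-3.10}, the mass bound $\int_\Omega u_\varepsilon(t)\le m_*$ from \eqref{-2.8}, and $\int_\Omega u_{0\varepsilon}^{p_k}\le(\|u_0\|_{L^\infty}+1)^{p_k}|\Omega|$, all the densities integrate to uniform constants, the moment factors are bounded by $N_{k-1}$, and $C(p_k)$ grows only polynomially in $p_k\sim2^k$; since $\tfrac{2}{\bar p}=\tfrac1{p_{k-1}}\le q\,2^{-k}$ and $N_{k-1}\ge1$, this yields $N_k\le a^kN_{k-1}^{2+q2^{-k}}+b^{2^k}$. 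Lemma \ref{lemma-4.3} then gives $\liminf_{k\to\infty}N_k^{1/2^k}<\infty$ uniformly in $\varepsilon$, and because $\|u_\varepsilon(t)\|_{L^{p_k}}\le N_k^{1/p_k}$ with $p_k\sim(p_0-l+2)2^k$, letting $k\to\infty$ produces the $\varepsilon$- and $t$-uniform bound \eqref{619-1648}.

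The main obstacle is the uniform bookkeeping of the $p$-dependence so that the three ingredients of Lemma \ref{lemma-4.3} appear exactly: one must verify that no step injects a non-absorbable $\int_\Omega u_\varepsilon^{p}$, which would feed $N_k$ back into the right-hand side and force a Gr\"onwall factor exploding with $p_k$ over a possibly infinite time interval; that every remainder is a product of an $N_{k-1}$-power with a density whose integral over the \emph{entire} interval $(0,T_{\max,\varepsilon})$ is uniformly finite; and that all prefactors grow at most polynomially in $p_k$. This is exactly where the hypotheses enter: $l>1$ lets the reaction power $p$ be dominated by $p+l-1$ with the time-integrable remainder $\int_\Omega u_\varepsilon v_\varepsilon$, the tailored exponent sequence makes the moment generated by Lemma \ref{lemma-4.2} land precisely on level $k-1$, and $l\le3$ is inherited through the prerequisite estimates \eqref{-3.29}, \eqref{-3.32} and \eqref{-3.36} of Lemmas \ref{lemma-3.8} and \ref{lemma-3.9x}, which hold only in that range.
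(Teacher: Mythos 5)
Your overall architecture (the quantities $N_k$, a recursion fed into Lemma \ref{lemma-4.3}, base case from \eqref{-3.36}) matches the paper's, and your tailored exponent sequence $p_k=2p_{k-1}-(l-2)$, which makes $\tfrac{p_k+l-2}{2}=p_{k-1}$ exactly, is a clean alternative to the paper's choice $p_k=2^kp_0+1$ combined with Young's inequality. The gap lies precisely at the step where you deliberately depart from the paper: you keep the functional inequality of Lemma \ref{lemma-3.5} \emph{inside} the iteration, applied with $\bar p=p_k+l-2\to\infty$, and you assert that the resulting coefficient $\tilde C(p)$ is polynomial in $p$. Lemma \ref{lemma-3.5} gives no such information: its constant is $c(p)=\max\{2C,(p+1)^2C^2,\tfrac{25}{4}C^2\}$ with $C=C(p)$ the constant of \eqref{eq-6.5}, and \eqref{eq-6.5} is obtained from the Sobolev inequality \eqref{eq-6.2} by interpolating $\|\rho\|_{L^1}\le\|\rho\|_{L^2}^{\theta}\|\rho\|_{L^{1/(p+1)}}^{1-\theta}$ with $1-\theta=\tfrac{1}{2p+1}$ and then absorbing via Young's inequality with conjugate exponent $2p+1$; this produces a constant of order $4^p C^{p}$, i.e.\ \emph{exponential} in $p$. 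Consequently the absorption parameter $\eta(p)$ you must choose in Lemma \ref{lemma-4.2} is exponentially small, and the surviving prefactor $\eta^{-\kappa}\bar p^{2\kappa}$ is exponentially large, contrary to your claim.

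This is not cosmetic: along $p_k\sim 2^k$ an exponential-in-$p_k$ coefficient is of size $\Lambda^{2^k}$, not $a^k$, so your recursion has the form $N_k\le \Lambda^{2^k}N_{k-1}^{2+q2^{-k}}+b^{2^k}$, which lies outside the hypotheses of Lemma \ref{lemma-4.3}; setting $x_k=\log N_k$ one gets $\tfrac{x_k}{2^k}\ge\tfrac{x_{k-1}}{2^{k-1}}+\log\Lambda$ up to vanishing corrections, hence $N_k^{1/2^k}\to\infty$ and no $L^\infty$ bound follows. The paper avoids this exact trap: it first uses the already available $L^p$ bound \eqref{-3.36} (a single fixed $p>2$ suffices) together with heat semigroup estimates to prove $\|v_\varepsilon(t)\|_{W^{1,\infty}(\Omega)}\le C$ (Lemma \ref{lemma-4.1}), and then in \eqref{511-1339} bounds the taxis term pointwise, $\int_\Omega u_\varepsilon^{p_k+l-1}v_\varepsilon|\nabla v_\varepsilon|^2\le c_1^2\int_\Omega u_\varepsilon^{p_k+l-1}v_\varepsilon$, so that the coefficient of $\int_\Omega u_\varepsilon^{p_k+l-1}v_\varepsilon$ is $O(p_k^2)$, matched to the $O(p_k^2)$ dissipation; then $\eta$ in Lemma \ref{lemma-4.2} can be taken \emph{independent of $k$}, and the only $p$-dependence entering the iteration is the tracked polynomial factor $K\eta^{-\kappa}p^{2\kappa}$. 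Your argument is repairable by inserting exactly this intermediate step (you already have all its ingredients); as written, the polynomial-growth claim is unjustified and the iteration does not close.
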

\begin{proof}
Based on Lemma \ref{lemma-4.1}, we can find $c_1>0$ such that
\begin{align}\label{511-1312}
\left|\nabla v_{\varepsilon}(x, t)\right| \leqslant c_1 \quad \text { for all } x \in \Omega, t \in\left(0, T_{\max, \varepsilon}\right) \text { and } \varepsilon \in(0,1) \text {, }
\end{align}
For integers $k \geqslant 1$ we set $p_k=2^k p_0+1$ with $p_0>1$, and let
\begin{align}\label{511-1223-1}
N_{k, \varepsilon}(T)=1+\sup _{t \in(0, T)} \int_{\Omega} u_{\varepsilon}^{p_k}(t)< \infty, \quad T \in\left(0, T_{\max, \varepsilon}\right), k \in\{0,1,2, \ldots\}, \varepsilon \in(0,1).
\end{align}
Owing to \eqref{-3.36}, we can find $c_2>0$ such that 
\begin{align}\label{511-1223}
N_{0, \varepsilon}(T) \leqslant c_2 \quad \text { for all } T \in\left(0, T_{\max, \varepsilon}\right) \text { and } \varepsilon \in(0,1). 
\end{align}
To estimate $N_{k, \varepsilon}(T)$ for $k \geqslant 1, T \in\left(0, T_{\max, \varepsilon}\right)$ and $\varepsilon \in(0,1)$, using the first equation of the system \eqref{sys-regul}, according to \eqref{511-1312} and Young's inequality, we see that 
\begin{align}\label{511-1339}
\dt \int_{\Omega} u_{\varepsilon}^{p_k}= & -p_k \left(p_k-1\right) \int_{\Omega} u_{\varepsilon}^{p_k+l-3} v_{\varepsilon}\left|\nabla u_{\varepsilon}\right|^2+ p_k\left(p_k-1\right) \int_{\Omega} u_{\varepsilon}^{p_k+l-2} v_{\varepsilon} \nabla u_{\varepsilon} \cdot \nabla v_{\varepsilon}\nonumber \\
& +p_k  \int_{\Omega} u_{\varepsilon}^{p_k} v_{\varepsilon}\nonumber \\
\leqslant & -\frac{ p_k\left(p_k-1\right)}{2} \int_{\Omega} u_{\varepsilon}^{p_k+l-3} v_{\varepsilon}\left|\nabla u_{\varepsilon}\right|^2+\frac{ p_k\left(p_k-1\right)}{2} \int_{\Omega} u_{\varepsilon}^{p_k+l-1} v_{\varepsilon}\left|\nabla v_{\varepsilon}\right|^2 \nonumber\\
& +p_k  \int_{\Omega} u_{\varepsilon}^{p_k} v_{\varepsilon}\nonumber\\
\leqslant & -\frac{p_k\left(p_k-1\right)}{2} \int_{\Omega} u_{\varepsilon}^{p_k-1} v_{\varepsilon}\left|\nabla u_{\varepsilon}\right|^2+ p_k\left(p_k-1\right)  c^2_1 \int_{\Omega} u_{\varepsilon}^{p_k+1} v_{\varepsilon} \nonumber \\
& +p_k  \int_{\Omega}   u_{\varepsilon}^{p_k} v_{\varepsilon}\nonumber \\
\leqslant & -\frac{p_k\left(p_k-1\right)}{2} \int_{\Omega} u_{\varepsilon}^{p_k-1} v_{\varepsilon}\left|\nabla u_{\varepsilon}\right|^2\nonumber\\
& + \left\{ p_k\left(p_k-1\right)c^2_1 + p_k  \right\}\cdot \left\{\int_{\Omega} u_{\varepsilon}^{p_k+l-1} v_{\varepsilon}+\int_{\Omega}  u_{\varepsilon} v_{\varepsilon}\right\}.
\end{align}
Since $p_k\left(p_k-1\right) \leqslant p_k^2$, $p_k \leqslant p_k^2$ and $\frac{p_k\left(p_k-1\right)}{2} \geqslant \frac{p_k^2}{4}$, from \eqref{511-1339} we infer that
\begin{align}\label{511-1338}
\dt \int_{\Omega} u_{\varepsilon}^{p_k}+\frac{p_k^2}{4} \int_{\Omega} u_{\varepsilon}^{p_k+l-3} v_{\varepsilon}\left|\nabla u_{\varepsilon}\right|^2 \leqslant c_3 p_k^2 \int_{\Omega} u_{\varepsilon}^{p_k+l-1} v_{\varepsilon}+c_3 p_k^2 \int_{\Omega} u_{\varepsilon} v_{\varepsilon}
\end{align}
for all $t \in(0, T_{\max, \varepsilon})$, where $c_3=  c^2_1+1$. Choosing $p_{\star}=2 p_0> 2$ in Lemma \ref{lemma-4.2}, we can conclude that 
\begin{align*}
\int_{\Omega} u_{\varepsilon}^{p_k+l-1} v_{\varepsilon} \leqslant & \frac{1}{4 c_3} \int_{\Omega} u_{\varepsilon}^{p_k+l-3} v_{\varepsilon}\left|\nabla u_{\varepsilon}\right|^2+\frac{1}{4 c_3} \cdot\left\{\int_{\Omega} u_{\varepsilon}^{\frac{p_k+l-2}{2}}\right\}^{\frac{2(p_k+l-1)}{p_k+l-2}} \cdot \int_{\Omega} \frac{\left|\nabla v_{\varepsilon}\right|^6}{v_{\varepsilon}^5} \\
& +K \cdot\left(4 c_3\right)^\kappa \cdot (p_k+l-1)^{2 \kappa} \cdot\left\{\int_{\Omega} u_{\varepsilon}^{\frac{p_k+l-2}{2}}\right\}^2 \cdot \int_{\Omega} u_{\varepsilon} v_{\varepsilon} 
\end{align*}
for all $t \in(0, T_{\max, \varepsilon})$, where $\kappa=\kappa\left(p_{\star}\right)>0$ and $K=K\left(p_{\star}\right)>0$. According to \eqref{511-1223-1} and Young's inequality, for all $T \in\left(0, T_{\max, \varepsilon}\right)$ we can estimate
\begin{align*}
\int_{\Omega} u_{\varepsilon}^{\frac{p_k+l-2}{2}}\leqslant \int_{\Omega} u_{\varepsilon}^{\frac{p_k+1}{2}} + |\Omega| +1 =\int_{\Omega} u_{\varepsilon}^{p_{k-1}} + |\Omega| +1 \leqslant N_{k-1, \varepsilon}(T) + |\Omega| +1 
\end{align*}
for all $t \in(0, T)$. Since $(a+b)^s \leqslant 2^{s-1}\left(a^s+b^s\right)$ for all $a, b >0$ and $s \geqslant 1$, we then obtain
\begin{align*}
c_3 p_k^2 \int_{\Omega} u_{\varepsilon}^{p_k+1} v_{\varepsilon} \leqslant & \frac{p_k^2}{4} \int_{\Omega} u_{\varepsilon}^{p_k-1} v_{\varepsilon}\left|\nabla u_{\varepsilon}\right|^2+p_k^2 N_{k-1, \varepsilon}^{\frac{2(p_k+l-1)}{p_k+l-2}}(T) \int_{\Omega} \frac{\left|\nabla v_{\varepsilon}\right|^6}{v_{\varepsilon}^5} + p_k^2 (|\Omega|+1)^3 \int_{\Omega} \frac{\left|\nabla v_{\varepsilon}\right|^6}{v_{\varepsilon}^5} \\
& +2^{2\kappa+1} c_3^{\kappa+1} K (p_k+2)^{2 \kappa+2} N_{k-1, \varepsilon}^2(T) \int_{\Omega} u_{\varepsilon} v_{\varepsilon} + 2^{2\kappa+1} c_3^{\kappa+1} K (|\Omega| +1)^2   \int_{\Omega} u_{\varepsilon} v_{\varepsilon}
\end{align*}
for all $t \in(0, T)$. Since $p_k^2 \leqslant (p_k+2)^{2 \kappa+2}$ and $1 \leqslant N_{k-1, \varepsilon}^2(T) \leqslant N_{k-1, \varepsilon}^{\frac{2(p_k+l-1)}{p_k+l-2}}(T)$, from \eqref{511-1338} we deduce that with $c_4 =\max \left\{1, (|\Omega| +1)^3 , 2^{2\kappa+1} c_3^{\kappa+1} K, 2^{2\kappa+1} c_3^{\kappa+1} K (|\Omega| +1)^2\right\}$,
\begin{align*}
\dt \int_{\Omega} u_{\varepsilon}^{p_k} \leqslant 2 c_4 (p_k+2)^{2 \kappa+2} N_{k-1, \varepsilon}^{\frac{2\left(p_k+1\right)}{p_k}}(T) w_{\varepsilon}(t) 
\end{align*}
for all  $t \in(0, T)$,  any  $T \in\left(0, T_{\max, \varepsilon}\right)$  and each $\varepsilon \in(0,1)$, where $w_{\varepsilon}(t) = \int_{\Omega} \frac{\left|\nabla v_{\varepsilon}(t)\right|^6}{v_{\varepsilon}^5(t)}+\int_{\Omega} u_{\varepsilon}(t) v_{\varepsilon}(t)$.
Collecting \eqref{-2.10} and \eqref{-3.32}, we have with $c_5>0$, 
\begin{align*}
\int_0^{T} w_{\varepsilon}(t) \d t \leqslant c_5 \quad \text { for all } \varepsilon \in(0,1).
\end{align*}
An integration of this shows that 
\begin{align*}
\int_{\Omega} u_{\varepsilon}^{p_k} & \leqslant \int_{\Omega}\left(u_0+\varepsilon\right)^{p_k}+2 c_4 c_5 (p_k+2)^{2 \kappa+2} N_{k-1, \varepsilon}^{\frac{2\left(p_k+l-1\right)}{p_k+l-2}}(T) \\
& \leqslant|\Omega| \cdot\left\|u_0+1\right\|_{L^{\infty}(\Omega)}^{p_k}+2 c_4 c_5 (p_k+2)^{2 \kappa+2} N_{k-1, \varepsilon}^{\frac{2\left(p_k+l-1\right)}{p_k+l-2}}(T).
\end{align*}
for all  $t \in(0, T)$,  any  $T \in\left(0, T_{\max, \varepsilon}\right)$  and each $\varepsilon \in(0,1)$. In view of \eqref{511-1223}, this yields
\begin{align*}
N_{k, \varepsilon}({T}) & \leqslant 1+|\Omega| \cdot\left\|u_0+1\right\|_{L^{\infty}(\Omega)}^{p_k}+2 c_4 c_5 (p_k+2)^{2 \kappa+2} N_{k-1, \varepsilon}^{\frac{2\left(p_k+l-1\right)}{p_k+l-2}}(T) \\
& \leqslant s^{2^k}+d^k N_{k-1, \varepsilon}^{2+r \cdot 2^{-k}}({T}) \quad \text { for all } {T} \in\left(0, T_{\max, \varepsilon}\right) \text { and } \varepsilon \in(0,1)
\end{align*}
with
\begin{align*}
d=\left(\max \left\{ (2p_0+3) c_4 c_5 , 1\right\}\right)^{2 \kappa+2}, \quad 
r=\frac{2}{p_0} \text { and } \quad 
s=[2+2|\Omega| \cdot\left\|u_0+1\right\|_{L^{\infty}(\Omega)}]^{p_0},
\end{align*}
where the following estimate is used 
\begin{align*}
1+|\Omega| \cdot\left\|u_0+1\right\|_{L^{\infty}(\Omega)}^{p_k} \leqslant (1+|\Omega| \cdot\left\|u_0+1\right\|_{L^{\infty}(\Omega)})^{p_k}=[(2+2|\Omega| \cdot\left\|u_0+1\right\|_{L^{\infty}(\Omega)})^{p_0}]^{2^k}.
\end{align*}
Then it follows from Lemma \ref{lemma-4.3} and \eqref{511-1223} that
\begin{align*}
\left\|u_{\varepsilon}(t)\right\|_{L^{\infty}(\Omega)}^{p_0} & =\liminf _{k \rightarrow \infty}\left\{\int_{\Omega} u_{\varepsilon}^{p_k}(t)\right\}^{\frac{1}{2^k}} \\
& \leqslant \liminf _{k \rightarrow \infty} N_{k, \varepsilon}^{\frac{1}{2^k}}({T}) \\
& \leqslant\left(2 \sqrt{2} d^3 s^{1+\frac{r}{2}} c_2\right)^{e^{\frac{r}{2}}}
\end{align*}
for all $t \in(0, T)$, $T \in\left(0, T_{\max, \varepsilon}\right)$ and each $\varepsilon \in(0,1)$.
Taking $T \nearrow T_{\max, \varepsilon}$, we obtain \eqref{619-1648}.
\end{proof}

Based on Lemma \ref{lemma-4.4vbv}, we can demonstrate the global existence of $\left(u_{\varepsilon}, v_{\varepsilon}\right)$ for any $\varepsilon \in(0,1)$.
\begin{lem}\label{lemma-4.5}
Let $1<l \leqslant 3$ and assume that \eqref{assIniVal} holds. Then $T_{\max, \varepsilon}=+\infty$ for all $\varepsilon \in(0,1)$.
\end{lem}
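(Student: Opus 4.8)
The plan is to argue by contradiction, relying entirely on the extensibility criterion from Lemma \ref{lemma-2.1} together with the $L^\infty$ bound already secured in Lemma \ref{lemma-4.4vbv}. Fix $\varepsilon \in (0,1)$ and suppose, for contradiction, that $T_{\max, \varepsilon} < \infty$. Then the dichotomy \eqref{-2.7} forces
\[
\limsup_{t \nearrow T_{\max, \varepsilon}} \left\|u_{\varepsilon}(t)\right\|_{L^{\infty}(\Omega)} = \infty.
\]
On the other hand, Lemma \ref{lemma-4.4vbv} (specifically \eqref{619-1648}) supplies a constant $C>0$, depending only on the initial data and in particular independent of $t$, such that
\[
\left\|u_{\varepsilon}(t)\right\|_{L^{\infty}(\Omega)} \leqslant C \quad \text{for all } t \in (0, T_{\max, \varepsilon}).
\]
These two assertions are mutually exclusive, so the hypothesis $T_{\max, \varepsilon} < \infty$ cannot hold, whence $T_{\max, \varepsilon} = +\infty$.

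The crucial feature making this work is that the bound \eqref{619-1648} is uniform in time across the entire maximal existence interval $(0, T_{\max, \varepsilon})$, with a constant governed solely by quantities attached to $u_0$ and $v_0$ rather than by the length of that interval. I would therefore introduce no new estimates in this lemma: the argument is a direct synthesis of the blow-up criterion \eqref{-2.7} and the a priori $L^\infty$ control from Lemma \ref{lemma-4.4vbv}. The one point deserving a word of care is precisely the time-uniformity of $C$ in \eqref{619-1648}, but this is guaranteed by the statement of Lemma \ref{lemma-4.4vbv} itself.

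There is essentially no obstacle at this stage, since all the substantive analytic work—the Moser-type iteration built on Lemmas \ref{lemma-3.9x}, \ref{lemma-4.1}, \ref{lemma-4.2} and \ref{lemma-4.3} that culminates in Lemma \ref{lemma-4.4vbv}—has already been carried out. The present lemma merely harvests the consequence of that uniform $L^\infty$ estimate via the standard extensibility mechanism, yielding global-in-time existence of the regularized solutions $(u_\varepsilon, v_\varepsilon)$ for each $\varepsilon \in (0,1)$.
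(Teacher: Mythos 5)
Your proposal is correct and coincides with the paper's own argument: the paper proves this lemma by exactly the same combination of the extensibility criterion \eqref{-2.7} with the time-uniform $L^\infty$ bound \eqref{619-1648} from Lemma \ref{lemma-4.4vbv}. Your write-up merely spells out the contradiction explicitly, which is fine.
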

\begin{proof}
This immediately follows from Lemma \ref{lemma-4.4vbv} when combined with \eqref{-2.7}.
\end{proof}

The $L^\infty$ estimate from Lemma \ref{lemma-4.4vbv} allows us to establish a lower bound for $v_{\varepsilon}$ using a comparison argument.
\begin{lem}\label{lemma-4.6xx}
Let $1<l \leqslant 3$ and let $T>0$, and assume that \eqref{assIniVal} holds.
Then we have  
\begin{align}\label{-4.8xx}
v_{\varepsilon}(x, t) \geqslant C(T)  \quad \text { for all } x \in \Omega, ~~t \in(0, T), \text { and  } \varepsilon \in(0,1),
\end{align}
where $C(T)$ is a positive constant depending on $v_0$, but independent of $\varepsilon$.
\end{lem}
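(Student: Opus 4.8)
The plan is to bound $v_\varepsilon$ from below by a spatially homogeneous subsolution manufactured from the $\varepsilon$-independent sup bound on $u_\varepsilon$. By Lemma \ref{lemma-4.4vbv} (together with Lemma \ref{lemma-4.5}, which guarantees $T_{\max,\varepsilon}=\infty$) there is a constant $M>0$, independent of $\varepsilon$ and of $t$, such that $\|u_\varepsilon(t)\|_{L^\infty(\Omega)}\leqslant M$. Since $v_0>0$ on the compact set $\overline\Omega$, the quantity $\delta_0:=\min_{\overline\Omega}v_0$ is strictly positive. These two facts are the only genuine inputs; the rest is a routine parabolic comparison.

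First I would introduce the candidate subsolution $\underline v(t):=\delta_0 e^{-Mt}$. Being spatially constant, it satisfies $\partial_\nu\underline v=0$ on $\partial\Omega$ and $\Delta\underline v=0$, and it solves the ODE $\underline v'(t)=-M\underline v(t)$. Setting $z_\varepsilon:=v_\varepsilon-\underline v$ and using the second equation of \eqref{sys-regul}, a short computation using $\Delta\underline v=0$ gives
\begin{align*}
z_{\varepsilon t}-\Delta z_\varepsilon + u_\varepsilon z_\varepsilon
= -u_\varepsilon v_\varepsilon + M\underline v + u_\varepsilon(v_\varepsilon-\underline v)
= (M-u_\varepsilon)\,\underline v \geqslant 0,
\end{align*}
where the final inequality holds because $u_\varepsilon\leqslant M$ and $\underline v>0$. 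Moreover $z_\varepsilon(\cdot,0)=v_0-\delta_0\geqslant 0$ in $\Omega$ and $\partial_\nu z_\varepsilon=0$ on $\partial\Omega$.

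The differential operator $\partial_t-\Delta+u_\varepsilon$ carries the nonnegative, bounded zeroth-order coefficient $u_\varepsilon$, so the parabolic comparison (maximum) principle applies and yields $z_\varepsilon\geqslant 0$, that is, $v_\varepsilon(x,t)\geqslant \delta_0 e^{-Mt}$ for all $x\in\Omega$ and $t>0$. Restricting to $t\in(0,T)$ then gives $v_\varepsilon(x,t)\geqslant \delta_0 e^{-MT}=:C(T)>0$, which is exactly \eqref{-4.8xx} and is independent of $\varepsilon$. I do not anticipate a serious obstacle here: once the $\varepsilon$-independent $L^\infty$ bound on $u_\varepsilon$ is in hand, the construction of $\underline v$ and the invocation of the comparison principle are entirely standard, the only point requiring the convexity/Neumann structure being the vanishing boundary flux of the (constant) subsolution, which is automatic.
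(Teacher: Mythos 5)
Your proposal is correct and is essentially the paper's own argument: the paper likewise uses the $\varepsilon$-independent bound $c_1=\sup_{\varepsilon}\sup_{t}\|u_{\varepsilon}(t)\|_{L^{\infty}(\Omega)}$ from Lemma \ref{lemma-4.4vbv} to write $v_{\varepsilon t}\geqslant \Delta v_{\varepsilon}-c_1 v_{\varepsilon}$ and concludes $v_{\varepsilon}\geqslant\{\inf_{\Omega}v_0\}e^{-c_1 t}$ by comparison. Your write-up merely makes the comparison explicit by exhibiting the spatially constant subsolution $\delta_0 e^{-Mt}$ and verifying the inequality for $z_{\varepsilon}=v_{\varepsilon}-\underline v$, which is a fine (and slightly more detailed) presentation of the same proof.
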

\begin{proof}
Since
\begin{align*}
v_{\varepsilon t} \geqslant \Delta v_{\varepsilon}-c_1 v_{\varepsilon} \quad \text { in } \Omega \times(0, \infty) \quad \text { for all } \varepsilon \in(0,1),
\end{align*}
with $c_1=\sup _{\varepsilon \in(0,1)} \sup _{t>0}\left\|u_{\varepsilon}(t)\right\|_{L^{\infty}(\Omega)}$ being finite by Lemma \ref{lemma-4.4vbv}, from a comparison principle we obtain 
\begin{align*}
v_{\varepsilon}(x, t) \geqslant\left\{\inf _{\Omega} v_0\right\} \cdot e^{-c_1 t} \quad \text { for all } x \in \Omega, ~~t>0 \text { and } \varepsilon \in(0,1).
\end{align*}
We complete the proof.
\end{proof}

Since the boundedness of $u_{\varepsilon}$ and $v_{\varepsilon}$ asserted in Lemmas  \ref{lemma-4.1}, \ref{lemma-4.4vbv} and \ref{lemma-4.6xx}, the H\"{o}lder estimates of $u_{\varepsilon}$, $v_{\varepsilon}$ and $\nabla v_{\varepsilon}$ can be derived from standard parabolic regularity theory.
\begin{lem}\label{lemma-4.8}
Let $1<l \leqslant 3$ and let $T>0$, and assume that \eqref{assIniVal} holds. Then one can find $\theta_1=\theta(T) \in(0,1)$  such that 
\begin{align}\label{-4.12}
\left\|u_{\varepsilon}\right\|_{C^{\theta_1, \frac{\theta_1}{2}}(\overline{\Omega} \times[0, T])} \leqslant C_1(T) \quad \text { for all } \varepsilon \in(0,1)
\end{align}
and
\begin{align}\label{-4.13}
\left\|v_{\varepsilon}\right\|_{C^{\theta_1, \frac{\theta_1}{2}}(\overline{\Omega} \times[0, T])} \leqslant C_1(T) \quad \text { for all } \varepsilon \in(0,1),
\end{align}
where $C_1(T)$ is a positive constant independent of $\varepsilon$.
Moreover, for each $\tau>0$ and all $T>\tau$ one can also fix $\theta_2=\theta_2(\tau, T) \in(0,1)$ such that 
\begin{align}\label{-4.14}
\left\|v_{\varepsilon}\right\|_{C^{2+\theta_2, 1+\frac{\theta_2}{2}}(\overline{\Omega} \times [\tau, T])} \leqslant C_2(\tau, T) \quad \text { for all } \varepsilon \in(0,1),
\end{align}
where $C_2(T)>0$ is a positive constant independent of $\varepsilon$.
\end{lem}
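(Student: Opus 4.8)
The plan is to feed the uniform $L^\infty$ information already assembled into standard parabolic regularity theory, handling the two equations of \eqref{sys-regul} in the correct order. Fix $T>0$; by Lemmas \ref{lemma-4.1}, \ref{lemma-4.4vbv} and \ref{lemma-4.6xx} the quantities $\|u_\varepsilon\|_{L^\infty(\Omega\times(0,T))}$, $\|v_\varepsilon\|_{W^{1,\infty}(\Omega\times(0,T))}$ and $\big(\inf_{\Omega\times(0,T)}v_\varepsilon\big)^{-1}$ are all bounded independently of $\varepsilon$. I would first treat $v_\varepsilon$, which solves the linear uniformly parabolic problem $v_{\varepsilon t}=\Delta v_\varepsilon-u_\varepsilon v_\varepsilon$ under homogeneous Neumann conditions with source $-u_\varepsilon v_\varepsilon$ bounded in $L^\infty(\Omega\times(0,T))$ uniformly in $\varepsilon$. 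The De Giorgi--Nash--Moser Hölder estimate (equivalently, parabolic $L^p$ theory with $p$ large followed by a Sobolev embedding), together with the Hölder regularity of $v_0\in W^{1,\infty}(\Omega)$, then furnishes $\theta_1\in(0,1)$ and $C_1(T)>0$, independent of $\varepsilon$, for which \eqref{-4.13} holds.

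The main work, and the principal obstacle, is the Hölder bound \eqref{-4.12} for $u_\varepsilon$, whose equation
\[
u_{\varepsilon t}=\nabla\cdot\big(u_\varepsilon^{l-1}v_\varepsilon\nabla u_\varepsilon-u_\varepsilon^{l}v_\varepsilon\nabla v_\varepsilon\big)+u_\varepsilon v_\varepsilon
\]
is quasilinear and degenerates as $u_\varepsilon\to0$. Since $v_\varepsilon$ is trapped between positive bounds on $(0,T)$ while $u_\varepsilon$ and $\nabla v_\varepsilon$ stay bounded, the diffusion coefficient $u_\varepsilon^{l-1}v_\varepsilon$ is bounded above but degenerates exactly like the porous-medium exponent $l-1>0$, whereas the drift flux $u_\varepsilon^{l}v_\varepsilon\nabla v_\varepsilon$ and the reaction $u_\varepsilon v_\varepsilon$ are uniformly bounded. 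Uniformly parabolic theory is thus unavailable; instead I would rewrite the first term as $\tfrac1l\nabla\cdot(v_\varepsilon\nabla u_\varepsilon^{l})$ and verify the structure conditions of the Hölder-continuity theory for degenerate parabolic equations of porous-medium type (DiBenedetto; Porzio--Vespri). Concretely, the $L^\infty$ bounds and Young's inequality give the coercivity and growth estimates $\vec A\cdot\nabla u_\varepsilon\gtrsim u_\varepsilon^{l-1}|\nabla u_\varepsilon|^2-C$ and $|\vec A|\lesssim u_\varepsilon^{l-1}|\nabla u_\varepsilon|+C$ for the flux $\vec A:=u_\varepsilon^{l-1}v_\varepsilon\nabla u_\varepsilon-u_\varepsilon^{l}v_\varepsilon\nabla v_\varepsilon$, with the bounded drift and reaction entering only as admissible lower-order forcing. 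Because $u_\varepsilon$ is a bounded (classical, hence weak) solution and these structural constants are uniform in $\varepsilon$, that theory produces an interior-and-boundary Hölder modulus depending solely on the structural data, and after possibly decreasing $\theta_1$ this yields \eqref{-4.12}.

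With $u_\varepsilon$ and $v_\varepsilon$ now both Hölder continuous on $\overline\Omega\times[0,T]$ uniformly in $\varepsilon$, the source $-u_\varepsilon v_\varepsilon$ in the second equation is Hölder continuous in space and time, so parabolic Schauder theory applies to the linear problem $v_{\varepsilon t}=\Delta v_\varepsilon-u_\varepsilon v_\varepsilon$. Applying it on $\overline\Omega\times[\tau,T]$ with $\tau>0$ — the retreat from $t=0$ being forced by the merely $W^{1,\infty}$ regularity of $v_0$, which need not satisfy the $C^{2+\theta_2}$ compatibility required up to the initial instant — provides $\theta_2=\theta_2(\tau,T)\in(0,1)$ and $C_2(\tau,T)>0$ independent of $\varepsilon$ with \eqref{-4.14}, completing the argument.
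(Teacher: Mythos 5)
Your proposal is correct and follows essentially the same route as the paper: the paper likewise feeds the uniform bounds from Lemmas \ref{lemma-4.1}, \ref{lemma-4.4vbv} and \ref{lemma-4.6xx} together with \eqref{assIniVal} into the H\"older regularity theory for degenerate parabolic equations of Porzio--Vespri \cite{1993-JDE-PorzioVespri} to obtain \eqref{-4.12}--\eqref{-4.13}, and then combines Schauder estimates \cite{1968-Ladyzen} with a cut-off (time-retreat) argument for \eqref{-4.14}. Your write-up merely makes explicit what the paper leaves as citations --- the porous-medium rewriting $\frac{1}{l}\nabla\cdot(v_\varepsilon\nabla u_\varepsilon^{l})$, the structure-condition verification, and the use of linear (De Giorgi--Nash--Moser) theory for $v_\varepsilon$ --- which are all consistent with the paper's intended argument.
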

\begin{proof}
It follows from Lemmas \ref{lemma-4.1}, \ref{lemma-4.4vbv}, \ref{lemma-4.6xx} and \eqref{assIniVal} as well as the H\"{o}lder regularity theory for parabolic equations (cf. \cite{1993-JDE-PorzioVespri}) that \eqref{-4.12} and \eqref{-4.13} hold. Combining the standard Schauder estimates (cf. \cite{1968-Ladyzen}) with \eqref{-4.12} and a cut-off argument, we can deduce \eqref{-4.14}.
\end{proof}

Based on the preparations above, we can now utilize a standard extraction procedure to construct a pair of limit functions $(u, v)$, which is proved to be a global bounded weak solution to the system \eqref{SYS:MAIN} as documented in Theorem \ref{thm-1.1}.

\begin{lem}\label{lemma-4.9}
Assume that the initial value $\left(u_0, v_0\right)$ satisfies \eqref{assIniVal}. Then there exist $(\varepsilon_j)_{j \in \mathbb{N}} \subset(0,1)$ as well as functions $u$ and $v$ which satisfy \eqref{solu:property2} with $u > 0$ a.e in $\Omega \times(0, \infty)$ and $v>0$ in $\overline{\Omega} \times[0, \infty)$ such that
\begin{flalign}
& u_{\varepsilon} \rightarrow u  \quad \text { in } C_{\mathrm{loc}}^0(\overline{\Omega} \times(0, \infty)) \text {, }\label{-4.15}\\
& v_{\varepsilon} \rightarrow v  \quad \text { in } C_{\mathrm{loc}}^0(\overline{\Omega} \times[0, \infty)) \text { and in } C_{\mathrm{loc}}^{2,1}(\overline{\Omega} \times(0, \infty)),\label{-4.16}\\
& \nabla v_{\varepsilon} \stackrel{*}{\rightharpoonup} \nabla v \quad \text { in } L^{\infty}(\Omega \times(0, \infty)),\label{-4.17}
\end{flalign}
as $\varepsilon=\varepsilon_j \searrow 0$, and that $(u, v)$ is a global weak solution of the system (\ref{SYS:MAIN}) as defined in Definition \ref{def-weak-sol}. 
Moreover, we have
\begin{align}\label{eq4.38}
\int_{\Omega} u_{0} \leqslant \int_{\Omega} u(t) \leqslant \int_{\Omega} u_{0}+ \int_{\Omega} v_{0}  \quad \text { for all } t>0
\end{align}
and
\begin{align}\label{eq4.38-1}
\|v(t)\|_{L^{\infty}(\Omega)} \leqslant\left\|v_0\right\|_{L^{\infty}(\Omega)} \quad \text { for all } t>0
\end{align}
as well as
\begin{align}\label{eq4.381}
\int_0^{\infty} \int_{\Omega} u v \leqslant \int_{\Omega} v_0.
\end{align}
\end{lem}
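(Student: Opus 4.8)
The plan is to realize $(u,v)$ as a limit of the approximate solutions $(u_\varepsilon,v_\varepsilon)$ along a null sequence $\varepsilon=\varepsilon_j\searrow0$ and then to pass to the limit in the regularized weak formulations. First I would invoke the $\varepsilon$-independent H\"older and Schauder bounds of Lemma~\ref{lemma-4.8}: the estimates \eqref{-4.12} and \eqref{-4.13} give equicontinuity of $(u_\varepsilon)$ and $(v_\varepsilon)$ on each $\overline\Omega\times[0,T]$, while \eqref{-4.14} controls $v_\varepsilon$ in $C^{2+\theta_2,1+\theta_2/2}(\overline\Omega\times[\tau,T])$. By the Arzel\`a--Ascoli theorem and a diagonal extraction over $T\in\mathbb N$ and $\tau=\tfrac1j$, I obtain $\varepsilon_j\searrow0$ along which \eqref{-4.15} and \eqref{-4.16} hold, which simultaneously yields the regularity \eqref{solu:property2}. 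The weak-$*$ convergence \eqref{-4.17} then follows from the uniform bound \eqref{-4.1}, the limit being identified as $\nabla v$ through the $C^{2,1}_{\mathrm{loc}}$ convergence \eqref{-4.16}. Positivity of $v$ in $\overline\Omega\times[0,\infty)$ is inherited directly from the $\varepsilon$-independent lower bound \eqref{-4.8xx} of Lemma~\ref{lemma-4.6xx}.

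Positivity of $u$ a.e.\ I would deduce from Fatou's lemma applied to the pointwise convergence $u_{\varepsilon_j}\to u$, together with the uniform control of a singular functional of $u_\varepsilon$ supplied by the first-order estimates of Section~\ref{sect-3x}: for $l>2$ the uniform bound on $\int_\Omega u_\varepsilon^{2-l}(t)$ and for $l=2$ the bound on $-\int_\Omega\ln u_\varepsilon(t)$ force the corresponding limit quantity to be finite for a.e.\ $t$, so that $u(t)>0$ a.e.; the range $1<l<2$ is the delicate point, where one relies on the local lower bound \eqref{-4.8xx} for $v$ to keep the associated energy control finite. With positivity at hand, the routine terms of \eqref{-2.3} and \eqref{-2.4} pass to the limit using only uniform convergence and the $L^\infty$ bounds \eqref{-2.9}, \eqref{619-1648}: indeed $\int_0^\infty\!\!\int_\Omega u_\varepsilon\varphi_t\to\int_0^\infty\!\!\int_\Omega u\varphi_t$, $\int_\Omega u_{0\varepsilon}\varphi(\cdot,0)\to\int_\Omega u_0\varphi(\cdot,0)$, $\int_0^\infty\!\!\int_\Omega u_\varepsilon v_\varepsilon\varphi\to\int_0^\infty\!\!\int_\Omega uv\varphi$, and the analogous $v$-terms, the diffusion term $\int_0^\infty\!\!\int_\Omega\nabla v_\varepsilon\cdot\nabla\varphi$ being handled by \eqref{-4.16}.

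The main obstacle is the degenerate diffusion term $-\tfrac1l\int_0^\infty\!\!\int_\Omega v_\varepsilon\nabla u_\varepsilon^l\cdot\nabla\varphi$, for which I must identify the weak limit of $\nabla u_\varepsilon^l$ as $\nabla u^l$. The key observation is that taking $p=l+1\ge2$ in the gradient estimate \eqref{-3.37} gives $\int_0^\infty\!\!\int_\Omega u_\varepsilon^{2(l-1)}v_\varepsilon|\nabla u_\varepsilon|^2\le C$, and combining this with the lower bound \eqref{-4.8xx} for $v_\varepsilon$ on $[0,T]$ yields $\|u_\varepsilon^l\|_{L^2((0,T);W^{1,2}(\Omega))}\le C(T)$ uniformly in $\varepsilon$, since $|\nabla u_\varepsilon^l|^2=l^2u_\varepsilon^{2(l-1)}|\nabla u_\varepsilon|^2$. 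Hence, along a further subsequence, $u_\varepsilon^l\rightharpoonup u^l$ weakly in $L^2((0,T);W^{1,2}(\Omega))$, the limit being identified through the uniform convergence $u_\varepsilon^l\to u^l$; in particular $\nabla u_\varepsilon^l\rightharpoonup\nabla u^l$ in $L^2(\Omega\times(0,T))$, which also verifies the regularity \eqref{-2.2}. Multiplying by the strongly convergent, uniformly bounded factor $v_\varepsilon$ then gives $v_\varepsilon\nabla u_\varepsilon^l\rightharpoonup v\nabla u^l$ weakly in $L^2$, so this term converges. For the taxis term $\int_0^\infty\!\!\int_\Omega u_\varepsilon^l v_\varepsilon\nabla v_\varepsilon\cdot\nabla\varphi$ I would combine the a.e.\ convergence $u_\varepsilon^l v_\varepsilon\nabla v_\varepsilon\to u^l v\nabla v$ (valid off $t=0$ by \eqref{-4.16}) with the uniform $L^\infty$ dominating bounds from \eqref{619-1648}, \eqref{-2.9} and \eqref{-4.1}, and invoke dominated convergence, which simultaneously shows $u^l\nabla v\in L^1_{\mathrm{loc}}(\overline\Omega\times[0,\infty);\mathbb R^2)$.

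Finally, the three integral inequalities follow by passing to the limit in the $\varepsilon$-level relations. Fixing $t>0$, the uniform convergence $u_{\varepsilon_j}(t)\to u(t)$ turns \eqref{-2.8} into \eqref{eq4.38} after noting $\int_\Omega u_{0\varepsilon}=\int_\Omega u_0+\varepsilon|\Omega|\to\int_\Omega u_0$, and likewise \eqref{-2.9} yields \eqref{eq4.38-1}. For \eqref{eq4.381} I would use $T_{\max,\varepsilon}=\infty$ from Lemma~\ref{lemma-4.5}, so that \eqref{-2.10} reads $\int_0^\infty\!\!\int_\Omega u_\varepsilon v_\varepsilon\le\int_\Omega v_0$; since $u_{\varepsilon_j}v_{\varepsilon_j}\to uv$ a.e.\ with nonnegative integrands, Fatou's lemma gives $\int_0^\infty\!\!\int_\Omega uv\le\liminf_{j\to\infty}\int_0^\infty\!\!\int_\Omega u_{\varepsilon_j}v_{\varepsilon_j}\le\int_\Omega v_0$, as claimed.
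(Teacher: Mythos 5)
Your proposal is correct and follows essentially the same route as the paper's proof: diagonal extraction via Arzel\`a--Ascoli from the uniform bounds of Lemmas \ref{lemma-4.1} and \ref{lemma-4.8}, the bound on $u_\varepsilon^l$ in $L^2((0,T);W^{1,2}(\Omega))$ obtained from \eqref{-3.37} (with $p=l+1$) combined with the lower bound \eqref{-4.8xx}, passage to the limit term by term in the regularized weak formulation, and limit passage in \eqref{-2.8}--\eqref{-2.10} to get the three integral inequalities. The only divergence is your attempt to justify $u>0$ a.e.\ (a point the paper's proof passes over, asserting only $u\geqslant 0$); your sketch there is vague for $1<l<2$, but this mirrors a gap in the paper itself rather than introducing a new one.
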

\begin{proof}
The existence of $\left\{\varepsilon_j\right\}_{j \in \mathbb{N}}$ and nonnegative functions $u$ and $v$ with the properties in \eqref{solu:property2} and \eqref{-4.15}-\eqref{-4.17} follows from Lemmas \ref{lemma-4.1} and \ref{lemma-4.8} by a diagonal extraction procedure. Moreover, $u \geqslant 0$ by nonnegativity of all the $u_{\varepsilon}$. Based on \eqref{-4.8xx} and \eqref{-4.16}, we obtain $v>0$ in $\overline \Omega \times[0, \infty)$. Using \eqref{-2.8} and \eqref{-4.15}, we obtain \eqref{eq4.38}. Using \eqref{-2.9} and \eqref{-4.16}, we also obtain \eqref{eq4.38-1}. Similarly, \eqref{eq4.381} results from \eqref{-2.10}, \eqref{-4.15} and \eqref{-4.16}.

We use \eqref{-3.37} to deduce that
\begin{align}\label{-4.23}
\left(u_{\varepsilon}^l\right)_{\varepsilon \in(0,1)} \text { is bounded in } L^2\left((0, T) ; W^{1, 2}(\Omega)\right) \quad \text { for all } T>0. 
\end{align}
The regularity requirements \eqref{-2.1} and \eqref{-2.2} in Definition \ref{def-weak-sol} become straightforward consequences of \eqref{-4.15}, \eqref{-4.16} and \eqref{-4.23}. Given $\varphi \in C_0^{\infty}(\overline{\Omega} \times[0, \infty))$ satisfying  $\frac{\partial \varphi}{\partial \nu}=0$ on $\partial \Omega \times(0, \infty)$, using \eqref{sys-regul} we see that
\begin{align}\label{ident-1.10-1}
-\int_0^{\infty} \int_{\Omega} u_{\varepsilon} \varphi_t-\int_{\Omega} u_0 \varphi(0)=&-\frac{1}{2} \int_0^{\infty} \int_{\Omega} v_{\varepsilon}  \nabla u_{\varepsilon}^{2} \cdot \nabla \varphi
+ \int_0^{\infty} \int_{\Omega} u_{\varepsilon}^{2} v_{\varepsilon} \nabla v_{\varepsilon} \cdot \nabla \varphi \nonumber\\
& +\int_0^{\infty} \int_{\Omega} u_{\varepsilon} v_{\varepsilon} \varphi
\end{align}
for all $\varepsilon \in(0,1)$. We apply \eqref{-4.15} to show that
$$
-\int_0^{\infty} \int_{\Omega} u_{\varepsilon} \varphi_t \rightarrow-\int_0^{\infty} \int_{\Omega} u \varphi_t
$$
as $\varepsilon=\varepsilon_j \searrow 0$. Based on \eqref{-4.15}, \eqref{-4.16}, \eqref{-4.17} and \eqref{-4.23}, we have
$$
-\frac{1}{l} \int_0^{\infty} \int_{\Omega} v_{\varepsilon}  \nabla u_{\varepsilon}^{l} \cdot \nabla \varphi \rightarrow -\frac{1}{l} \int_0^{\infty} \int_{\Omega} v \nabla u^{l} \cdot \nabla \varphi
$$
and
$$
\int_0^{\infty} \int_{\Omega} u_{\varepsilon}^{l} v_{\varepsilon} \nabla v_{\varepsilon} \cdot \nabla \varphi \rightarrow  \int_0^{\infty} \int_{\Omega} u^{l} v\nabla v \cdot \nabla \varphi
$$
and
$$
\int_0^{\infty} \int_{\Omega} u_{\varepsilon} \varphi \rightarrow \int_0^{\infty} \int_{\Omega} u \varphi
$$
as well as
$$
\int_0^{\infty} \int_{\Omega} u^2_{\varepsilon} \varphi \rightarrow \int_0^{\infty} \int_{\Omega} u^2 \varphi
$$
as $\varepsilon=\varepsilon_j \searrow 0$. Therefore, \eqref{ident-1.10-1} implies \eqref{-2.3}. Similarly, \eqref{-2.4} can be verified.
\end{proof}

\begin{proof}[Proof of Theorem \ref{thm-1.1}]
Theorem \ref{thm-1.1} is a direct consequence of Lemmas \ref{lemma-4.1}, \ref{lemma-4.4vbv}, \ref{lemma-4.5} and \ref{lemma-4.9}.
\end{proof}

\subsection{Proof of Theorem \ref{thm-1.1xzc}}
We will first provide a priori estimates of solutions to the approximate problem \eqref{sys-regul} using a similar method as in Section \ref{sect-3x}.

\begin{lem}\label{lem-1st-est-11}
Let $l=1$ and $T = \min \{\widetilde{T} , T_{\max,\varepsilon}\}$ for $\widetilde{T} \in \left(0, +\infty\right)$, and assume that \eqref{assIniVal} holds. 
Then we have 
\begin{align}\label{-3.5aaxz} 
\int_0^{T_{\max, \varepsilon}}\r \int_{\Omega} \frac{v_{\varepsilon}}{u_{\varepsilon}}|\nabla u_{\varepsilon }|^2  \leqslant C(T)  \quad \text { for all } \varepsilon \in(0,1),
\end{align}
where $C$ is a positive constant depending on $\int_{\Omega} u_0$ and $\int_{\Omega} |\nabla v_{0 }|^2$, but independent of $\varepsilon$.
\end{lem}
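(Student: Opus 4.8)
The plan is to reproduce the gradient-like structure behind Lemma~\ref{lem-1st-est}, but to replace the multiplier $u_\varepsilon^{1-l}$, which at $l=1$ degenerates to the constant $1$ and merely reproduces the mass identity without producing any dissipation, by the entropy multiplier $\ln u_\varepsilon$. Testing the first equation of \eqref{sys-regul} (with $l=1$) against $\ln u_\varepsilon$ and integrating by parts, using the no-flux conditions to drop the boundary terms, I expect
\begin{align*}
\dt \int_\Omega (u_\varepsilon \ln u_\varepsilon - u_\varepsilon) + \int_\Omega \frac{v_\varepsilon}{u_\varepsilon}|\nabla u_\varepsilon|^2 = \int_\Omega v_\varepsilon \nabla u_\varepsilon \cdot \nabla v_\varepsilon + \int_\Omega u_\varepsilon v_\varepsilon \ln u_\varepsilon,
\end{align*}
the dissipation $\int_\Omega \frac{v_\varepsilon}{u_\varepsilon}|\nabla u_\varepsilon|^2$ arising exactly because $(\ln u_\varepsilon)'' = u_\varepsilon^{-1}$. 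Adding this to \eqref{1006-1659} cancels the cross term $\int_\Omega v_\varepsilon \nabla u_\varepsilon \cdot \nabla v_\varepsilon$ identically, so that with $E_\varepsilon(t) := \int_\Omega (u_\varepsilon \ln u_\varepsilon - u_\varepsilon) + \frac{1}{2}\int_\Omega |\nabla v_\varepsilon|^2$ I arrive at
\begin{align*}
E_\varepsilon'(t) + \int_\Omega \frac{v_\varepsilon}{u_\varepsilon}|\nabla u_\varepsilon|^2 + \int_\Omega |\Delta v_\varepsilon|^2 + \int_\Omega u_\varepsilon |\nabla v_\varepsilon|^2 = \int_\Omega u_\varepsilon v_\varepsilon \ln u_\varepsilon
\end{align*}
for all $t \in (0, T_{\max,\varepsilon})$ and $\varepsilon \in (0,1)$.

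The decisive step, and the genuine obstacle, is to control the reaction term $\int_\Omega u_\varepsilon v_\varepsilon \ln u_\varepsilon$, which carries no sign and, unlike the situation for $l>1$, cannot be absorbed by the gradient dissipation. The remedy is to feed it back into the entropy $E_\varepsilon$ itself. Since $v_\varepsilon \leqslant \|v_0\|_{L^\infty(\Omega)}$ by \eqref{-2.9} and $u_\varepsilon v_\varepsilon \ln u_\varepsilon \leqslant 0$ wherever $u_\varepsilon < 1$, I restrict to $\{u_\varepsilon \geqslant 1\}$ and use $-\xi\ln\xi \leqslant e^{-1}$ on $(0,1)$ together with the mass bound \eqref{-2.8} to obtain
\begin{align*}
\int_\Omega u_\varepsilon v_\varepsilon \ln u_\varepsilon \leqslant \|v_0\|_{L^\infty(\Omega)} \int_\Omega u_\varepsilon \ln u_\varepsilon + \frac{|\Omega|}{e}\|v_0\|_{L^\infty(\Omega)} \leqslant \|v_0\|_{L^\infty(\Omega)}\,(E_\varepsilon(t) + c),
\end{align*}
where $c$ depends only on $\int_\Omega u_0$, $\int_\Omega v_0$ and $|\Omega|$, and where I have used $\int_\Omega u_\varepsilon \ln u_\varepsilon \leqslant E_\varepsilon(t) + \int_\Omega u_\varepsilon$.

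Since $\xi\ln\xi - \xi \geqslant -1$ forces $E_\varepsilon(t) \geqslant -|\Omega|$, the identity above becomes the linear differential inequality $E_\varepsilon'(t) \leqslant \|v_0\|_{L^\infty(\Omega)} E_\varepsilon(t) + C$. A Grönwall argument, noting that $E_\varepsilon(0)$ is bounded uniformly in $\varepsilon$ because $u_0 \in W^{1,\infty}(\Omega)$ (whence $(u_0+\varepsilon)\ln(u_0+\varepsilon)$ is uniformly bounded) and $\int_\Omega |\nabla v_0|^2 < \infty$, then yields $E_\varepsilon(t) \leqslant C(\widetilde T)$ for all $t \in (0, T)$, uniformly in $\varepsilon$. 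Finally I integrate the energy identity over $(0,T)$: discarding the nonnegative terms $\int_\Omega |\Delta v_\varepsilon|^2$ and $\int_\Omega u_\varepsilon|\nabla v_\varepsilon|^2$, bounding $E_\varepsilon(T) \geqslant -|\Omega|$ from below and the time integral of the reaction term by the previous step, I obtain
\begin{align*}
\int_0^T \r \int_\Omega \frac{v_\varepsilon}{u_\varepsilon}|\nabla u_\varepsilon|^2 \leqslant E_\varepsilon(0) + |\Omega| + C(\widetilde T),
\end{align*}
which is the asserted bound.

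It is worth stressing why this only produces a time-dependent constant. For $l>1$ the analogous reaction contribution either possesses a favourable sign or is dominated by a power $u_\varepsilon^{3-l}$ appearing with the correct sign, permitting a global-in-time estimate as in Lemma~\ref{lem-1st-est}; at the critical value $l=1$ the term $\int_\Omega u_\varepsilon v_\varepsilon \ln u_\varepsilon$ can only be reabsorbed into $E_\varepsilon$, which is precisely what forces the Grönwall step and hence the constant $C(T)$ growing (exponentially) in $T$. This is fully consistent with Theorem~\ref{thm-1.1xzc}, which at $l=1$ asserts boundedness only on finite time intervals.
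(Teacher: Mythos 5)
Your proposal is correct and follows essentially the same route as the paper's proof: test the first equation with the logarithmic multiplier to produce the entropy $\int_\Omega u_\varepsilon\ln u_\varepsilon$ and the dissipation $\int_\Omega \frac{v_\varepsilon}{u_\varepsilon}|\nabla u_\varepsilon|^2$, add the $-\Delta v_\varepsilon$ identity \eqref{1006-1659} so the cross terms cancel, absorb the signless reaction term $\int_\Omega u_\varepsilon v_\varepsilon\ln u_\varepsilon$ back into the functional via $v_\varepsilon\leqslant\|v_0\|_{L^\infty(\Omega)}$ and $\xi\ln\xi\geqslant -e^{-1}$, and close with Gr\"onwall on the finite interval, which is exactly why the constant depends on $T$. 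The only (immaterial) differences are that you use $\ln u_\varepsilon$ instead of $1+\ln u_\varepsilon$, so your functional carries the extra bounded mass term $-\int_\Omega u_\varepsilon$, and your splitting over $\{u_\varepsilon\geqslant 1\}$ and $\{u_\varepsilon<1\}$ when estimating $\int_\Omega u_\varepsilon v_\varepsilon \ln u_\varepsilon$ is in fact slightly more careful than the corresponding step in the paper.
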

\begin{proof}
Multiply the second equation of \eqref{sys-regul} by $-\Delta v$ and integrating over $\Omega$ yield that
\begin{align}\label{1006-1659a}
\frac{1}{2} \frac{d}{d t} \int_{\Omega}|\nabla v_{\varepsilon}|^2+\int_{\Omega}|\Delta v_{\varepsilon}|^2 + \int_{\Omega}u_{\varepsilon} | \nabla v_{\varepsilon }|^2  =- \int_{\Omega}v_{\varepsilon} \nabla u_{\varepsilon }\cdot \nabla v_{\varepsilon }
\end{align}
for all $t \in(0, T_)$ and $\varepsilon \in(0,1)$. We multiply the first equation in \eqref{sys-regul} by $1+\ln u_{\varepsilon}$, integrate by parts and use \eqref{-2.9} to obtain
\begin{align}\label{1006-2240a}
\dt \int_{\Omega} u_{\varepsilon}\ln u_{\varepsilon}
= &  -\int_{\Omega} \frac{v_{\varepsilon}}{u_{\varepsilon}}|\nabla u_{\varepsilon }|^2+ \int_{\Omega} v_{\varepsilon} \nabla u_{\varepsilon }\cdot \nabla v_{\varepsilon }+\int_{\Omega}u_{\varepsilon} v_{\varepsilon} + \int_{\Omega}u_{\varepsilon} v_{\varepsilon} \ln u_{\varepsilon} \nonumber\\
\leqslant & -\int_{\Omega} \frac{v_{\varepsilon}}{u_{\varepsilon}}|\nabla u_{\varepsilon }|^2+\int_{\Omega} v_{\varepsilon} \nabla u_{\varepsilon }\cdot \nabla v_{\varepsilon } + \|v_0\|_{L^{\infty}(\Omega)} \int_{\Omega}u_{\varepsilon} \ln u_{\varepsilon}+\int_{\Omega}u_{\varepsilon} v_{\varepsilon}
\end{align}
for all $t \in(0, T)$ and $\varepsilon \in(0,1)$. Combining \eqref{1006-1659a} with \eqref{1006-2240a} and \eqref{-2.8}, \eqref{-2.9}, and using $\xi \ln \xi +\frac{1}{e}\geqslant 0$ for all $\xi>0$, we conclude that
\begin{align*}
\dt \left\{\int_{\Omega}u_{\varepsilon}\ln u_{\varepsilon}+  \frac{1}{2} \int_{\Omega} |\nabla v_{\varepsilon }|^2\right\} +\int_{\Omega} \frac{v_{\varepsilon}}{u_{\varepsilon}}|\nabla u_{\varepsilon }|^2
\leqslant &   \|v_0\|_{L^{\infty}(\Omega)} \int_{\Omega}u_{\varepsilon} \ln u_{\varepsilon} + \frac{1}{2} \int_{\Omega} |\nabla v_{\varepsilon }|^2 + \int_{\Omega}u_{\varepsilon} v_{\varepsilon} \\
\leqslant & \|v_0\|_{L^{\infty}(\Omega)} \left\{\int_{\Omega}u_{\varepsilon} \ln u_{\varepsilon}+ \frac{1}{2} \int_{\Omega} |\nabla v_{\varepsilon }|^2\right\}\\
& + \left\{\int_{\Omega}u_{\varepsilon}\ln u_{\varepsilon}+ \frac{1}{2} \int_{\Omega} |\nabla v_{\varepsilon }|^2\right\}  \\
& +  c_1 \|v_0\|_{L^{\infty}(\Omega)}  + \frac{|\Omega|}{e}\\
= &  \left(\|v_0\|_{L^{\infty}(\Omega)} +1\right) \cdot \left\{\int_{\Omega}u_{\varepsilon} \ln u_{\varepsilon}+ \frac{1}{2} \int_{\Omega} |\nabla v_{\varepsilon }|^2\right\}\\
&  +  c_1 \|v_0\|_{L^{\infty}(\Omega)}  + \frac{|\Omega|}{e}
\end{align*}
for all $t \in(0, T)$ and $\varepsilon \in(0,1)$, where $c_1=\int_{\Omega}\left(u_0+1\right)+ \int_{\Omega} v_0$. Integrating the above differential equality on $(0,t)$, we have
\begin{align}\label{1006-2249a}
\int_{\Omega}u_{\varepsilon}\ln u_{\varepsilon} + &  \frac{1}{2} \int_{\Omega} |\nabla v_{\varepsilon }|^2   \nonumber\\
\leqslant & c_4\equiv c_4(T)= \left\{\int_{\Omega}(u_{0}+1)\ln(u_{0}+1)+ \frac{1}{2} \int_{\Omega} |\nabla v_{0}|^2\right\} \cdot e^{c_2 T}+\frac{c_3}{c_2} e^{c_2 T} 
\end{align}
for all $t \in(0, T)$ and $\varepsilon \in(0,1)$, where $c_2=\|v_0\|_{L^{\infty}(\Omega)} +1$ and $c_3=c_1 \|v_0\|_{L^{\infty}(\Omega)}  + \frac{|\Omega|}{e}$ . From \eqref{1006-2249a}, we see that
\begin{align}\label{0914-1042}
\int_0^{t} \int_{\Omega} \frac{v_{\varepsilon}}{u_{\varepsilon}}|\nabla u_{\varepsilon }|^2 \leqslant c_4+c_3 T+c_2 c_4 T
\end{align}
for all $t \in(0, T)$ and $\varepsilon \in(0,1)$, which implies \eqref{-3.5aaxz}.
\end{proof}

Beginning with Lemma \ref{lem-1st-est-11} and utilizing the proof methods from Lemma \ref{lem-2nd-est} to Lemma \ref{lemma-4.1}, we can also derive the following two lemmas.
\begin{lem}\label{lemma-3.9xvb}
Let $l=1$ and $T = \min \{\widetilde{T}, T_{\max,\varepsilon}\}$ for $\widetilde{T} \in \left(0, +\infty\right)$, and assume that \eqref{assIniVal} holds. Then for all $p >2$, we have
\begin{align}\label{-3.36vb}
\int_{\Omega} u_{\varepsilon}^p(t) \leqslant C(p,T) \quad \text { for all } t \in(0, T) \text { and } \varepsilon \in(0,1)
\end{align}
and
\begin{align}\label{-3.37vb}
\int_0^{T} \int_{\Omega} u_{\varepsilon}^{p-2} \left|\nabla u_{\varepsilon}\right|^2 \leqslant  C(p,T) \quad \text { for all }  \varepsilon \in(0,1),
\end{align}
where $C(p,T)$ is some positive constant, but independent of $\varepsilon$. 
\end{lem}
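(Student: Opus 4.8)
The plan is to transplant, with $l=1$, the full chain of $\varepsilon$-uniform estimates built in Section \ref{sect-3} for $1<l\leqslant3$, the one structural difference being that Lemma \ref{lem-1st-est-11} replaces the dissipative Lemma \ref{lem-1st-est}: since \eqref{-3.5aaxz} is obtained through a Gronwall argument, every constant below will depend on $T$ (growing exponentially in $T$) instead of being uniform on $(0,T_{\max,\varepsilon})$. First I would run the proof of Lemma \ref{lem-2nd-est} unchanged — it uses only the second equation of \eqref{sys-regul} and is thus insensitive to $l$ — inserting \eqref{-3.5aaxz} in place of \eqref{-3.5aa} to get $\int_0^{T}\!\int_\Omega \frac{u_\varepsilon}{v_\varepsilon}|\nabla v_\varepsilon|^2\leqslant C(T)$ and $\int_0^{T}\!\int_\Omega \frac{|\nabla v_\varepsilon|^4}{v_\varepsilon^3}\leqslant C(T)$. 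Feeding these together with \eqref{-2.8}, \eqref{-2.10} and \eqref{-3.5aaxz} into Lemma \ref{lemma-3.4} at $p=1$ reproduces the analogue of Lemma \ref{lemma-3.7}, i.e. $\int_0^{T}\!\int_\Omega u_\varepsilon^2 v_\varepsilon\leqslant C(T)$.

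Next I would build the energy functional. Multiplying the first equation of \eqref{sys-regul} by $u_\varepsilon$ and specializing the computation of Lemma \ref{lemma-3.6} (the subcase $1\leqslant l<2$) — where now $3-l=2$ and $(l-3)(l-2)=2$ — gives
\begin{align*}
2b\,\dt\int_\Omega u_\varepsilon^2+3b\int_\Omega v_\varepsilon|\nabla u_\varepsilon|^2\leqslant 4b\int_\Omega u_\varepsilon^2 v_\varepsilon|\nabla v_\varepsilon|^2+4b\int_\Omega u_\varepsilon^2 v_\varepsilon,
\end{align*}
so that $G_\varepsilon(t):=2b\int_\Omega u_\varepsilon^2+\int_\Omega\frac{|\nabla v_\varepsilon|^4}{v_\varepsilon^3}$ obeys the counterpart of \eqref{0922-1558}. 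Substituting \eqref{1008-0954} to absorb the term $4b\int_\Omega u_\varepsilon^2 v_\varepsilon|\nabla v_\varepsilon|^2$, using $v_\varepsilon\leqslant\|v_0\|_{L^\infty(\Omega)}$ and $\int_\Omega\frac{|\nabla v_\varepsilon|^4}{v_\varepsilon^3}\leqslant G_\varepsilon$, and noting that for $l=1$ the leftover term is already $4b\int_\Omega u_\varepsilon^2 v_\varepsilon$ (no Young splitting needed), I arrive at $G_\varepsilon'\leqslant Z_\varepsilon G_\varepsilon+M_\varepsilon$ with $Z_\varepsilon=c\int_\Omega u_\varepsilon^2 v_\varepsilon$ and $M_\varepsilon$ composed of $\int_\Omega\frac{|\nabla v_\varepsilon|^4}{v_\varepsilon^3}$, $\int_\Omega u_\varepsilon v_\varepsilon$ and $\int_\Omega u_\varepsilon^2 v_\varepsilon$. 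As $\int_s^t Z_\varepsilon\leqslant C(T)$ and $\int_0^t M_\varepsilon\leqslant C(T)$ by the previous paragraph and \eqref{-2.10}, Gronwall yields the pointwise control $\int_\Omega\frac{|\nabla v_\varepsilon|^4}{v_\varepsilon^3}\leqslant C(T)$ as well as $\int_0^{T}\!\int_\Omega\frac{|\nabla v_\varepsilon|^6}{v_\varepsilon^5}\leqslant C(T)$, exactly as in Lemma \ref{lemma-3.8}. With this in hand the $L^p$ bound follows from the $l=1$ instance of \eqref{0704-0024}: I would control $\int_\Omega u_\varepsilon^{p}v_\varepsilon|\nabla v_\varepsilon|^2$ by \eqref{eq-6.4} taken with exponent $p-1$, so that its gradient contribution matches the dissipation $\int_\Omega u_\varepsilon^{p-2}v_\varepsilon|\nabla u_\varepsilon|^2$ on the left, and then use $v_\varepsilon\leqslant\|v_0\|_{L^\infty(\Omega)}$ together with the pointwise bound on $\int_\Omega\frac{|\nabla v_\varepsilon|^4}{v_\varepsilon^3}$ to reach $Y_\varepsilon'\leqslant H_\varepsilon Y_\varepsilon+M_\varepsilon$ for $Y_\varepsilon=\int_\Omega u_\varepsilon^p$ with $H_\varepsilon$ and $M_\varepsilon$ integrable in time; Gronwall then gives \eqref{-3.36vb} for every $p>2$.

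Finally, integrating that same differential inequality only produces the \emph{weighted} dissipation $\int_0^{T}\!\int_\Omega u_\varepsilon^{p-2}v_\varepsilon|\nabla u_\varepsilon|^2\leqslant C(p,T)$, and the step I expect to be the main obstacle is removing the weight to obtain \eqref{-3.37vb}: because the diffusion coefficient $u_\varepsilon^{l-1}v_\varepsilon=v_\varepsilon$ degenerates exactly where $v_\varepsilon$ is small, the passage requires a positive lower bound on $v_\varepsilon$. I would therefore first promote \eqref{-3.36vb} to an $L^\infty$ bound for $u_\varepsilon$ on $(0,T)$ by repeating Lemma \ref{lemma-4.1} (the $\nabla v_\varepsilon$ bound via the Neumann heat semigroup) and the Moser iteration of Lemma \ref{lemma-4.4vbv} (through Lemmas \ref{lemma-4.2} and \ref{lemma-4.3}), and then invoke the comparison argument of Lemma \ref{lemma-4.6xx} applied to $v_{\varepsilon t}\geqslant\Delta v_\varepsilon-c_1 v_\varepsilon$ to get $v_\varepsilon\geqslant C(T)>0$ on $(0,T)$. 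Testing the first equation against $u_\varepsilon^{p-1}$ once more, bounding the cross term by Young's inequality with $|\nabla v_\varepsilon|\leqslant c_1$ and $v_\varepsilon\leqslant\|v_0\|_{L^\infty(\Omega)}$ on the right while using $v_\varepsilon\geqslant C(T)$ on the dissipation term, integration over $(0,T)$ together with \eqref{-3.36vb} then yields \eqref{-3.37vb}. The two points demanding care throughout are the irreducible loss of $t$-uniformity — all constants are finite only because $T<\infty$ — and the bootstrap ordering, since the unweighted gradient estimate becomes available only after the whole $L^\infty$-bound-and-lower-bound package has been assembled.
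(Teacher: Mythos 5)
Your proposal is correct, and for the first assertion \eqref{-3.36vb} it coincides with what the paper intends: the paper's entire ``proof'' of this lemma is the one-line instruction to rerun the machinery of Lemmas \ref{lem-2nd-est}--\ref{lemma-4.1} with $l=1$, starting from the Gronwall-type (hence $T$-dependent) bound \eqref{-3.5aaxz}, and your transplantation — including the observation that for $l=1$ the functional is $G_\varepsilon=2b\int_\Omega u_\varepsilon^2+\int_\Omega\frac{|\nabla v_\varepsilon|^4}{v_\varepsilon^3}$ with the sign structure of the paper's subcase $1\leqslant l<2$, and that \eqref{eq-6.4} must be invoked with exponent $p-1$ so the gradient term matches the dissipation — is exactly that argument. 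Where you genuinely diverge is \eqref{-3.37vb}: you correctly notice that the transplanted chain (the $l=1$ analogue of Lemma \ref{lemma-3.9x}) only delivers the \emph{weighted} bound $\int_0^T\!\int_\Omega u_\varepsilon^{p-2}v_\varepsilon|\nabla u_\varepsilon|^2\leqslant C(p,T)$, and that stripping the weight needs $\inf v_\varepsilon\geqslant C(T)>0$, a point the paper's citation of lemmas leaves entirely implicit. Your bootstrap (semigroup $W^{1,\infty}$ bound for $v_\varepsilon$, then the Moser iteration of Lemma \ref{lemma-4.4vbv} — which indeed requires no non-degeneracy of the diffusion, only \eqref{-2.10}, the $L^{p}$ bounds and $\int_0^T\!\int_\Omega\frac{|\nabla v_\varepsilon|^6}{v_\varepsilon^5}\leqslant C(T)$ — then the ODE comparison $v_\varepsilon\geqslant\inf v_0\,e^{-c_1t}$) is non-circular and works. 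The paper's own later development reaches the same package in a different order and by a different device: its Lemma \ref{lemma-4.6} gets the lower bound for $v_\varepsilon$ \emph{before} any $L^\infty$ information on $u_\varepsilon$, via the substitution $w_\varepsilon=-\ln\bigl(v_\varepsilon/\|v_0\|_{L^\infty(\Omega)}\bigr)$, the inequality $w_{\varepsilon t}\leqslant\Delta w_\varepsilon+u_\varepsilon$ and a Duhamel estimate that needs only the space--time $L^p$ bound following from \eqref{-3.36vb}; only afterwards does Lemma \ref{lemma-4.4a} obtain $\|u_\varepsilon\|_{L^\infty(\Omega)}\leqslant C(T)$, using that lower bound to apply the Moser result of Tao--Winkler to the now uniformly parabolic equation. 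So your route buys wholesale reuse of the two-dimensional $1<l\leqslant3$ machinery at the cost of running the heavier weight-compatible Moser iteration first, while the paper's route is more economical, extracting positivity of $v_\varepsilon$ directly from $L^p$ information; either one legitimately closes the gap between the weighted dissipation estimate and the unweighted claim \eqref{-3.37vb}.
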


\begin{lem}\label{lemma-4.1zxc}
Let $l=1$ and $T = \min \{\widetilde{T} , T_{\max,\varepsilon}\}$ for $\widetilde{T} \in \left(0, +\infty\right)$, and assume that \eqref{assIniVal} holds.
Then we have
\begin{align}\label{-4.1zxc}
\left\|v_{\varepsilon}(t)\right\|_{W^{1, \infty}(\Omega)} \leqslant C(T) \quad \text { for all } t \in(0, T) \text { and } \varepsilon \in(0,1),
\end{align}
where $C$ is a positive constant independent of $\varepsilon$.
\end{lem}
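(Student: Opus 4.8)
The plan is to mirror the proof of Lemma \ref{lemma-4.1} almost verbatim, the only structural change being that the uniform-in-$\varepsilon$ $L^p$ control on $u_\varepsilon$ is now supplied by Lemma \ref{lemma-3.9xvb} in place of \eqref{-3.36}. First I would rewrite the second equation of \eqref{sys-regul} as $v_{\varepsilon t} = (\Delta - 1)v_\varepsilon - \{u_\varepsilon v_\varepsilon - v_\varepsilon\}$ and pass to the associated variation-of-constants representation, so that
$$\nabla v_\varepsilon(t) = \nabla e^{t(\Delta-1)}v_0 - \int_0^t \nabla e^{(t-s)(\Delta-1)}\{u_\varepsilon(s)v_\varepsilon(s) - v_\varepsilon(s)\}\,\d s.$$
Then, fixing any $p > 2$ and invoking the standard $L^p$--$L^\infty$ smoothing estimates for the Neumann heat semigroup (cf. \cite{2010-JDE-Winkler}), I would bound
$$\|\nabla v_\varepsilon(t)\|_{L^\infty(\Omega)} \leqslant c_1\|v_0\|_{W^{1,\infty}(\Omega)} + c_1\int_0^t\bigl(1 + (t-s)^{-\frac{1}{2}-\frac{1}{p}}\bigr)e^{-(t-s)}\|u_\varepsilon(s)v_\varepsilon(s) - v_\varepsilon(s)\|_{L^p(\Omega)}\,\d s,$$
exactly as in \eqref{-4.2}.

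The next step is to control the source term uniformly in $\varepsilon$. Using the triangle inequality together with the $L^\infty$ bound \eqref{-2.9} on $v_\varepsilon$ and the $\varepsilon$-independent estimate $\int_\Omega u_\varepsilon^p(s) \leqslant C(p,T)$ from \eqref{-3.36vb}, I obtain
$$\|u_\varepsilon(s)v_\varepsilon(s) - v_\varepsilon(s)\|_{L^p(\Omega)} \leqslant \bigl(\|u_\varepsilon(s)\|_{L^p(\Omega)} + |\Omega|^{\frac{1}{p}}\bigr)\|v_0\|_{L^\infty(\Omega)} \leqslant c_2(T)$$
for all $s \in (0,T)$, with $c_2(T)$ independent of $\varepsilon$. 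Since $p > 2 = n$ forces $\tfrac{1}{2} + \tfrac{1}{p} < 1$, the temporal kernel $\bigl(1 + (t-s)^{-\frac{1}{2}-\frac{1}{p}}\bigr)e^{-(t-s)}$ is integrable in $s$ with a bound depending only on $p$; inserting the above into the Duhamel estimate and combining with \eqref{-2.9} then yields the claimed bound $\|v_\varepsilon(t)\|_{W^{1,\infty}(\Omega)} \leqslant C(T)$.

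Since all the required analytic machinery is already in place, there is no genuine obstacle here; the only point requiring care is the choice $p > 2$, which is precisely what renders the singular factor $(t-s)^{-\frac{1}{2}-\frac{1}{p}}$ time-integrable and which is guaranteed available by Lemma \ref{lemma-3.9xvb}. The $T$-dependence of the final constant is inherited solely from the local-in-time $L^p$ estimate \eqref{-3.36vb}, whose constant carries the Gronwall factor $e^{c_2 T}$ produced in Lemma \ref{lem-1st-est-11} and propagated through the intervening estimates.
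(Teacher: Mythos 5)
Your proposal is correct and is essentially the paper's own argument: the paper gives no separate proof of this lemma, stating only that it follows by ``utilizing the proof methods from Lemma \ref{lem-2nd-est} to Lemma \ref{lemma-4.1}'', and your Duhamel representation with the Neumann heat semigroup smoothing estimates, the source term controlled via \eqref{-2.9} and the $\varepsilon$-independent $L^p$ bound \eqref{-3.36vb}, is precisely the argument of Lemma \ref{lemma-4.1} transplanted to $l=1$. You also correctly insist on $p>2$ to make the kernel $(t-s)^{-\frac{1}{2}-\frac{1}{p}}$ integrable (the statement ``fixing any $p>1$'' in the proof of Lemma \ref{lemma-4.1} is a slip), and you correctly identify that the $T$-dependence of the constant enters only through \eqref{-3.36vb}.
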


Following a standard procedure of changing variables in the second equation of \eqref{sys-regul}, we can determine the lower bound of $v_{\varepsilon}$.
\begin{lem}\label{lemma-4.6}
Let $l=1$ and $T = \min \{\widetilde{T} , T_{\max,\varepsilon}\}$ for $\widetilde{T} \in \left(0, +\infty\right)$, and assume that \eqref{assIniVal} holds. 
Then we have  
\begin{align}\label{-4.8xc}
v_{\varepsilon}(t) \geqslant C(T) \quad \text { for all } t \in(0, T) \text { and } \varepsilon \in(0,1).
\end{align}
where $C(T)$ is a positive constant depending on $v_0$, but independent of $\varepsilon$.
\end{lem}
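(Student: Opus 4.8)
The plan is to imitate the comparison strategy used for the range $l>1$ in Lemma \ref{lemma-4.6xx}, but to accommodate the fact that in the present regime $l=1$ we have so far secured only the $L^p$ bounds of Lemma \ref{lemma-3.9xvb} rather than the $L^\infty$ control that was available there. The device bridging this gap is the change of variables $w_\varepsilon:=\ln v_\varepsilon$ in the second equation of \eqref{sys-regul}, which is legitimate since $v_\varepsilon>0$ in $\overline{\Omega}\times[0,T_{\max,\varepsilon})$ by Lemma \ref{lemma-2.1}. A direct computation shows
\begin{align*}
w_{\varepsilon t}=\Delta w_\varepsilon+|\nabla w_\varepsilon|^2-u_\varepsilon\geqslant \Delta w_\varepsilon-u_\varepsilon \quad \text{in } \Omega\times(0,T),
\end{align*}
together with the homogeneous Neumann condition $\partial_\nu w_\varepsilon=v_\varepsilon^{-1}\partial_\nu v_\varepsilon=0$ and the initial datum $w_\varepsilon(\cdot,0)=\ln v_0$. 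Discarding the nonnegative gradient term converts the nonlinear equation into a linear differential inequality with the \emph{known} forcing $-u_\varepsilon$.

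Next I would introduce the solution $\zeta_\varepsilon$ of the linear problem $\zeta_{\varepsilon t}=\Delta\zeta_\varepsilon-u_\varepsilon$ in $\Omega\times(0,T)$, subject to $\partial_\nu\zeta_\varepsilon=0$ and $\zeta_\varepsilon(\cdot,0)=\ln v_0$. The difference $w_\varepsilon-\zeta_\varepsilon$ then satisfies $(w_\varepsilon-\zeta_\varepsilon)_t-\Delta(w_\varepsilon-\zeta_\varepsilon)=|\nabla w_\varepsilon|^2\geqslant 0$ with vanishing initial data and homogeneous Neumann boundary condition, so the parabolic comparison principle yields $w_\varepsilon\geqslant\zeta_\varepsilon$ throughout $\Omega\times(0,T)$. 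It therefore suffices to bound $\zeta_\varepsilon$ from below uniformly in $\varepsilon$.

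To this end I would exploit the Duhamel representation
\begin{align*}
\zeta_\varepsilon(t)=e^{t\Delta}\ln v_0-\int_0^t e^{(t-s)\Delta}u_\varepsilon(s)\,\d s, \quad t\in(0,T).
\end{align*}
Since $v_0\in W^{1,\infty}(\Omega)$ with $v_0>0$ in $\overline{\Omega}$ we have $\inf_\Omega v_0>0$, and order preservation together with the conservation of constants under the Neumann heat semigroup gives $e^{t\Delta}\ln v_0\geqslant \ln(\inf_\Omega v_0)$. The main work lies in the Duhamel integral, which is where Lemma \ref{lemma-3.9xvb} enters: fixing any $p>2$ and invoking the smoothing estimate $\|e^{\tau\Delta}\varphi\|_{L^\infty(\Omega)}\leqslant c\,(1+\tau^{-\frac{1}{p}})\|\varphi\|_{L^p(\Omega)}$ valid in the planar case $n=2$ (cf. \cite{2010-JDE-Winkler}), I obtain
\begin{align*}
\left\|\int_0^t e^{(t-s)\Delta}u_\varepsilon(s)\,\d s\right\|_{L^\infty(\Omega)}\leqslant c\int_0^t \bigl(1+(t-s)^{-\frac{1}{p}}\bigr)\|u_\varepsilon(s)\|_{L^p(\Omega)}\,\d s\leqslant C(T),
\end{align*}
the singularity $(t-s)^{-1/p}$ being integrable precisely because $p>2>1$, and the $\varepsilon$-independent bound $\sup_{s\in(0,T)}\|u_\varepsilon(s)\|_{L^p(\Omega)}\leqslant C(p,T)$ from \eqref{-3.36vb} rendering the resulting constant independent of $\varepsilon$. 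Combining the last three displays yields $\zeta_\varepsilon\geqslant \ln(\inf_\Omega v_0)-C(T)$, whence $v_\varepsilon=e^{w_\varepsilon}\geqslant e^{\zeta_\varepsilon}\geqslant (\inf_\Omega v_0)\,e^{-C(T)}=:C(T)>0$ on $(0,T)$, which is \eqref{-4.8xc}.

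The step I expect to be the main obstacle is the uniform control of the Duhamel integral: one must choose $p$ large enough (any $p>2$ works in two dimensions) that the kernel singularity $(t-s)^{-n/(2p)}=(t-s)^{-1/p}$ is time-integrable, and one must verify that both the heat-semigroup smoothing constant and the $L^p$ bound of Lemma \ref{lemma-3.9xvb} are independent of $\varepsilon$, so that the final lower bound depends only on $v_0$ and $T$. The remaining ingredients — the logarithmic substitution, the comparison against the linear problem, and the order properties of $e^{t\Delta}$ — are routine.
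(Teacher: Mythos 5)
Your proof is correct and follows essentially the same route as the paper: a logarithmic change of variables in the $v_{\varepsilon}$-equation, discarding the gradient-squared term by sign, comparison against the Neumann heat semigroup representation, and $L^p$--$L^{\infty}$ smoothing combined with the $\varepsilon$-independent $L^p$ bounds of Lemma \ref{lemma-3.9xvb}. The only cosmetic differences are the sign convention ($w_{\varepsilon}=\ln v_{\varepsilon}$ versus the paper's $w_{\varepsilon}=-\ln\bigl(v_{\varepsilon}/\|v_0\|_{L^{\infty}(\Omega)}\bigr)$), your explicit auxiliary function $\zeta_{\varepsilon}$ in place of the paper's direct variation-of-constants inequality, and your use of the pointwise-in-time $L^p$ bound where the paper instead applies H\"older's inequality in time to the space-time integral $\int_0^t\|u_{\varepsilon}(s)\|^p_{L^p(\Omega)}\,\mathrm{d}s$.
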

\begin{proof}
Let
\begin{align*}
w_{\varepsilon}(x, t)=-\ln \frac{v_{\varepsilon}(x, t)}{\left\|v_0\right\|_{L^{\infty}(\Omega)}},
\end{align*}
then the second equation of system \eqref{sys-regul} with its corresponding boundary condition becomes
\begin{align*}
\begin{cases}w_{\varepsilon t}= \Delta w_{\varepsilon}-|\nabla w_{\varepsilon}|^2+  u_{\varepsilon} & x \in \Omega, t >0, \\ \frac{\partial w_{\varepsilon}}{\partial \nu}=0, & x \in \partial \Omega, t>0, \\ w_{\varepsilon}(x, 0)=-\ln \frac{v_0(x)}{\left\|v_0\right\|_{L^{\infty}(\Omega)}}, & x \in \Omega .\end{cases}
\end{align*}
Since
\begin{align*}
w_{\varepsilon t}=\Delta w_{\varepsilon}-|\nabla w_{\varepsilon}|^2+  u_{\varepsilon}\leqslant \Delta w_{\varepsilon}+ u_{\varepsilon} 
\end{align*}
for all $t \in(0, T)$ and $\varepsilon \in(0,1)$, we have
\begin{align*}
w_{\varepsilon}(t) \leqslant e^{t \Delta} w_0+\int_0^t e^{(t-s) \Delta}u_{\varepsilon}(s) ds \quad \text { for all } t \in(0, T)\cap\left(0, T_{\max , \varepsilon}\right) \text { and } \varepsilon \in(0,1). 
\end{align*}
Combining H\"{o}lder's inequality and the Neumann heat semigroup \cite{2010-JDE-Winkler}, we show with some $c_1>0$,
\begin{align*}
&\|w_{\varepsilon}(t)\|_{L^{\infty}(\Omega)} \\
\leqslant &  c_1\left\|w_0\right\|_{L^{\infty}(\Omega)}+c_1 \int_0^t\left(1+(t-s)^{-\frac{1}{p}}\right)\|u(s)\|_{L^{p}(\Omega)} \d s \\
\leqslant & c_1\left\|w_0\right\|_{L^{\infty}(\Omega)}+c_1\left(\int_0^t\|u_{\varepsilon}(s)\|_{L^{p}(\Omega)}^{p} \d s\right)^{\frac{1}{p}}\left(\int_0^t\left(1+(t-s)^{-\frac{1}{p}}\right)^\frac{p}{p-1} \d s\right)^{\frac{p-1}{p}}.
\end{align*}
Based on Lemma \ref{lemma-3.9xvb}, we can choose $p>2$ such that $\int_0^t (1+(t-s)^{-\frac{1}{p}})^{\frac{p}{p-1}} \d s  < \infty$ and
\begin{align*}
\int_0^t \int_{\Omega}u_{\varepsilon}^{p} \leqslant c_2
\end{align*}
for some $c_2=c_2(T)>0$. Therefore, we complete the proof of \eqref{-4.8xc}.
\end{proof}

We can now proceed to assert the time-dependent boundedness of $u_{\varepsilon}$.
\begin{lem}\label{lemma-4.4a}
Let $l=1$ and $T = \min \{\widetilde{T} , T_{\max,\varepsilon}\}$ for $\widetilde{T} \in \left(0, +\infty\right)$, and assume that \eqref{assIniVal} holds. Then we have
\begin{align}\label{619-1648a}
\left\|u_{\varepsilon}(t)\right\|_{L^{\infty}(\Omega)} \leqslant C(T)\quad \text { for all } t \in(0, T) \text { and } \varepsilon \in(0,1), 
\end{align}
where $C(T)$ is a positive constant independent of $\varepsilon$.
\end{lem}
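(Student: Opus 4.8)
The plan is to prove \eqref{619-1648a} by an Alikakos--Moser iteration, exploiting that for $l=1$ the coupling through the second equation has become mild. Indeed, by Lemma \ref{lemma-4.1zxc} together with \eqref{-2.9} the factor $v_\varepsilon$ is bounded from above and $\nabla v_\varepsilon$ is bounded on $(0,T)$, while Lemma \ref{lemma-4.6} bounds $v_\varepsilon$ from below by some $c(T)>0$; hence the first equation of \eqref{sys-regul} with $l=1$ is uniformly parabolic with $T$-dependent but $\varepsilon$-independent coefficients, and we may run a standard iteration rather than the weighted one used for $l>1$ in Lemma \ref{lemma-4.4vbv}.

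First I would test the first equation in \eqref{sys-regul} (with $l=1$) against $p u_\varepsilon^{p-1}$ for $p\geq 2$, integrate by parts, and apply Young's inequality to the cross term $p(p-1)\int_\Omega u_\varepsilon^{p-1}v_\varepsilon\nabla u_\varepsilon\cdot\nabla v_\varepsilon$. The decisive simplification relative to the case $l>1$ is that the surviving gradient contribution is $\frac{p(p-1)}{2}\int_\Omega u_\varepsilon^{p}v_\varepsilon|\nabla v_\varepsilon|^2$, whose $u$-exponent is exactly $p$; using \eqref{-2.9} and the gradient bound from Lemma \ref{lemma-4.1zxc} this is absorbed directly into $C(T)p^2\int_\Omega u_\varepsilon^p$, so no weighted functional inequality of the type of Lemma \ref{lemma-4.2} is required. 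Retaining the dissipation via the lower bound $v_\varepsilon\geq c(T)>0$ from Lemma \ref{lemma-4.6}, this yields a differential inequality of the form
\begin{align*}
\frac{d}{dt}\int_\Omega u_\varepsilon^p + c(T)\int_\Omega \left|\nabla u_\varepsilon^{p/2}\right|^2 \leq C(T)\,p^2\int_\Omega u_\varepsilon^p
\end{align*}
on $(0,T)$, with constants independent of $\varepsilon$.

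Next I would insert the two-dimensional Gagliardo--Nirenberg inequality, applied to $w=u_\varepsilon^{p/2}$ in the form $\|w\|_{L^2(\Omega)}^2\leq \delta\|\nabla w\|_{L^2(\Omega)}^2+C_\delta\|w\|_{L^1(\Omega)}^2$, to estimate the right-hand side by the dissipation plus the lower power $\{\int_\Omega u_\varepsilon^{p/2}\}^2$. Setting $p_k=2^k p_0$ with $p_0>1$ and $N_{k,\varepsilon}(T)=1+\sup_{t\in(0,T)}\int_\Omega u_\varepsilon^{p_k}(t)$, a time integration then produces a recursion $N_{k,\varepsilon}(T)\leq a(T)^k N_{k-1,\varepsilon}(T)^{2+q\cdot 2^{-k}}+b(T)^{2^k}$ of exactly the shape treated in Lemma \ref{lemma-4.3}, in which the base level $N_{0,\varepsilon}(T)$ is finite and $\varepsilon$-independent by Lemma \ref{lemma-3.9xvb} and $b(T)$ encodes $\|u_0+1\|_{L^\infty(\Omega)}$. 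Applying Lemma \ref{lemma-4.3} and letting $k\to\infty$ yields the claimed $\varepsilon$-independent bound $\sup_{t\in(0,T)}\|u_\varepsilon(t)\|_{L^\infty(\Omega)}\leq C(T)$, which is \eqref{619-1648a}.

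I expect the main obstacle to be bookkeeping the $T$-dependence rather than any structural difficulty. The lower bound $c(T)$ furnished by Lemma \ref{lemma-4.6} degenerates (essentially like $e^{-c_1T}$) as $T$ grows, and the $L^p$-estimates of Lemma \ref{lemma-3.9xvb} likewise deteriorate with $p$ and $T$, so one must verify that all of $a(T),b(T),c(T)$ and the base level $N_{0,\varepsilon}(T)$ remain finite and $\varepsilon$-independent on each fixed finite interval. This is precisely what the two preceding lemmas guarantee, and it suffices here because the target \eqref{619-1648a} is only a time-dependent bound, in contrast to the global estimate of Lemma \ref{lemma-4.4vbv}. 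A minor technical point is the justification of the testing procedure and of the initial value of $\int_\Omega u_\varepsilon^{p_k}$, which is handled by the smoothness of $(u_\varepsilon,v_\varepsilon)$ asserted in Lemma \ref{lemma-2.1} and the regularity $u_{0\varepsilon}=u_0+\varepsilon\in W^{1,\infty}(\Omega)$.
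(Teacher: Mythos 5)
Your proposal is correct, and it rests on exactly the same three preparatory facts as the paper's proof (the upper bound \eqref{-2.9} together with the gradient bound of Lemma \ref{lemma-4.1zxc}, the lower bound of Lemma \ref{lemma-4.6}, and the $\varepsilon$-independent $L^p$ bounds of Lemma \ref{lemma-3.9xvb}), but the final step is executed differently. The paper does not iterate by hand: having written the first equation of \eqref{sys-regul} for $l=1$ as $u_{\varepsilon t}=\nabla\cdot(v_{\varepsilon}\nabla u_{\varepsilon})-\nabla\cdot(u_{\varepsilon}v_{\varepsilon}\nabla v_{\varepsilon})+u_{\varepsilon}v_{\varepsilon}$, it observes that the diffusion coefficient $v_{\varepsilon}$ is bounded below by $c_1(T)>0$ while the flux $u_{\varepsilon}v_{\varepsilon}\nabla v_{\varepsilon}$ and the source $u_{\varepsilon}v_{\varepsilon}$ are bounded in $L^{\infty}((0,T);L^{q}(\Omega))$ for every $q>1$, and then invokes the Moser-type iteration of \cite{2012-JDE-TaoWinkler} as a black box. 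You instead run the Alikakos--Moser scheme explicitly: testing with $u_{\varepsilon}^{p-1}$, absorbing the cross term pointwise (which, as you correctly emphasize, works only because for $l=1$ the resulting exponent is exactly $p$, so no weighted interpolation of the type of Lemma \ref{lemma-4.2} is needed), interpolating via Gagliardo--Nirenberg, and closing the recursion with the paper's Lemma \ref{lemma-4.3}. The two routes are equivalent in substance; yours buys self-containedness and reuses machinery already present in the paper, making transparent why $l=1$ is simpler than the case treated in Lemma \ref{lemma-4.4vbv}, while the paper's citation-based argument is shorter. One bookkeeping remark: in your time integration the absence of a damping term on the left is harmless precisely because $T$ is finite, so the factor $T$ can be absorbed into $a(T)$; alternatively, keep $\int_{\Omega}u_{\varepsilon}^{p}$ on the left after the Gagliardo--Nirenberg absorption and conclude by ODE comparison, which removes even that dependence from the recursion constants.
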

\begin{proof}
We apply \eqref{-4.1zxc} and \eqref{-4.8xc} to find $c_1=c_1(T) > 0$ and $c_2=c_2(T) > 0$ such that
\begin{align*}
v_{\varepsilon} \geqslant c_1  \quad \text { for all } x \in \Omega, t \in(0, T)\text {, and } \varepsilon \in(0,1)
\end{align*}
and
\begin{align}\label{0929-1946a}
\left\|\nabla v_{\varepsilon}(t)\right\|_{L^{\infty}(\Omega)} \leqslant c_2 \quad \text { for all } t \in(0, T) \text { and } \varepsilon \in(0,1).
\end{align}
Since
\begin{align*}
u_{\varepsilon t}=\nabla \cdot\left(v_{\varepsilon} \nabla u_{\varepsilon}\right)
  - \nabla \cdot\left(u_{\varepsilon} v_{\varepsilon} \nabla v_{\varepsilon}\right)+ u_{\varepsilon} v_{\varepsilon}, \quad  x \in \Omega, ~~~t>0,
\end{align*}
applying \eqref{-2.9} and \eqref{-3.36vb} as well as \eqref{0929-1946a} implies that for each $q>1$, there exists $c_3=c_3(q, T)>0$ such that
\begin{align*}
\sup _{\varepsilon \in(0,1)} \sup _{t \in(0, T)}\left\{\left\|u_{\varepsilon}( t)v_{\varepsilon}( t\right\|_{L^q(\Omega)}+ \left\|\left(u_{\varepsilon} v_{\varepsilon} \nabla v_{\varepsilon}\right)(t)\right\|_{L^q(\Omega)}\right\} \leqslant c_3
\end{align*}
and finally, a Moser iteration result (cf. \cite{2012-JDE-TaoWinkler}) can imply \eqref{619-1648a}.
\end{proof}

Considering the above lemma and \eqref{-2.7}, we have the following result.
\begin{lem}\label{lemma-4.5aa}
Let $l = 1$ and assume that \eqref{assIniVal} holds. Then $T_{\max, \varepsilon}=+\infty$ for all $\varepsilon \in(0,1)$.
\end{lem}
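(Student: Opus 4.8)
The plan is to argue by contradiction, combining the time-local $L^{\infty}$-bound for $u_{\varepsilon}$ furnished by Lemma \ref{lemma-4.4a} with the extensibility criterion \eqref{-2.7}. This is exactly the $l=1$ counterpart of the reasoning already used in Lemma \ref{lemma-4.5} for the range $1<l\leqslant3$, where the conclusion was obtained in one line from Lemma \ref{lemma-4.4vbv} and \eqref{-2.7}; I expect the same structure to carry over without change, the only new ingredient being that the relevant $L^{\infty}$-estimate is now the $l=1$ version \eqref{619-1648a} rather than \eqref{619-1648}.

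Concretely, I would fix an arbitrary $\varepsilon\in(0,1)$ and suppose, for contradiction, that $T_{\max,\varepsilon}<\infty$. The key point is that Lemma \ref{lemma-4.4a} provides a bound for $\|u_{\varepsilon}(t)\|_{L^{\infty}(\Omega)}$ on the whole interval $(0,T)$ with $T=\min\{\widetilde{T},T_{\max,\varepsilon}\}$, where $\widetilde{T}\in(0,+\infty)$ is at our disposal. I would therefore choose any $\widetilde{T}>T_{\max,\varepsilon}$ (say $\widetilde{T}=T_{\max,\varepsilon}+1$), so that $T=T_{\max,\varepsilon}$ becomes a fixed finite number, and \eqref{619-1648a} then yields a constant $C(T_{\max,\varepsilon})>0$, independent of $t$, with
\begin{align*}
\left\|u_{\varepsilon}(t)\right\|_{L^{\infty}(\Omega)}\leqslant C(T_{\max,\varepsilon})\quad\text{for all } t\in(0,T_{\max,\varepsilon}).
\end{align*}
In particular $\limsup_{t\nearrow T_{\max,\varepsilon}}\|u_{\varepsilon}(t)\|_{L^{\infty}(\Omega)}\leqslant C(T_{\max,\varepsilon})<\infty$, which contradicts \eqref{-2.7}. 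Hence $T_{\max,\varepsilon}<\infty$ is impossible, and since $\varepsilon\in(0,1)$ was arbitrary, $T_{\max,\varepsilon}=+\infty$ for all $\varepsilon\in(0,1)$.

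I do not anticipate any genuine analytic obstacle, since all of the substantial work—the Moser-iteration $L^{\infty}$-bound underlying Lemma \ref{lemma-4.4a}, itself resting on the gradient estimates for $v_{\varepsilon}$ and the lower bound \eqref{-4.8xc}—has already been completed upstream. The only step requiring a moment's attention is the bookkeeping around the definition $T=\min\{\widetilde{T},T_{\max,\varepsilon}\}$: one must exploit the freedom in $\widetilde{T}$ so that $T$ actually equals $T_{\max,\varepsilon}$ under the contradiction hypothesis, and observe that the constant $C(T_{\max,\varepsilon})$ is then a fixed finite quantity precisely because $T_{\max,\varepsilon}$ is assumed finite. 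No limiting procedure in $T$ is needed, so the argument remains a short contradiction.
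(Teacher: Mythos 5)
Your proposal is correct and follows exactly the paper's (implicit) argument: the paper likewise obtains the conclusion by combining the time-local $L^{\infty}$-bound of Lemma \ref{lemma-4.4a} with the extensibility criterion \eqref{-2.7}, just as in Lemma \ref{lemma-4.5} for $1<l\leqslant 3$. Your write-up merely makes explicit the (correct) bookkeeping with $\widetilde{T}>T_{\max,\varepsilon}$ that the paper leaves to the reader.
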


Given the boundedness of $u_{\varepsilon}$ and $v_{\varepsilon}$ as established in Lemmas \ref{lemma-4.1zxc}, \ref{lemma-4.6}, and \ref{lemma-4.4a}, the Hölder estimates for $u_{\varepsilon}$, $v_{\varepsilon}$, and $\nabla v_{\varepsilon}$ can be derived using standard parabolic regularity theory.

\begin{lem}\label{lemma-4.8aa}
Let $l = 1$ and let $T>0$, and assume that \eqref{assIniVal} holds. Then one can find $\theta_1=\theta(T) \in(0,1)$  such that 
\begin{align*}
\left\|u_{\varepsilon}\right\|_{C^{\theta_1, \frac{\theta_1}{2}}(\overline{\Omega} \times[0, T])} \leqslant C_1(T) \quad \text { for all } \varepsilon \in(0,1)
\end{align*}
and
\begin{align*}
\left\|v_{\varepsilon}\right\|_{C^{\theta_1, \frac{\theta_1}{2}}(\overline{\Omega} \times[0, T])} \leqslant C_1(T) \quad \text { for all } \varepsilon \in(0,1),
\end{align*}
where $C_1(T)$ is a positive constant independent of $\varepsilon$.
Moreover, for each $\tau>0$ and all $T>\tau$ one can also fix $\theta_2=\theta_2(\tau, T) \in(0,1)$ such that 
\begin{align*}
\left\|v_{\varepsilon}\right\|_{C^{2+\theta_2, 1+\frac{\theta_2}{2}}(\overline{\Omega} \times [\tau, T])} \leqslant C_2(\tau, T) \quad \text { for all } \varepsilon \in(0,1),
\end{align*}
where $C_2(T)>0$ is a positive constant independent of $\varepsilon$.
\end{lem}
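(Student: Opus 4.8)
The plan is to reproduce the argument of Lemma~\ref{lemma-4.8}, now invoking the $l=1$ versions of the underlying bounds: Lemma~\ref{lemma-4.1zxc} for $\|v_\varepsilon(t)\|_{W^{1,\infty}(\Omega)}$, Lemma~\ref{lemma-4.6} for the $\varepsilon$-independent positive pointwise lower bound on $v_\varepsilon$, and Lemma~\ref{lemma-4.4a} for the $L^\infty$ bound on $u_\varepsilon$. First I would fix $T>0$ and record, for all $t\in(0,T)$ and $\varepsilon\in(0,1)$, the bounds $c_1(T)\le v_\varepsilon\le\|v_0\|_{L^\infty(\Omega)}$, together with $\|\nabla v_\varepsilon(t)\|_{L^\infty(\Omega)}\le c_2(T)$ and $\|u_\varepsilon(t)\|_{L^\infty(\Omega)}\le c_3(T)$. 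The crucial structural point is that when $l=1$ the degeneracy factor $u_\varepsilon^{l-1}$ equals $1$, so the first equation of \eqref{sys-regul} reads $u_{\varepsilon t}=\nabla\cdot(v_\varepsilon\nabla u_\varepsilon-u_\varepsilon v_\varepsilon\nabla v_\varepsilon)+u_\varepsilon v_\varepsilon$; in view of the lower bound on $v_\varepsilon$ this is a \emph{uniformly} parabolic divergence-form equation whose principal coefficient $v_\varepsilon$ lies between two positive constants, whose first-order coefficient $v_\varepsilon\nabla v_\varepsilon$ is bounded, and whose zero-order source $u_\varepsilon v_\varepsilon$ is bounded, all uniformly in $\varepsilon$ on $\overline\Omega\times[0,T]$.

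Granting this, the desired Hölder bound for $u_\varepsilon$ follows from the Hölder regularity theory for parabolic equations in divergence form (cf.\ \cite{1993-JDE-PorzioVespri}), producing an exponent $\theta_1=\theta_1(T)\in(0,1)$ and a constant depending only on the above data. Applying the same theory to the second equation $v_{\varepsilon t}=\Delta v_\varepsilon-u_\varepsilon v_\varepsilon$, whose right-hand side $-u_\varepsilon v_\varepsilon$ is bounded in $L^\infty(\Omega\times(0,T))$, yields the Hölder estimate for $v_\varepsilon$ on $\overline\Omega\times[0,T]$ after possibly shrinking $\theta_1$. With both $u_\varepsilon$ and $v_\varepsilon$ now known to be Hölder continuous, I would bootstrap: treating $-u_\varepsilon v_\varepsilon$ as a Hölder-continuous inhomogeneity in the heat equation satisfied by $v_\varepsilon$ and combining the standard parabolic Schauder estimates (cf.\ \cite{1968-Ladyzen}) with a cut-off in time near $t=\tau$, I obtain the interior estimate in $C^{2+\theta_2,1+\frac{\theta_2}{2}}(\overline\Omega\times[\tau,T])$ for every $\tau\in(0,T)$, with a new exponent $\theta_2=\theta_2(\tau,T)\in(0,1)$.

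The main obstacle is not the regularity machinery itself but securing the uniform parabolicity of the $u_\varepsilon$-equation, that is, the $\varepsilon$-independent positive lower bound $v_\varepsilon\ge c_1(T)>0$; this is exactly the content of Lemma~\ref{lemma-4.6}, and without it the principal coefficient could degenerate and the cited estimates would fail. A secondary point requiring attention is that, in contrast to the uniform-in-time bounds available for $1<l\le3$, all constants here are permitted to depend on $T$; this is harmless for the qualitative conclusions of Theorem~\ref{thm-1.1xzc} but must be tracked when the drift and source bounds $c_2(T),c_3(T)$ are inserted. Once these bounds are in place, verifying that the coefficients satisfy the structural hypotheses of \cite{1993-JDE-PorzioVespri} and \cite{1968-Ladyzen} is routine, and the three asserted estimates follow.
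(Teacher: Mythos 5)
Your proposal is correct and follows essentially the same route as the paper: the paper's proof simply repeats the argument of Lemma~\ref{lemma-4.8}, i.e.\ it combines the $l=1$ bounds (Lemmas~\ref{lemma-4.1zxc}, \ref{lemma-4.6}, \ref{lemma-4.4a}) and \eqref{assIniVal} with the parabolic H\"older regularity theory of \cite{1993-JDE-PorzioVespri}, and then obtains the $C^{2+\theta_2,1+\frac{\theta_2}{2}}$ bound for $v_{\varepsilon}$ via Schauder estimates \cite{1968-Ladyzen} and a cut-off argument, exactly as you do. Your additional observation that the lower bound on $v_{\varepsilon}$ renders the $u_{\varepsilon}$-equation uniformly parabolic when $l=1$ is a correct and useful clarification of why the cited regularity theory applies.
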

\begin{proof}
The proof follows that of Lemma \ref{lemma-4.8}.
\end{proof}
%%%%%%%%%%%%%%%%%%%%%%%%%%%%%%%%%%%%
With the preparations above, we can use a standard extraction procedure to construct limit functions $(u, v)$, which are globally weak solutions to system \eqref{SYS:MAIN}, as shown in Theorem \ref{thm-1.1xzc}.

\begin{lem}\label{lemma-4.9aa}
Assume that the initial value $\left(u_0, v_0\right)$ satisfies \eqref{assIniVal}. Then there exist $(\varepsilon_j)_{j \in \mathbb{N}} \subset(0,1)$ as well as functions $u$ and $v$ which satisfy \eqref{solu:property1} with $u > 0$ a.e in $\Omega \times(0, \infty)$ and $v>0$ in $\overline{\Omega} \times[0, \infty)$ such that
\begin{flalign*}
& u_{\varepsilon} \rightarrow u  \quad \text { in } C_{\mathrm{loc}}^0(\overline{\Omega} \times(0, \infty)) \text {, }\\
& v_{\varepsilon} \rightarrow v  \quad \text { in } C_{\mathrm{loc}}^0(\overline{\Omega} \times[0, \infty)) \text { and in } C_{\mathrm{loc}}^{2,1}(\overline{\Omega} \times(0, \infty)),\\
& \nabla v_{\varepsilon} \stackrel{*}{\rightharpoonup} \nabla v \quad \text { in } L^{\infty}(\Omega \times(0, \infty)),
\end{flalign*}
as $\varepsilon=\varepsilon_j \searrow 0$, and that $(u, v)$ is a global weak solution of the system (\ref{SYS:MAIN}) as defined in Definition \ref{def-weak-sol}. 
\end{lem}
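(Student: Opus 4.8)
The plan is to mirror the construction carried out for Lemma \ref{lemma-4.9} in the regime $1<l\leqslant 3$, now invoking the $l=1$ counterparts of the a priori bounds, namely Lemmas \ref{lemma-4.1zxc}, \ref{lemma-4.6}, \ref{lemma-4.4a} and the global existence statement of Lemma \ref{lemma-4.5aa}. First I would fix an increasing sequence $T_k\nearrow\infty$ and, on each slab $\overline\Omega\times[0,T_k]$, combine the H\"older bounds of Lemma \ref{lemma-4.8aa} with the Arz\'ela--Ascoli theorem; the interior Schauder estimate (the $l=1$ analogue of \eqref{-4.14}) then upgrades the convergence of $v_\varepsilon$ to $C^{2,1}_{\mathrm{loc}}(\overline\Omega\times(0,\infty))$. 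A diagonal extraction over $k$ produces $\varepsilon_j\searrow0$ and limits $u,v$ with the three asserted convergences, the $*$-weak convergence of $\nabla v_\varepsilon$ being a consequence of the uniform $W^{1,\infty}$ bound \eqref{-4.1zxc}. Nonnegativity of $u$ is inherited from that of the $u_\varepsilon$, while $v>0$ in $\overline\Omega\times[0,\infty)$ follows from the lower bound \eqref{-4.8xc} together with the uniform convergence of $v_\varepsilon$.

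The one place where the $l=1$ argument genuinely differs from Lemma \ref{lemma-4.9}, and the step I expect to be the main obstacle, is producing the gradient compactness needed to pass to the limit in the diffusion term, i.e. the $l=1$ substitute for \eqref{-4.23}. For $l>1$ one has \eqref{-3.37} with a favourable power of $u_\varepsilon$, but the corresponding bound \eqref{-3.37vb} here only controls $\int_0^T\!\!\int_\Omega u_\varepsilon^{p-2}|\nabla u_\varepsilon|^2$ for $p>2$, whose weight degenerates where $u_\varepsilon$ is small and hence does not by itself bound $\nabla u_\varepsilon$. I would instead return to Lemma \ref{lem-1st-est-11}, which gives $\int_0^T\!\!\int_\Omega \frac{v_\varepsilon}{u_\varepsilon}|\nabla u_\varepsilon|^2\leqslant C(T)$; combining this with the strictly positive lower bound $v_\varepsilon\geqslant c(T)>0$ from Lemma \ref{lemma-4.6} yields $\int_0^T\!\!\int_\Omega \frac{1}{u_\varepsilon}|\nabla u_\varepsilon|^2\leqslant C(T)$, after which the $L^\infty$ bound $\|u_\varepsilon\|_{L^\infty}\leqslant M(T)$ of Lemma \ref{lemma-4.4a} gives
\begin{align*}
\int_0^T\!\!\int_\Omega|\nabla u_\varepsilon|^2
=\int_0^T\!\!\int_\Omega u_\varepsilon\cdot\frac{|\nabla u_\varepsilon|^2}{u_\varepsilon}
\leqslant M(T)\int_0^T\!\!\int_\Omega\frac{|\nabla u_\varepsilon|^2}{u_\varepsilon}\leqslant C(T).
\end{align*}
Thus $(u_\varepsilon)$ is bounded in $L^2((0,T);W^{1,2}(\Omega))$ for every $T>0$, which is precisely the $l=1$ form of \eqref{-4.23}; it secures the regularity requirement \eqref{-2.2} (with $u^l=u$) and, along a further subsequence, the weak convergence $\nabla u_\varepsilon\rightharpoonup\nabla u$ in $L^2_{\mathrm{loc}}$.

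With this in hand the passage to the limit is routine and follows the template of Lemma \ref{lemma-4.9}. Testing \eqref{sys-regul} as in \eqref{ident-1.10-1} with $l=1$, I would pass to the limit term by term: the time-derivative, reaction and consumption terms converge by the locally uniform convergence of $u_\varepsilon$ and $v_\varepsilon$; in $\int_0^\infty\!\!\int_\Omega v_\varepsilon\nabla u_\varepsilon\cdot\nabla\varphi$ the factor $v_\varepsilon\nabla\varphi$ converges strongly in $L^2$ against the weakly convergent $\nabla u_\varepsilon$; and in $\int_0^\infty\!\!\int_\Omega u_\varepsilon v_\varepsilon\nabla v_\varepsilon\cdot\nabla\varphi$ the product $u_\varepsilon v_\varepsilon\nabla\varphi$ converges strongly in $L^1$ against the $*$-weakly convergent $\nabla v_\varepsilon$. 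This yields \eqref{-2.3} and, in the same manner, \eqref{-2.4} for $l=1$, identifying $(u,v)$ as a global weak solution in the sense of Definition \ref{def-weak-sol}. The a.e. positivity of $u$ can be recorded exactly as in the $1<l\leqslant3$ case, so I would treat it last and only briefly.
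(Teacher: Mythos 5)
Your proposal is correct and follows the same route as the paper: the paper's entire proof of this lemma is the single sentence that it ``parallels that of Lemma \ref{lemma-4.9}'', and your argument is precisely that parallel, carried out with the $l=1$ bounds (Lemmas \ref{lemma-4.1zxc}, \ref{lemma-4.6}, \ref{lemma-4.4a}, \ref{lemma-4.8aa}) and the same diagonal extraction and term-by-term limit passage. The one step you single out as non-automatic---replacing \eqref{-4.23} by an $L^2((0,T);W^{1,2}(\Omega))$ bound for $u_\varepsilon$ itself, obtained from \eqref{-3.5aaxz} combined with the lower bound on $v_\varepsilon$ and the $L^\infty$ bound on $u_\varepsilon$, rather than from the degenerately weighted estimate \eqref{-3.37vb}---is exactly the point where the parallelism is not literal, and your fix is correct and more explicit than anything recorded in the paper.
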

\begin{proof}
The proof parallels that of Lemma \ref{lemma-4.9}.
\end{proof}

We are now in a position to prove Theorem \ref{thm-1.1xzc}.

\begin{proof}[Proof of Theorem \ref{thm-1.1xzc}]
Theorem \ref{thm-1.1xzc} follows directly from Lemmas \ref{lemma-4.5aa} and \ref{lemma-4.9aa}.
\end{proof}

\section{The One-Dimensional Case. Proof of Theorem \ref{thm-1.1a}}\label{sect-5}

The following lemma concerns the functional differential inequality $\int_{\Omega} v_{\varepsilon}^{-\alpha}\left|v_{\varepsilon x}\right|^q$ for $q>2$ and $\alpha \in(0, q)$, which was initially proven in \cite[Lemma 4.1]{2021-TAMS-Winkler}.

\begin{lem}\label{lem-1009}
Let $q>2$ and $\alpha \in(0, q)$. Then we have 
\begin{align*}
& \frac{d}{d t} \int_{\Omega} v_{\varepsilon}^{-\alpha}\left|v_{\varepsilon x}\right|^q+\frac{1}{C} \int_{\Omega} v_{\varepsilon}^{-\alpha-2}\left|v_{\varepsilon x}\right|^{q+2} \leqslant C \int_{\Omega} u_{\varepsilon}^{\frac{q+2}{2}} v_{\varepsilon}^{q-\alpha} \quad \text { for all } t \in(0, T_{\max, \varepsilon}) \text { and } \varepsilon \in(0,1),
\end{align*}
where $C$ is some positive constant, but independent of $\varepsilon$.
\end{lem}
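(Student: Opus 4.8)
The plan is to test the second equation $v_t=v_{xx}-uv$ (I suppress the subscript $\varepsilon$ and write $v_x,v_{xx},v_{xxx},u_x$ for one-dimensional derivatives) against the weight implicit in $\dt\int_\Omega v^{-\alpha}|v_x|^q$, integrating by parts and using the Neumann condition $v_x=0$ on $\partial\Omega$ to discard every boundary term. I abbreviate $P:=\int_\Omega v^{-\alpha-2}|v_x|^{q+2}$, $J:=\int_\Omega v^{-\alpha}|v_x|^{q-2}v_{xx}^2$ and $I:=\int_\Omega v^{-\alpha-1}|v_x|^q v_{xx}$; since $q>2$ the maps $s\mapsto|s|^q$ and $s\mapsto|s|^{q-2}s$ are $C^1$, so the differentiations below are legitimate. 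Using $v_t=v_{xx}-uv$ and $v_{xt}=v_{xxx}-u_xv-uv_x$, the derivative $\dt\int_\Omega v^{-\alpha}|v_x|^q$ splits into a diffusion part $D=-\alpha\int_\Omega v^{-\alpha-1}v_{xx}|v_x|^q+q\int_\Omega v^{-\alpha}|v_x|^{q-2}v_xv_{xxx}$ and a reaction part $R$ gathering all terms that carry a factor $u$ or $u_x$.

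I would first reduce $D$. Integrating the triple-derivative term by parts turns it into $q\alpha I-q(q-1)J$, so that $D=\alpha(q-1)I-q(q-1)J$; a further integration by parts in the definition of $I$ produces the exact identity $(q+1)I=(\alpha+1)P$. Factoring $v^{-\alpha-1}|v_x|^qv_{xx}=\big(v^{-\alpha/2}|v_x|^{(q-2)/2}v_{xx}\big)\big(v^{-\alpha/2-1}|v_x|^{(q+2)/2}\big)$ and applying Cauchy--Schwarz gives $I^2\le J\cdot P$, which together with the identity yields the decisive lower bound $J\ge\big(\tfrac{\alpha+1}{q+1}\big)^2P$.

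Next I would treat $R$. Integrating its $u_x$-term by parts (the boundary term again vanishes) and collecting, the contributions proportional to $\int_\Omega uv^{-\alpha}|v_x|^q$ add up to $-\alpha(q-1)\int_\Omega uv^{-\alpha}|v_x|^q\le0$ and may be discarded, leaving $R\le q(q-1)\int_\Omega uv^{1-\alpha}|v_x|^{q-2}v_{xx}$. A weighted Young inequality with a small parameter $\eta>0$ splits the right-hand side as $\eta J+C(\eta)\int_\Omega u^2v^{2-\alpha}|v_x|^{q-2}$, whence
\begin{align*}
\dt\int_\Omega v^{-\alpha}|v_x|^q\le\alpha(q-1)I-\big(q(q-1)-\eta\big)J+C(\eta)\int_\Omega u^2v^{2-\alpha}|v_x|^{q-2}.
\end{align*}

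The step I expect to be the main obstacle is the sign bookkeeping that follows, because the estimate is genuinely borderline. Inserting $I=\tfrac{\alpha+1}{q+1}P$ and $J\ge\big(\tfrac{\alpha+1}{q+1}\big)^2P$ bounds the first two terms above by $(q-1)\tfrac{\alpha+1}{q+1}\big[\alpha-\big(q-\tfrac{\eta}{q-1}\big)\tfrac{\alpha+1}{q+1}\big]P$, whose bracket converges to $\tfrac{\alpha-q}{q+1}<0$ as $\eta\searrow0$; a crude choice (reserving, say, half of $J$ for $R$) would only force the stronger restriction $\alpha<\tfrac{q}{q+2}$, so one must reserve an arbitrarily small fraction $\eta$ of the dissipation, at which point the admissibility threshold relaxes exactly to the hypothesis $\alpha<q$ and one obtains $\alpha(q-1)I-(q(q-1)-\eta)J\le-c_1P$ with some $c_1>0$. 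It then remains to convert the residual term by writing $u^2v^{2-\alpha}|v_x|^{q-2}=\big(v^{-(\alpha+2)(q-2)/(q+2)}|v_x|^{q-2}\big)\big(u^2v^{4(q-\alpha)/(q+2)}\big)$ and applying Young with the conjugate exponents $\tfrac{q+2}{q-2}$ and $\tfrac{q+2}{4}$, which exceed $1$ precisely because $q>2$; this gives $\int_\Omega u^2v^{2-\alpha}|v_x|^{q-2}\le\tfrac{c_1}{2C(\eta)}P+C\int_\Omega u^{(q+2)/2}v^{q-\alpha}$, and absorbing the $P$-term on the left produces the claimed inequality with $\tfrac1C=\tfrac{c_1}{2}$.
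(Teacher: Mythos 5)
Your proposal is correct. Note that the paper gives no proof of this lemma at all: it is imported verbatim as a citation of \cite{2021-TAMS-Winkler} (Lemma 4.1 there), so your argument is in effect a self-contained reconstruction of the cited proof rather than an alternative to anything written in this paper. The pivotal steps all check out: the integration by parts giving $D=\alpha(q-1)I-q(q-1)J$; the exact identity $(q+1)I=(\alpha+1)P$, where every boundary term vanishes because $v_{\varepsilon x}=0$ on $\partial\Omega$; the Cauchy--Schwarz bound $I^{2}\leqslant JP$, hence $J\geqslant\big(\tfrac{\alpha+1}{q+1}\big)^{2}P$; the cancellation in the reaction part, whose $\int_\Omega u_\varepsilon v_\varepsilon^{-\alpha}|v_{\varepsilon x}|^{q}$ contributions collapse to the coefficient $-\alpha(q-1)\leqslant0$; and the final Young step with conjugate exponents $\tfrac{q+2}{q-2}$ and $\tfrac{q+2}{4}$, which reproduces exactly the integrands $v_\varepsilon^{-\alpha-2}|v_{\varepsilon x}|^{q+2}$ and $u_\varepsilon^{(q+2)/2}v_\varepsilon^{q-\alpha}$ (the exponent bookkeeping $-(\alpha+2)(q-2)/(q+2)+4(q-\alpha)/(q+2)=2-\alpha$ is right). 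Your observation that only an arbitrarily small fraction $\eta$ of the dissipation $J$ may be ceded to the reaction term --- so that the bracket tends to $\tfrac{\alpha-q}{q+1}<0$ --- is precisely what makes the full range $\alpha\in(0,q)$ admissible; reserving a fixed fraction would indeed shrink the range, as you note. The only point worth flagging is regularity: the computation uses $v_{\varepsilon xxx}$, so one should remark (as the paper does before \eqref{-3.11}) that $v_\varepsilon$ is $C^{3}$ in space for $t>0$ by standard parabolic regularity; with that remark your proof is complete.
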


Building on Lemma \ref{lem-1009}, we are now able to establish time-independent $L^p$ estimates for $u_{\varepsilon}$ for any $p > 1$.

\begin{lem}\label{1012-2334}
Let $l \geqslant 1$ and assume that \eqref{assIniVal} holds.
Then for all $p>1$ we have
\begin{align}\label{-3.29c}
\int_{\Omega} u_{\varepsilon}^p(t) \leqslant C(p) \quad \text { for all } t \in(0, T_{\max, \varepsilon}) \text { and } \varepsilon \in(0,1),
\end{align}
where $C$ is a positive constant depending on $\int_{\Omega} u_0$, $\int_{\Omega} v_0$ and $\int_{\Omega}\left|v_0\right|^{-(q-1)}\left|v_{0 x}\right|^q$, but independent of $\varepsilon$. 
\end{lem}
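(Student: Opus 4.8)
The plan is to run a coupled energy argument for the pair $\bigl(\int_\Omega u_\varepsilon^p,\ \int_\Omega v_\varepsilon^{-(q-1)}|v_{\varepsilon x}|^q\bigr)$, where $q>2$ is chosen large in terms of $p$ and the weight exponent in Lemma~\ref{lem-1009} is fixed at $\alpha=q-1$, so that the initial value $\int_\Omega v_0^{-(q-1)}|v_{0x}|^q$ is precisely the quantity entering the asserted constant. Since interpolation $\|u_\varepsilon\|_{L^p(\Omega)}\le\|u_\varepsilon\|_{L^1(\Omega)}^{1-\theta}\|u_\varepsilon\|_{L^{p_0}(\Omega)}^{\theta}$ together with the mass bound \eqref{-2.8} reduces the case $1<p\le p_0$ to a single large exponent $p_0$, it suffices to prove \eqref{-3.29c} for all sufficiently large $p$; I would therefore fix such a $p>2$ and set $q=2p-2>2$.

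First I would test the first equation of \eqref{sys-regul} by $p u_\varepsilon^{p-1}$, integrate by parts, and absorb the resulting cross term by Young's inequality to obtain
\begin{align*}
\frac{d}{dt}\int_\Omega u_\varepsilon^p+\frac{p(p-1)}{2}\int_\Omega u_\varepsilon^{p+l-3}v_\varepsilon|u_{\varepsilon x}|^2
\le \frac{p(p-1)}{2}\int_\Omega u_\varepsilon^{p+l-1}v_\varepsilon|v_{\varepsilon x}|^2+p\int_\Omega u_\varepsilon^p v_\varepsilon .
\end{align*}
I would then add a suitable multiple of the inequality of Lemma~\ref{lem-1009}: with $\alpha=q-1$ its right-hand side is, since $q-\alpha=1$, equal to $C\int_\Omega u_\varepsilon^{(q+2)/2}v_\varepsilon=C\int_\Omega u_\varepsilon^{p}v_\varepsilon$, which by \eqref{-2.9} is dominated by $C\|v_0\|_{L^\infty(\Omega)}\int_\Omega u_\varepsilon^p$, while its left-hand side supplies the strong one-dimensional dissipation $\int_\Omega v_\varepsilon^{-(q+1)}|v_{\varepsilon x}|^{q+2}$.

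The heart of the argument is the taxis term $\int_\Omega u_\varepsilon^{p+l-1}v_\varepsilon|v_{\varepsilon x}|^2$. Writing $|v_{\varepsilon x}|^2=\bigl(v_\varepsilon^{-(q+1)}|v_{\varepsilon x}|^{q+2}\bigr)^{2/(q+2)}v_\varepsilon^{2(q+1)/(q+2)}$, using $v_\varepsilon\le\|v_0\|_{L^\infty(\Omega)}$, and applying Hölder and Young, I would bound it by a small multiple of the $v$-dissipation plus $C\int_\Omega u_\varepsilon^{(p+l-1)(q+2)/q}$; note $(q+2)/q=p/(p-1)$ with the above choice of $q$. To control this residual high power, I would rewrite $u_\varepsilon^{p+l-3}v_\varepsilon|u_{\varepsilon x}|^2=\tfrac{4}{(p+l-1)^2}\,v_\varepsilon\bigl|(u_\varepsilon^{(p+l-1)/2})_x\bigr|^2$, set $w_\varepsilon=u_\varepsilon^{(p+l-1)/2}$, and run a one-dimensional Gagliardo–Nirenberg interpolation for the weighted quantity $\sqrt{v_\varepsilon}\,w_\varepsilon$: the derivative $(\sqrt{v_\varepsilon}w_\varepsilon)_x$ splits into $\sqrt{v_\varepsilon}\,w_{\varepsilon x}$, which is exactly the diffusion dissipation, and $\tfrac12 v_\varepsilon^{-1/2}v_{\varepsilon x}w_\varepsilon$, which is controlled through Hölder against the $v$-dissipation and the mass bound \eqref{-2.8}. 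This converts the residual $u_\varepsilon$-power into a small multiple of the two dissipations plus a power of the conserved mass $\int_\Omega u_\varepsilon\le m_*$ and a space–time integrable remainder governed by \eqref{-2.10}.

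Collecting everything, I expect an inequality for $y_\varepsilon=\int_\Omega u_\varepsilon^p+\kappa\int_\Omega v_\varepsilon^{-(q-1)}|v_{\varepsilon x}|^q$ of coercive type $y_\varepsilon'(t)+c\,y_\varepsilon(t)^{1+\sigma}\le h_\varepsilon(t)$, or after isolating the Grönwall part $y_\varepsilon'\le a_\varepsilon y_\varepsilon+b_\varepsilon$ with $\int_0^t a_\varepsilon+\int_0^t b_\varepsilon$ bounded uniformly in $\varepsilon$ and $t$ by \eqref{-2.10} and the space–time control of the $v$-dissipation; either form yields a bound for $y_\varepsilon(t)$ uniform in $t\in(0,T_{\max,\varepsilon})$ and $\varepsilon\in(0,1)$ depending only on $y_\varepsilon(0)$ and the stated initial quantities, which gives \eqref{-3.29c} and, by the interpolation reduction, for every $p>1$. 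The main obstacle is the genuinely degenerate weight $v_\varepsilon$: lacking an $\varepsilon$- and time-uniform positive lower bound, the diffusion dissipation $\int_\Omega v_\varepsilon|(u_\varepsilon^{(p+l-1)/2})_x|^2$ cannot be promoted to an unweighted $H^1$-seminorm, so the interpolation must be performed for $\sqrt{v_\varepsilon}\,u_\varepsilon^{(p+l-1)/2}$ and the parasitic gradient created by differentiating $\sqrt{v_\varepsilon}$ must be reabsorbed into the strong one-dimensional $v$-dissipation. Balancing the exponents so that both dissipations simultaneously absorb the taxis term and the residual $u_\varepsilon$-power, leaving only mass- and $\int_\Omega u_\varepsilon v_\varepsilon$-type quantities on the right, is the delicate bookkeeping on which the estimate hinges.
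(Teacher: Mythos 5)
Your overall architecture is the same as the paper's: couple $\int_\Omega u_\varepsilon^p$ with $\int_\Omega v_\varepsilon^{-(q-1)}|v_{\varepsilon x}|^q$, invoke Lemma \ref{lem-1009} with $\alpha=q-1$, test the first equation with $u_\varepsilon^{p-1}$, absorb the taxis term into the two dissipations via Young and a weighted one-dimensional interpolation for $v_\varepsilon^{1/2}u_\varepsilon^{(p+l-1)/2}$, and close with a Gr\"onwall argument whose data are space--time integrable by \eqref{-2.10}. However, the exponent bookkeeping --- which, as you say yourself, is what the estimate hinges on --- does not close with your choice $q=2p-2$. The Young step applied to $\int_\Omega u_\varepsilon^{p+l-1}v_\varepsilon|v_{\varepsilon x}|^2$ against the dissipation $\int_\Omega v_\varepsilon^{-(q+1)}|v_{\varepsilon x}|^{q+2}$ produces the residual power $u_\varepsilon^{(p+l-1)(q+2)/q}$, and the interpolation step can absorb this only if that exponent equals $(q+2)/2$: one writes $\int_\Omega u_\varepsilon^{(q+2)/2}v_\varepsilon=\int_\Omega\bigl(u_\varepsilon^{(p+l-1)/2}v_\varepsilon^{1/2}\bigr)^2u_\varepsilon$ and peels off exactly one factor of $u_\varepsilon$, controlled by the conserved mass \eqref{-2.8}. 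This forces $(p+l-1)(q+2)/q=(q+2)/2$, i.e. $q=2(p+l-1)$, which is the paper's choice. With $q=2p-2$ the residual power is $\frac{p(p+l-1)}{p-1}=p+l+\frac{l}{p-1}$, strictly larger than $p+l$ (and than $(q+2)/2=p$) for every $l\geqslant1$, so after peeling off the square one is left with $\|u_\varepsilon^{1+l/(p-1)}\|_{L^1(\Omega)}$, which is not the mass and is not otherwise controlled; the argument does not close. Note also that, as written, your residual carries no factor of $v_\varepsilon$ at all, and since $v_\varepsilon$ has no $\varepsilon$- and time-uniform positive lower bound, an unweighted integral can never be dominated by the weighted interpolation; you must retain one power of $v_\varepsilon$, which the Young step does permit.

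A second, independent gap: you estimate the right-hand side of Lemma \ref{lem-1009} by $C\|v_0\|_{L^{\infty}(\Omega)}\int_\Omega u_\varepsilon^p$ and route it into the Gr\"onwall factor. That coefficient is a positive constant, so its time integral grows linearly and Gr\"onwall yields only bounds of the type $e^{Ct}$ --- in conflict with your own stipulation that $\int_0^t a_\varepsilon+\int_0^t b_\varepsilon$ be bounded uniformly in $t$, and insufficient for the lemma, whose constant must be independent of $t$ and of $T_{\max,\varepsilon}$ (this time-uniformity is exactly what the boundedness assertion of Theorem \ref{thm-1.1a} requires). The paper instead keeps the weight: with the correct $q$ the source term is $\int_\Omega u_\varepsilon^{(q+2)/2}v_\varepsilon$, the same quantity as the taxis residual, and the interpolation converts it into small multiples of the two dissipations plus $C\int_\Omega u_\varepsilon v_\varepsilon$, whose time integral is bounded by \eqref{-2.10}; only then does integration in time produce a bound depending solely on the stated initial quantities. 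Both defects are repaired by taking $q=2(p+l-1)$ and treating the source term through the interpolation rather than through $\|v_0\|_{L^{\infty}(\Omega)}$, at which point your proposal coincides with the paper's proof (your preliminary reduction to large $p$ then becomes unnecessary, since $q=2(p+l-1)>2$ already holds for all $p>1$ when $l\geqslant1$).
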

\begin{proof}
Let
\begin{align}\label{1010-1355}
q=2(p+l-1).
\end{align}
This allows us to apply Lemma \ref{lem-1009} with $\alpha=q-1 \in(0, q)$ to obtain $c_1=c_1(q)>0$ and $c_2=c_2(q)>0$ such that
\begin{align}\label{1009-1933}
\frac{d}{d t} \int_{\Omega} v_{\varepsilon}^{-(q-1)}\left|v_{\varepsilon x}\right|^q+c_1 \int_{\Omega} v_{\varepsilon}^{-(q+1)}\left|v_{\varepsilon x}\right|^{q+2} \leqslant c_2 \int_{\Omega} u_{\varepsilon}^{\frac{q+2}{2}} v_{\varepsilon}
\end{align}
for all $t \in\left(0, T_{\max , \varepsilon}\right)$ and $\varepsilon \in(0,1)$. Next, a direct calculation using the first equation in \eqref{sys-regul} along with Young's inequality yields
\begin{align*}
\frac{1}{p} \frac{d}{d t} \int_{\Omega} u_{\varepsilon}^p + (p-1) \int_{\Omega} u_{\varepsilon}^{l+p-3} v_{\varepsilon} u_{\varepsilon x}^2= &  (p-1) \int_{\Omega} u_{\varepsilon}^{l+p-2} v_{\varepsilon} u_{\varepsilon x} v_{\varepsilon x}+  \int_{\Omega} u_{\varepsilon}^{p} v_{\varepsilon}\nonumber\\
\leqslant & \frac{(p-1)}{2} \int_{\Omega} u_{\varepsilon}^{l+p-3} v_{\varepsilon} u_{\varepsilon x}^2 \nonumber\\
& + \frac{(p-1)}{2} \int_{\Omega} u_{\varepsilon}^{l+p-1} v_{\varepsilon} v_{\varepsilon x}^2+ \int_{\Omega} u_{\varepsilon}^{p} v_{\varepsilon}
\end{align*}
for all $t \in\left(0, T_{\max , \varepsilon}\right)$ and $\varepsilon \in(0,1)$. Upon simple rearrangement together with \eqref{-2.9}, this leads to
\begin{align}\label{1010-1404}
\frac{1}{p} \frac{d}{d t} \int_{\Omega} u_{\varepsilon}^p+\frac{p-1}{2} \int_{\Omega} u_{\varepsilon}^{l+p-3} v_{\varepsilon} u_{\varepsilon x}^2 \leqslant c_3 \int_{\Omega} u_{\varepsilon}^{l+p-1} v_{\varepsilon}^{-1} v_{\varepsilon x}^2 + \int_{\Omega} u_{\varepsilon}^{p} v_{\varepsilon}
\end{align}
for all $t \in\left(0, T_{\max , \varepsilon}\right)$ and $\varepsilon \in(0,1)$, where $c_3=\frac{p-1}{2}\left\|v_0\right\|^2_{L^{\infty}(\Omega)}$. By Young's inequality, we derive
\begin{align}\label{1010-1405}
\int_{\Omega} u_{\varepsilon}^{l+p-1} v_{\varepsilon}^{-1} v_{\varepsilon x}^2 \leqslant \int_{\Omega} v_{\varepsilon}^{-(q+1)}\left|v_{\varepsilon x}\right|^{q+2}+\int_{\Omega} u_{\varepsilon}^{\frac{q+2}{2}} v_{\varepsilon}
\end{align}
and
\begin{align}\label{1010-1406}
\int_{\Omega} u_{\varepsilon}^{p} v_{\varepsilon} \leqslant \int_{\Omega} u_{\varepsilon} v_{\varepsilon} + \int_{\Omega} u_{\varepsilon}^{\frac{q+2}{2}} v_{\varepsilon}
\end{align}
for all $t \in\left(0, T_{\max , \varepsilon}\right)$ and $\varepsilon \in(0,1)$, where we have used \eqref{1010-1355}. We use Hölder inequality to see that
\begin{align}\label{1010-1728}
\left\|u_{\varepsilon}^{\frac{p+l-1}{2}} v_{\varepsilon}^{\frac{1}{2}}\right\|_{L^{\frac{2}{p+l-1}}(\Omega)}^2 & =\left\{\int_{\Omega}\left(u_{\varepsilon} v_{\varepsilon}\right)^{\frac{1}{p+l-1}} u_{\varepsilon}^{\frac{p+l-2}{p+l-1}} \right\}^{p+l-1} \nonumber\\
& \leqslant \left\{\int u_{\varepsilon}\right\}^{p+l-2} \cdot \int_{\Omega} u_{\varepsilon} v_{\varepsilon}\nonumber\\
& \leqslant c_4^{p+l-2} \int_{\Omega} u_{\varepsilon} v_{\varepsilon}
\end{align}
for all $t \in\left(0, T_{\max , \varepsilon}\right)$ and $\varepsilon \in(0,1)$, where $c_4=\int_{\Omega}\left(u_0+1\right)+ \int_{\Omega} v_0$ due to \eqref{-2.8}. To estimate the last integrand on the right-hand side of \eqref{1010-1405} and \eqref{1010-1406}, let
\begin{equation}\label{1010-1657}
\eta=\eta(p)=\min \left\{\frac{p-1}{2\eta\left(c_3+2 c_1^{-1} c_2 c_3+1\right)}, \frac{c_3}{\left(1+c_3+2 c_1^{-1} c_2 c_3\right)}\right\},
\end{equation}
and define
\begin{equation}\label{1010-1658}
\eta_1=\eta_1(p)=\min \left\{\eta, \frac{\eta}{(p+m-1)^2}, 1 \right\}.
\end{equation}
Using \eqref{1010-1728}, \eqref{1010-1405}, and Ehrling's lemma, we obtain 
\begin{align*}
\int_{\Omega} u_{\varepsilon}^{\frac{q+2}{2}} v_{\varepsilon}= & \int_{\Omega}\left(u_{\varepsilon}^{\frac{p+l-1}{2}} v_{\varepsilon}^{\frac{1}{2}}\right)^2 u_{\varepsilon} \nonumber\\
\leqslant & \left\|u_{\varepsilon}\right\|_{L^1(\Omega)} \cdot\left\|u_{\varepsilon}^{\frac{p+l-1}{2}} v_{\varepsilon}^{\frac{1}{2}}\right\|_{L^{\infty}(\Omega)}^2 \nonumber\\
\leqslant & \eta_1 \left\|\left(u_{\varepsilon}^{\frac{p+l-1}{2}} v_{\varepsilon}^{\frac{1}{2}}\right)_x\right\|_{L^2(\Omega)}^2+c_5\left\|u_{\varepsilon}\right\|_{L^1(\Omega)}\left\|u_{\varepsilon}^{\frac{p+l-1}{2}} v_{\varepsilon}^{\frac{1}{2}}\right\|_{L^{\frac{2}{p+l-1}}(\Omega)}^2 \nonumber\\
\leqslant & \frac{(p+l-1)^2}{2} \eta_1 \int_{\Omega} u_{\varepsilon}^{p+l-3} v_{\varepsilon} u_{\varepsilon x}^2+\frac{1}{2} \eta_1 \int_{\Omega} u_{\varepsilon}^{p+l-1} v_{\varepsilon}^{-1} v_{\varepsilon x}^2 \nonumber\\
& + c_5 c_4^{p+l-1}  \int_{\Omega} u_{\varepsilon} v_{\varepsilon} \nonumber\\
\leqslant & \frac{(p+l-1)^2}{2} \eta_1 \int_{\Omega} u_{\varepsilon}^{p+l-3} v_{\varepsilon} u_{\varepsilon x}^2+\frac{1}{2} \eta_1 \int_{\Omega} v_{\varepsilon}^{-(q+1)}\left|v_{\varepsilon x}\right|^{q+2} \nonumber\\
& +\frac{1}{2} \eta_1 \int_{\Omega} u_{\varepsilon}^{\frac{q+2}{2}} v_{\varepsilon} +c_6 \int_{\Omega} u_{\varepsilon} v_{\varepsilon}
\end{align*}
for all $t \in\left(0, T_{\max , \varepsilon}\right)$ and $\varepsilon \in(0,1)$, where $c_5=c_5(p)>0$ and $c_6= c_5 c_4^{p+l-1}>0$. By our choice of $\eta_1$ in \eqref{1010-1657}, this yields
\begin{align}\label{1010-1800}
\int_{\Omega} u_{\varepsilon}^{\frac{q+2}{2}} v_{\varepsilon} \leqslant \eta \int_{\Omega} u_{\varepsilon}^{p+l-3} v_{\varepsilon} u_{\varepsilon x}^2+\eta \int_{\Omega} v_{\varepsilon}^{-(q+1)}\left|v_{\varepsilon x}\right|^{q+2}+2 c_6 \int_{\Omega} u_{\varepsilon} v_{\varepsilon}
\end{align}
for all $t \in\left(0, T_{\max , \varepsilon}\right)$ and $\varepsilon \in(0,1)$. Combining this with \eqref{1010-1404}, \eqref{1010-1405} and \eqref{1010-1406} we see that
\begin{align}\label{1010-1900}
\frac{1}{p} \frac{d}{d t} \int_{\Omega} u_{\varepsilon}^p+\left\{\frac{p-1}{2}-\eta (c_3+1)\right\} \int_{\Omega} u_{\varepsilon}^{p+l-3} v_{\varepsilon} u_{\varepsilon x}^2 \leqslant &  [(1+\eta) c_3+ \eta] \int_{\Omega} v_{\varepsilon}^{-(q+1)}\left|v_{\varepsilon x}\right|^{q+2}\nonumber\\
& +(2 c_3 c_6+1) \int_{\Omega} u_{\varepsilon} v_{\varepsilon}
\end{align}
for all $t \in\left(0, T_{\max , \varepsilon}\right)$ and $\varepsilon \in(0,1)$. Together with \eqref{1010-1800}, \eqref{1009-1933} entails that
\begin{align}\label{1010-1905}
\frac{d}{d t} \int_{\Omega} v_{\varepsilon}^{-(q-1)}\left|v_{\varepsilon x}\right|^q+\left(c_1-\eta c_2\right) \int_{\Omega} v_{\varepsilon}^{-(q+1)}\left|v_{\varepsilon x}\right|^{q+2} \leqslant & \eta c_2 \int_{\Omega} u_{\varepsilon}^{p+l-3} v_{\varepsilon} u_{\varepsilon x}^2\nonumber\\
& +2 c_2 c_6 \int_{\Omega} u_{\varepsilon} v_{\varepsilon} 
\end{align}
for all $t \in\left(0, T_{\max , \varepsilon}\right)$ and $\varepsilon \in(0,1)$. Denoting
\begin{align}\label{1010-1909}
y_{\varepsilon}(t)=\frac{1}{p} \int_{\Omega}\left|u_{\varepsilon}(t)\right|^p+\frac{2 c_3}{c_1} \int_{\Omega}\left|v_{\varepsilon}(t)\right|^{-(q-1)}\left|v_{\varepsilon x}(t)\right|^q
\end{align}
for all $t \in\left(0, T_{\max , \varepsilon}\right)$ and $\varepsilon \in(0,1)$, we derive from \eqref{1010-1900} and \eqref{1010-1905} that
\begin{align}\label{1010-1912}
y_{\varepsilon}^{\prime}(t)+& \left\{\frac{p-1}{2}-\eta\left(c_3+2 c_1^{-1} c_2 c_3+1\right)\right\} \int_{\Omega} u_{\varepsilon}^{p+l-3} v_{\varepsilon} u_{\varepsilon x}^2 \nonumber\\
& +\left\{c_3-\eta\left(1+c_3+2 c_1^{-1} c_2 c_3\right)\right\} \int_{\Omega} v_{\varepsilon}^{-(q+1)}\left|v_{\varepsilon x}\right|^{q+2} \leqslant c_7 \int_{\Omega} u_{\varepsilon} v_{\varepsilon}
\end{align}
for all $t \in\left(0, T_{\max , \varepsilon}\right)$ and $\varepsilon \in(0,1)$, where $c_6=c_6(p)=2 c_3 c_6+4 c_1^{-1} c_2 c_3 c_6+1$. Thus, using the definition of $\eta$, it follows from \eqref{1010-1912} that
\begin{align*}
y_{\varepsilon}^{\prime}(t) \leqslant c_7 \int_{\Omega} u_{\varepsilon} v_{\varepsilon} \quad \text { for all } t \in\left(0, T_{\max , \varepsilon}\right) \text { and } \varepsilon \in(0,1)
\end{align*}
which, upon integration in time, thanks to \eqref{-2.10}, implies that
\begin{align*}
\int_{\Omega}\left|u_{\varepsilon}(t)\right|^p  \leqslant c_7 \int_{\Omega} v_{0} +\frac{1}{p} \int_{\Omega}\left|u_{0}\right|^p+\frac{2 c_3}{c_1} \int_{\Omega}\left|v_0\right|^{-(q-1)}\left|v_{0 x}\right|^q 
\end{align*}
for all $t \in\left(0, T_{\max , \varepsilon}\right)$ and $\varepsilon \in(0,1)$. The proof is thus completed.
\end{proof}

Using Lemma \ref{1012-2334} and a similar method as in Lemma \ref{lemma-4.1}, we can also derive $L^{\infty}$ bounds for $\nabla v_{\varepsilon}$.

\begin{lem}\label{lemma-4.1a}
Let $1 \geqslant 1$ and assume that \eqref{assIniVal} holds.
Then we have
\begin{align}\label{1013-1842}
\left\|v_{\varepsilon}(t)\right\|_{W^{1, \infty}(\Omega)} \leqslant C\quad \text { for all } t \in(0, T_{\max, \varepsilon}) \text { and } \varepsilon \in(0,1),
\end{align}
where $C$ is a positive constant independent of $\varepsilon$.
\end{lem}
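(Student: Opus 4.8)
The plan is to mirror the semigroup argument of Lemma~\ref{lemma-4.1}, now feeding in the time-independent $L^p$ estimate provided by Lemma~\ref{1012-2334}. Writing the second equation of \eqref{sys-regul} as $v_{\varepsilon t}=(\Delta-1)v_\varepsilon+\{v_\varepsilon-u_\varepsilon v_\varepsilon\}$ and using the Neumann heat semigroup $(e^{t(\Delta-1)})_{t\geqslant0}$, I would start from the Duhamel representation
\begin{align*}
v_\varepsilon(t)=e^{t(\Delta-1)}v_0+\int_0^t e^{(t-s)(\Delta-1)}\bigl\{v_\varepsilon(s)-u_\varepsilon(s)v_\varepsilon(s)\bigr\} \d s,
\end{align*}
differentiate in space, and take $L^\infty$ norms so as to reduce the claim to controlling the gradient of each heat-semigroup contribution.

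Next I would fix any $p>1$; in the one-dimensional setting ($n=1$) the standard smoothing property of the Neumann heat semigroup yields $c_1>0$ with
\begin{align*}
\bigl\|\nabla e^{\sigma(\Delta-1)}w\bigr\|_{L^\infty(\Omega)}\leqslant c_1\bigl(1+\sigma^{-\frac12-\frac{1}{2p}}\bigr)e^{-\sigma}\|w\|_{L^p(\Omega)},\qquad \sigma>0,
\end{align*}
the key structural point being that $p>1$ forces the temporal exponent to satisfy $\frac12+\frac{1}{2p}<1$, so the singularity at $s=t$ is integrable. The nonlinearity is then handled uniformly: combining the pointwise bound \eqref{-2.9} on $v_\varepsilon$ with the $\varepsilon$- and $t$-independent estimate \eqref{-3.29c} of Lemma~\ref{1012-2334} gives
\begin{align*}
\bigl\|v_\varepsilon(s)-u_\varepsilon(s)v_\varepsilon(s)\bigr\|_{L^p(\Omega)}\leqslant |\Omega|^{\frac1p}\|v_0\|_{L^\infty(\Omega)}+\|v_0\|_{L^\infty(\Omega)}\,C(p)=:c_2
\end{align*}
for all $s\in(0,T_{\max,\varepsilon})$ and $\varepsilon\in(0,1)$.

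Inserting these two estimates, the gradient term is bounded by $c_1c_2\int_0^t\bigl(1+(t-s)^{-\frac12-\frac{1}{2p}}\bigr)e^{-(t-s)} \d s$, which is dominated by $c_1c_2\int_0^\infty(1+\sigma^{-\frac12-\frac{1}{2p}})e^{-\sigma} \d\sigma<\infty$ independently of $t$, while the initial contribution is estimated by $c_1\|v_0\|_{W^{1,\infty}(\Omega)}$; together with \eqref{-2.9} this yields \eqref{1013-1842}. I expect no genuine analytic obstacle here: the only two nontrivial inputs are the time-independent $L^p$ bound of Lemma~\ref{1012-2334} and the integrability of the temporal kernel, and the decisive feature securing time-\emph{uniformity} (as opposed to a merely $T$-dependent bound) is the exponential decay $e^{-(t-s)}$, which makes the convolution integral finite uniformly as $t\to\infty$. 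The fact that one may take $p>1$ arbitrarily---available precisely because the domain is one-dimensional---is exactly what renders the exponent $\frac12+\frac{1}{2p}$ integrable, so the whole scheme closes in the same way as Lemma~\ref{lemma-4.1}.
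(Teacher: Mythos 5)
Your proposal is correct and follows essentially the same route as the paper, whose proof of this lemma consists precisely of invoking Lemma \ref{1012-2334} together with the semigroup argument of Lemma \ref{lemma-4.1}; your write-up simply fills in the details of that reference. In particular, your adjustment of the gradient-smoothing exponent to $-\frac{1}{2}-\frac{1}{2p}$ in the one-dimensional setting, which makes every $p>1$ admissible and keeps the temporal singularity integrable, is exactly the intended adaptation.
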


By applying a standard variable change to the second equation of \eqref{sys-regul} and using a method similar to that in Lemma \ref{lemma-4.6}, we can establish the lower bound for $v_{\varepsilon}$.
\begin{lem}\label{lemma-4.6xx}
Let $l\geqslant1$ and assume that \eqref{assIniVal} holds. 
Then we have  
\begin{align*}
v_{\varepsilon}(t) \geqslant C(T) \quad \text { for all } t \in(0, T)\cap (0, T_{\max, \varepsilon})  \text { and } \varepsilon \in(0,1).
\end{align*}
where $C(T)$ is a positive constant depending on $v_0$, but independent of $\varepsilon$.
\end{lem}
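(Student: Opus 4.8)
The plan is to adapt the logarithmic substitution used in Lemma~\ref{lemma-4.6}, exploiting the crucial fact that the second equation of \eqref{sys-regul}, namely $v_{\varepsilon t}=\Delta v_{\varepsilon}-u_{\varepsilon}v_{\varepsilon}$, carries no dependence on $l$. Setting $w_{\varepsilon}=-\ln\frac{v_{\varepsilon}}{\|v_0\|_{L^{\infty}(\Omega)}}$, a direct computation converts the second equation together with its Neumann boundary condition into
\begin{align*}
w_{\varepsilon t}=\Delta w_{\varepsilon}-|\nabla w_{\varepsilon}|^2+u_{\varepsilon},\qquad \frac{\partial w_{\varepsilon}}{\partial\nu}=0,
\end{align*}
which is valid verbatim for every $l\geqslant1$. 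Since $v_{\varepsilon}>0$ in $\overline{\Omega}\times[0,T_{\max,\varepsilon})$ and $v_{\varepsilon}\leqslant\|v_0\|_{L^{\infty}(\Omega)}$ by \eqref{-2.9}, the quantity $w_{\varepsilon}$ is nonnegative, and the initial datum $w_0=-\ln\frac{v_0}{\|v_0\|_{L^{\infty}(\Omega)}}$ lies in $L^{\infty}(\Omega)$ because $v_0>0$ in $\overline{\Omega}$ with $v_0\in W^{1,\infty}(\Omega)$ forces $\inf_{\Omega}v_0>0$.

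Discarding the nonpositive term $-|\nabla w_{\varepsilon}|^2$ yields the supersolution-type inequality $w_{\varepsilon t}\leqslant\Delta w_{\varepsilon}+u_{\varepsilon}$, so a comparison with the Neumann heat semigroup gives
\begin{align*}
w_{\varepsilon}(t)\leqslant e^{t\Delta}w_0+\int_0^t e^{(t-s)\Delta}u_{\varepsilon}(s)\,\d s.
\end{align*}
I would then bound $\|w_{\varepsilon}(t)\|_{L^{\infty}(\Omega)}$ from above by invoking the smoothing $L^p\!\to\!L^{\infty}$ estimates for the one-dimensional Neumann heat semigroup (cf.\ \cite{2010-JDE-Winkler}) and pairing the resulting kernel factor $1+(t-s)^{-1/p}$ against $\|u_{\varepsilon}(s)\|_{L^p(\Omega)}$ via H\"{o}lder's inequality in time. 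The decisive input is Lemma~\ref{1012-2334}, which supplies, for every $p>1$ and independently of $\varepsilon$ and of $l\geqslant1$, the uniform bound $\int_{\Omega}u_{\varepsilon}^p(t)\leqslant C(p)$; integrating in time gives $\int_0^t\int_{\Omega}u_{\varepsilon}^p\leqslant C(p)\,T$ on $(0,T)$.

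The one genuine constraint to track is the choice of exponent: the temporal H\"{o}lder pairing forces the integral $\int_0^t\bigl(1+(t-s)^{-1/p}\bigr)^{\frac{p}{p-1}}\,\d s$ to be finite, which holds precisely when $p>2$. Fixing such a $p$, the two ingredients combine to give $\|w_{\varepsilon}(t)\|_{L^{\infty}(\Omega)}\leqslant C(T)$ uniformly in $\varepsilon\in(0,1)$, whence $v_{\varepsilon}=\|v_0\|_{L^{\infty}(\Omega)}\,e^{-w_{\varepsilon}}\geqslant\|v_0\|_{L^{\infty}(\Omega)}\,e^{-C(T)}=:C(T)>0$, as claimed. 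I do not anticipate a substantial obstacle: because the $v$-equation is insensitive to $l$, the argument is structurally identical to that of Lemma~\ref{lemma-4.6}, the only modification being the replacement of the $l=1$ estimate by the all-$l$ estimate of Lemma~\ref{1012-2334}. The mild care required is simply to ensure that the $L^p$ bound is $\varepsilon$-uniform and that the admissible range $p>2$ is reachable, both of which Lemma~\ref{1012-2334} guarantees.
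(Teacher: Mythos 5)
Your proposal is correct and is essentially the paper's own argument: the paper proves this lemma precisely by the logarithmic substitution and Neumann heat semigroup comparison of Lemma~\ref{lemma-4.6}, with the time-dependent $L^p$ bound replaced by the $\varepsilon$- and time-uniform bound of Lemma~\ref{1012-2334}, exactly as you describe. The only cosmetic remark is that in one dimension the smoothing factor is $(t-s)^{-\frac{1}{2p}}$ rather than $(t-s)^{-\frac{1}{p}}$, so your condition $p>2$ is merely a (perfectly valid) conservative choice.
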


We can now assert the following result using a method similar to that in Lemma \ref{lemma-4.4a}.
\begin{lem}\label{lemma-4.4ax}
Let $l\geqslant1$ and assume that \eqref{assIniVal} holds. Then we have
\begin{align*}
\left\|u_{\varepsilon}(t)\right\|_{L^{\infty}(\Omega)} \leqslant C(T)\quad \text { for all } t \in(0, T)\cap\left(0, T_{\max , \varepsilon}\right) \text { and } \varepsilon \in(0,1), 
\end{align*}
where $C(T)$ is a positive constant independent of $\varepsilon$.
\end{lem}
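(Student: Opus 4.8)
The plan is to run an Alikakos--Moser iteration adapted to the degenerate diffusion, following the overall strategy of Lemma~\ref{lemma-4.4a} but replacing the off-the-shelf parabolic Moser estimate by the porous-medium substitution already used in the two-dimensional Lemma~\ref{lemma-4.4vbv}. First I would fix $T>0$ and collect the three ingredients now available on the strip $(0,T)\cap(0,T_{\max,\varepsilon})$: the pointwise lower bound $v_\varepsilon\geq c_1(T)>0$ from Lemma~\ref{lemma-4.6xx}, the gradient bound $\|v_{\varepsilon x}(t)\|_{L^\infty(\Omega)}\leq c_2(T)$ together with $v_\varepsilon\leq\|v_0\|_{L^\infty(\Omega)}$ from Lemma~\ref{lemma-4.1a}, and the $\varepsilon$-uniform bounds $\|u_\varepsilon(t)\|_{L^p(\Omega)}\leq C(p)$ for every finite $p$ from Lemma~\ref{1012-2334}. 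Together these reduce the first equation of \eqref{sys-regul} to a one-dimensional porous-medium-type equation whose principal coefficient $u_\varepsilon^{l-1}v_\varepsilon$ is comparable to $u_\varepsilon^{l-1}$, degenerate only where $u_\varepsilon$ vanishes.

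Next I would test the first equation with $u_\varepsilon^{p-1}$ and integrate by parts (the boundary terms vanish since $u_{\varepsilon x}=v_{\varepsilon x}=0$ on $\partial\Omega$). Absorbing the cross term $(p-1)\int_\Omega u_\varepsilon^{p+l-2}v_\varepsilon u_{\varepsilon x}v_{\varepsilon x}$ by Young's inequality, retaining half of the dissipation, and inserting the bounds $c_1\leq v_\varepsilon\leq\|v_0\|_{L^\infty(\Omega)}$ and $|v_{\varepsilon x}|\leq c_2$ yields, with $w:=u_\varepsilon^{(p+l-1)/2}$ so that $\int_\Omega u_\varepsilon^{p+l-3}v_\varepsilon u_{\varepsilon x}^2\geq\frac{4c_1}{(p+l-1)^2}\|w_x\|_{L^2(\Omega)}^2$ and $\int_\Omega u_\varepsilon^{p+l-1}=\|w\|_{L^2(\Omega)}^2$, a differential inequality of the schematic form
\[
\frac{d}{dt}\int_\Omega u_\varepsilon^{p}
+\frac{2p(p-1)c_1}{(p+l-1)^2}\|w_x\|_{L^2(\Omega)}^2
\leq C(T)\,p^2\|w\|_{L^2(\Omega)}^2+C(T)\,p\int_\Omega u_\varepsilon^{p}.
\]
The crux is then the one-dimensional Gagliardo--Nirenberg interpolation $\|w\|_{L^2(\Omega)}^2\leq\delta\|w_x\|_{L^2(\Omega)}^2+C_\delta\|w\|_{L^1(\Omega)}^2$: choosing $\delta$ of order $p^{-2}$ absorbs the gradient term into the dissipation, and since $\|w\|_{L^1(\Omega)}=\int_\Omega u_\varepsilon^{(p+l-1)/2}$, the right-hand side is governed by a strictly lower power of $u_\varepsilon$. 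The reaction contribution $\int_\Omega u_\varepsilon^{p}$ is even more subcritical: writing it as $\|w\|_{L^r(\Omega)}^r$ with $r=\tfrac{2p}{p+l-1}\in(1,2)$ and interpolating between $L^1$ and $L^2$ produces, after the same absorption, only an $\|w\|_{L^1(\Omega)}$-term, hence does not spoil the quadratic structure.

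This is exactly what makes the iteration close. Setting $p_k=2^kp_0+(l-1)$, so that $(p_k+l-1)/2=p_{k-1}$, and $N_{k,\varepsilon}(T)=1+\sup_{t\in(0,T)}\int_\Omega u_\varepsilon^{p_k}$, a time integration of the absorbed inequality on $(0,T)$---using $\int_\Omega u_\varepsilon^{p_k}(\cdot,0)\leq|\Omega|\,\|u_0+1\|_{L^\infty(\Omega)}^{p_k}$ for the initial contribution---produces a recursion $N_{k,\varepsilon}(T)\leq d^{\,k}N_{k-1,\varepsilon}^2(T)+s^{2^k}$, with $d=d(T)$ and $s=s(T)$ independent of $\varepsilon$ and $k$. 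Applying Lemma~\ref{lemma-4.3} with $q=0$, together with the $\varepsilon$-uniform base estimate $N_{0,\varepsilon}(T)\leq C(p_0)$ furnished by Lemma~\ref{1012-2334}, gives $\|u_\varepsilon(t)\|_{L^\infty(\Omega)}^{p_0}=\liminf_{k\to\infty}\big(\int_\Omega u_\varepsilon^{p_k}(t)\big)^{1/2^k}\leq\liminf_{k\to\infty}N_{k,\varepsilon}^{1/2^k}(T)\leq C(T)$, uniformly in $\varepsilon\in(0,1)$, which is the claim.

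I expect the main obstacle to be the degeneracy of the diffusion. Unlike the case $l=1$ of Lemma~\ref{lemma-4.4a}, where the equation is uniformly parabolic on $(0,T)$ and the Moser result of \cite{2012-JDE-TaoWinkler} applies directly, for $l>1$ the principal part degenerates as $u_\varepsilon\to0$, so the iteration must be carried out by hand through the substitution $w=u_\varepsilon^{(p+l-1)/2}$. The delicate bookkeeping is to track the polynomial-in-$p_k$ growth of the constants generated by Young's inequality and by $C_\delta$ so that, after normalization against the factor $2^{-k}$ in $N_{k,\varepsilon}^{1/2^k}$, they contribute only the geometric factor $d^{\,k}$ demanded by Lemma~\ref{lemma-4.3}; the clean exponent $2$ in the recursion, rather than $2+q\cdot2^{-k}$, is guaranteed precisely by the choice $p_k=2^kp_0+(l-1)$, which matches the interpolation power $(p_k+l-1)/2$ to the preceding level $p_{k-1}$.
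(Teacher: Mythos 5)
Your proposal is correct, but it takes a more self-contained route than the paper. The paper's proof of this lemma is deliberately terse: exactly as in Lemma~\ref{lemma-4.4a}, it regards the first equation of \eqref{sys-regul} as a scalar quasilinear equation $u_{\varepsilon t}=\left(u_{\varepsilon}^{l-1}v_{\varepsilon}u_{\varepsilon x}\right)_x+\partial_x f+g$ with prescribed flux $f=-u_{\varepsilon}^{l}v_{\varepsilon}v_{\varepsilon x}$ and source $g=u_{\varepsilon}v_{\varepsilon}$, notes that the $L^p$ bounds of Lemma~\ref{1012-2334}, the bound $\|v_{\varepsilon}\|_{W^{1,\infty}(\Omega)}\leqslant C$ of Lemma~\ref{lemma-4.1a} and the positive lower bound on $v_{\varepsilon}$ make the diffusion coefficient $D=u_{\varepsilon}^{l-1}v_{\varepsilon}\geqslant c_1(T)\,u_{\varepsilon}^{l-1}$ of porous-medium type while giving $\varepsilon$-uniform $L^{\infty}\left((0,T);L^{q}(\Omega)\right)$ bounds on $f$ and $g$ for every $q$, and then invokes the Moser lemma of \cite{2012-JDE-TaoWinkler} once. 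You instead run the Alikakos--Moser iteration by hand: testing with $u_{\varepsilon}^{p-1}$, substituting $w=u_{\varepsilon}^{(p+l-1)/2}$, absorbing via one-dimensional Gagliardo--Nirenberg with $\delta\sim p^{-2}$, choosing the exponents $p_k=2^k p_0+(l-1)$ so that $(p_k+l-1)/2=p_{k-1}$, and closing with the recursion Lemma~\ref{lemma-4.3} at $q=0$ --- in effect transplanting the paper's own two-dimensional argument (Lemma~\ref{lemma-4.4vbv}) to the one-dimensional degenerate setting. All steps check: the Young and interpolation constants grow only polynomially in $p_k$ and thus fit the geometric factor $d^{\,k}$, the initial-data contribution is dominated by $s^{2^k}$, and the base case is supplied by Lemma~\ref{1012-2334}, so the conclusion follows uniformly in $\varepsilon$. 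One remark, though: your stated motivation --- that for $l>1$ the degeneracy of the principal part prevents a direct application of \cite{2012-JDE-TaoWinkler} --- is not accurate. The Moser lemma cited there is formulated precisely for diffusion coefficients satisfying $D\geqslant\delta s^{m-1}$ with $m\geqslant 1$, i.e.\ degenerate at $s=0$, which is what the paper exploits both here and again in Lemma~\ref{lem-1013} for the transformed problem \eqref{1010-2338}. So the paper's one-line citation suffices; what your version buys is independence from that external lemma, at the cost of carrying out the bookkeeping explicitly.
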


Based on the above lemma and \eqref{-2.7}, we have the following lemma.
\begin{lem}\label{lemma-4.5aa}
Let $l\geqslant1$ and assume that \eqref{assIniVal} holds.  Then $T_{\max, \varepsilon}=+\infty$ for all $\varepsilon \in(0,1)$.
\end{lem}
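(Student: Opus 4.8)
The plan is to argue by contradiction via the extensibility criterion \eqref{-2.7}, exactly in the spirit of Lemma \ref{lemma-4.5}. Suppose that $T_{\max,\varepsilon}<\infty$ for some $\varepsilon\in(0,1)$. The extensibility alternative \eqref{-2.7} then forces
\begin{align*}
\limsup_{t\nearrow T_{\max,\varepsilon}}\left\|u_{\varepsilon}(t)\right\|_{L^{\infty}(\Omega)}=\infty,
\end{align*}
so it suffices to rule this out by producing a finite, $t$-uniform $L^{\infty}$ bound for $u_{\varepsilon}$ on the whole time interval $(0,T_{\max,\varepsilon})$.

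To this end I would fix any finite $\widetilde{T}$ with $\widetilde{T}>T_{\max,\varepsilon}$ (for instance $\widetilde{T}=2T_{\max,\varepsilon}$) and set $T=\min\{\widetilde{T},T_{\max,\varepsilon}\}=T_{\max,\varepsilon}$. With this choice, $(0,T)\cap(0,T_{\max,\varepsilon})=(0,T_{\max,\varepsilon})$, so Lemma \ref{lemma-4.4ax} delivers a constant $C(T)>0$, independent of $\varepsilon$, with
\begin{align*}
\left\|u_{\varepsilon}(t)\right\|_{L^{\infty}(\Omega)}\leqslant C(T)\qquad\text{for all }t\in(0,T_{\max,\varepsilon}).
\end{align*}
Consequently $\limsup_{t\nearrow T_{\max,\varepsilon}}\|u_{\varepsilon}(t)\|_{L^{\infty}(\Omega)}\leqslant C(T)<\infty$, which directly contradicts the conclusion of \eqref{-2.7}. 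Hence $T_{\max,\varepsilon}$ cannot be finite, and $T_{\max,\varepsilon}=+\infty$ for every $\varepsilon\in(0,1)$, as claimed.

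The only point requiring care, and the step I would treat as the (mild) main obstacle, is the bookkeeping with the truncated horizon $T=\min\{\widetilde{T},T_{\max,\varepsilon}\}$ that appears throughout Section \ref{sect-5}: one must verify that choosing $\widetilde{T}$ strictly larger than the (hypothetically finite) $T_{\max,\varepsilon}$ indeed makes the bound of Lemma \ref{lemma-4.4ax} valid on the \emph{entire} punctured interval up to blow-up time, rather than on a strictly smaller subinterval. Once this is observed, no further estimates are needed, since Lemma \ref{lemma-4.4ax} already chains together the $L^p$ bounds of Lemma \ref{1012-2334}, the gradient bound \eqref{1013-1842} for $v_{\varepsilon}$, and the lower bound for $v_{\varepsilon}$ from Lemma \ref{lemma-4.6xx} into the desired $L^{\infty}$ control. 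The argument is therefore an immediate consequence of Lemma \ref{lemma-4.4ax} combined with \eqref{-2.7}.
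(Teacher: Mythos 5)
Your proposal is correct and coincides with the paper's own (essentially one-line) argument: the paper likewise deduces the lemma by combining the $\varepsilon$-independent, time-local $L^{\infty}$ bound of Lemma \ref{lemma-4.4ax} with the extensibility criterion \eqref{-2.7}, exactly as in the two-dimensional analogue Lemma \ref{lemma-4.5}. Your bookkeeping with $\widetilde{T}=2T_{\max,\varepsilon}$ and $T=\min\{\widetilde{T},T_{\max,\varepsilon}\}$ is a harmless elaboration of the same contradiction argument, so no further comment is needed.
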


With $u_{\varepsilon}$ and $v_{\varepsilon}$ bounded as shown in Lemmas \ref{lemma-4.1a}, \ref{lemma-4.6xx}, and \ref{lemma-4.4ax}, we derive the following Hölder estimates for $u_{\varepsilon}$, $v_{\varepsilon}$, and $\nabla v_{\varepsilon}$ using standard parabolic regularity theory.

\begin{lem}\label{lemma-4.8aa}
Let $l\geqslant1$ and let $T>0$, and assume that \eqref{assIniVal} holds. Then one can find $\theta_1=\theta(T) \in(0,1)$  such that 
\begin{align*}
\left\|u_{\varepsilon}\right\|_{C^{\theta_1, \frac{\theta_1}{2}}(\overline{\Omega} \times[0, T])} \leqslant C_1(T) \quad \text { for all } \varepsilon \in(0,1)
\end{align*}
and
\begin{align*}
\left\|v_{\varepsilon}\right\|_{C^{\theta_1, \frac{\theta_1}{2}}(\overline{\Omega} \times[0, T])} \leqslant C_1(T) \quad \text { for all } \varepsilon \in(0,1),
\end{align*}
where $C_1(T)$ is a positive constant independent of $\varepsilon$.
Moreover, for each $\tau>0$ and all $T>\tau$ one can also fix $\theta_2=\theta_2(\tau, T) \in(0,1)$ such that 
\begin{align*}
\left\|v_{\varepsilon}\right\|_{C^{2+\theta_2, 1+\frac{\theta_2}{2}}(\overline{\Omega} \times [\tau, T])} \leqslant C_2(\tau, T) \quad \text { for all } \varepsilon \in(0,1),
\end{align*}
where $C_2(T)>0$ is a positive constant independent of $\varepsilon$.
\end{lem}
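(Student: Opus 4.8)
The plan is to follow exactly the scheme of Lemma~\ref{lemma-4.8}, feeding in the one-dimensional $\varepsilon$-uniform bounds in place of their planar counterparts. First I would assemble the three ingredients already established: the spatial gradient bound $\|v_\varepsilon(t)\|_{W^{1,\infty}(\Omega)}\le C$ from Lemma~\ref{lemma-4.1a}, the pointwise lower bound $v_\varepsilon\ge C(T)>0$ from Lemma~\ref{lemma-4.6xx}, and the sup bound $\|u_\varepsilon(t)\|_{L^\infty(\Omega)}\le C(T)$ from Lemma~\ref{lemma-4.4ax}. Together these show that on $\overline\Omega\times[0,T]$ the diffusion coefficient $u_\varepsilon^{l-1}v_\varepsilon$, the drift $u_\varepsilon^l v_\varepsilon v_{\varepsilon x}$, and the source $u_\varepsilon v_\varepsilon$ are all uniformly bounded, while $v_\varepsilon$ stays uniformly away from zero.

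Next I would establish the H\"older bound for $u_\varepsilon$. In one dimension the first equation reads $u_{\varepsilon t}=\big(u_\varepsilon^{l-1}v_\varepsilon u_{\varepsilon x}-u_\varepsilon^l v_\varepsilon v_{\varepsilon x}\big)_x+u_\varepsilon v_\varepsilon$, which is a quasilinear divergence-form equation $u_{\varepsilon t}=\big(a_\varepsilon(x,t,u_\varepsilon,u_{\varepsilon x})\big)_x+f_\varepsilon$. Because $v_\varepsilon$ is trapped between two positive constants and $|v_{\varepsilon x}|$ is bounded, the structure and growth conditions of the degenerate-parabolic H\"older theory of Porzio--Vespri \cite{1993-JDE-PorzioVespri} are met with constants independent of $\varepsilon$: the only degeneracy left is the porous-medium-type vanishing of $u_\varepsilon^{l-1}v_\varepsilon$ as $u_\varepsilon\to0$. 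This yields an exponent $\theta_1\in(0,1)$ and a constant $C_1(T)$ with the claimed estimate for $u_\varepsilon$.

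I would then handle $v_\varepsilon$. Its equation $v_{\varepsilon t}=v_{\varepsilon xx}-u_\varepsilon v_\varepsilon$ is linear and non-degenerate with uniformly bounded zeroth-order coefficient $u_\varepsilon$, so standard linear parabolic $C^{\theta,\theta/2}$ theory, combined with the already available $W^{1,\infty}$ bound, gives the H\"older estimate for $v_\varepsilon$; shrinking $\theta_1$ if necessary produces a common exponent. For the interior higher regularity, once $u_\varepsilon\in C^{\theta_1,\theta_1/2}$ is known, the reaction term $-u_\varepsilon v_\varepsilon$ is H\"older continuous, and I would invoke the interior Schauder estimates of \cite{1968-Ladyzen} together with a cut-off in time near $t=\tau$ to obtain the $C^{2+\theta_2,1+\theta_2/2}(\overline\Omega\times[\tau,T])$ bound.

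The main obstacle is the H\"older continuity of $u_\varepsilon$ in the second step, since its equation is genuinely degenerate: the diffusion coefficient $u_\varepsilon^{l-1}v_\varepsilon$ vanishes as $u_\varepsilon\to0$ and no positive lower bound for $u_\varepsilon$ is available. The decisive observation is that the two-sided bounds on $v_\varepsilon$ collapse the doubly degenerate structure to a single porous-medium degeneracy in $u_\varepsilon$ alone, placing the equation within reach of the Porzio--Vespri framework with $\varepsilon$-uniform constants; verifying that the required structure conditions indeed hold with the bounds from the first step is the technical heart of the argument.
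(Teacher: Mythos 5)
Your proposal is correct and follows essentially the same route as the paper: the paper likewise combines the $\varepsilon$-uniform bounds from Lemmas \ref{lemma-4.1a}, \ref{lemma-4.6xx} and \ref{lemma-4.4ax} with the degenerate-parabolic H\"older theory of Porzio--Vespri \cite{1993-JDE-PorzioVespri} for $u_{\varepsilon}$ and $v_{\varepsilon}$, and then the Schauder estimates of \cite{1968-Ladyzen} together with a cut-off argument for the $C^{2+\theta_2,1+\frac{\theta_2}{2}}$ bound on $v_{\varepsilon}$ (mirroring the proof of Lemma \ref{lemma-4.8}). Your identification of the single remaining porous-medium-type degeneracy as the technical heart is exactly the point the paper's citation of \cite{1993-JDE-PorzioVespri} is meant to address.
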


Using the preparations above, we apply a standard extraction procedure to construct limit functions $(u, v)$, which form a globally bounded weak solution to system \eqref{SYS:MAIN}, as shown in Theorem \ref{thm-1.1a}.

\begin{lem}\label{lemma-4.9bn}
Assume that the initial value $\left(u_0, v_0\right)$ satisfies \eqref{assIniVal}. Then there exist $(\varepsilon_j)_{j \in \mathbb{N}} \subset(0,1)$ as well as functions $u$ and $v$ which satisfy \eqref{solu:property3} with $u > 0$ a.e in $\Omega \times(0, \infty)$ and $v>0$ in $\overline{\Omega} \times[0, \infty)$ such that
\begin{flalign}
& u_{\varepsilon} \rightarrow u  \quad \text { in } C_{\mathrm{loc}}^0(\overline{\Omega} \times(0, \infty)) \text {, }\label{-4.15zx}\\
& v_{\varepsilon} \rightarrow v  \quad \text { in } C_{\mathrm{loc}}^0(\overline{\Omega} \times[0, \infty)) \text { and in } C_{\mathrm{loc}}^{2,1}(\overline{\Omega} \times(0, \infty)),\label{-4.16zx}\\
& \nabla v_{\varepsilon} \stackrel{*}{\rightharpoonup} \nabla v \quad \text { in } L^{\infty}(\Omega \times(0, \infty)),\nonumber
\end{flalign}
as $\varepsilon=\varepsilon_j \searrow 0$, and that $(u, v)$ is a global weak solution of the system (\ref{SYS:MAIN}) as defined in Definition \ref{def-weak-sol}. 
Moreover, we have
\begin{align*}
\int_{\Omega} u_{0} \leqslant \int_{\Omega} u(t) \leqslant \int_{\Omega} u_{0}+ \int_{\Omega} v_{0}  \quad \text { for all } t>0
\end{align*}
and
\begin{align*}
\|v(t)\|_{L^{\infty}(\Omega)} \leqslant\left\|v_0\right\|_{L^{\infty}(\Omega)} \quad \text { for all } t>0
\end{align*}
as well as
\begin{align*}
\int_0^{\infty} \int_{\Omega} u v \leqslant \int_{\Omega} v_0.
\end{align*}
\end{lem}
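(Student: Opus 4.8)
The plan is to mirror the construction carried out in Lemma~\ref{lemma-4.9} for the planar case, substituting the one-dimensional a priori estimates for their two-dimensional analogues. First I would collect the uniform-in-$\varepsilon$ bounds furnished by Lemmas~\ref{lemma-4.1a}, \ref{lemma-4.6xx} and \ref{lemma-4.4ax}, which hold on every compact time interval since $T_{\max,\varepsilon}=\infty$ by Lemma~\ref{lemma-4.5aa}, together with the H\"older estimates of Lemma~\ref{lemma-4.8aa}. Applying the Arzel\`a--Ascoli theorem on an exhausting sequence of compact subsets of $\overline{\Omega}\times[0,\infty)$ and extracting diagonally then produces a sequence $\varepsilon_j\searrow0$ and limit functions $u,v$ enjoying the regularity \eqref{solu:property3} and the convergences \eqref{-4.15zx}, \eqref{-4.16zx} as well as the weak-$*$ convergence of $\nabla v_\varepsilon$. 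Nonnegativity of $u$ is inherited from that of every $u_\varepsilon$, and strict positivity of $v$ follows from the lower bound in Lemma~\ref{lemma-4.6xx} combined with \eqref{-4.16zx}. The three quantitative bounds are obtained by passing to the limit in \eqref{-2.8}, \eqref{-2.9} and \eqref{-2.10}: locally uniform convergence gives the mass estimate and the $L^\infty$-bound for $v$, while Fatou's lemma applied to \eqref{-2.10} yields $\int_0^\infty\int_\Omega uv\leqslant\int_\Omega v_0$.

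It remains to verify the weak identities \eqref{-2.3} and \eqref{-2.4}. Testing \eqref{sys-regul} against $\varphi\in C_0^\infty(\overline\Omega\times[0,\infty))$ produces the one-dimensional counterpart of \eqref{ident-1.10-1}. In the terms $-\int\!\int u_\varepsilon\varphi_t$, $\int\!\int u_\varepsilon^{l}v_\varepsilon\nabla v_\varepsilon\cdot\nabla\varphi$ and $\int\!\int u_\varepsilon v_\varepsilon\varphi$ one may pass to the limit directly, since on the compact support of $\varphi$ the sequences $u_\varepsilon$, $v_\varepsilon$ converge uniformly and $\nabla v_\varepsilon\to\nabla v$ in $C^0_{\mathrm{loc}}(\overline\Omega\times(0,\infty))$, so that dominated convergence applies. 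The only term demanding additional care is the degenerate diffusion contribution $-\frac{1}{l}\int\!\int v_\varepsilon\nabla u_\varepsilon^{l}\cdot\nabla\varphi$, whose treatment requires a weak convergence statement for $\nabla u_\varepsilon^{l}$.

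To obtain it, I would establish the one-dimensional analogue of \eqref{-4.23}, namely that $(u_\varepsilon^{l})_\varepsilon$ is bounded in $L^2((0,T);W^{1,2}(\Omega))$ for each $T>0$. Choosing $p=l+1\geqslant2$ and integrating in time the differential inequality \eqref{1010-1912} derived in the proof of Lemma~\ref{1012-2334} controls $\int_0^T\!\int_\Omega u_\varepsilon^{\,2l-2}v_\varepsilon\,u_{\varepsilon x}^2$ uniformly in $\varepsilon$; since $|\nabla u_\varepsilon^{l}|^2=l^2u_\varepsilon^{\,2l-2}|\nabla u_\varepsilon|^2$ and $v_\varepsilon\geqslant C(T)>0$ on $(0,T)$ by Lemma~\ref{lemma-4.6xx}, this bounds $\int_0^T\!\int_\Omega|\nabla u_\varepsilon^{l}|^2$ uniformly, and together with the $L^\infty$-bound on $u_\varepsilon$ from Lemma~\ref{lemma-4.4ax} it yields the asserted $L^2(W^{1,2})$-bound. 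Passing to a further subsequence gives $\nabla u_\varepsilon^{l}\rightharpoonup\chi$ weakly in $L^2(\Omega\times(0,T))$, and since $u_\varepsilon^{l}\to u^{l}$ uniformly the limit is identified as $\chi=\nabla u^{l}$. Combining this weak convergence with the strong convergence of the factor $v_\varepsilon$ allows the limit in the diffusion term, completing \eqref{-2.3}; the verification of \eqref{-2.4} is analogous and relies only on \eqref{-4.15zx} and \eqref{-4.16zx}.

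The main obstacle is precisely this passage to the limit in the nonlinear diffusion term $v_\varepsilon\nabla u_\varepsilon^{l}\cdot\nabla\varphi$: strong compactness of $u_\varepsilon$ alone does not suffice, and one must pair the weak $L^2$-convergence of $\nabla u_\varepsilon^{l}$ with the uniform convergence of $v_\varepsilon$ and identify the weak limit as $\nabla u^{l}$. Because the gradient estimate emerging from Lemma~\ref{1012-2334} carries the degenerate weight $v_\varepsilon$, the time-local positive lower bound on $v_\varepsilon$ supplied by Lemma~\ref{lemma-4.6xx} is indispensable in converting it into an unweighted $L^2$-bound for $\nabla u_\varepsilon^{l}$.
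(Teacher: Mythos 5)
Your proposal is correct and takes essentially the same route as the paper, whose proof of this lemma simply defers to the argument of Lemma~\ref{lemma-4.9}: diagonal extraction from the uniform H\"older bounds, inheritance of positivity and of the bounds \eqref{-2.8}--\eqref{-2.10} under the convergences, and passage to the limit in the weak formulation with the degenerate diffusion term handled via an $L^2((0,T);W^{1,2}(\Omega))$ bound on $u_\varepsilon^l$. Your explicit derivation of the one-dimensional analogue of \eqref{-4.23}, integrating \eqref{1010-1912} with $p=l+1$ and removing the weight through the time-local positive lower bound on $v_\varepsilon$, is precisely the detail the paper leaves implicit under ``similar approach''; the only (trivially fixable) point is that $\eta$ should be taken strictly below the threshold in \eqref{1010-1657}, since with the borderline choice the coefficient of $\int_\Omega u_\varepsilon^{p+l-3}v_\varepsilon u_{\varepsilon x}^2$ in \eqref{1010-1912} may vanish and then yields no space-time gradient control.
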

\begin{proof}
The proof follows a similar approach to that of Lemma \ref{lemma-4.9}.
\end{proof}

Collecting \cite[Lemma 5.2 and Corollary
5.3]{2021-TAMS-Winkler} and \cite[Lemmas 5.1 and 5.2]{2022-CPAA-LiWinkler}, we can complete the proof of the following lemma.

\begin{lem}\label{lemma-1010-x}
Let $1 \geqslant 1$ and assume that \eqref{assIniVal} holds.
Then we have
\begin{align}\label{1010-1937}
v_{\varepsilon}(x, t) \geqslant C \left\|v_{\varepsilon}(t)\right\|_{L^{\infty}(\Omega)} \quad \text { for all } x \in \Omega, t>0 \text { and } \varepsilon \in(0,1)
\end{align}
and
\begin{align}\label{1010-2029}
\left|v_{\varepsilon x}(x, t)\right| \leqslant C v_{\varepsilon}(x, t) \quad \text { for all } x \in \Omega, t>0 \text { and } \varepsilon \in(0,1)
\end{align}
as well as
\begin{align}\label{1010-2030}
\int_0^{\infty}\left\|v_{\varepsilon}(t)\right\|_{L^{\infty}(\Omega)} \d t \leqslant C \frac{\int_{\Omega} v_0}{\int_{\Omega} u_0} \quad \text { for all } \varepsilon \in(0,1)
\end{align}
where $C$ is a positive constant independent of $\varepsilon$.
\end{lem}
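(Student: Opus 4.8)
The plan is to regard the two pointwise bounds \eqref{1010-1937} and \eqref{1010-2029} as intrinsic properties of the scalar consumption equation $v_{\varepsilon t}=v_{\varepsilon xx}-u_{\varepsilon}v_{\varepsilon}$ under its Neumann boundary condition, and then to read off the time-global integral bound \eqref{1010-2030} as an elementary consequence of \eqref{1010-1937} together with the mass bookkeeping already available. The decisive observation is that neither \eqref{1010-1937} nor \eqref{1010-2029} refers to the first equation of \eqref{sys-regul}: both are driven entirely by the facts that $v_0\in W^{1,\infty}(\Omega)$ with $v_0>0$ in $\overline{\Omega}$, that $u_{\varepsilon}\geqslant0$ (so that $v_{\varepsilon}$ is a subsolution of the one-dimensional heat equation), and by the uniform bounds \eqref{-2.8}, \eqref{-2.9} and \eqref{-2.10}. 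Since the second equation here is formally identical to the one treated in \cite{2021-TAMS-Winkler, 2022-CPAA-LiWinkler}, the arguments of \cite[Lemma 5.2 and Corollary 5.3]{2021-TAMS-Winkler} and \cite[Lemmas 5.1 and 5.2]{2022-CPAA-LiWinkler} apply here without change, and I would invoke them to obtain \eqref{1010-1937} and \eqref{1010-2029}.

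To indicate why the cited analysis succeeds, I would pass to $\phi_{\varepsilon}=\ln v_{\varepsilon}$, which solves $\phi_{\varepsilon t}=\phi_{\varepsilon xx}+\phi_{\varepsilon x}^{2}-u_{\varepsilon}$ on the interval with $\phi_{\varepsilon x}=0$ on $\partial\Omega$. The nonnegative gradient term and the nonpositive reaction term are the structural features that, in the one-dimensional setting, allow a time-independent control of the spatial oscillation of $\phi_{\varepsilon}$ and hence the Harnack-type lower bound \eqref{1010-1937}; the companion gradient bound \eqref{1010-2029}, equivalently $|(\ln v_{\varepsilon})_{x}|\leqslant C$, is established in the same references by a pointwise analysis of $v_{\varepsilon x}/v_{\varepsilon}$ that is genuinely one-dimensional and has no higher-dimensional analogue. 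I would emphasize that these are taken over verbatim rather than reproved, since the only feature used is the common form of the consumption equation.

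With \eqref{1010-1937} and \eqref{1010-2029} at hand, \eqref{1010-2030} is immediate. By \eqref{1010-1937} there is $c>0$, independent of $\varepsilon$, with $v_{\varepsilon}(x,t)\geqslant c\,\|v_{\varepsilon}(t)\|_{L^{\infty}(\Omega)}$ for all $x\in\Omega$ and $t>0$, so that, using the lower mass bound $\int_{\Omega}u_{\varepsilon}(t)\geqslant\int_{\Omega}u_{0\varepsilon}\geqslant\int_{\Omega}u_{0}$ from \eqref{-2.8},
\begin{align*}
\int_{\Omega}u_{\varepsilon}(\cdot,t)\,v_{\varepsilon}(\cdot,t)\geqslant c\,\|v_{\varepsilon}(t)\|_{L^{\infty}(\Omega)}\int_{\Omega}u_{\varepsilon}(\cdot,t)\geqslant c\,\|v_{\varepsilon}(t)\|_{L^{\infty}(\Omega)}\int_{\Omega}u_{0}.
\end{align*}
Integrating in time and invoking \eqref{-2.10} together with $T_{\max,\varepsilon}=+\infty$ (cf. Lemma \ref{lemma-4.5aa}) gives
\begin{align*}
c\left(\int_{\Omega}u_{0}\right)\int_{0}^{\infty}\|v_{\varepsilon}(t)\|_{L^{\infty}(\Omega)}\,\d t\leqslant\int_{0}^{\infty}\!\!\!\int_{\Omega}u_{\varepsilon}v_{\varepsilon}\leqslant\int_{\Omega}v_{0},
\end{align*}
which is precisely \eqref{1010-2030} with $C=1/c$.

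The main obstacle is the uniform-in-time validity of \eqref{1010-1937} and \eqref{1010-2029}: only local-in-time $L^{\infty}$ control of $u_{\varepsilon}$ is available (Lemma \ref{lemma-4.4ax}), so a naive comparison argument based on $\|u_{\varepsilon}(t)\|_{L^{\infty}(\Omega)}$ would degrade as $t\to\infty$, and any first-order quantity formed from $v_{\varepsilon}$ satisfies an equation involving the uncontrolled term $u_{\varepsilon x}$. The entire point of the cited results is to bypass both difficulties by exploiting only the sign of $u_{\varepsilon}$ and the global consumption bound \eqref{-2.10}; securing that time-independent pointwise control is the sole nontrivial ingredient, after which \eqref{1010-2030} follows by the short computation above.
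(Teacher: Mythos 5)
Your proposal is correct and takes essentially the same route as the paper: the paper's entire proof consists of invoking \cite[Lemma 5.2 and Corollary 5.3]{2021-TAMS-Winkler} and \cite[Lemmas 5.1 and 5.2]{2022-CPAA-LiWinkler}, exactly as you do for \eqref{1010-1937} and \eqref{1010-2029}, these being estimates driven by the consumption equation alone. Your explicit two-line derivation of \eqref{1010-2030} from \eqref{1010-1937} together with \eqref{-2.8} and \eqref{-2.10} is correct and is precisely the computation underlying the cited corollary, so it constitutes no real departure from the paper's argument.
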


\begin{lem}
Let $1 \geqslant 1$ and assume that \eqref{assIniVal} holds. Assume that $\left(\varepsilon_j\right)_{j \in \mathbb{N}}$ is provided by Lemma \ref{lemma-4.9bn}. For each $\varepsilon \in\left(\varepsilon_j\right)_{j \in \mathbb{N}}$, define
\begin{align}
L_{\varepsilon}& =\int_0^{\infty}\left\|v_{\varepsilon}(t)\right\|_{L^{\infty}(\Omega)} \d t \nonumber\\
w_{\varepsilon}(x, \tau) & =u_{\varepsilon}\left(x, g_{\varepsilon}^{-1}(\tau)\right), \label{1010-2336} \quad x \in \overline{\Omega}, \tau \in[0,1)
\end{align}
and
\begin{align*}
g_{\varepsilon}(t)=\frac{1}{L_{\varepsilon}} \cdot \int_0^t\left\|v_{\varepsilon}(s)\right\|_{L^{\infty}(\Omega)} \d s
\end{align*}
Moreover, denoting $t=g_{\varepsilon}^{-1}(\tau)$, then for each $\varepsilon \in\left(\varepsilon_j\right)_{j \in \mathbb{N}}$ we let
\begin{align*}
a_{\varepsilon}(x, \tau) & =L_{\varepsilon} \cdot \frac{ v_{\varepsilon}(x, t)}{\left\|v_{\varepsilon}(t)\right\|_{L^{\infty}(\Omega)}} \quad \text { and } \quad b_{\varepsilon}(x, \tau)=L_{\varepsilon} \cdot \frac{ v_{\varepsilon}(x, t) v_{\varepsilon x}(x, t)}{\left\|v_{\varepsilon}(t)\right\|_{L^{\infty}(\Omega)}}, \nonumber\\
(x, \tau) & \in \Omega \times(0,1)
\end{align*}
Then we have
\begin{align}\label{1010-2327}
L_{\varepsilon} \rightarrow L =\int_0^{\infty}\|v(t)\|_{L^{\infty}(\Omega)} \d t, \quad \text { as } \varepsilon=\varepsilon_j \searrow 0
\end{align}
and there exists $C>1$ satisfying that
\begin{align}\label{1010-2333}
a_{\varepsilon}(x, \tau) \geqslant \frac{1}{C} \quad \text { for all } x \in \Omega, \tau \in(0,1), \text { and } \varepsilon \in\left(\varepsilon_j\right)_{j \in \mathbb{N}}
\end{align}
and
\begin{align}\label{1010-2334}
\left|b_{\varepsilon}(x, \tau)\right| \leqslant C \quad \text { for all } x \in \Omega, \tau \in(0,1) \text {, and } \varepsilon \in\left(\varepsilon_j\right)_{j \in \mathbb{N}}
\end{align}
as well as for each $\varepsilon \in\left(\varepsilon_j\right)_{j \in \mathbb{N}}$, the function $w_{\varepsilon}$ satisfies
\begin{align}\label{1010-2338}
\begin{cases}
w_{\varepsilon \tau}=\left(a_{\varepsilon}(x, \tau) w_{\varepsilon}^{l-1} w_{\varepsilon x}\right)_x-\left(b_{\varepsilon}(x, \tau) w_{\varepsilon}^{l}\right)_x+ a_{\varepsilon}(x, \tau) w_{\varepsilon}, x \in \Omega, \tau \in(0,1) \\
w_{\varepsilon x}=0, \quad x \in \partial \Omega, \quad \tau \in(0,1) \\
w_{\varepsilon}(x, 0)=u_0(x)+\varepsilon, \quad x \in \Omega
\end{cases}
\end{align}
\end{lem}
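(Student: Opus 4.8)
The plan is to treat this as a deterministic time-rescaling of the one-dimensional version of \eqref{sys-regul}, and then to read off the coefficient bounds from Lemma \ref{lemma-1010-x}. First I would check that the rescaling is admissible: by \eqref{1010-2030} one has $0<L_\varepsilon<\infty$ for every $\varepsilon$, so $g_\varepsilon$ is a strictly increasing $C^1$ function with $g_\varepsilon(0)=0$, $g_\varepsilon'(t)=L_\varepsilon^{-1}\|v_\varepsilon(t)\|_{L^\infty(\Omega)}>0$ and $g_\varepsilon(t)\nearrow 1$ as $t\nearrow\infty$ (since $\int_0^\infty\|v_\varepsilon\|_{L^\infty(\Omega)}=L_\varepsilon$). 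Hence $g_\varepsilon:[0,\infty)\to[0,1)$ is a diffeomorphism and $w_\varepsilon,a_\varepsilon,b_\varepsilon$ in \eqref{1010-2336} are well defined and smooth on $\Omega\times(0,1)$.

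Next I would derive \eqref{1010-2338}. Writing $t=g_\varepsilon^{-1}(\tau)$ and differentiating $w_\varepsilon(x,\tau)=u_\varepsilon(x,t)$ in $\tau$, the chain rule gives $w_{\varepsilon\tau}=u_{\varepsilon t}\cdot(g_\varepsilon'(t))^{-1}=\tfrac{L_\varepsilon}{\|v_\varepsilon(t)\|_{L^\infty(\Omega)}}\,u_{\varepsilon t}$. Substituting the one-dimensional form of the first equation of \eqref{sys-regul}, namely $u_{\varepsilon t}=(u_\varepsilon^{l-1}v_\varepsilon u_{\varepsilon x})_x-(u_\varepsilon^{l}v_\varepsilon v_{\varepsilon x})_x+u_\varepsilon v_\varepsilon$, and using that the factor $L_\varepsilon\|v_\varepsilon(t)\|_{L^\infty(\Omega)}^{-1}$ depends on $\tau$ alone and may therefore be pulled inside $\partial_x$, I obtain exactly $w_{\varepsilon\tau}=(a_\varepsilon w_\varepsilon^{l-1}w_{\varepsilon x})_x-(b_\varepsilon w_\varepsilon^{l})_x+a_\varepsilon w_\varepsilon$, after identifying $a_\varepsilon=L_\varepsilon v_\varepsilon/\|v_\varepsilon(t)\|_{L^\infty(\Omega)}$ and $b_\varepsilon=L_\varepsilon v_\varepsilon v_{\varepsilon x}/\|v_\varepsilon(t)\|_{L^\infty(\Omega)}$ (here $u_\varepsilon=w_\varepsilon$ and $u_{\varepsilon x}=w_{\varepsilon x}$ at fixed $\tau$, since $t$ is independent of $x$). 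The no-flux condition $w_{\varepsilon x}=0$ on $\partial\Omega$ is inherited from $\partial_\nu u_\varepsilon=0$, and $w_\varepsilon(\cdot,0)=u_\varepsilon(\cdot,g_\varepsilon^{-1}(0))=u_\varepsilon(\cdot,0)=u_0+\varepsilon$ because $g_\varepsilon^{-1}(0)=0$.

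For the coefficient bounds I would invoke Lemma \ref{lemma-1010-x} together with the convergence $L_\varepsilon\to L$. The lower estimate \eqref{1010-2333} follows from \eqref{1010-1937}: writing $v_\varepsilon\ge c_\ast\|v_\varepsilon(t)\|_{L^\infty(\Omega)}$ yields $a_\varepsilon\ge c_\ast L_\varepsilon$, which is bounded below as soon as $\inf_j L_{\varepsilon_j}>0$. The upper estimate \eqref{1010-2334} combines \eqref{1010-2029} and \eqref{-2.9}: $|b_\varepsilon|\le c\,L_\varepsilon\, v_\varepsilon^2/\|v_\varepsilon(t)\|_{L^\infty(\Omega)}\le c\,L_\varepsilon\|v_\varepsilon(t)\|_{L^\infty(\Omega)}\le c\,L_\varepsilon\|v_0\|_{L^\infty(\Omega)}$, which is bounded because $L_\varepsilon\le C\int_\Omega v_0/\int_\Omega u_0$ by \eqref{1010-2030}.

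The main obstacle is the convergence \eqref{1010-2327}. From $v_\varepsilon\to v$ in $C^0_{\mathrm{loc}}(\overline\Omega\times[0,\infty))$ (Lemma \ref{lemma-4.9bn}) one gets $\|v_\varepsilon(t)\|_{L^\infty(\Omega)}\to\|v(t)\|_{L^\infty(\Omega)}$ for each fixed $t$, and $0\le\|v_\varepsilon(t)\|_{L^\infty(\Omega)}\le\|v_0\|_{L^\infty(\Omega)}$ by \eqref{-2.9}, so on every bounded time interval dominated convergence applies directly. The delicate point is the uniform control of the tail $\int_T^\infty\|v_\varepsilon(t)\|_{L^\infty(\Omega)}\d t$ as $T\to\infty$, which does \emph{not} follow from the mere $\varepsilon$-uniform bound \eqref{1010-2030} (a uniform $L^1(0,\infty)$ bound alone need not give tightness at infinity); I would secure it from the uniform-in-$\varepsilon$ decay of $\|v_\varepsilon(t)\|_{L^\infty(\Omega)}$ contained in the large-time analysis of \cite{2021-TAMS-Winkler,2022-CPAA-LiWinkler} underlying Lemma \ref{lemma-1010-x}, after which a Vitali-type argument gives $L_\varepsilon\to L$. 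Finally, since $v>0$ in $\overline\Omega\times[0,\infty)$ one has $L>0$, whence $L_{\varepsilon_j}\ge\tfrac12 L$ for all large $j$ and, each $L_{\varepsilon_j}$ being positive, $\inf_j L_{\varepsilon_j}>0$, which closes the lower bound required in \eqref{1010-2333}.
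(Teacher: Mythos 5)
Your proposal is correct and follows essentially the same route as the paper's proof: the chain-rule computation yielding \eqref{1010-2338}, the coefficient bounds \eqref{1010-2333} and \eqref{1010-2334} deduced from \eqref{1010-1937}, \eqref{1010-2029}, the upper bound on $L_\varepsilon$ in \eqref{1010-2030} and the positivity of $\inf_j L_{\varepsilon_j}$ via \eqref{1010-2327}, and the splitting of $|L_\varepsilon-L|$ into a compact-time part (controlled by the locally uniform convergence $v_\varepsilon\to v$) plus two tail integrals. Your one deviation is in fact a sharpening: the paper asserts the uniform tail smallness $\int_{t_0}^\infty\|v_\varepsilon(t)\|_{L^\infty(\Omega)}\d t\leqslant\frac{\eta}{3}$ directly from \eqref{1010-2030}, which as stated is only an $\varepsilon$-uniform $L^1(0,\infty)$ bound, whereas you correctly observe that this step requires the stronger estimates underlying Lemma \ref{lemma-1010-x} (the results of \cite{2021-TAMS-Winkler,2022-CPAA-LiWinkler}, which bound the tail by quantities decaying uniformly in $\varepsilon$).
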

\begin{proof}
From \eqref{1010-2030}, we can infer that for any given $\eta>0$, there exist $t_0=t_0(\eta)>0$ and $\varepsilon^{\star}=\varepsilon^{\star}(\eta) \in(0,1)$ such that
\begin{align}\label{1010-2324}
\int_{t_0}^{\infty}\left\|v_{\varepsilon}(t)\right\|_{L^{\infty}(\Omega)} \d t \leqslant \frac{\eta}{3} \quad \text { for all } \varepsilon \in\left(\varepsilon_j\right)_{j \in \mathbb{N}} \text { satisfying } \varepsilon<\varepsilon^{\star}
\end{align}
Further applying \eqref{-4.16} and Fatou’s lemma, we obtain
\begin{align}\label{1010-2325}
\int_{t_0}^{\infty}\|v(t)\|_{L^{\infty}(\Omega)} \d t \leqslant \frac{\eta}{3}
\end{align}
From \eqref{-4.16}, it follows that we can find $\varepsilon^{\star \star}=\varepsilon^{\star \star}(\eta) \in(0,1)$ such that
\begin{align}\label{1010-2326}
\int_0^{t_0}\left\|v_{\varepsilon}(t)-v(t)\right\|_{L^{\infty}(\Omega)} d t \leqslant \frac{\eta}{3} \quad \text { for all } \varepsilon \in\left(\varepsilon_j\right)_{j \in \mathbb{N}} \text { satisfying } \varepsilon<\varepsilon^{\star \star}
\end{align}
Collecting \eqref{1010-2324}-\eqref{1010-2326}, we infer that whenever $\varepsilon \in\left(\varepsilon_j\right)_{j \in \mathbb{N}}$ is such that $\varepsilon<  \min \left\{\varepsilon_{\star}, \varepsilon_{\star \star}\right\} $ 
\begin{align*}
& \left|L_{\varepsilon}-L\right| \nonumber\\
\leqslant & \int_0^{t_0}\left\|v_{\varepsilon}(t)-v(t)\right\|_{L^{\infty}(\Omega)} \d t+\int_{t_0}^{\infty}\|v(t)\|_{L^{\infty}(\Omega)} \d t+\int_{t_0}^{\infty}\left\|v_{\varepsilon}(t)\right\|_{L^{\infty}(\Omega)} \d t  \nonumber\\
\leqslant & \eta
\end{align*}
which implies \eqref{1010-2327}. Thus, \eqref{1010-1937} and
\eqref{1010-2029}, combined with \eqref{1010-2327} and the positivity of each $L_{\varepsilon}$, result in \eqref{1010-2333} and \eqref{1010-2334}. Finally, using \eqref{1010-2336} and \eqref{sys-regul}, we immediately deduce \eqref{1010-2338}.
\end{proof}

\begin{lem}\label{lem-1013}
Let $1 \geqslant 1$ and assume that \eqref{assIniVal} holds. Then we have
\begin{align}\label{1010-2351}
\|u(t)\|_{L^{\infty}(\Omega)} \leqslant C \quad \text { for all } t>0
\end{align}
and
\begin{align}\label{1010-2352}
\left\|v(t)\right\|_{W^{1, \infty}(\Omega)} \leqslant C \quad \text { for all } t>0
\end{align}
\end{lem}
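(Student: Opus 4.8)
The plan is to prove the two uniform-in-time bounds separately. The gradient estimate \eqref{1010-2352} follows almost directly from estimates already in hand, whereas the $L^\infty$-bound \eqref{1010-2351} for $u$ is where the time change introduced above is indispensable, so I would devote the bulk of the argument to it.

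For \eqref{1010-2352}, I would invoke Lemma \ref{lemma-4.1a}, which supplies $\|v_\varepsilon(t)\|_{W^{1,\infty}(\Omega)} \leqslant C$ uniformly in $\varepsilon$ and in $t \in (0,\infty)$, recalling that $T_{\max,\varepsilon} = \infty$ for all $\varepsilon$ by Lemma \ref{lemma-4.5aa}. The $L^\infty$ part, namely $\|v(t)\|_{L^\infty(\Omega)} \leqslant \|v_0\|_{L^\infty(\Omega)}$, is already recorded in Lemma \ref{lemma-4.9bn}. For the gradient I would pass to the limit along $\varepsilon = \varepsilon_j \searrow 0$ in the uniform bound for $\|\nabla v_\varepsilon(t)\|_{L^\infty(\Omega)}$: for each fixed $t>0$ the local convergence \eqref{-4.16zx} in $C^{2,1}_{\mathrm{loc}}(\overline{\Omega}\times(0,\infty))$ identifies $\nabla v_\varepsilon(\cdot,t) \to \nabla v(\cdot,t)$ uniformly on $\overline{\Omega}$, whence $\|\nabla v(t)\|_{L^\infty(\Omega)} = \lim_{\varepsilon}\|\nabla v_\varepsilon(t)\|_{L^\infty(\Omega)} \leqslant C$.

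For \eqref{1010-2351}, the idea is to work entirely on the \emph{fixed} finite cylinder $\Omega\times(0,1)$ through the rescaled density $w_\varepsilon(x,\tau) = u_\varepsilon(x, g_\varepsilon^{-1}(\tau))$, which solves \eqref{1010-2338} with coefficients obeying the uniform bounds $a_\varepsilon \geqslant \tfrac{1}{C}$ and $|b_\varepsilon| \leqslant C$ from \eqref{1010-2333}--\eqref{1010-2334}. First I would transfer the uniform-in-time $L^p$ estimates of Lemma \ref{1012-2334} to $w_\varepsilon$: since $g_\varepsilon^{-1}(\tau) \in [0,\infty)$ for $\tau\in[0,1)$, one has $\int_\Omega w_\varepsilon^p(\tau) = \int_\Omega u_\varepsilon^p(g_\varepsilon^{-1}(\tau)) \leqslant C(p)$ for every $p>1$, uniformly in $\tau\in(0,1)$ and $\varepsilon$. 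Because \eqref{1010-2338} now carries uniformly positive diffusion weight $a_\varepsilon$, uniformly bounded drift $b_\varepsilon$, and uniformly bounded initial data $w_\varepsilon(\cdot,0) = u_0+\varepsilon$, I would then rerun the very scheme of Lemma \ref{lemma-4.4ax} (a Moser-type iteration in the spirit of \cite{2012-JDE-TaoWinkler}), applied to $w_\varepsilon$ on $\Omega\times(0,1)$ rather than to $u_\varepsilon$ on $\Omega\times(0,T)$. Since the temporal length is now the fixed number $1$ and all coefficient and $L^p$ bounds are $\varepsilon$-uniform, the iteration yields $\|w_\varepsilon\|_{L^\infty(\Omega\times(0,1))} \leqslant C$ with $C$ independent of $\varepsilon$. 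Undoing the time change via the surjectivity of $g_\varepsilon^{-1}\colon[0,1)\to[0,\infty)$ gives $\sup_{t>0}\|u_\varepsilon(t)\|_{L^\infty(\Omega)} = \sup_{\tau\in(0,1)}\|w_\varepsilon(\tau)\|_{L^\infty(\Omega)} \leqslant C$, and a final passage to the limit through \eqref{-4.15zx} delivers \eqref{1010-2351}.

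The main obstacle is exactly the degeneration, as $t\to\infty$, of the diffusion weight $v_\varepsilon$ in the original $u_\varepsilon$-equation: the comparison lower bound of the form $\inf_\Omega v_0\cdot e^{-c_1 t}$ decays to zero, so a direct Moser iteration in $t$ only produces the $T$-dependent bound of Lemma \ref{lemma-4.4ax}. The whole purpose of the normalization $a_\varepsilon = L_\varepsilon\, v_\varepsilon/\|v_\varepsilon\|_{L^\infty}$, combined with the pointwise lower bound $v_\varepsilon \geqslant C\|v_\varepsilon(t)\|_{L^\infty}$ from \eqref{1010-1937} and the convergence $L_\varepsilon\to L>0$ from \eqref{1010-2327}, is to restore uniform ellipticity on the compressed interval $(0,1)$. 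The delicate point therefore lies not in any new computation but in checking that every constant emerging from the iteration depends only on the fixed length of $(0,1)$, on the uniform coefficient bounds, and on the uniform $L^p$ bounds of $w_\varepsilon$, and not on any growing time horizon, so that the resulting $L^\infty$ estimate is genuinely uniform in both $\tau$ and $\varepsilon$.
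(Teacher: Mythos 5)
Your proposal is correct and follows essentially the same route as the paper: the paper likewise obtains \eqref{1010-2351} by applying the Moser-type iteration of \cite{2012-JDE-TaoWinkler} to the rescaled problem \eqref{1010-2338} on $\Omega\times(0,1)$, using the uniform coefficient bounds \eqref{1010-2333}--\eqref{1010-2334}, then undoing the time change \eqref{1010-2336} and passing to the limit via \eqref{-4.15zx}, while \eqref{1010-2352} is deduced from \eqref{1013-1842} together with \eqref{-4.16zx}. Your write-up is merely more explicit about transferring the $L^p$ bounds of Lemma \ref{1012-2334} to $w_\varepsilon$ and about the limit argument for the gradient of $v$, details the paper leaves implicit.
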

\begin{proof}
In view of Lemma \ref{lemma-4.9}, \eqref{1010-2336}, \eqref{1010-2333}, and \eqref{1010-2334}, the Moser-type argument used in \cite{2012-JDE-TaoWinkler} when applied to \eqref{1010-2338} allows us to find $c_1>0$ such that
$$
\left\|w_{\varepsilon}(\tau)\right\|_{L^{\infty}(\Omega)} \leqslant C \quad \text { for all } \tau \in(0,1) \text { and } \varepsilon \in(0,1)
$$
which implies
$$
\left\|u_{\varepsilon}(t)\right\|_{L^{\infty}(\Omega)} \leqslant C \quad \text { for all } t>0 \text { and } \varepsilon \in(0,1)
$$
due to \eqref{1010-2336}. From \eqref{-4.15zx}, \eqref{-4.16zx}, and \eqref{1013-1842}, we have \eqref{1010-2351} and \eqref{1010-2352}.
\end{proof}

\begin{proof}[Proof of Theorem \ref{thm-1.1a}]
Theorem \ref{thm-1.1a} follows directly from Lemmas \ref{lemma-4.9bn} and \ref{lem-1013}.
\end{proof}

%\hfill$ \Box$
%\bibliographystyle{siam}

%\bibliography{indrectsingball}

%\end{thebibliography}

\end{document}